\documentclass[10pt,reqno]{amsart}
\usepackage{amsaddr}
\usepackage[a4paper, total={6.5in, 8.8in}]{geometry}
\usepackage{microtype}

\usepackage[symbol]{footmisc}

\usepackage{tikz}
\usetikzlibrary{decorations.pathreplacing}

\usepackage{amsmath}
\usepackage{amssymb}
\usepackage{amsthm}
\usepackage[latin1]{inputenc}
\usepackage{eurosym}
\usepackage{graphicx}
\usepackage{epsfig}
\usepackage{hyperref}
\usepackage{cleveref}
\usepackage{dsfont}
\usepackage[space]{cite}
\usepackage{subcaption}
\usepackage{caption}
\usepackage{placeins}
\usepackage{mathtools}
\usepackage{xcolor}
\usepackage[title]{appendix}
\usepackage{bm}
\usepackage{multirow}
\usepackage{diagbox}
\usepackage{algorithm}
\usepackage[noend]{algpseudocode}
\usepackage{booktabs}

\usepackage[mathlines]{lineno}
\modulolinenumbers[1]

\allowdisplaybreaks

\usepackage{ifthen}
\usepackage{comment}

\makeindex
\usepackage{color}
\usepackage{float}
\usepackage{commath}
\definecolor{revision_color}{rgb}{1,0,0}

\definecolor{blue_revision}{rgb}{0,0,1}

\definecolor{reviewgreen}{rgb}{0,0.5,0}

\usepackage{aliascnt}

\renewcommand{\div}{\operatorname{div}}

\numberwithin{equation}{section}

\newtheoremstyle{thmlemcorr}{10pt}{10pt}{\itshape}{}{\bfseries}{.}{10pt}{{\thmname{#1}\thmnumber{
			#2}\thmnote{ (#3)}}}
\newtheoremstyle{thmlemcorr*}{10pt}{10pt}{\itshape}{}{\bfseries}{.}\newline{{\thmname{#1}\thmnumber{
			#2}\thmnote{ (#3)}}}
\newtheoremstyle{defi}{10pt}{10pt}{\itshape}{}{\bfseries}{.}{10pt}{{\thmname{#1}\thmnumber{
			#2}\thmnote{ (#3)}}}
\newtheoremstyle{remexample}{10pt}{10pt}{}{}{\bfseries}{.}{10pt}{{\thmname{#1}\thmnumber{
			#2}\thmnote{ (#3)}}}
\newtheoremstyle{assumption}{10pt}{10pt}{}{}{\bfseries}{.}{10pt}{{\thmname{#1}\thmnumber{
			A#2}\thmnote{ (#3)}}}

\theoremstyle{thmlemcorr}
\newtheorem{theorem}{Theorem}
\numberwithin{theorem}{section}

\newaliascnt{lemma}{theorem}
\newtheorem{lemma}[lemma]{Lemma}
\aliascntresetthe{lemma}

\newaliascnt{corollary}{theorem}

\aliascntresetthe{corollary}

\newaliascnt{proposition}{theorem}
\newtheorem{proposition}[proposition]{Proposition}
\aliascntresetthe{proposition}

\newtheorem{assumption}{Assumption}
\numberwithin{assumption}{section}

\newaliascnt{conjecture}{theorem}

\aliascntresetthe{conjecture}

\crefname{theorem}{theorem}{theorems}
\Crefname{theorem}{Theorem}{Theorems}

\crefname{lemma}{lemma}{lemmas}
\Crefname{lemma}{Lemma}{Lemmas}

\crefname{corollary}{corollary}{corollaries}
\Crefname{corollary}{Corollary}{Corollaries}

\crefname{proposition}{proposition}{propositions}
\Crefname{proposition}{Proposition}{Propositions}

\crefname{assumption}{assumption}{assumptions}
\Crefname{assumption}{Assumption}{Assumptions}

\crefname{conjecture}{conjecture}{conjectures}
\Crefname{conjecture}{Conjecture}{Conjectures}

\theoremstyle{thmlemcorr*}
\newtheorem{theorem*}{Theorem}
\newtheorem{lemma*}[theorem]{Lemma}
\newtheorem{corollary*}[theorem]{Corollary}
\newtheorem{proposition*}[theorem]{Proposition}
\newtheorem{problem*}[theorem]{Problem}
\newtheorem{conjecture*}[theorem]{Conjecture}

\theoremstyle{defi}

\newtheorem{problem}{Problem}
\crefname{problem}{problem}{problems}
\Crefname{problem}{Problem}{Problems}

\theoremstyle{remexample}
\newtheoremstyle{itremark}%
  {\topsep}{\topsep}
  {\itshape}
  {}
  {\bfseries}
  {.}
  {.5em}
  {}

\theoremstyle{itremark}

\newaliascnt{remarknoqed}{theorem}
\newtheorem{remarknoqed}[remarknoqed]{Remark}
\aliascntresetthe{remarknoqed}
\crefname{remarknoqed}{remark}{remarks}
\Crefname{remarknoqed}{Remark}{Remarks}

\newenvironment{remark}
  {\begin{remarknoqed}}
  {\qed\end{remarknoqed}}

\newenvironment{notations}{
	\noindent\textbf{Notations:} \it 
}{
	\normalfont 
}

\newtheorem*{notations*}{Notations}

\title[Hessian Riemannian Flows and a Solver-Agnostic Framework for Inverse Problems]{
A globally convergent flow for time-dependent mean field games and a solver-agnostic framework for inverse problems}

\author{Hanwei Yan$^{1}$, Xianjin Yang$^{2,*}$, Jingguo Zhang$^{3}$}
\thanks{$^*$Corresponding author. Authors are listed in alphabetical order.} 
 
\address{$^1$Department of Mathematics, City University of Hong Kong, Hong Kong, China.}
\email{hanweiyan2-c@my.cityu.edu.hk}
\address{$^2$Department of Computing and Mathematical Sciences, California Institute of Technology, CA, USA.}
\email{yxjmath@caltech.edu}
\address[J. Zhang]{
	$^3$Department of Mathematics and Risk Management Institute, National University of Singapore, Singapore.}
\email{e0983423@u.nus.edu}

\begin{document}

\begin{abstract}
Mean field games (MFGs) describe the limiting behavior of large populations of strategically interacting agents. This paper addresses two numerical challenges for MFGs: globally convergent forward solvers and solver-agnostic methods for inverse problems. For the forward problem, we extend the Hessian--Riemannian flow (HRF), previously developed for stationary MFGs, to time-dependent MFGs. We first discretize the system in space and time and then construct the flow directly on the resulting finite-dimensional problem. The proposed flow exploits Lasry--Lions monotonicity, preserves the initial density and terminal value function, and maintains positivity and mass of the density. Under standard assumptions, we prove global convergence of the HRF and show how to recover a solution of the full discretized time-dependent MFG system from its limit. For the inverse problem, we formulate parameter estimation as a bilevel problem in which the outer problem updates unknown coefficients and the inner problem solves the discretized MFG system. Gradients of the outer objective are obtained by differentiating the discretized MFG system at the inner solution, rather than differentiating through the iterations of a particular forward solver. This yields a solver-agnostic framework with adjoint-based gradient descent and Gauss--Newton acceleration. Numerical experiments on stationary and time-dependent MFGs demonstrate the effectiveness of the proposed methods.
\end{abstract}

\maketitle

\bigskip
\noindent\textbf{Keywords:} mean field games, monotone flows, inverse problems, solver-agnostic, Gauss--Newton method

\section{Introduction}
\label{sec:intro}
In this paper, we develop a globally convergent flow for solving time-dependent mean field games (MFGs) that preserves the positivity of the density, together with a solver-agnostic framework for inverse MFG problems in which parameter optimization is decoupled from the numerical procedure used to solve the forward problem. MFGs describe the macroscopic limit of strategic interactions among a large population of indistinguishable agents.  MFGs were introduced independently by two research groups: one led by Lasry and Lions \cite{Lasry-Lions:Jeux:2006,Lasry-Lions:Jeux-II:2006,Lasry-Lions:Janpan:2007}, and the other by Caines, Huang, and Malham$\mathrm{\acute{e}}$ \cite{Huang-Roland:CommunInfSyst:2006,Huang-Roland:TAC:2007}.
The theory offers a powerful mathematical framework for approximating Nash equilibria in multi-agent systems with a finite number of players. 
Under the assumptions of a homogeneous population and symmetric interactions, the MFG model demonstrates that as the number of agents approaches infinity, the influence of any specific opponent vanishes: the strategy of an individual agent is shaped by the aggregate behavior of the population, which is statistically represented by the mean field.  This viewpoint is particularly natural in large financial markets, where each small trader has a negligible individual impact on prices, yet the aggregate trading behavior of the population can significantly affect market dynamics \cite{cardaliaguet2018mean,lachapelle2016efficiency}. A standard MFG system is characterized by a Hamilton--Jacobi--Bellman (HJB) equation coupled with a Fokker--Planck (FP) equation. 
The former governs the value function of a representative agent, while the latter describes the evolution of the population distribution.

A prototypical time-dependent MFG on the $d$-dimensional torus $\mathbb{T}^d$ takes the form 
\begin{align}
	\label{eq:intro_TimeDep_MFG}
	\begin{cases}
		-\partial_t u - \nu \Delta u + H(x, D_xu) = f(m)+ V(x), &\forall (x,t) \in  \mathbb{T}^d\times (0, T), \\
		\partial_t m - \nu\Delta m - \div(D_pH(x, D_xu)m) = 0, &\forall (x,t)\in \mathbb{T}^d\times (0, T), \\
		m(x, 0)=m_0(x), \quad u(x, T)=u_T(x), & \forall x\in \mathbb{T}^d,
	\end{cases}
\end{align}
where $u$ denotes the value function of an agent, $m$ represents the population density, and $\nu$ is the volatility parameter of the agents' motion. Here, $H$ denotes the Hamiltonian, $f$ the coupling term, and $V$ the spatial cost. The initial distribution and the terminal cost are denoted by $m_0$ and $u_T$.  
In the MFG system \eqref{eq:intro_TimeDep_MFG}, the optimal strategy for a representative agent is given by $-D_pH(x,D_xu)$ \cite{Lasry-Lions:Jeux:2006,Lasry-Lions:Jeux-II:2006,Lasry-Lions:Janpan:2007}. 
In this paper, we also consider the stationary MFG, which takes the form
\begin{align}
	\label{eq:intro_Ergodic_MFG}
	\begin{cases}
		-\nu\Delta u+H\left(x, D_xu\right)=f(m)+V(x)+\lambda, &\forall x \in  \mathbb{T}^d, \\
		- \nu\Delta m - \div(D_pH(x, D_xu)m) = 0, &\forall x\in \mathbb{T}^d, \\
		\int_{\mathbb{T}^d} u \,\mathrm{d}x = 0, 
		\quad 
		\int_{\mathbb{T}^d} m \,\mathrm{d}x = 1. 
	\end{cases}
\end{align}
Here \(\lambda\in\mathbb R\) is the ergodic constant, representing the
long-time average cost in the stationary regime. It is an additional unknown,
while the normalization \(\int_{\mathbb T^d}u\,dx=0\) fixes the additive
constant of the value function. 
The stationary MFG system can be interpreted as the long-time limit $t \to \infty$ of the time-dependent MFG \cite{cardaliaguet2012long, cardaliaguet2013long}.

MFGs have been widely applied in engineering, economics, finance, and energy distribution networks. For comprehensive overviews, see \cite{Gomes-Saude:DGA:2014, Cardaliaguet-Porretta:MFG_Cetraro:2021}. The MFG equations form a highly coupled system and rarely admit closed-form solutions, which motivates the development of efficient numerical methods for their approximation.

Solving either the stationary MFG \eqref{eq:intro_Ergodic_MFG} or the time-dependent MFG \eqref{eq:intro_TimeDep_MFG} can be formulated as finding zeros of nonlinear operators acting on infinite-dimensional function spaces. In the stationary case, the ergodic constant $\lambda$ can be eliminated explicitly, yielding a closed operator equation in $(m,u)$ alone. Indeed, multiplying the HJB equation by $m$, integrating over $\mathbb{T}^d$, using periodicity, and dividing by $\int_{\mathbb{T}^d}m\,\mathrm{d}x$, gives
\begin{align}
\label{eq:intro_lambda_elim}
\lambda(m,u)
&=\frac{
\int_{\mathbb{T}^d} \Bigl(-\nu\Delta u + H(x,D_xu) - f(m(x)) - V(x)\Bigr)\,m(x)\,\mathrm{d} x}{\int_{\mathbb{T}^d}m \,\mathrm{d}x}.
\end{align}
With \eqref{eq:intro_lambda_elim} in hand, define the stationary residual operator
\begin{align}
\label{eq:intro_Fop_st_closed}
\mathcal{F}_{\mathrm{st}}
\begin{pmatrix}
m\\ u
\end{pmatrix}
:=
\begin{pmatrix}
\nu \Delta u - H(x,D_xu) + f(m) + V(x) + \lambda(m,u)\\[2pt]
-\nu \Delta m - \div\!\bigl(D_pH(x,D_xu)\,m\bigr)
\end{pmatrix},
\qquad x\in\mathbb{T}^d.
\end{align}
Then, solving the stationary MFG \eqref{eq:intro_Ergodic_MFG} is equivalent to finding $(m,u)$ such that
\[
\mathcal{F}_{\mathrm{st}}(m,u)=0,
\qquad
\int_{\mathbb{T}^d} m(x)\,\mathrm{d} x = 1,
\qquad
m(x)\ge 0,
\qquad
\int_{\mathbb{T}^d} u(x)\,\mathrm{d} x = 0.
\]
For the time-dependent problem, one analogously introduces the evolution residual operator
\begin{align}
\label{eq:intro_Fop_td_closed}
\mathcal{F}_{\mathrm{td}}
\begin{pmatrix}
m\\ u
\end{pmatrix}
 :=
\begin{pmatrix}
\partial_t u + \nu \Delta u - H(x,D_xu) + f(m) + V(x)\\[2pt]
\partial_t m - \nu \Delta m - \div\!\bigl(D_pH(x,D_xu)\,m\bigr)
\end{pmatrix},
\qquad (t,x)\in(0,T)\times\mathbb{T}^d,
\end{align}
so that solving \eqref{eq:intro_TimeDep_MFG} amounts to finding $(m,u)$ satisfying $\mathcal{F}_{\mathrm{td}}(m,u)=0$ together with the prescribed initial-terminal data and the constraints $m(t,\cdot)\ge 0$ and $\int_{\mathbb{T}^d} m(t,x)\,\mathrm{d} x = 1$ for all $t\in[0,T]$.

From this operator viewpoint, one may apply Newton-type iterations to a finite-difference discretization of the coupled residual equations together with the normalization constraints \cite{achdou2010mean}. While highly effective when initialized sufficiently close to a solution, such Newton solvers are well known to exhibit primarily local convergence. A possible way to obtain a globally convergent algorithm is to recast the MFG system as the first-order optimality condition of a linearly constrained convex minimization problem. This structure corresponds to so-called potential MFGs, for which one can leverage convex optimization methods (see, e.g., \cite{briceno2018proximal, briceno2019implementation, briceno2024forward}). This approach, however, is limited to potential MFGs and does not extend to MFGs with a non-potential structure, such as \eqref{eq:2DstationaryMFGinvSmall_nu} in \Cref{2DMFGexample2}.   A different direction relaxes the potential assumption and instead exploits that the MFG residual operators $\mathcal{F}_{\mathrm{st}}$ and $\mathcal{F}_{\mathrm{td}}$ are monotone under the Lasry--Lions monotonicity condition \cite{Lasry-Lions:Jeux:2006,Lasry-Lions:Jeux-II:2006,almulla2017two}. To illustrate the idea, let $z=(m,u)$ and let $\mathcal{F}$ denote either $\mathcal{F}_{\mathrm{st}}$ or $\mathcal{F}_{\mathrm{td}}$, viewed as a mapping between suitable Hilbert spaces. We equip the product space with the pairing
\[
\langle z_1,z_2\rangle := \int_{\mathbb{T}^d}  (m_1(x)m_2(x) \;+\; u_1(x)u_2(x))\,\mathrm{d} x,
\qquad 
\|z\|^2:=\langle z,z\rangle,
\]
and analogously in the time-dependent setting by integrating over $(0,T)\times\mathbb{T}^d$. Monotonicity then means that
\[
\bigl\langle \mathcal{F}(z_1)-\mathcal{F}(z_2),\, z_1-z_2 \bigr\rangle \ge 0
\qquad \text{for all admissible } z_1,z_2.
\]
In the time-dependent case, this monotonicity statement is understood for admissible pairs with the same prescribed initial density and terminal value function, so that the boundary terms generated by integration by parts in time cancel.
In the stationary case, this monotonicity statement is understood on the normalized admissible set $\int_{\mathbb{T}^d} m\,\mathrm{d}x = 1$.
This property naturally suggests introducing an artificial-time evolution (a monotone flow)
\begin{equation}\label{eq:ambient_mf_intro_polished}
\partial_s z(s) = -\mathcal{F}\bigl(z(s)\bigr),\qquad s\ge 0,
\end{equation}
for which one formally obtains the dissipation estimate
\[
\frac{\mathrm{d}}{\mathrm{d} s}\frac12\|z(s)-z^\ast\|^2
= -\bigl\langle \mathcal{F}(z(s))-\mathcal{F}(z^\ast),\, z(s)-z^\ast\bigr\rangle \le 0
\]
whenever $z^\ast$ satisfies $\mathcal{F}(z^\ast)=0$.
This estimate shows that the distance from the trajectory \(z(s)\) to any
solution \(z^*\) is nonincreasing along the artificial time \(s\). When the
monotonicity is strict or strengthened in a suitable sense, this dissipation can
also force convergence of the trajectory to the solution. For rigorous results on global existence and convergence of such monotone flows, we refer the reader to \cite{almulla2017two}.

The key obstruction is that \eqref{eq:ambient_mf_intro_polished} is an ambient flow: it evolves in the full product space and does not enforce the intrinsic constraints of the MFG system, in particular $m\ge 0$ and mass conservation. As a result, positivity of the density is not preserved in general. This issue is especially severe computationally: once an iterate leaves the positive cone, nonlinearities such as $f(m)$ can become numerically unstable (e.g., for entropic couplings $f(m)=\ln m$).

To enforce positivity of densities in stationary MFGs, \cite{gomes2020hessian} introduced a monotone flow on the manifold of probability densities equipped with a barrier-induced Riemannian geometry, leading to Hessian--Riemannian flows (HRFs) that preserve positivity by construction. The idea is to work on the open set of strictly positive densities and equip \((m,u)\) with the Riemannian metric generated by the Hessian of the entropy functional
\[
\int_{\mathbb{T}^d}\Bigl(m\ln m-m+\tfrac12 u^2\Bigr)\,\mathrm{d}x.
\]
The corresponding Hessian induces the weighted pairing
\[
\bigl\langle(\delta m,\delta u),(\delta \tilde{ m},\delta \tilde{ u})\bigr\rangle
=
\int_{\mathbb{T}^d}\frac{\delta m\,\delta \tilde{ m}}{m}\,\mathrm{d}x
+
\int_{\mathbb{T}^d}\delta u\,\delta \tilde{ u}\,\mathrm{d}x.
\]
The ambient monotone flow direction, $-\mathcal{F}_{\mathrm{st}}$, is then projected onto the manifold of positive densities with respect to this Riemannian metric, yielding an intrinsic monotone flow. The resulting positivity-preserving HRF reads \cite{gomes2020hessian}
\begin{equation*}
\begin{pmatrix}\partial_s m\\ \partial_s u\end{pmatrix}
=
-\begin{pmatrix}
    m & 0\\
    0 & 1
\end{pmatrix}\mathcal{F}_{\mathrm{st}}
\begin{pmatrix}
    m \\ u
\end{pmatrix}.
\end{equation*}
This multiplicative structure is the key point: one can rewrite the first equation as an evolution for \(\ln m\), so strict positivity of the initial density propagates for all artificial times \(s\ge 0\), while \(\int_{\mathbb{T}^d}m\,\mathrm{d}x\) is preserved by construction. Here mass preservation uses the compatibility choice of \(\lambda(m,u)\) in \eqref{eq:intro_lambda_elim}, which gives \(\int_{\mathbb T^d}m\,\mathcal F^{m}_{\mathrm{st}}(m,u)\,\mathrm dx=0\). See, e.g., \cite{gomes2020hessian}. 

However, extending the HRF method to the time-dependent PDE system \eqref{eq:intro_TimeDep_MFG} involves additional constraints: one must simultaneously preserve positivity at each time slice and enforce the mixed initial-terminal conditions \(m(x,0)=m_0(x)\) and \(u(x,T)=u_T(x)\). A constrained-flow viewpoint proposed in \cite{gomes2021numerical} suggests treating these endpoint conditions as trace constraints at \(t=0\) and \(t=T\). This naturally leads to a time-regularity setting in which traces are well defined, typically a Sobolev space with one time derivative, together with a projection operator that enforces the two endpoint constraints simultaneously. Such a projection is inherently global in time: for a given direction, computing its projection onto the feasible manifold amounts to solving an auxiliary two-point boundary-value problem in the time variable. The same difficulty reappears when one attempts to introduce a Riemannian geometry in order to encode the positivity at the continuous level. We refer to Section~\ref{sec:tdmfg} for further discussion.

The difficulty described above stems from the trace-regularity requirements and the global-in-time projection inherent in continuous-in-time approaches. This naturally motivates a fully discrete viewpoint: once the system is discretized in time, the mixed endpoint conditions become coordinate constraints on the first and last time slices, and positivity and mass conservation reduce to algebraic conditions on finite-dimensional vectors. Both difficulties thus disappear. Feasibility can be maintained simply by freezing the boundary slices and evolving only the interior degrees of freedom, reducing constraint handling to an explicit elimination of boundary variables. Following this discretize-then-flow strategy, we construct an HRF directly for the time-discretized MFG system.  Under standard assumptions, such as convexity of the Hamiltonian and monotonicity of the coupling, the proposed HRF for the discretized time-dependent MFG system is globally well-defined. The density remains strictly positive throughout the evolution. The flow converges to the unique stationary point of the HRF, from which one can
recover the unique solution of the full discretized time-dependent MFG system by
adding suitable spatial constants to the discretized value function. See \Cref{thm:global_conv_discrete} and \Cref{prop:global_exist_pos_mass}. 

The second contribution concerns inverse problems for both stationary and time-dependent MFGs. We develop algorithms for MFG inverse problems, in which unknown model components (such as the density of agents, the value function, or the spatial cost) are inferred 
from partial, noisy observations of the population and the spatial cost. 
Such problems arise, for example, in financial markets, where one seeks to recover 
latent trading costs or risk premia from order-flow and price data generated by a large 
population of interacting agents, and in models of crowd motion, where spatial preferences 
are inferred from observed mass evolution. More concretely, in the settings of \eqref{eq:intro_Ergodic_MFG} and \eqref{eq:intro_TimeDep_MFG}, one seeks to recover the state $(m,u)$ (and, in the stationary case, also the ergodic constant $\lambda$) together with the spatial cost $V$ from noisy observations of $m$ and $V$, and we assume that $H$, $f$, and $\nu$ are known. A concrete description of these inverse problems is given in \Cref{prob:st_prob_inverse,prob:td_prob} of \Cref{sec:inverse_stationary}. We formulate the inverse problem as a nonlinear optimization problem constrained by the MFG equations, with the spatial cost \(V\) as the parameter to be optimized. The objective balances data fidelity with regularization of the unknown parameter.

This formulation differs from several recent approaches to inverse problems for MFGs that treat the unknown coefficients and the MFG state as joint decision variables in a single optimization problem, and minimize over both simultaneously, see \cite{ding2022mean, mou2022numerical}. In contrast, our viewpoint is closer in spirit to bilevel formulations \cite{yu2024bilevel, huang2025joint, zhang2025learning, ren2024policy}, where the outer problem updates only the coefficients, while the inner problem computes the corresponding MFG state by solving the forward system. The bilevel formulation offers two practical advantages. First, the inner state is, by construction, an accurate MFG equilibrium for
  the current parameter. Second, it decouples the outer optimization from the particular algorithm used for the inner MFG solve. As a result, one can change the inner solver, stopping criteria, or implementation details without modifying the outer optimization procedure, provided the forward solve is sufficiently accurate. However, most existing bilevel approaches compute outer gradients in a way that is tightly coupled to the chosen inner procedure, either by differentiating through a particular solver and its iteration trace \cite{yu2024bilevel, zhang2025learning}, or by committing to a specific linearization such as policy iteration \cite{ren2024policy, yang2025gaussian,yu2025equilibrium}. Instead, we adopt a solver-agnostic approach inspired by implicit differentiation \cite{amos2017optnet, agrawal2019differentiable, blondel2022efficient}: we treat the discretized MFG system as an implicit layer and differentiate the constraints at the inner solution, rather than differentiating through the solver iterations. The forward solver thus only needs to return a sufficiently accurate solution, and can be changed without modifying the outer optimization procedure. Building on this, our second contribution is a solver-agnostic framework for inverse problems in both stationary and time-dependent MFGs, supporting both adjoint-based first-order updates and Gauss--Newton (GN) acceleration for the outer optimization. Further details are deferred to \Cref{sec:inverse_stationary}.

Overall, our contributions can be summarized as follows:
\begin{enumerate}
\item A positivity-preserving flow method for the fully discretized time-dependent MFG system with global convergence. We follow a discretize-then-flow strategy. After discretizing the time-dependent MFG system in space and time, we keep the boundary time slices fixed to preserve the initial condition of the density and the terminal condition of the value function. The HRF is then constructed to evolve only the interior unknowns. For the resulting finite-dimensional system, we prove that the HRF is globally
defined, preserves the positivity and mass of the density, and converges to a
state from which a solution of the full discretized time-dependent MFG system
can be recovered by adding spatial constants to the value-function slices.

\item Plug-and-play, solver-agnostic methods for inverse problems in stationary and time-dependent MFGs.
We pose the inverse problem as a nonlinear optimization problem constrained by the MFG equations, with unknown coefficients such as the spatial cost as decision variables. The framework is plug-and-play: any forward solver that returns a sufficiently accurate solution of the MFG can be used. Within this framework, we consider both a first-order method and a GN acceleration. 
\end{enumerate}

\subsection{Related Work on Numerical Methods and Inverse Problems for MFGs}
In recent years, a broad range of numerical methods has been developed for forward MFG problems. For the purposes of the present discussion, it is convenient to group them loosely into direct discretization methods, optimization-based methods, and learning-based methods.

Direct discretization methods work directly with a spatial or space-time discretization of the MFG system and solve the resulting finite-dimensional equations. Representative examples include finite difference schemes \cite{achdou2010mean,achdou2013mean,achdou2012iterative}, semi-Lagrangian and Lagrange--Galerkin discretizations \cite{carlini2015semilagrangian,carlini2024lagrange}, and finite element approximations \cite{osborne2025finite}, together with iterative solvers such as Newton-type methods, policy iteration \cite{achdou2012iterative,cacace2021policy,lauriere2023policy}, and monotone flows \cite{almulla2017two,gomes2020hessian}.

A second class consists of optimization-based methods, which exploit additional structure in the MFG system before solving it numerically. For potential MFGs, primal-dual algorithms reformulate the problem as a variational problem \cite{briceno2018proximal,briceno2019implementation}. Related Fourier-based approaches for nonlocal couplings also fall into this category, where nonlocal terms are represented in a reduced form that supports variational, splitting, or primal-dual algorithms \cite{nurbekyan2019fourier,liu2020splitting,liu2021computational}.

A third class comprises learning-based methods. Neural-network-based solvers offer flexible function approximations for MFG systems \cite{ruthotto2020machine,lin2020apac,carmona2019convergence,carmona2021convergence}, and Gaussian process (GP) methods, originally developed as Bayesian nonparametric surrogates for PDEs \cite{chen2021solving,yang2023learning,yang2023mini}, have also been adapted to MFGs \cite{mou2022numerical,meng2023sparse,yang2025gaussian}. While these approaches have shown promising empirical performance, global convergence guarantees remain less well understood for this class of methods.

For the purposes of this paper, the most relevant solver family is that of monotonicity-based flow methods for directly discretized MFG systems. Under the Lasry--Lions monotonicity condition, the MFG system can be viewed as a root-finding problem for a monotone operator, which motivates flow-based solvers. Monotone flow schemes for stationary MFGs appear in \cite{almulla2017two}. The flow viewpoint is further extended to time-dependent MFGs in the finite-state case in \cite{gomes2021numerical}. A key advantage of this line of work is that it provides a mechanism for global convergence without requiring the MFG system to admit a potential structure. To preserve positivity of the density, \cite{gomes2020hessian} introduced an HRF methodology for stationary MFGs, which preserves both positivity and mass by incorporating a geometry adapted to the density constraint. More recently, \cite{bakaryan2025hessian} extended the HRF approach to multi-population Wardrop equilibrium problems on generalized graphs. However, for time-dependent MFGs, extending the HRF is substantially more delicate because one must simultaneously handle positivity,  mass conservation, and mixed initial-terminal conditions. Our first contribution is aimed precisely at this gap.


Inverse problems for MFGs aim to reconstruct unknown coefficients, such as spatial costs or coupling functions, from partial and possibly noisy observations \cite{ding2022mean,liu2023inverse}. On the theoretical side, these problems have also been studied through convexification and related stability analyses \cite{klibanov2023convexification,klibanov2023holder,imanuvilov2023lipschitz,ren2024reconstructing}. In this paper, our focus is on numerical methodologies.

From a numerical viewpoint, one class of approaches adopts an all-at-once formulation, in which the unknown coefficients and the MFG state are optimized simultaneously \cite{ding2022mean,yang2025gaussian,guo2024decoding}. A second class adopts a bilevel viewpoint, in which the outer problem updates the unknown coefficients while the inner problem solves the MFG system for the current parameter value \cite{ren2024policy,yu2024bilevel,huang2025joint,zhang2025learning}. Distinct from these optimization-based formulations, a related recent direction is provided by learning-based methods, such as \cite{yang2023context}, which use operator-learning ideas to leverage information across multiple MFG instances rather than solving each inverse problem independently.

A common feature of many existing bilevel pipelines is that the outer gradient is constructed together with a particular inner solver, so that its computation may depend on solver-specific implementation details such as the iterative scheme, the number of inner iterations, and the stopping criterion. This motivates the solver-agnostic framework developed in this paper, in which the outer optimization is carried out over the unknown coefficients while the gradient is obtained by implicitly differentiating the discrete MFG system satisfied by the converged inner solution, rather than by differentiating through the iteration trace of a particular forward solver. We note that \cite{zhang2025learning} also uses an adjoint-based strategy to obtain a solver-agnostic framework. The key distinction lies in the level at which implicit differentiation is performed. In \cite{zhang2025learning}, which focuses on potential MFGs, the adjoint is derived from the optimality system of an inner PDE-constrained convex minimization problem. By contrast, we differentiate the discretized MFG system satisfied by the converged inner solution directly, which does not require the MFG to admit a potential or variational structure.

\subsection{Organization}
The remainder of this paper is organized as follows. Section~\ref{sec:tdmfg} develops the HRF method for time-dependent MFGs. Section~\ref{sec:inverse_stationary} presents a solver-agnostic framework for inverse problems. Numerical experiments are reported in Section~\ref{sec:numerics}. Section~\ref{sec:conclusion} concludes the paper and outlines directions for future research. Finally, all proofs are collected in \Cref{app:appendix}.

\section{HRFs for Time-Dependent MFGs}
\label{sec:tdmfg}

The goal of this section is to develop a  flow-based solver for \eqref{eq:intro_TimeDep_MFG} that preserves the positivity of the density and admits rigorous convergence guarantees driven by monotonicity.  To focus on the main ideas without being distracted by the treatment of boundary conditions and low-regularity solutions, we work in the setting of classical solutions. Throughout, we assume that \eqref{eq:intro_TimeDep_MFG} admits a unique classical solution \((m,u)\), where $m, u\in C^{2,1}(\mathbb{T}^d\times[0,T])$. Classical well-posedness holds under standard hypotheses; see \cite{Lasry-Lions:Jeux-II:2006,cardaliaguet2010notes, gomes2016regularity} for the time-dependent MFG system and detailed proofs. In the continuous setting, we rely on standard conditions ensuring well-posedness and regularity, such as the convexity of \(H\) in the momentum variable and the Lasry--Lions monotonicity of the coupling \(f\). Since our HRF construction is carried out at the fully discrete level, after both space and time discretization, we do not restate the corresponding continuous hypotheses here. Instead, we state the discrete assumptions under which the HRF converges for the discretized system.

The HRF method of \cite{gomes2020hessian} was developed for stationary MFGs and shown to be globally convergent. In what follows, we first outline the main obstacles to extending this approach to time-dependent MFGs at the continuous level. Guided by these observations, we then develop an HRF method for time-dependent MFGs at the fully discrete level, after space-time discretization.

\subsection{Challenges in Constructing the HRF for Time-Dependent MFGs}
\label{subsec:continuous_HRF}

In this subsection, we explore how one might extend the HRF of \cite{gomes2020hessian} to time-dependent MFGs at the continuous PDE level, and identify the resulting challenges.

We define
\begin{equation*}
\mathcal{X}
:=
\Big\{(m,u):\ m\in C^{2,1}(\mathbb{T}^d\times[0,T]),\ u\in C^{2,1}(\mathbb{T}^d\times[0,T])\Big\},
\end{equation*}
together with the residual space
\begin{equation*}
\mathcal{Y}
:=
C(\mathbb{T}^d\times[0,T])\times C(\mathbb{T}^d\times[0,T]).
\end{equation*}
We recall the time-dependent MFG residual operator \(\mathcal{F}_{\mathrm{td}}:\mathcal{X}\to\mathcal{Y}\) defined in \eqref{eq:intro_Fop_td_closed}. Denote $z := (m, u)$. Solving \eqref{eq:intro_TimeDep_MFG} is equivalent to finding \(z\in\mathcal{X}\) such that \(\mathcal{F}_{\mathrm{td}}(z)=0\), subject to the endpoint conditions.
Define the linear trace operator
\begin{equation}
\label{eq:C_def_td}
\mathcal{T}:\mathcal{X}\to \mathcal{O},\qquad
\mathcal{T}(m,u):=\big(m(\cdot,0),\ u(\cdot,T)\big),
\qquad
\mathcal{O}:=C(\mathbb{T}^d)\times C(\mathbb{T}^d).
\end{equation}
The endpoint-feasible set is the affine manifold
\begin{equation*}
\mathcal{M}:=\Big\{z\in\mathcal{X}:\ \mathcal{T}z=(m_0,u_T)\Big\},
\end{equation*}
with tangent space at any \(z\in\mathcal{M}\) given by
\begin{equation*}
T_z\mathcal{M}=\ker \mathcal{T}
=
\Big\{(\delta m,\delta u)\in\mathcal{X}:\ \delta m(\cdot,0)=0,\ \delta u(\cdot,T)=0\Big\}.
\end{equation*}
To enforce positivity of the density along the flow, we restrict attention to
\begin{equation*}
\mathcal{M}_{++}:=\{(m,u)\in\mathcal{M}:\ m(x,t)>0\ \text{for all }(x,t)\in\mathbb{T}^d\times[0,T]\}.
\end{equation*}
On \(\mathcal{M}_{++}\), we equip the density component with an entropy-type metric and keep the Euclidean metric in \(u\). For \(z=(m,u)\in\mathcal{M}_{++}\), we define the weighted inner product
\begin{equation*}
\langle (\delta m,\delta u),(\delta\tilde m,\delta\tilde u)\rangle_{\mathcal{H}(z)}
:=
\int_0^T\!\!\int_{\mathbb{T}^d}\frac{\delta m\,\delta\tilde m}{m}\,\mathrm{d}x\,\mathrm{d}t
\;+\;
\int_0^T\!\!\int_{\mathbb{T}^d}\delta u\,\delta\tilde u\,\mathrm{d}x\,\mathrm{d}t,
\end{equation*}
corresponding to the diagonal metric operator
\begin{equation*}
\mathcal{H}(z)=\mathrm{diag}(1/m,\ I),
\qquad
\mathcal{H}(z)^{-1}=\mathrm{diag}(m,\ I).
\end{equation*}
The weight \(1/m\) penalizes variations increasingly as \(m\to 0\), thereby encoding a barrier against the boundary of the nonnegative cone. A natural extension of the HRF idea of \cite{gomes2020hessian} is then to evolve \(z\) in artificial time along the direction \(-\mathcal{H}(z)^{-1}\mathcal{F}_{\mathrm{td}}(z)\), giving the flow
\begin{align}
\label{eq:HRF_st}
\begin{pmatrix}
\partial_s m \\ \partial_s u
\end{pmatrix}
=
-\mathcal{H}(z)^{-1}\mathcal{F}_{\mathrm{td}}(z)
=
-\begin{pmatrix}
m\,\mathcal{F}_{\mathrm{td}}^{m}(m,u)\\[2pt]
\mathcal{F}_{\mathrm{td}}^{u}(m,u)
\end{pmatrix},
\end{align}
where we write \(\mathcal{F}_{\mathrm{td}}(z)=\big(\mathcal{F}_{\mathrm{td}}^{m}(m,u),\ \mathcal{F}_{\mathrm{td}}^{u}(m,u)\big)\).  In particular, the density component satisfies
\[
\partial_s m(x,t)=-\,m(x,t)\,\mathcal{F}_{\mathrm{td}}^{m}(m,u)(x,t).
\]
This multiplicative structure is the formal mechanism behind positivity preservation, although we do not pursue a rigorous continuous-level positivity statement here. However, the flow \eqref{eq:HRF_st} does not, in general, preserve the endpoint conditions: even if \(z(0)\in\mathcal{M}\), one typically has $z(s)\notin \mathcal{M}$ for $s>0$. A remedy for preserving endpoint conditions along monotone flows, formulated independently of any Riemannian geometry, is proposed in \cite{gomes2021numerical}. The basic mechanism is to project the driving field onto the tangent space \(T_z\mathcal{M} = \ker \mathcal{T}\), so that feasibility is maintained along the flow. To make endpoint evaluation continuous and the projection well-posed in a Hilbert setting, one works in the space
\begin{equation*}
\mathcal{X}_{H^1}
:=
H^1\big(0,T;L^2(\mathbb{T}^d)\big)\times H^1\big(0,T;L^2(\mathbb{T}^d)\big),
\end{equation*}
equipped with the inner product
\begin{equation*}
\langle (\delta m,\delta u),(\delta\tilde m,\delta\tilde u)\rangle_{H^1}
:=
\int_0^T\!\!\int_{\mathbb{T}^d}\Big(\delta m\,\delta\tilde m+\partial_t\delta m\,\partial_t\delta\tilde m\Big)\,\mathrm{d}x\,\mathrm{d}t
+
\int_0^T\!\!\int_{\mathbb{T}^d}\Big(\delta u\,\delta\tilde u+\partial_t\delta u\,\partial_t\delta\tilde u\Big)\,\mathrm{d}x\,\mathrm{d}t.
\end{equation*}
The trace operator in \eqref{eq:C_def_td} extends continuously to
\begin{equation*}
\mathcal{T}_{H^1}:\mathcal{X}_{H^1}\to L^2(\mathbb{T}^d)\times L^2(\mathbb{T}^d),
\qquad
\mathcal{T}_{H^1}(m,u):=\big(m(\cdot,0),\ u(\cdot,T)\big).
\end{equation*}
By an abuse of notation, we continue to denote this extension by $\mathcal{T}$ in the $H^1$ setting below. Given an ambient direction \(w\in\mathcal{X}_{H^1}\), its \(H^1\)-orthogonal projection onto \(\ker \mathcal{T}\) is the unique minimizer of
\begin{equation}
\label{eq:orth_proj_td_rewrite}
\Pi_{\perp}w
\in
\arg\min_{v\in\mathcal{X}_{H^1}}
\left\{
\frac12\|v\|_{H^1}^2 - \langle v, w\rangle_{H^1}
\right\}
\quad \text{subject to}\quad \mathcal{T}v=0,
\end{equation}
where \(\langle\cdot,\cdot\rangle_{H^1}\) denotes the inner product defined above. The optimality conditions for \eqref{eq:orth_proj_td_rewrite} show that, even though the constraint only fixes endpoint values, the correction generated by the \(H^1\) projection is spread across the full interval \((0,T)\), and computing \(\Pi_{\perp}w\) requires solving two-point boundary value problems in \(t\) for each spatial degree of freedom.

If one further wishes to incorporate the Riemannian geometry in order to also encode the positivity of the density, the two requirements must be combined. One natural trace-compatible approach is to work in \(\mathcal{X}_{H^1}\) and endow it, at each \(z=(m,u)\in\mathcal{M}_{++}\), with the \(z\)-dependent inner product
\begin{equation}
\label{eq:H1_HR_inner_td}
\langle (\delta m,\delta u),(\delta\tilde m,\delta\tilde u)\rangle_{H^1,z}
:=
\int_0^T\!\!\int_{\mathbb{T}^d}\frac{\delta m\,\delta\tilde m+\partial_t\delta m\,\partial_t\delta \tilde m}{m}\,\mathrm{d}x\,\mathrm{d}t
+
\int_0^T\!\!\int_{\mathbb{T}^d}\big(\delta u\,\delta\tilde u+\partial_t\delta u\,\partial_t\delta\tilde u\big)\,\mathrm{d}x\,\mathrm{d}t,
\end{equation}
which is trace compatible in time and incorporates the positivity barrier in the density component. The constrained update direction \(v(z)\) is then defined by
\begin{equation}
\label{eq:combined_constrained_direction}
v(z)\in \arg\min_{v\in \mathcal{X}_{H^1}}
\left\{
\frac12\|v\|_{H^1,z}^2+\langle \mathcal{F}_{\mathrm{td}}(z),v\rangle_{L^2}
\right\}
\quad \text{subject to}\quad \mathcal{T}v=0,
\end{equation}
with the artificial-time update given by \(\partial_s z=v(z)\). While this formulation enforces endpoint feasibility and encodes the positivity barrier in the metric, its computational cost is even higher: the optimality conditions for \eqref{eq:combined_constrained_direction} require solving a subproblem that is second order in time with state-dependent coefficients through \eqref{eq:H1_HR_inner_td}, making each artificial-time step considerably more expensive.

The difficulty in both cases above stems from the continuous setting: enforcing endpoint conditions requires working in a trace-compatible function space, and the resulting projection is inherently global in time. In a fully discrete setting, this difficulty disappears entirely. Once the system is discretized in time, the endpoint conditions become coordinate constraints on the first and last time slices, which can be enforced simply by freezing those degrees of freedom. This avoids the need to solve auxiliary boundary-value problems across the entire time interval, and motivates the fully discrete approach developed in the next section.

\subsection{The HRF Method for Time-Dependent MFGs}
\label{subsec:discrete_HRF}
This subsection develops an HRF for a time-discretization of the MFG system \eqref{eq:intro_TimeDep_MFG} with $\nu\ge0$. We first discretize \eqref{eq:intro_TimeDep_MFG} and then construct a continuous-time flow for the resulting finite-dimensional problem.  After discretization, we enforce the endpoint conditions by fixing the boundary slices and evolving only the interior variables, while the density variables evolve on a feasible manifold determined by the slice-wise mass constraints and positivity requirement. On this manifold, we introduce a Riemannian metric induced by a strictly convex entropy. The resulting flow preserves positivity and mass of the density by construction. We discretize \eqref{eq:intro_TimeDep_MFG} using the finite-difference framework of \cite{achdou2010mean}. 

For clarity, we present the construction on the two-dimensional torus $\mathbb{T}^2$ using a uniform periodic grid
\[
\mathbb{T}^2_h:=\{x_{i,j}=(ih,jh): i,j=0,\dots,N_h-1\},\qquad
h:=\frac1{N_h},\qquad
N_x:=|\mathbb{T}^2_h|=N_h^2.
\]
We assume \(N_h\ge 2\) and use periodic indexing. A grid function $y\in\mathbb{R}^{N_x}$ is identified with its values $y_{i,j}:=y(x_{i,j})$. We use one-sided differences
\begin{align}
\label{eq:one-side-diff}
(D_1^+y)_{i,j}:=\frac{y_{i+1,j}-y_{i,j}}{h},
(D_1^-y)_{i,j}:=\frac{y_{i,j}-y_{i-1,j}}{h},
(D_2^+y)_{i,j}:=\frac{y_{i,j+1}-y_{i,j}}{h},
(D_2^-y)_{i,j}:=\frac{y_{i,j}-y_{i,j-1}}{h},
\end{align}
and the standard five-point Laplacian
\begin{align}
\label{eq:five-point-laplacian}
(\Delta_h y)_{i,j}:=-\frac{4y_{i,j}-y_{i+1,j}-y_{i-1,j}-y_{i,j+1}-y_{i,j-1}}{h^2}.
\end{align}
At each grid node, we collect the four one-sided slopes into the upwind stencil
\begin{align}
\label{eq:stcil}
[D_h y]_{i,j}:=\Bigl((D_1^+y)_{i,j},\ (D_1^-y)_{i,j},\ (D_2^+y)_{i,j},\ (D_2^-y)_{i,j}\Bigr)^{\!\top}\in\mathbb{R}^4.
\end{align}

Following \cite{achdou2010mean}, we approximate the Hamiltonian $H$ in \eqref{eq:intro_TimeDep_MFG} by a numerical Hamiltonian
\[
g:\mathbb{T}^2\times\mathbb{R}^4\to\mathbb{R}.
\]
To keep the notation simple, for each grid node $(i,j)$ we write $g_{i,j}(q):=g(x_{i,j},q)$ for $q\in\mathbb{R}^4$.
Throughout, we assume that $H$ is strictly convex in its momentum argument. For the numerical Hamiltonian $g$, we impose the standard consistency and upwind-monotonicity conditions from \cite{achdou2010mean}, together with a convexity hypothesis that is used in the proof of \Cref{prop:Fh_monotone_MU}.

\begin{assumption}[Numerical Hamiltonian]
\label{ass:disc_g}
Let $H:\mathbb{T}^2\times\mathbb{R}^2\to\mathbb{R}$ be a continuous Hamiltonian, and assume that for each fixed $x\in\mathbb{T}^2$, the map $p\mapsto H(x,p)$ is strictly convex. Let $g:\mathbb{T}^2\times\mathbb{R}^4\to\mathbb{R}$ be a numerical Hamiltonian, and write $q=(q_1,q_2,q_3,q_4)\in\mathbb{R}^4$. The following properties hold.
\begin{itemize}
\item Consistency. For all $x\in\mathbb{T}^2$ and all $\xi_1, \xi_2\in\mathbb{R}$, $g(x,\xi_1,\xi_1,\xi_2,\xi_2)=H\bigl(x,(\xi_1,\xi_2)\bigr)$.
\item Monotonicity. For each fixed $x\in\mathbb{T}^2$, the map $q\mapsto g(x,q)$ is nonincreasing in $q_1$ and $q_3$ and nondecreasing in $q_2$ and $q_4$.
\item Convexity and rigidity. For each fixed $x\in\mathbb{T}^2$, the map $q\mapsto g(x,q)$ is continuously differentiable and convex on $\mathbb{R}^4$. Define the Bregman divergence
\[
D_g(x;q,q'):=g(x,q)-g(x,q')-\nabla_q g(x,q')\cdot(q-q')\ \ (\ge 0).
\]
For all grid functions $W,\widetilde W$, if $D_g\bigl(x_{ij};[D_hW]_{ij},[D_h\widetilde W]_{ij}\bigr)=0$ at every node $(i,j)$, then $[D_hW]=[D_h\widetilde W]$.
\end{itemize}
\end{assumption}
\begin{remark}[The rigidity condition covers two common cases]
\label{rem:rigidity_examples}
The rigidity condition in Assumption~\ref{ass:disc_g} is satisfied in the two
cases used in this paper.

\begin{itemize}
\item If \(q\mapsto g(x,q)\) is strictly convex for each fixed \(x\), then
\(D_g(x;q,q')=0\) implies \(q=q'\). Hence, the rigidity condition follows
immediately.

\item The condition also covers the Godunov-type fluxes considered here. These
fluxes may fail to be strictly convex in the full variable
\(q=(q_1,q_2,q_3,q_4)\). Instead, they depend on \(q\) through the upwind parts
\[
\rho(q):=(q_1^-,q_2^+,q_3^-,q_4^+),
\qquad
a^+:=\max\{a,0\},\quad a^-:=\max\{-a,0\},
\]
and are strictly convex in these upwind variables. More precisely, in the
Godunov case considered below, \(g(x,\cdot)\) is convex and \(C^1\), has the
form
\[
g(x,q)=\widehat g(x,\rho(q)),
\]
and \(r\mapsto \widehat g(x,r)\) is strictly convex on \(\mathbb R_+^4\).
By Lemma~\ref{lem:godunov_rigidity}, such fluxes satisfy the rigidity condition
in Assumption~\ref{ass:disc_g}.
\end{itemize}
\end{remark}


We discretize the transport operator $-\div(D_pH(\cdot,\nabla u)\,m)$ in divergence form using the fluxes induced by $g$.  Given $U\in\mathbb{R}^{N_x}$, define
\[
\alpha^{(\ell)}(U)_{i,j}
:=
\frac{\partial g_{i,j}}{\partial q_\ell}\Bigl([D_hU]_{i,j}\Bigr),
\qquad \ell=1,2,3,4.
\]
For $U,M\in\mathbb{R}^{N_x}$, define the discrete transport operator $\mathcal{B}_h(U,M)\in\mathbb{R}^{N_x}$ componentwise by
\begin{equation}
\label{eq:Bh_def}
(\mathcal{B}_h(U,M))_{i,j}
:=
-\Bigl(D_1^{-}(M\,\alpha^{(1)}(U))\Bigr)_{i,j}
-\Bigl(D_1^{+}(M\,\alpha^{(2)}(U))\Bigr)_{i,j}
-\Bigl(D_2^{-}(M\,\alpha^{(3)}(U))\Bigr)_{i,j}
-\Bigl(D_2^{+}(M\,\alpha^{(4)}(U))\Bigr)_{i,j},
\end{equation}
where all products are understood pointwise on the grid. For the reader's convenience, we recall in \Cref{lem:Bh_adjoint} the classical discrete result from \cite{achdou2010mean} that, for fixed \(U\), the discrete transport operator \(\mathcal B_h(U,\cdot)\) is the adjoint of the linearization of the discrete Hamiltonian block at \(U\).

We discretize $[0,T]$ by a uniform grid $0=t_0<t_1<\cdots<t_{N_T}=T$ with $\Delta t:=T/N_T$. For each $k=0,\dots,N_T$, we represent the functions on the grid at the time slices by vectors
\[
M_k\in\mathbb{R}^{N_x},\qquad U_k\in\mathbb{R}^{N_x},
\qquad
(M_k)_{i,j}\approx m(x_{i,j},t_k),\qquad (U_k)_{i,j}\approx u(x_{i,j},t_k).
\]
The mixed endpoint conditions are imposed by sampling the prescribed initial density $m_0(\cdot)$ and terminal value $u_T(\cdot)$ on the grid:
\[
(M_0)_{i,j}:=m_0(x_{i,j}),\qquad (U_{N_T})_{i,j}:=u_T(x_{i,j}).
\]
We use the discrete pairing $\langle a,b\rangle_h:=h^2\sum_{i,j}a_{i,j}b_{i,j}.$ Since sampling does not, in general, preserve mass exactly, we enforce $\langle M_0,\mathbf{1}\rangle_h=1$ by the renormalization
\[
M_0 \leftarrow \frac{M_0}{\langle M_0,\mathbf{1}\rangle_h},
\]
where $\mathbf{1}\in\mathbb{R}^{N_x}$ is the all-ones vector.  We then impose slice-wise mass conservation in the discrete form
\begin{equation}
\langle M_k,\mathbf{1}\rangle_h=1,\qquad k=0,1,\dots,N_T.
\end{equation}
We discretize the spatial cost by sampling $V(\cdot)$ on the grid and writing $V_h\in\mathbb{R}^{N_x}$ with $(V_h)_{i,j}:=V(x_{i,j})$. The coupling $f$ is applied componentwise to grid densities, and we adopt the following discrete analogue of the Lasry--Lions monotonicity condition.
\begin{assumption}[Coupling monotonicity]
\label{ass:disc_f}
The function $f:(0,\infty)\to\mathbb{R}$ is locally Lipschitz and strictly monotone increasing. The latter implies that for any $M,\widetilde M\in\mathbb{R}^{N_x}_{++}$ with $M\neq \widetilde M$,
\[
\bigl\langle f(M)-f(\widetilde M),\,M-\widetilde M\bigr\rangle_h>0,
\]
where $f(\cdot)$ is applied componentwise.
\end{assumption}

Since the endpoint slices $(M_0,U_{N_T})$ are fixed, the discrete problem is naturally parametrized by the interior unknowns
\[
Y:=\bigl(M_1,\dots,M_{N_T},\,U_0,\dots,U_{N_T-1}\bigr)\in
\mathcal{X}_h:=\Bigl(\mathbb{R}^{N_x}\Bigr)^{N_T}\times \Bigl(\mathbb{R}^{N_x}\Bigr)^{N_T}.
\]
Define the relative interior of the discrete probability simplex
\[
\mathcal{M}_{++,h}:=\Bigl\{M\in\mathbb{R}^{N_x}:\ M>0\ \text{componentwise},\ \langle M,\mathbf{1}\rangle_h=1\Bigr\}.
\]
The feasible set is the interior of the feasible manifold
\begin{equation}
\label{eq:Mh_def}
\mathcal{M}_h
:=
\Bigl\{
Y\in\mathcal{X}_h:\ M_k\in\mathcal{M}_{++,h}\ \text{for}\ k=1,\dots,N_T
\Bigr\},
\end{equation}
where the fixed data $(M_0,U_{N_T})$ are understood as above and used whenever they appear in the residuals.

For $k=1,\dots,N_T$, we define $r_k(Y)\in\mathbb{R}^{N_x}$ and $s_k(Y)\in\mathbb{R}^{N_x}$ componentwise by
\begin{align}
\label{eq:rk_def_rewrite}
(r_k(Y))_{i,j}
&:=
\frac{(U_k-U_{k-1})_{i,j}}{\Delta t}
+\nu\,(\Delta_h U_{k-1})_{i,j}
-g_{i,j}\bigl([D_hU_{k-1}]_{i,j}\bigr)
+f\bigl((M_k)_{i,j}\bigr)
+(V_h)_{i,j},\\
\label{eq:sk_def_rewrite}
(s_k(Y))_{i,j}
&:=
\frac{(M_k-M_{k-1})_{i,j}}{\Delta t}
-\nu\,(\Delta_h M_k)_{i,j}
+(\mathcal{B}_h(U_{k-1},M_k))_{i,j},
\end{align}
where $M_{k-1}$ is interpreted as $M_0$ when $k=1$, and $U_k$ is interpreted as $U_{N_T}$ when $k=N_T$.
We collect the residual blocks into
\begin{equation}
\label{eq:Fh_full_def}
F_h^{\mathrm{td}}(Y):=\bigl(r_1(Y),\dots,r_{N_T}(Y),\,s_1(Y),\dots,s_{N_T}(Y)\bigr).
\end{equation}
Solving the fully discretized MFG corresponds to finding $Y\in\mathcal{M}_h$ such that $F_h^{\mathrm{td}}(Y)=0$. 

Next, we establish the monotonicity of the operator  \(F_h^{\mathrm{td}}\) in \eqref{eq:Fh_full_def}. We first introduce the space-time pairing
\[
\langle \Xi,\Psi\rangle_{h,\Delta t}
:=
\Delta t\sum_{k=1}^{N_T}\langle \Xi_{M_k},\Psi_{M_k}\rangle_h
+
\Delta t\sum_{k=0}^{N_T-1}\langle \Xi_{U_k},\Psi_{U_k}\rangle_h,
\]
for $\Xi,\Psi\in\mathcal{X}_h$. Here $\Xi_{M_k},\Psi_{M_k}\in\mathbb{R}^{N_x}$ and $\Xi_{U_k},\Psi_{U_k}\in\mathbb{R}^{N_x}$ denote the density and value-function components of $\Xi,\Psi\in\mathcal{X}_h$, respectively.  Then, we have the following proposition.
\begin{proposition}[Strict monotonicity of $F_h^{\mathrm{td}}$]
\label{prop:Fh_monotone_MU}
Assume that the numerical Hamiltonian $g$ satisfies \Cref{ass:disc_g} and that the coupling $f$ satisfies \Cref{ass:disc_f}. Let $Y$ and $\widetilde Y$ be two feasible states with the same initial and terminal data $(M_0,U_{N_T})$. Assume also that their value-function slices have 
the same averages, namely 
$\langle U_{k-1}-\widetilde U_{k-1},\mathbf 1\rangle_h=0$ for 
$k=1,\dots,N_T$. Then $F_h^{\mathrm{td}}$ in \eqref{eq:Fh_full_def} is strictly monotone with respect to the space-time pairing $\langle\cdot,\cdot\rangle_{h,\Delta t}$, namely
\begin{equation}
\label{eq:Fh_strictly_monotone_MU}
\Big\langle F_h^{\mathrm{td}}(Y)-F_h^{\mathrm{td}}(\widetilde Y),\,Y-\widetilde Y\Big\rangle_{h,\Delta t}>0
\qquad\text{for all } Y\neq \widetilde Y.
\end{equation}
\end{proposition}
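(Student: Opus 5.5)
The plan is the discrete Lasry--Lions computation: expand the pairing slice by slice and show that all cross terms either telescope, cancel by symmetry, or combine into nonnegative Bregman divergences weighted by the densities. Write $\delta M_k:=M_k-\widetilde M_k$ and $\delta U_k:=U_k-\widetilde U_k$. Since the endpoint data coincide, $\delta M_0=0$ and $\delta U_{N_T}=0$. In $\langle\cdot,\cdot\rangle_{h,\Delta t}$ the block $r_k$ sits in the $M_k$ slot and $s_k$ sits in the $U_{k-1}$ slot. Hence
\[
\Big\langle F_h^{\mathrm{td}}(Y)-F_h^{\mathrm{td}}(\widetilde Y),Y-\widetilde Y\Big\rangle_{h,\Delta t}
=\Delta t\sum_{k=1}^{N_T}\Big(\langle r_k(Y)-r_k(\widetilde Y),\delta M_k\rangle_h+\langle s_k(Y)-s_k(\widetilde Y),\delta U_{k-1}\rangle_h\Big).
\]
I would split this sum into four groups: time differences, Laplacians, Hamiltonian/transport, and coupling.

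\emph{Time terms.} After multiplying by $\Delta t$, they give
\[
\sum_k\Big(\langle \delta U_k-\delta U_{k-1},\delta M_k\rangle_h+\langle\delta M_k-\delta M_{k-1},\delta U_{k-1}\rangle_h\Big)=\sum_k\Big(\langle\delta U_k,\delta M_k\rangle_h-\langle\delta U_{k-1},\delta M_{k-1}\rangle_h\Big).
\]
This telescopes to $\langle\delta U_{N_T},\delta M_{N_T}\rangle_h-\langle\delta U_0,\delta M_0\rangle_h=0$.

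\emph{Laplacian terms.} They cancel because $\Delta_h$ is symmetric with respect to $\langle\cdot,\cdot\rangle_h$ on the periodic grid: $\nu\langle\Delta_h\delta U_{k-1},\delta M_k\rangle_h-\nu\langle\Delta_h\delta M_k,\delta U_{k-1}\rangle_h=0$.

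\emph{Coupling terms.} They give $\Delta t\sum_k\langle f(M_k)-f(\widetilde M_k),\delta M_k\rangle_h\ge0$ by \Cref{ass:disc_f}.

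\emph{Hamiltonian and transport terms.} I would use \Cref{lem:Bh_adjoint} in the form $\langle\mathcal B_h(U,M),W\rangle_h=\langle M,\nabla_q g([D_hU])\cdot[D_hW]\rangle_h$, i.e.\ $h^2\sum_{i,j}M_{i,j}\nabla_q g_{i,j}([D_hU]_{i,j})\cdot[D_hW]_{i,j}$. With $U=U_{k-1}$, $\widetilde U=\widetilde U_{k-1}$, the Hamiltonian/transport contribution of slice $k$ is
\[
-\langle g(D_hU)-g(D_h\widetilde U),M_k-\widetilde M_k\rangle_h+\langle M_k,\nabla g(D_hU)\cdot D_h(U-\widetilde U)\rangle_h-\langle \widetilde M_k,\nabla g(D_h\widetilde U)\cdot D_h(U-\widetilde U)\rangle_h .
\]
Regrouping by $M_k$ and $\widetilde M_k$ turns this into
\[
h^2\sum_{i,j}\Big[(M_k)_{i,j}\,D_g\big(x_{ij};[D_h\widetilde U]_{ij},[D_hU]_{ij}\big)+(\widetilde M_k)_{i,j}\,D_g\big(x_{ij};[D_hU]_{ij},[D_h\widetilde U]_{ij}\big)\Big]\ \ge 0,
\]
which holds by convexity of $g$ and positivity of the densities. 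Hence the whole pairing is $\ge0$.

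\emph{Strictness.} Suppose the pairing vanishes. Every group is nonnegative, so each vanishes. First, the strict coupling monotonicity forces $\delta M_k=0$ for all $k$. Second, since $M_k>0$ componentwise, every Bregman divergence $D_g(x_{ij};[D_h\widetilde U_{k-1}]_{ij},[D_hU_{k-1}]_{ij})$ vanishes. The rigidity part of \Cref{ass:disc_g} then gives $[D_hU_{k-1}]=[D_h\widetilde U_{k-1}]$. In particular $D_1^+\delta U_{k-1}=D_2^+\delta U_{k-1}=0$, so $\delta U_{k-1}$ is constant on the connected periodic grid. The equal-average hypothesis makes that constant zero, hence $Y=\widetilde Y$.

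The main obstacle I expect is the bookkeeping in the Hamiltonian/transport step. One must check that the sign convention in \eqref{eq:Bh_def} and in the adjoint lemma makes the cross terms assemble exactly into $D_g$, with the correct argument order, rather than into its negative. One must also note that in the Godunov case rigidity is applied through \Cref{lem:godunov_rigidity}, which is already assumed.
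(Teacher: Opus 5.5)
Your proposal is correct and follows essentially the same route as the paper's proof: the time differences telescope to a vanishing boundary term, the Laplacian terms cancel by symmetry, the coupling term is nonnegative, and the Hamiltonian/transport terms combine into two density-weighted Bregman divergences, with strictness coming from strict monotonicity of $f$, rigidity, and the equal-average condition. The only differences are cosmetic: you get the Bregman decomposition by direct regrouping rather than through convexity of $U\mapsto\langle M,g(\cdot,[D_hU])\rangle_h$, and you apply rigidity to the $M_k$-weighted term instead of the $\widetilde M_k$-weighted one, which is equally valid since both densities are strictly positive.
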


Next, we build the HRF dynamics on $\mathcal{M}_h$ by endowing it with a Riemannian metric generated by a strictly convex entropy functional.  For $Y=(M_1,\dots,M_{N_T},U_0,\dots,U_{N_T-1})\in\mathcal{M}_h$, define
\begin{equation}
\label{eq:disc_entropy_h}
\mathcal{E}(Y)
:=
\Delta t\sum_{k=1}^{N_T}\big\langle M_k,\log M_k-\mathbf{1}\big\rangle_h
+
\frac{\Delta t}{2}\sum_{k=0}^{N_T-1}\langle U_k,U_k\rangle_h, 
\end{equation}
where $\log M_k$ is understood componentwise and $\mathbf{1}\in\mathbb{R}^{N_x}$ denotes the vector of ones.

We write $\nabla \mathcal{E}(Y)$ and $\nabla^2\mathcal{E}(Y)$ for the gradient and Hessian of $\mathcal{E}$ with respect to the pairing $\langle\cdot,\cdot\rangle_{h,\Delta t}$, namely
\[
D\mathcal{E}(Y)[\Xi]
=
\langle \nabla\mathcal{E}(Y),\Xi\rangle_{h,\Delta t}, \text{ and }
D^2\mathcal{E}(Y)[\Xi,\Psi]
=
\langle \nabla^2\mathcal{E}(Y)\,\Xi,\Psi\rangle_{h,\Delta t}.
\]
A direct computation shows that
\[
\nabla^2\mathcal{E}(Y)
=
\operatorname{diag}\bigl(1/M_1,\dots,1/M_{N_T},\,I,\dots,I\bigr),
\]
where $1/M_k$ is understood componentwise.

We then define the Riemannian metric on $\mathcal{M}_h$ by
\[
g_{Y,\mathcal{E}}(\Xi,\Psi)
:=
\langle \nabla^2\mathcal{E}(Y)\,\Xi,\Psi\rangle_{h,\Delta t}.
\]
The admissible velocity space is
\[
\mathcal{T}_Y\mathcal{M}_h:=\Bigl\{\Xi\in\mathcal{X}_h:\ \langle \Xi_{M_k},\mathbf{1}\rangle_h=0\ \text{for }k=1,\dots,N_T\Bigr\}.
\]

Then, the HRF is defined as 
\begin{equation}
\label{eq:disc_HRF_var}
\dot Y(s)\in\arg\min_{\Xi\in\mathcal{T}_{Y(s)}\mathcal{M}_h}
\left\{
\frac12\,g_{Y(s),\mathcal{E}}(\Xi,\Xi)
+
\langle F_h^{\mathrm{td}}(Y(s)),\,\Xi\rangle_{h,\Delta t}
\right\},
\qquad Y(0)\in\mathcal{M}_h.
\end{equation}
The mass constraint enters \eqref{eq:disc_HRF_var} through $\mathcal{T}_Y\mathcal{M}_h$.  Because the metric is diagonal in the density blocks and the constraints are linear, the minimizer in \eqref{eq:disc_HRF_var} admits a closed form, which is given by the following proposition.

\begin{proposition}[Explicit discrete HRF]
\label{prop:explicit_disc_HRF}
For a feasible $Y$ with $M_k\in\mathcal{M}_{++,h}$, for all $k=1,\dots,N_T$, define
\begin{equation}
\label{eq:rbar_def}
\bar r_k(Y):=\frac{\langle M_k,r_k(Y)\rangle_h}{\langle M_k,\mathbf{1}\rangle_h}.
\end{equation} 
The minimizer in \eqref{eq:disc_HRF_var} is unique, and the flow $Y(s)$ satisfies
\begin{align}
\label{eq:disc_HRF_blocks}
\begin{split}
\dot M_k &= -\,M_k\odot\bigl(r_k(Y(s))-\bar r_k(Y(s))\,\mathbf{1}\bigr),
\qquad k=1,\dots,N_T,\\
\dot U_{k-1} &= -\,s_k(Y(s)),
\qquad k=1,\dots,N_T,
\end{split}
\end{align}
where $\odot$ denotes componentwise multiplication.
\end{proposition}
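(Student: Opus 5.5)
The minimization problem in \eqref{eq:disc_HRF_var} is a strictly convex quadratic program with linear equality constraints, and it decouples across blocks. The plan is to exploit this decoupling block by block and read off the minimizer from the first-order conditions.

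First, expand the objective. Since $\nabla^2\mathcal{E}(Y)=\operatorname{diag}(1/M_1,\dots,1/M_{N_T},I,\dots,I)$ and the pairing $\langle\cdot,\cdot\rangle_{h,\Delta t}$ is a weighted sum over slices, the objective equals
\[
\Delta t\sum_{k=1}^{N_T}\Bigl(\tfrac12\langle \Xi_{M_k}/M_k,\Xi_{M_k}\rangle_h+\langle r_k(Y),\Xi_{M_k}\rangle_h\Bigr)
+\Delta t\sum_{k=1}^{N_T}\Bigl(\tfrac12\langle \Xi_{U_{k-1}},\Xi_{U_{k-1}}\rangle_h+\langle s_k(Y),\Xi_{U_{k-1}}\rangle_h\Bigr).
\]
This pairs the residual $r_k$ with the density slice $M_k$ and $s_k$ with $U_{k-1}$. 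That pairing is the indexing implicit in \eqref{eq:Fh_full_def} and \eqref{eq:disc_HRF_blocks}, and I will state it explicitly as the identification used in the pairing. The constraints in $\mathcal{T}_Y\mathcal{M}_h$ act only on the density blocks, one per slice, so the problem splits into $2N_T$ independent subproblems.

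Second, solve the $U$-blocks. Each one is unconstrained with objective $\tfrac12\|\Xi\|_h^2+\langle s_k,\Xi\rangle_h$. It is strictly convex, and its unique minimizer is $\Xi_{U_{k-1}}=-s_k(Y)$.

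Third, solve each $M_k$-block. We minimize $\tfrac12\langle \Xi/M_k,\Xi\rangle_h+\langle r_k,\Xi\rangle_h$ subject to $\langle\Xi,\mathbf 1\rangle_h=0$. Since $M_k>0$ componentwise, the quadratic form is positive definite, so the objective is strictly convex on the affine constraint set. A minimizer therefore exists and is unique, and the KKT conditions are necessary and sufficient. Introducing a multiplier $\mu_k$, stationarity gives $\Xi/M_k+r_k-\mu_k\mathbf 1=0$, that is, $\Xi=-M_k\odot(r_k-\mu_k\mathbf 1)$. Imposing $\langle \Xi,\mathbf 1\rangle_h=0$ gives $\langle M_k,r_k\rangle_h=\mu_k\langle M_k,\mathbf 1\rangle_h$, so $\mu_k=\bar r_k(Y)$ as in \eqref{eq:rbar_def}. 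This step uses $\langle M_k,\mathbf 1\rangle_h=1>0$, which holds because $M_k\in\mathcal{M}_{++,h}$. Alternatively, one can verify optimality directly: for any admissible $\Xi'$, the difference of objectives equals $\tfrac12\langle(\Xi'-\Xi)/M_k,\Xi'-\Xi\rangle_h$. The cross term vanishes because $\Xi/M_k+r_k$ is a multiple of $\mathbf 1$ and $\Xi'-\Xi\perp\mathbf 1$. This argument also yields uniqueness.

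Collecting the blocks gives \eqref{eq:disc_HRF_blocks}, and uniqueness of the full minimizer follows from uniqueness in each block. There is no real obstacle here. The only points needing care are the bookkeeping of which residual pairs with which variable, and noting that positivity of $M_k$ is what makes the metric well defined and the quadratic form coercive.
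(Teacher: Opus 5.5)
Your proposal is correct and follows essentially the same route as the paper: the diagonal metric makes the objective strictly convex, the unconstrained $U$-blocks give $\Xi_{U_{k-1}}=-s_k(Y)$, and each density slice gets one Lagrange multiplier fixed by the zero-mass constraint (the paper's $\lambda_k=-\bar r_k(Y)$ is your $\mu_k$ with the opposite sign). Your extra completing-the-square check, which shows the objective gap equals $\tfrac12\langle(\Xi'-\Xi)/M_k,\Xi'-\Xi\rangle_h$, is a self-contained confirmation of optimality and uniqueness; the paper gets the same conclusion from strict convexity on the linear subspace.
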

The proof is given in \Cref{app:appendix}.  Before studying the global existence and convergence of
\eqref{eq:disc_HRF_blocks}, we introduce the projected residual naturally associated with
the HRF. For \(k=1,\ldots,N_T\), set
\[
\widetilde r_k(Y):=r_k(Y)-\bar r_k(Y)\mathbf 1,
\]
where \(\bar r_k(Y)\) is defined in \eqref{eq:rbar_def}. We then define
\[
\widetilde F_h^{\mathrm{td}}(Y)
:=
\bigl(\widetilde r_1(Y),\ldots,\widetilde r_{N_T}(Y),
s_1(Y),\ldots,s_{N_T}(Y)\bigr).
\]
With this notation, a stationary point of the HRF in
\eqref{eq:disc_HRF_blocks} is characterized by
\(\widetilde F_h^{\mathrm{td}}(Y)=0\). The strict monotonicity above implies the
following uniqueness statement. If $Y,\widetilde Y\in\mathcal M_h$ both satisfy
$\widetilde F_h^{\mathrm{td}}=0$ and
$\langle U_{k-1}-\widetilde U_{k-1},\mathbf 1\rangle_h=0$ for
$k=1,\dots,N_T$, then $Y=\widetilde Y$. Indeed, in this case, the FP residuals
vanish, and each HJB residual is a multiple of $\mathbf 1$. Since every density
difference $M_k-\widetilde M_k$ has zero mass, these constants disappear in the
pairing with $Y-\widetilde Y$. Hence
\[
0=
\bigl\langle
\widetilde F_h^{\mathrm{td}}(Y)-\widetilde F_h^{\mathrm{td}}(\widetilde Y),
Y-\widetilde Y
\bigr\rangle_{h,\Delta t}
=
\bigl\langle
F_h^{\mathrm{td}}(Y)-F_h^{\mathrm{td}}(\widetilde Y),
Y-\widetilde Y
\bigr\rangle_{h,\Delta t}.
\]
By Proposition~\ref{prop:Fh_monotone_MU}, this forces
$Y=\widetilde Y$.

Next, we establish global existence for \eqref{eq:disc_HRF_blocks} together with positivity, mass preservation, and invariance of the slice-wise means. Our argument starts from the standard well-posedness theory for ODEs, which requires the following assumption.



\begin{assumption}[Additional regularity of the numerical Hamiltonian]
\label{ass:disc_terms_regularity}
Assume that $g:\mathbb{T}^2\times\mathbb{R}^4\to\mathbb{R}$ is continuous in $x$, differentiable in $q$, and that $\nabla_q g(x,\cdot)$ is locally Lipschitz in $q$, uniformly in $x$.
\end{assumption}
This assumption is only used to ensure that the right-hand side of the HRF is
locally Lipschitz as a finite-dimensional ODE. 

The following proposition states the basic properties of the HRF. Starting from
a feasible initial datum, the flow exists for all artificial times, the density
stays positive, the mass of each density slice remains equal to one, and the
average of each value-function slice does not change.

\begin{proposition}[Global existence, positivity, mass preservation, and mean invariance]
\label{prop:global_exist_pos_mass}
Suppose that Assumptions~\ref{ass:disc_g}, \ref{ass:disc_f}, and
\ref{ass:disc_terms_regularity} hold. Let $Y(\cdot)$ be the solution of
\eqref{eq:disc_HRF_blocks} with feasible initial datum $Y(0)\in\mathcal M_h$.
Assume that there exists $Y^\dagger\in\mathcal M_h$ such that
$\widetilde F_h^{\mathrm{td}}(Y^\dagger)=0$ and
$\langle U^\dagger_{k-1},\mathbf 1\rangle_h
=
\langle U_{k-1}(0),\mathbf 1\rangle_h$ for $k=1,\dots,N_T$.
Then $Y(\cdot)$ exists for all $s\ge 0$. Moreover, for every $s\ge 0$ and
$k=1,\dots,N_T$, we have $M_k(s)\in\mathcal M_{++,h}$,
$\langle M_k(s),\mathbf 1\rangle_h=1$, and
$\langle U_{k-1}(s),\mathbf 1\rangle_h
=
\langle U_{k-1}(0),\mathbf 1\rangle_h$.
\end{proposition}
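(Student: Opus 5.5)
The plan is to combine local ODE theory with a Lyapunov estimate that exploits the monotonicity of Proposition~\ref{prop:Fh_monotone_MU}.

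\textbf{Local existence.} On the open set $\{Y\in\mathcal X_h: M_k>0 \text{ for all } k\}$, Assumption~\ref{ass:disc_terms_regularity} and the local Lipschitz continuity of $f$ make the right-hand side of \eqref{eq:disc_HRF_blocks} locally Lipschitz. Picard--Lindel\"of then gives a unique maximal solution on $[0,s^\ast)$.

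\textbf{Invariants on $[0,s^\ast)$.} These are algebraic identities.
\begin{itemize}
\item \emph{Mass.} By the definition of $\bar r_k$, $\frac{d}{ds}\langle M_k,\mathbf 1\rangle_h=-\langle M_k,r_k\rangle_h+\bar r_k\langle M_k,\mathbf 1\rangle_h=0$.
\item \emph{Positivity.} Componentwise, $\frac{d}{ds}\log (M_k)_{i,j}=-(\widetilde r_k)_{i,j}$. Hence $(M_k)_{i,j}(s)=(M_k)_{i,j}(0)\exp(-\int_0^s(\widetilde r_k)_{i,j})>0$ as long as the solution exists.
\item \emph{Means.} We need $\langle s_k(Y),\mathbf 1\rangle_h=0$. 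The time-difference term gives $\langle M_k-M_{k-1},\mathbf 1\rangle_h/\Delta t=0$ by slice mass (using $\langle M_0,\mathbf 1\rangle_h=1$). The Laplacian and the divergence-form terms in $\mathcal B_h$ are discrete divergences on the periodic grid, so they sum to zero. Therefore $\langle U_{k-1},\mathbf 1\rangle_h$ is constant.
\end{itemize}

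\textbf{Global existence.} We must rule out blow-up of $|Y|$ and any $(M_k)_{i,j}\to 0$ as $s\to s^\ast<\infty$. Take the Bregman-type Lyapunov function
\[
L(s)=\Delta t\sum_k \sum_{i,j}h^2\Bigl(M^\dagger_k\log\tfrac{M^\dagger_k}{M_k}-M^\dagger_k+M_k\Bigr)_{i,j}+\tfrac{\Delta t}{2}\sum_k\|U_{k-1}-U^\dagger_{k-1}\|_h^2,
\]
which is the Bregman divergence of $\mathcal E$ between $Y^\dagger$ and $Y$.

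Differentiate $L$. The density part gives $\Delta t\sum_k\langle M_k-M^\dagger_k,-\widetilde r_k\rangle_h$, and the value part gives $-\Delta t\sum_k\langle U_{k-1}-U^\dagger_{k-1},s_k\rangle_h$. Since $\widetilde F_h^{\mathrm{td}}(Y^\dagger)=0$, we may subtract it. The constants $\bar r_k$ disappear because $M_k-M_k^\dagger$ has zero mass. This yields
\[
\dot L=-\bigl\langle F_h^{\mathrm{td}}(Y)-F_h^{\mathrm{td}}(Y^\dagger),Y-Y^\dagger\bigr\rangle_{h,\Delta t}\le 0 .
\]
The inequality follows from Proposition~\ref{prop:Fh_monotone_MU}, which applies because of mean invariance and the hypothesis on $U^\dagger$. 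Both $Y$ and $Y^\dagger$ share $(M_0,U_{N_T})$ and are feasible.

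\textbf{Bounds from $L$.} $L(s)\le L(0)$ bounds $U$ directly. Each density summand is nonnegative, so each one is bounded by $L(0)$. Since $M^\dagger_k>0$ componentwise, the term $-M^\dagger\log M_k$ forces $M_k$ to stay uniformly away from $0$. Mass conservation bounds $M_k$ above. The trajectory therefore stays in a compact subset of the open domain, so $s^\ast=\infty$.

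\textbf{Main obstacle.} The delicate step is the computation of $\dot L$. One must check carefully that the value-function directions are paired with the FP residual $s_k$ exactly as in the space-time pairing used in Proposition~\ref{prop:Fh_monotone_MU}. The flow is the negative of $\nabla^2\mathcal E^{-1}$ applied to the projected $F_h^{\mathrm{td}}$, so this should match by construction. It is also essential to verify that the $\bar r_k\mathbf 1$ corrections contribute nothing to $\dot L$.
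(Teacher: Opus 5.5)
Your proposal is correct and follows essentially the same route as the paper. Both arguments use Picard--Lindel\"of on the open positive set, derive mass, positivity and mean invariance as algebraic identities, take the Bregman divergence of $\mathcal E$ as a Lyapunov function that is nonincreasing by the monotonicity of Proposition~\ref{prop:Fh_monotone_MU}, and conclude with compactness and the continuation criterion. The only cosmetic difference is that you compute $\dot L$ directly from the explicit block equations, whereas the paper obtains the same identity from the first-order optimality condition of the variational problem \eqref{eq:disc_HRF_var} tested with $\delta=Y^\dagger-Y(s)$.
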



For later use, we define the Bregman divergence associated with
$\mathcal E$ in \eqref{eq:disc_entropy_h} by
\[
D_{\mathcal E}(Y^\dagger,Y)
:=
\mathcal E(Y^\dagger)-\mathcal E(Y)
-\bigl\langle \nabla\mathcal E(Y),Y^\dagger-Y\bigr\rangle_{h,\Delta t}.
\]
Here $\nabla\mathcal E$ is the gradient with respect to the pairing
$\langle\cdot,\cdot\rangle_{h,\Delta t}$. Since $\mathcal E$ is strictly convex
on $\mathcal M_h$, we have $D_{\mathcal E}(Y^\dagger,Y)\ge 0$, and equality
holds only when $Y=Y^\dagger$.

The next theorem proves convergence, assuming that there exists a point
$Y^\dagger$ with $\widetilde F_h^{\mathrm{td}}(Y^\dagger)=0$ and with the same
value-function averages as the initial datum.

\begin{theorem}[Convergence of the HRF]
\label{thm:global_conv_discrete}
Suppose that Assumptions~\ref{ass:disc_g}, \ref{ass:disc_f}, and
\ref{ass:disc_terms_regularity} hold. Let $Y(\cdot)$ be the solution of
\eqref{eq:disc_HRF_blocks} with feasible initial datum $Y(0)\in\mathcal M_h$.
Assume that there exists $Y^\dagger\in\mathcal M_h$ such that
$\widetilde F_h^{\mathrm{td}}(Y^\dagger)=0$ and
$\langle U^\dagger_{k-1},\mathbf 1\rangle_h
=
\langle U_{k-1}(0),\mathbf 1\rangle_h$ for $k=1,\dots,N_T$.
Then this $Y^\dagger$ is the only point in $\mathcal M_h$ with these same
value-function averages and with $\widetilde F_h^{\mathrm{td}}=0$. Moreover, the solution $Y(\cdot)$ exists for all $s\ge 0$, the Bregman
divergence $D_{\mathcal E}(Y^\dagger,Y(s))$ is nonincreasing on $[0,\infty)$,
and
\[
\widetilde F_h^{\mathrm{td}}(Y(s))\to 0,
\qquad
Y(s)\to Y^\dagger
\qquad\text{as }s\to\infty.
\]
\end{theorem}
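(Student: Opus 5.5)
Uniqueness follows directly from the remark after Proposition~\ref{prop:explicit_disc_HRF}. If $\widetilde Y\in\mathcal M_h$ satisfies $\widetilde F_h^{\mathrm{td}}(\widetilde Y)=0$ and has the same slice averages as $Y(0)$, then it also has the same averages as $Y^\dagger$. The HJB residuals of both states are then constant on each slice, and these constants vanish when paired against the zero-mass differences $M_k-\widetilde M_k$. Proposition~\ref{prop:Fh_monotone_MU} therefore forces $\widetilde Y=Y^\dagger$. Global existence, positivity, unit mass and invariance of the $U$-averages are exactly Proposition~\ref{prop:global_exist_pos_mass}, so the work left is the Lyapunov argument and convergence.

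\textbf{Step 1: Lyapunov function.} Set $L(s):=D_{\mathcal E}(Y^\dagger,Y(s))$. For fixed $Y^\dagger$,
\[
\frac{\mathrm d}{\mathrm ds}L=-\bigl\langle \nabla^2\mathcal E(Y)\dot Y,\,Y^\dagger-Y\bigr\rangle_{h,\Delta t}.
\]
By \eqref{eq:disc_HRF_blocks}, $\nabla^2\mathcal E(Y)\dot Y=-(\widetilde r_1,\dots,\widetilde r_{N_T},s_1,\dots,s_{N_T})=-\widetilde F_h^{\mathrm{td}}(Y)$, since $\tfrac1{M_k}\odot\dot M_k=-\widetilde r_k$. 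Hence
\[
\tfrac{\mathrm d}{\mathrm ds}L=-\langle \widetilde F_h^{\mathrm{td}}(Y),Y-Y^\dagger\rangle_{h,\Delta t}.
\]
Subtract $\widetilde F_h^{\mathrm{td}}(Y^\dagger)=0$, and replace $\widetilde r_k$ by $r_k$. This replacement is legitimate because $\bar r_k\mathbf 1$ pairs to zero with $M_k-M_k^\dagger$, both having unit mass. The $U$-averages agree by Proposition~\ref{prop:global_exist_pos_mass}, so Proposition~\ref{prop:Fh_monotone_MU} applies and gives
\[
\tfrac{\mathrm d}{\mathrm ds}L=-\Phi(Y(s))\le 0,\qquad \Phi(Y):=\langle F_h^{\mathrm{td}}(Y)-F_h^{\mathrm{td}}(Y^\dagger),Y-Y^\dagger\rangle_{h,\Delta t},
\]
with $\Phi(Y)>0$ whenever $Y\neq Y^\dagger$. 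So $L$ is nonincreasing and $\int_0^\infty\Phi(Y(s))\,\mathrm ds\le L(0)<\infty$.

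\textbf{Step 2: compactness.} The sublevel set $\{L\le L(0)\}$ should be a compact subset of $\mathcal M_h$, restricted to the affine set with the prescribed $U$-averages. The $U$-part of $L$ is $\tfrac{\Delta t}{2}\sum_k\|U_k-U_k^\dagger\|_h^2$, which bounds $U$. The $M$-part is $\Delta t\sum_k\langle M_k^\dagger,\log(M_k^\dagger/M_k)\rangle_h$, which is the KL divergence, since both densities have unit mass. Because $M_k^\dagger>0$ componentwise and $M_k\le 1/h^2$, this term blows up as any $(M_k)_{i,j}\to0$. Hence $M_k(s)\ge c>0$ uniformly in $s$, and the trajectory stays in a compact set $K\subset\mathcal M_h$. (This is presumably also how Proposition~\ref{prop:global_exist_pos_mass} is proved.)

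\textbf{Step 3: convergence.} $\Phi$ is continuous on $K$, and $\dot Y$ is bounded there, so $s\mapsto\Phi(Y(s))$ is uniformly continuous. Integrability then gives $\Phi(Y(s))\to0$. Every limit point $\hat Y$ lies in $K$ and has the correct averages, which are preserved and closed under limits. It satisfies $\Phi(\hat Y)=0$, and strict monotonicity gives $\hat Y=Y^\dagger$. By compactness, $Y(s)\to Y^\dagger$. Continuity of $\widetilde F_h^{\mathrm{td}}$ on $K$ then gives $\widetilde F_h^{\mathrm{td}}(Y(s))\to\widetilde F_h^{\mathrm{td}}(Y^\dagger)=0$.

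The step I expect to be most delicate is Step 2: the uniform lower bound on the densities, via the blow-up of the KL term. This bound is what keeps $f$, which is only locally Lipschitz on $(0,\infty)$, and the right-hand side of the flow uniformly controlled along the trajectory. Without it, the uniform continuity argument in Step 3 does not go through.
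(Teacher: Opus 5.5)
Your proposal is correct and follows essentially the same route as the paper's proof. The paper also uses $D_{\mathcal E}(Y^\dagger,Y(s))$ as a Lyapunov function with derivative $-\Phi(s)$, gets compactness of the trajectory from the blow-up of the entropy term, obtains $\Phi(s)\to 0$ from integrability plus uniform continuity (Barbalat), and uses strict monotonicity to identify every limit point with $Y^\dagger$. The only differences are cosmetic: you read off the Lyapunov identity directly from $\nabla^2\mathcal E(Y)\dot Y=-\widetilde F_h^{\mathrm{td}}(Y)$ rather than from the first-order optimality condition of the variational problem, and you bound the density part through the KL form with $(M_k)_\alpha\le h^{-2}$ instead of the termwise nonnegative form $a\log(a/m)-(a-m)$.
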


The assumption above only says that there is a point $Y^\dagger\in\mathcal M_h$
such that $\widetilde F_h^{\mathrm{td}}(Y^\dagger)=0$ and
$\langle U^\dagger_k,\mathbf 1\rangle_h=\langle U_k(0),\mathbf 1\rangle_h$ for
$k=0,\dots,N_T-1$. This assumption holds if the discretized MFG system has a
solution.  The HRF convergence result gives a state with \(\widetilde F_h^{\mathrm{td}}(Y)=0\), where each HJB residual may still contain a space-independent constant. The next remark shows how to recover the solution of the discretized MFG system from $Y^\dagger$ by shifting the value function in $Y^\dagger$ on each time slice, so that the corrected variable \(\widehat Y\) satisfies \(F_h^{\mathrm{td}}(\widehat Y)=0\).


\begin{remark}[Recovering a full discrete solution by shifting the value function]
\label{rem:td-postprocess-full}
The HRF may converge to a state for which the FP equations are already satisfied, while the HJB equations are satisfied only up to a space-independent constant on each time slice. This can be fixed by adding one constant to $U_k$ at each time slice.
Suppose that
\[
Y=(M_1,\dots,M_{N_T}, U_0,\dots,U_{N_T-1})\in\mathcal M_h
\]
satisfies
\[
\widetilde F_h^{\mathrm{td}}(Y)=0 .
\]
Then, for each $k=1,\dots,N_T$, we have
\[
s_k(Y)=0,\qquad r_k(Y)=a_k\mathbf 1,
\]
where $a_k:=\bar r_k(Y)$. Thus, the FP equations are already satisfied, while the HJB residuals may contain only space-independent constants on each time slice.

These constants can be removed by adding suitable constants to the value function. Set $c_{N_T}:=0$ for the fixed terminal slice and define
\[
c_{k-1}=c_k+\Delta t\,a_k,\qquad k=N_T,N_T-1,\dots,1.
\]
Now define
\[
\widehat M_k:=M_k,\qquad 
\widehat U_{k-1}:=U_{k-1}+c_{k-1}\mathbf 1 .
\]
This change does not affect the density or the spatial derivatives of the value function. In particular, since
\[
D_h\mathbf 1=0,\qquad \Delta_h\mathbf 1=0,
\]
the FP residuals are unchanged:
\[
s_k(\widehat Y)=s_k(Y)=0.
\]
For the HJB residuals, the only change comes from the time-difference term. Therefore,
\[
r_k(\widehat Y)
=
r_k(Y)+\frac{c_k-c_{k-1}}{\Delta t}\mathbf 1
=
a_k\mathbf 1-a_k\mathbf 1
=0 .
\]
Hence
\[
F_h^{\mathrm{td}}(\widehat Y)=0 .
\]
In other words, if $\widetilde F_h^{\mathrm{td}}(Y)=0$, then the FP residuals already vanish, and each HJB residual is only a spatial constant. By adding the constants $c_k$ defined above to the value function slices, these remaining HJB constants are canceled, and the corrected pair $\widehat Y$ solves the full discrete MFG system.
\end{remark}

For the numerical experiments in \Cref{sec:numerics}, we approximate the dynamics \eqref{eq:disc_HRF_blocks} by an implicit Euler discretization in the flow variable. The convergence theorem above concerns the continuous artificial-time HRF itself. The implicit Euler iteration is used numerically as a flow integrator to solve the HRF.

\section{A Solver-Agnostic Framework for Inverse Problems}
\label{sec:inverse_stationary}

This section introduces a solver-agnostic optimization framework for inverse problems in MFGs. We illustrate the approach on the stationary and time-dependent MFG models in \eqref{eq:intro_Ergodic_MFG} and \eqref{eq:intro_TimeDep_MFG}. The main task is to recover $m$, $u$, and the spatial cost $V$ from noisy measurements of $m$ and $V$.

We formulate the inverse problem by treating $V$ as the unknown parameter. For each given $V$, we solve the corresponding MFG system to obtain the state variables $(m,u)$ (or $(m,u,\lambda)$ in the stationary case), and then evaluate the objective function. Thus, the state variables are not treated as independent optimization variables but are determined by the MFG equations associated with $V$.

A standard implementation updates the unknown coefficients $V$ by a gradient-based method. However, computing the gradient of the outer objective with respect to $V$ by differentiating through a forward solver can be expensive and tightly coupled to the particular numerical method used to solve the MFG system. Instead, we treat the discretized MFG system as the defining constraint for the MFG state $(m, u)$ and compute the gradient of the outer objective with respect to $V$ by implicit differentiation of these constraints. This yields adjoint-type problems whose solutions provide the desired gradients without unrolling or differentiating through the internal iterations of the forward solver. Building on this implicit differentiation framework, we further derive a GN method that incorporates second-order information to accelerate the outer optimization.

\subsection{Stationary and Time-Dependent Inverse Problems}
\label{subsec:inverse-problems}
We begin by formulating the inverse problems associated with stationary and time-dependent MFGs.

\subsubsection{Inverse Problem for Stationary MFGs} For ease of presentation, we consider the prototype MFG system \eqref{eq:intro_Ergodic_MFG}. 
Here, we consider an inverse problem in which we infer both the agents' equilibrium strategies and the spatial cost from limited, noisy observations of the population distribution and the cost.
Specifically, we consider the following problem.

\begin{problem}[Stationary MFG inverse problem]\label{prob:st_prob_inverse}
Given a spatial cost function $V^*$ and known components $(H,\nu,f)$, suppose that the stationary MFG~\eqref{eq:intro_Ergodic_MFG} admits a unique classical solution $(u^*,m^*,\lambda^*)$. Assume further that, in practice, we only have noisy observations of $m^*$ and $V^*$ at a finite number of points, and our aim is to recover the full configuration $(u^*,m^*,\lambda^*,V^*)$.

More precisely, we assume:
\begin{enumerate}
  \item \textbf{Noisy partial observations of $m^*$.}
  Let $\{\phi_\ell^o\}_{\ell=1}^{N_m}$ be bounded linear observation functionals. We observe the vector
  \[
    \mathbf m^o
    := \bigl(\langle \phi_1^o, m^*\rangle,\dots,\langle \phi_{N_m}^o, m^*\rangle\bigr) + \boldsymbol\epsilon_m
    \in \mathbb R^{N_m},
  \]
  where $\boldsymbol\epsilon_m \in \mathbb R^{N_m}$ is a Gaussian noise vector satisfying $\boldsymbol\epsilon_m \sim \mathcal N\!\bigl(\mathbf 0,\sigma_m^2 I_{N_m}\bigr)$.
  Here $\sigma_m>0$ denotes the noise standard deviation, and $I_{N_m}$ is the $N_m\times N_m$ identity matrix.

  \item \textbf{Noisy partial observations of $V^*$.}
  Let $\{\psi_\ell^o\}_{\ell=1}^{N_v}$ be bounded linear observation functionals acting on $V^*$. We observe
  \[
    \mathbf V^o
    := \bigl(\langle \psi_1^o, V^*\rangle,\dots,\langle \psi_{N_v}^o, V^*\rangle\bigr) + \boldsymbol\epsilon_v
    \in \mathbb R^{N_v},
  \]
  where $\boldsymbol\epsilon_v \in \mathbb R^{N_v}$ and $
    \boldsymbol\epsilon_v \sim \mathcal N\!\bigl(\mathbf 0,\sigma_V^2 I_{N_v}\bigr),$
  with $\sigma_V>0$.
\end{enumerate}
\medskip

\textbf{Inverse Problem.}
Assuming that the agents are governed by the stationary MFG \eqref{eq:intro_Ergodic_MFG}, and given the noisy data $\mathbf m^o$ and $\mathbf V^o$, our goal is to recover the full configuration $(u^*,m^*,\lambda^*,V^*)$.
\end{problem}

The above inverse problem is posed in the continuous setting. For computation, we discretize the MFG system and work with the corresponding discretized unknowns and observations. For illustration, we work on the two-dimensional torus $\mathbb{T}^2$ and discretize the MFG using the standard finite-difference scheme of \cite{achdou2010mean}, as described in \Cref{subsec:discrete_HRF} and \cite{gomes2020hessian}. Let $\mathbb{T}^2_h$ be the toroidal grid with mesh size $h$ and periodic indices, and let $N_x:=|\mathbb{T}^2_h|$. For a grid function \(y\in\mathbb{R}^{N_x}\), identified with the nodal values \((y_{i,j})\), we use the same one-sided difference operators, discrete gradient stencil, and standard five-point Laplacian as defined in \eqref{eq:one-side-diff}, \eqref{eq:stcil}, and \eqref{eq:five-point-laplacian}.
Let $g(x,q)$ be a Godunov numerical Hamiltonian approximating $H(x,p)$, with partial derivatives $g_{q_k}$ evaluated at $(x_{i,j},[D_hU]_{i,j})$. Sampling the fields pointwise on $\mathbb{T}^2_h$, we represent
\[
M\in\mathbb{R}^{N_x}\approx m(\cdot),\qquad
U\in\mathbb{R}^{N_x}\approx u(\cdot),\qquad
V_h\in\mathbb{R}^{N_x}\approx V(\cdot).
\]
Define the stationary residual blocks
\[
\mathcal R^{(m)}_{i,j}(M,U;V_h,\lambda)
:=\nu(\Delta_h U)_{i,j}-g(x_{i,j},[D_h U]_{i,j})+f(M_{i,j})+(V_h)_{i,j}+\lambda,
\]
\[
\mathcal R^{(u)}_{i,j}(M,U)
:= -\nu(\Delta_h M)_{i,j}
-\Big(D_1^{-}(M\,g_{q_1})\Big)_{i,j}
-\Big(D_1^{+}(M\,g_{q_2})\Big)_{i,j}
-\Big(D_2^{-}(M\,g_{q_3})\Big)_{i,j}
-\Big(D_2^{+}(M\,g_{q_4})\Big)_{i,j}.
\]
We use the discrete inner product $\langle a,b\rangle_h:=h^2\sum_{i,j}a_{i,j}b_{i,j}$. The ergodic constant $\lambda$ is eliminated by enforcing the discrete compatibility condition $\langle M,\mathcal R^{(m)}(M,U;V_h,\lambda)\rangle_h=0$, i.e.,
\[
\lambda(M,U,V_h)
=
-\frac{\big\langle M,\ \nu\Delta_h U-g(\cdot,[D_hU]) + f(M)+V_h\big\rangle_h}{\langle M,\mathbf 1\rangle_h}.
\]
For positive density vectors $M\in\mathbb{R}_{++}^{N_x}$, we then define the discretized stationary MFG residual operator
\[
F_h^{\mathrm{st}}(M,U,V_h)
:=
\big(\mathcal R^{(m)}(M,U;V_h,\lambda(M,U,V_h)),\ \mathcal R^{(u)}(M,U)\big)
\in \mathbb{R}^{N_x}\times \mathbb{R}^{N_x},
\]together with the normalization constraints
\[
\langle M,\mathbf 1\rangle_h=1,\qquad \langle U,\mathbf 1\rangle_h=0.
\]

We denote the admissible stationary density set by
\[
\mathcal M_{++,h}^{\mathrm{st}}:=\bigl\{M\in\mathbb{R}_{++}^{N_x}:\ \langle M,\mathbf 1\rangle_h=1\bigr\}.
\]
Hence, for a given $V_h$, the discrete forward problem is to find $(M,U)$ such that
\begin{equation}
\label{eq:st_forward_problem_discretized}
F_h^{\mathrm{st}}(M,U,V_h)=0,\qquad M\in\mathcal M_{++,h}^{\mathrm{st}},\qquad \langle U,\mathbf 1\rangle_h=0.
\end{equation}

The inverse problem is intrinsically ill-posed: finitely many noisy observations do not determine a unique spatial cost or equilibrium state without additional regularization. Meanwhile, in the stationary model, there is also an identifiability issue between $V_h$ and $\lambda$. Indeed, adding a constant to $V_h$ and subtracting the same constant from $\lambda$ leaves the HJB residual unchanged. Consequently, the MFG equations alone determine $V_h$ only up to an additive constant. The observations of $V$ and the RKHS regularization provide additional information that helps distinguish between such equivalent solutions. We therefore use RKHS reconstructions for $(m,u,V)$ while keeping the forward MFG solve fully discrete, and we regularize the cost variable \(V\) through the corresponding RKHS norm. A convenient and flexible way to encode smoothness and structural assumptions is to place $m$, $u$, and $V$ in RKHSs \cite{owhadi2019operator}, whose evaluation functionals are continuous. This choice also provides a systematic link between the discrete unknowns on \(\mathbb{T}^2_h\) and continuous objects on \(\mathbb{T}^2\). The grid vectors \((M,U,V_h)\) are interpreted as nodal samples of RKHS functions and are lifted to continuous functions through kernel reconstruction. In this way, continuous observation functionals can be applied consistently to the discretized variables.
Thus, we assume that \(m\in\mathcal M\), \(u\in\mathcal U\), and \(V\in\mathcal V\), where \(\mathcal M\), \(\mathcal U\), and \(\mathcal V\) are RKHSs on \(\mathbb{T}^2\) with kernels \(K_m\), \(K_u\), and \(K_V\), respectively. Then, we use the standard optimal-recovery reconstruction. Fix a strictly positive definite kernel $K$ on $\mathbb{T}^2$ with RKHS $\mathcal{H}_K$, and let $X=\{x_\ell\}_{\ell=1}^{N_x}\subset\mathbb{T}^2$ denote the grid points.  Given nodal values $Z\in\mathbb{R}^{N_x}$, we define $\mathcal R_h^{K}(Z)\in\mathcal{H}_K$ as the unique minimizer of
\begin{align}
\label{eq:stoptrvy}
\min_{w\in\mathcal{H}_K}\ \|w\|_{\mathcal{H}_K}^2, \quad\text{such that}\quad
w(x_\ell)=Z_\ell,\qquad \ell=1,\dots,N_x.
\end{align}
By the representer theorem \cite{owhadi2019operator}, the minimizer has the closed form
\[
\mathcal R_h^{K}(Z)(x)
=
K(x,X)K(X,X)^{-1}Z,
\qquad x\in\mathbb{T}^2,
\]
where $K(X,X)\in\mathbb{R}^{N_x\times N_x}$ is the Gram matrix and $K(x,X)\in\mathbb{R}^{1\times N_x}$ is the row vector of kernel evaluations at $x$. Since the grid points considered here are distinct, the positive definiteness of the kernel implies that the Gram matrix \(K(X,X)\) is invertible. Thus, given the grid values $M$, $U$, and $V_h$ corresponding to $m$, $u$, and $V$, respectively, we reconstruct continuous representatives via
\[
m^\dagger:=\mathcal R_h^{K_m}(M)\in\mathcal M,\qquad
u^\dagger:=\mathcal R_h^{K_u}(U)\in\mathcal U,\qquad
V^\dagger:=\mathcal R_h^{K_V}(V_h)\in\mathcal V.
\]
Moreover, the RKHS norm of the reconstructed $V^\dagger$ admits an explicit expression, i.e., 
\[
\|V^\dagger\|_{\mathcal V}^2
=
\bigl\langle V^\dagger, V^\dagger\bigr\rangle_{\mathcal V}
=
V_h^{\top}K_V(X,X)^{-1}V_h,
\]
where $\langle\cdot,\cdot\rangle_{\mathcal V}$ denotes the inner product in the RKHS $\mathcal V$ associated with the kernel $K_V$. In practice, one may replace the norm by  \(V_h^\top\bigl(K_V(X,X)+\epsilon I\bigr)^{-1}V_h\) with a very small nugget \(\epsilon>0\) for numerical stability.

Using these reconstructions, we can apply the linear observation functionals stated in \Cref{prob:st_prob_inverse} to the continuous surrogates, rather than restricting observations to the uniform grid $\mathbb{T}^2_h$.  We define the induced observation maps
\[
\Phi_h^m(M):=\big(\langle \phi_1^o,m^\dagger\rangle,\dots,\langle \phi_{N_m}^o,m^\dagger\rangle\big)\in\mathbb{R}^{N_m},
\qquad
\Phi_h^V(V_h):=\big(\langle \psi_1^o,V^\dagger\rangle,\dots,\langle \psi_{N_v}^o,V^\dagger\rangle\big)\in\mathbb{R}^{N_v}.
\]
The stationary inverse problem is then formulated as the PDE-constrained optimization problem
\begin{equation}
\label{eq:OptGPProb_st}
\min_{V_h\in\mathbb{R}^{N_x},\,(M,U)}
\ \frac{\alpha}{2}\|V^\dagger\|_{\mathcal V}^2
+\frac{\beta}{2}\big\|\Phi_h^m(M)-\mathbf m^o\big\|_2^2
+\frac{\gamma}{2}\big\|\Phi_h^V(V_h)-\mathbf V^o\big\|_2^2
\quad
\text{s.t.}\quad
(M,U)\ \text{satisfies }\eqref{eq:st_forward_problem_discretized},
\end{equation}
where \(\alpha>0\) is the RKHS prior weight. Under an exact isotropic Gaussian
likelihood interpretation, one may take \(\beta=\sigma_m^{-2}\) and
\(\gamma=\sigma_V^{-2}\). In the numerical experiments, however, we treat
\(\beta\) and \(\gamma\) as tunable balancing weights. The ergodic constant \(\lambda\) is understood as
\(\lambda(M,U,V_h)\) defined above.

\begin{remark}[RKHS surrogates for continuous observations]\label{rem:rkhs_surrogates}
In \Cref{prob:st_prob_inverse}, observations are formulated as continuous linear functionals acting on the underlying fields (e.g., $m$ and $V$), whereas the forward MFG solve produces only grid unknowns such as $(M,U,V_h)$. To apply continuous observation functionals to these discrete outputs, we introduce continuous representatives of the grid data. A standard alternative is a finite-element type reconstruction (e.g., piecewise polynomial interpolation). Here we instead use optimal recovery \eqref{eq:stoptrvy}.

This choice is convenient because it provides a systematic, kernel-parameterized family of smooth function classes in which general linear observations are well defined. Throughout, the observation maps are assumed to be bounded linear functionals on the chosen RKHSs. Point evaluations are a standard example when the kernel is continuous. Moreover, for common kernel families on $\mathbb{T}^2$ (in particular Mat\'ern kernels), the associated RKHS is norm-equivalent to a Sobolev space $H^s(\mathbb{T}^2)$ for some $s>0$ (see, e.g., \cite{wendland2004scattered}). In particular, choosing $s$ above the Sobolev embedding threshold (e.g., $s>1$ in two dimensions) ensures that elements admit continuous representatives, so pointwise and other continuous observations are meaningful, and the resulting regularity is consistent with the smoothness typically assumed for classical MFG solutions.

We emphasize that the RKHS assumption is not needed to solve the discretized
MFG system. The RKHS representative is used only to evaluate observation
functionals. The time-dependent inverse formulation adopts the same viewpoint, and we do not repeat this discussion there.
\end{remark}

\subsubsection{Inverse Problem for Time-Dependent MFGs}
We now formulate the analogous inverse problem for the time-dependent MFG \eqref{eq:intro_TimeDep_MFG}.

\begin{problem}[Time-dependent MFG inverse problem]\label{prob:td_prob}
Let $V^*$ be a spatial cost and suppose that the components $(H,\nu,f, m_0, u_T)$ are known. For a given $V^*$, assume the time-dependent MFG system~\eqref{eq:intro_TimeDep_MFG} admits a unique classical solution $(u^*,m^*)$. In practice, suppose we have access to partial and noisy observations of $m^*$ and $V^*$ and wish to reconstruct the associated state and spatial cost. To elaborate, we have
\begin{enumerate}
  \item \textbf{Noisy partial observations of $m^*$.}
  Let $\{\phi_\ell^o\}_{\ell=1}^{N_m}$ be bounded linear observation functionals and denote
  \[
    \mathbf m^o
    = \bigl(\langle \phi_1^o, m^*\rangle,\dots,\langle \phi_{N_m}^o, m^*\rangle\bigr)
      + \boldsymbol\epsilon_m,
    \qquad \boldsymbol\epsilon_m \sim \mathcal N\!\bigl(\mathbf 0,\sigma_m^2 I_{N_m}\bigr).
  \]
  Here $\sigma_m>0$ is the noise standard deviation and $\langle\cdot,\cdot\rangle$ denotes the data--state pairing.
  \item \textbf{Noisy partial observations of $V^*$.}
  Let $\{\psi_\ell^o\}_{\ell=1}^{N_v}$ be bounded linear observation functionals for $V^*$ and define
  \[
    \mathbf V^o
    = \bigl(\langle \psi_1^o, V^*\rangle,\dots,\langle \psi_{N_v}^o, V^*\rangle\bigr)
      + \boldsymbol\epsilon_v,
    \qquad \boldsymbol\epsilon_v \sim \mathcal N\!\bigl(\mathbf 0,\sigma_V^2 I_{N_v}\bigr),
  \]
  with noise level $\sigma_V>0$.
\end{enumerate}

\medskip
\textbf{Inverse Problem.}
Suppose that agents are governed by the time-dependent MFG in \eqref{eq:intro_TimeDep_MFG}. Given $\mathbf m^o$ and $\mathbf V^o$, we want to recover the unknown spatial cost together with the associated state.
\end{problem}


Similar to the stationary case, we work with a fully discretized formulation for computation. We use the finite difference scheme of \Cref{subsec:discrete_HRF}. In particular, we use the space-time grid $(\mathbb{T}^2_h,\{t_k\}_{k=0}^{N_T})$. The mixed endpoint data are sampled on the grid and fixed as $(M_0,U_{N_T})$, and the interior unknowns are collected as
\[
Y:=(M_1,\dots,M_{N_T},\,U_0,\dots,U_{N_T-1})\in\mathcal{X}_h,
\qquad
V_h\in\mathbb{R}^{N_x}.
\]
To make the dependence on $V_h$ explicit, we write the discretized residual  as
$F_h^{\mathrm{td}}(\,\cdot\,;V_h):\mathcal{X}_h\to\mathcal{X}_h$.
In this convention, the fixed endpoint slices $(M_0,U_{N_T})$ are inserted whenever they appear in the time-slice blocks. For the time-dependent inverse problem, we use the discretized MFG system as the discrete forward constraint:
\begin{equation}
\label{eq:td_forward_problem_discretized}
 F_h^{\mathrm{td}}(Y;V_h)=0,
\qquad
Y\in\mathcal{M}_h .
\end{equation}
Here \(\mathcal{M}_h\) is the interior feasible manifold defined in
\Cref{subsec:discrete_HRF}.



{


{
}

}

To apply continuous observation functionals to the discrete unknowns, we construct continuous surrogates from the grid values using kernel optimal recovery. We place the space-time fields $m$ and $u$ in RKHSs on $\mathbb{T}^2\times[0,T]$ with kernels $K_m$ and $K_u$, and we place the spatial cost $V$ in an RKHS on $\mathbb{T}^2$ with kernel $K_V$. Let $X=\{x_\ell\}_{\ell=1}^{N_x}\subset\mathbb{T}^2$ be the spatial grid points and let $\mathcal{T}_h:=\{t_k\}_{k=0}^{N_T}$ be the time grid, and set $S:=X\times \mathcal{T}_h$.

For a space-time kernel $K$ such that the Gram matrix $K(S,S)$ is invertible and a space-time sample vector $w\in\mathbb{R}^{N_x(N_T+1)}$, the analogous optimal-recovery problem to \eqref{eq:stoptrvy} yields the representer formula
\[
\mathcal{R}^{K}_{h,\Delta t}(w)(x,t)
:=
K\bigl((x,t),S\bigr)K(S,S)^{-1}w,
\qquad (x,t)\in\mathbb{T}^2\times[0,T].
\]
Likewise, for a spatial kernel $K$ such that the Gram matrix $K(X,X)$ is invertible and a spatial sample vector $v\in\mathbb{R}^{N_x}$
(stacked over the spatial grid $X:=\{x_\ell\}_{\ell=1}^{N_x}$), define
\[
\mathcal{R}^{K}_{h}(v)(x)
:=
K(x,X)K(X,X)^{-1}v,
\qquad x\in\mathbb{T}^2. 
\]
Applying these maps to the discrete unknowns yields the continuous surrogates
\[
m^\dagger:=\mathcal{R}^{K_m}_{h,\Delta t}\!\bigl(\mathbf m_h\bigr),\qquad
u^\dagger:=\mathcal{R}^{K_u}_{h,\Delta t}\!\bigl(\mathbf u_h\bigr),\qquad
V^\dagger:=\mathcal{R}^{K_V}_{h}(V_h),
\]
where $\mathbf m_h:=\bigl(M_0,\dots,M_{N_T}\bigr)\in\mathbb{R}^{N_x(N_T+1)}$ and
$\mathbf u_h:=\bigl(U_0,\dots,U_{N_T}\bigr)\in\mathbb{R}^{N_x(N_T+1)}$ denote the stacked space-time samples.
Moreover, $V^\dagger$ satisfies the explicit norm identity
\[
\|V^\dagger\|_{\mathcal V}^2
=
V_h^{\top}K_V(X,X)^{-1}V_h,
\]
which follows directly from the representer form.

Using these reconstructions, we can apply the linear observation functionals to the continuous surrogates. We define the induced observation maps
\[
\Phi^{m}_{h,\Delta t}(Y)
:=
\bigl(\langle \phi^{o}_1, m^\dagger\rangle,\dots,\langle \phi^{o}_{N_m}, m^\dagger\rangle\bigr)\in\mathbb{R}^{N_m},
\qquad
\Phi^{V}_{h}(V_h)
:=
\bigl(\langle \psi^{o}_1, V^\dagger\rangle,\dots,\langle \psi^{o}_{N_v}, V^\dagger\rangle\bigr)\in\mathbb{R}^{N_v}.
\]

The time-dependent inverse problem is then posed as the PDE-constrained optimization problem
\begin{equation}
\label{eq:OptGPProb_td}
\min_{V_h\in\mathbb{R}^{N_x},\,Y}
\ \frac{\alpha}{2}\|V^\dagger\|_{\mathcal V}^2
+\frac{\beta}{2}\big\|\Phi^{m}_{h,\Delta t}(Y)-\mathbf m^o\big\|_2^2
+\frac{\gamma}{2}\big\|\Phi^{V}_{h}(V_h)-\mathbf V^o\big\|_2^2
\quad
\text{s.t.}\quad
Y\ \text{satisfies }\eqref{eq:td_forward_problem_discretized},
\end{equation}
Here, $\alpha$, $\beta$, and $\gamma$ are positive parameters balancing the regularization term and the data-misfit terms. The second and third terms measure the discrepancy between the model predictions and the available observations of the density and the spatial cost, respectively.



\subsection{A Solver-Agnostic Inverse Framework}
\label{sec:inverse-adjoint-weighted}
Our next goal is to solve \eqref{eq:OptGPProb_st} and \eqref{eq:OptGPProb_td} numerically. To this end, we separate the outer optimization from the particular numerical method used for the inner MFG solve. The key idea is to treat the discretized MFG system as the defining constraint for the inner solution: the inner solver returns an accurate state satisfying the discretized MFG equations, and these equations are the objects we differentiate.  Accordingly, derivatives of the outer objective with respect to the unknown parameters are obtained by implicit differentiation of the discretized MFG constraints, rather than by differentiating through the iterations of a specific inner algorithm. Since \eqref{eq:OptGPProb_st} and \eqref{eq:OptGPProb_td} share the same structure from this viewpoint, we present the method in a unified template.


Let $\Theta\subset\mathbb R^\ell$ be the set of admissible parameters, and let
$\theta\in\Theta$ be the unknown parameter. For example, $\theta$ may be the
discretized spatial cost in an inverse MFG problem. After discretization, the
inner MFG problem is a finite-dimensional system for the state variable
$z\in\mathbb R^n$. We write the equations in the form
\begin{equation}
	F(z,\theta)=0,\qquad \mathsf A z=\mathsf b .
	\label{eq:eq-constraints-linear}
\end{equation}
Here $F:\mathbb R^n\times\mathbb R^\ell\to\mathbb R^c$ contains the nonlinear
equations from the discretized MFG system, and $\mathsf A z=\mathsf b$ contains
linear constraints that we keep separate, such as the mass constraint, the
normalization of the value function, or fixed boundary data. If there are no
separate linear constraints, the second equation is absent.

In the stationary MFG setting, the linear constraints $\mathsf{A}z=\mathsf{b}$ encode standard normalization and gauge conditions. For instance, if $z=(M,U)$ collects the spatial grid unknowns, one may impose
\[
\frac{1}{N_x}\sum_i M_i=1,
\qquad
\sum_i U_i=0,
\]
which can be written compactly in the form $\mathsf{A}z=\mathsf{b}$.


In the time-dependent MFG setting, the mixed endpoint data are imposed after discretization as fixed values on finitely many time slices. Following \Cref{subsec:discrete_HRF}, we enforce these endpoint constraints by eliminating the fixed endpoint blocks and working only with the interior unknowns. In the resulting reduced system, the endpoint conditions are already built into the parametrization and therefore do not appear as additional explicit equalities.

Now, we define a unified objective that matches the formulations in \eqref{eq:OptGPProb_st} and \eqref{eq:OptGPProb_td}. Let $z^{\mathrm o}\in\mathbb{R}^{m}$ and $\theta^{\mathrm o}\in\mathbb{R}^{r}$ denote given data vectors, and let
\[
W\in\mathbb{R}^{m\times n},
\qquad
G\in\mathbb{R}^{r\times \ell},
\qquad
J\in\mathbb{R}^{\ell\times \ell}
\]
be linear maps defining the observation and regularization operators. We consider the  objective
\begin{equation}
\mathcal{J}(\theta)
= \frac{\alpha}{2}\,\big\|J\,\theta\big\|_2^2  +
\frac{\beta}{2}\,\big\| W\,z^*(\theta)-z^{\mathrm o}\big\|_2^2
+
\frac{\gamma}{2}\,\big\| G\,\theta-\theta^{\mathrm o}\big\|_2^2,
\qquad
\theta\in \Theta,
\label{eq:reduced-objective}
\end{equation}
with weights $\alpha\ge 0$ and $\beta,\gamma>0$. In the stationary inverse problem, $W$ represents the discrete observation of the equilibrium density, $G$ represents the discrete observation of the cost $V$, and $J$ represents the RKHS regularizer.  In our setting, the parameter \(\theta\) is the grid vector \(V_h\) corresponding to the spatial cost \(V\). The term \(\|J\theta\|_2^2\) is therefore chosen so as to approximate the RKHS norm of the continuous function reconstructed from \(V_h\). Concretely, let \(X\) denote the spatial grid, and let \(K_V(X,X)\) be the kernel matrix of the reproducing kernel \(K_V\) evaluated on \(X\). Then a natural choice is
\[
J^\top J = K_V(X,X)^{-1}.
\]
Accordingly,
\[
\|JV_h\|_2^2 = V_h^\top K_V(X,X)^{-1}V_h,
\]
which equals \(\|V^\dagger\|_{\mathcal V}^2\), where \(V^\dagger\) is the RKHS interpolant associated with the grid values \(V_h\). 

We solve \eqref{eq:reduced-objective} by computing \(\nabla_\theta \mathcal{J}(\theta)\) without differentiating through a particular forward solver. The next result provides an adjoint formula that depends only on the discrete system \eqref{eq:eq-constraints-linear}. The formulas below are local to the parameter. Fix a parameter $\theta$ and let
$z^*(\theta)$ be the state computed by the inner solver. We assume that this
state satisfies the discretized inner system
\[
F(z^*(\theta),\theta)=0,\qquad \mathsf A z^*(\theta)=\mathsf b .
\]
To differentiate the outer objective at this parameter, we only use local
information near $(z^*(\theta),\theta)$. Namely, we need $F$ to be differentiable
there, the inner state to be locally differentiable with respect to $\theta$,
and the linear system used in the sensitivity or adjoint equation to be
solvable. These conditions are separate from the forward convergence result in \Cref{sec:tdmfg}. 

\begin{proposition}[Adjoint gradient formula]
\label{prop:adjoint_grad}
Fix a parameter $\theta\in\Theta$. Let $z^*(\theta)$ be the state computed by the
inner solver, satisfying
\[
F(z^*(\theta),\theta)=0,\qquad
\mathsf A z^*(\theta)=\mathsf b .
\]
Assume that $F$ is differentiable at $(z^*(\theta),\theta)$ and that
$z^*(\theta)$ is differentiable with respect to $\theta$ at this parameter.
Assume also that the adjoint equation
\begin{equation}
\frac{\partial F}{\partial z}\bigl(z^*(\theta),\theta\bigr)^\top \psi
+
\mathsf A^\top \eta
=
-\,\beta\,W^\top\bigl(W z^*(\theta)-z^{\mathrm o}\bigr)
\label{eq:adjoint}
\end{equation}
has a solution $(\psi,\eta)$, with the convention that $\eta$ and the
$\mathsf {A} ^\top\eta$ terms are omitted when there are no separate linear
constraints. Then
\begin{equation}
\nabla_\theta \mathcal J(\theta)
=
\frac{\partial F}{\partial \theta}\bigl(z^*(\theta),\theta\bigr)^\top \psi
+
\gamma\,G^\top\bigl(G\theta-\theta^{\mathrm o}\bigr)
+
\alpha\,J^\top J\,\theta .
\label{eq:grad-final}
\end{equation}
If the adjoint equation \eqref{eq:adjoint} has multiple solutions, then
\(F_\theta(z^*(\theta),\theta)^\top\psi\) is the same for every solution
\(\psi\) of \eqref{eq:adjoint}. Consequently, the gradient formula
\eqref{eq:grad-final} is independent of which solution \(\psi\) is used.
\end{proposition}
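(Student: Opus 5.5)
The plan is to differentiate the reduced objective by the chain rule and then eliminate the state sensitivity using the differentiated constraints together with the adjoint equation \eqref{eq:adjoint}. Write $z^*=z^*(\theta)$, $F_z:=\frac{\partial F}{\partial z}(z^*,\theta)$, $F_\theta:=\frac{\partial F}{\partial \theta}(z^*,\theta)$, and $S:=\frac{\partial z^*}{\partial\theta}(\theta)\in\mathbb R^{n\times\ell}$, which exists by assumption.

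\textbf{Step 1: chain rule.} Since the regularization and $V$-misfit terms in \eqref{eq:reduced-objective} depend on $\theta$ explicitly, we get
\[
\nabla_\theta\mathcal J(\theta)=\alpha J^\top J\theta+\gamma G^\top(G\theta-\theta^{\mathrm o})+\beta S^\top W^\top(Wz^*-z^{\mathrm o}).
\]
Only the last term needs rewriting.

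\textbf{Step 2: sensitivity equations.} The identities $F(z^*(\theta'),\theta')=0$ and $\mathsf A z^*(\theta')=\mathsf b$ hold for $\theta'$ near $\theta$. Indeed, the inner solver is assumed to return a solution for each parameter, and this is what local differentiability of $z^*$ refers to. Differentiating at $\theta$ (chain rule, using differentiability of $F$ at the point and of $z^*$ at $\theta$) gives
\[
F_zS+F_\theta=0,\qquad \mathsf A S=0.
\]

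\textbf{Step 3: adjoint elimination.} Take $(\psi,\eta)$ solving \eqref{eq:adjoint}. Then
\begin{align*}
\beta S^\top W^\top(Wz^*-z^{\mathrm o})
&=-S^\top\bigl(F_z^\top\psi+\mathsf A^\top\eta\bigr)
=-(F_zS)^\top\psi-(\mathsf AS)^\top\eta\\
&=F_\theta^\top\psi.
\end{align*}
Substituting this into Step 1 yields \eqref{eq:grad-final}. When there are no separate linear constraints, the $\eta$ terms are simply dropped.

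\textbf{Step 4: independence of the solution.} The same computation shows that $F_\theta^\top\psi=\beta S^\top W^\top(Wz^*-z^{\mathrm o})$ for every solution $(\psi,\eta)$ of \eqref{eq:adjoint}. The right-hand side does not involve $\psi$, so $F_\theta^\top\psi$ is the same for all solutions. Equivalently, if $(\psi_1,\eta_1)$ and $(\psi_2,\eta_2)$ both solve \eqref{eq:adjoint}, their difference $(\delta\psi,\delta\eta)$ satisfies $F_z^\top\delta\psi+\mathsf A^\top\delta\eta=0$. Hence
\[
F_\theta^\top\delta\psi=-S^\top F_z^\top\delta\psi=S^\top\mathsf A^\top\delta\eta=(\mathsf AS)^\top\delta\eta=0.
\]

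\textbf{Main obstacle.} The only delicate point is Step 2: justifying that the constraints can be differentiated along $\theta'\mapsto z^*(\theta')$. This requires that the inner state satisfies the constraints in a neighborhood of $\theta$, not merely at $\theta$. I will state this explicitly as the meaning of ``$z^*(\theta)$ is differentiable with respect to $\theta$'', i.e., $z^*$ is a differentiable selection of solutions near $\theta$. Note that $F$ is only assumed differentiable at the single point $(z^*,\theta)$, and the chain rule for a composition needs nothing more. The remaining steps are linear algebra.
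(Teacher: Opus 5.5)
Your proposal is correct and follows essentially the same route as the paper: differentiate the reduced objective, differentiate the inner constraints to get the sensitivity relations, use the adjoint equation to eliminate the state sensitivity, and show independence by pairing the homogeneous adjoint difference with the sensitivity equations. The paper works with directional derivatives $\delta z = Dz^*(\theta)\,\delta\theta$ rather than the full Jacobian $S$, which is only cosmetic. It also differentiates $F(z^*(\theta),\theta)=0$ under the same implicit assumption that you state explicitly, namely that the inner state satisfies the constraints for parameters near $\theta$.
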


In the MFG
discretizations considered here, the linearized system \eqref{eq:adjoint} can be singular or
ill-conditioned, especially when the monotonicity is strict but not strong.
Nevertheless, the adjoint solve is stable in our numerical experiments and
produces descent directions for the outer optimization. When numerical
degeneracy is encountered, one can use a mild regularization of the adjoint solve,
for example, a minimum-norm least-squares solve or a small Tikhonov
regularization. These regularized solves should be understood as stable
numerical approximations of the adjoint equation, rather than as part of the
exact gradient formula in \Cref{prop:adjoint_grad}.

With this formulation, computing $\nabla_\theta \mathcal{J}(\theta)$ only requires that the inner solver returns a sufficiently accurate solution $z^*(\theta)$ of the discrete MFG constraints \eqref{eq:eq-constraints-linear}. No differentiation through the solver itself is needed.

\subsection{Gauss--Newton Method}
\label{sec:inverse-gn}

The adjoint-based gradient formula in \Cref{sec:inverse-adjoint-weighted} provides a first-order method for minimizing the objective \eqref{eq:reduced-objective} with respect to the parameter $\theta$. We also consider a GN method, which exploits the least-squares structure of the objective and typically produces a more effective parameter update than gradient descent (GD).

For the GN method, we first rewrite the  objective \eqref{eq:reduced-objective} as a least-squares
problem. Fix a parameter value $\theta$ for which the inner MFG system has been
solved, and denote the computed state by $z^*(\theta)$. We define
\begin{equation}
\label{eq:gn-residual-def}
r(\theta)
:=
\begin{bmatrix}
\sqrt{\beta}\,\bigl(W z^*(\theta)-z^{\mathrm o}\bigr)\\[2pt]
\sqrt{\gamma}\,\bigl(G\theta-\theta^{\mathrm o}\bigr)\\[2pt]
\sqrt{\alpha}\,J\,\theta
\end{bmatrix},
\qquad
\mathcal J(\theta)=\frac12\|r(\theta)\|_2^2 .
\end{equation}
Here $z^*(\theta)$ satisfies the inner system
\[
F(z^*(\theta),\theta)=0,\qquad
\mathsf A z^*(\theta)=\mathsf b .
\]


The GN method computes an increment $d\in\mathbb{R}^\ell$ by minimizing a quadratic model obtained from the first-order expansion $
r(\theta+d)\approx r(\theta)+\mathsf{J}_r(\theta)d$,
where $\mathsf{J}_r(\theta)$ denotes the Jacobian of the residual $r(\theta)$ with respect to $\theta$. Thus, the GN step is defined by the linearized least-squares problem
\begin{equation}
\label{eq:gn-linearized-subproblem}
\min_{d\in\mathbb{R}^\ell}
\frac12\bigl\|r(\theta)+\mathsf{J}_r(\theta)d\bigr\|_2^2.
\end{equation}

The only subtle point is the evaluation of the Jacobian action $\mathsf{J}_r(\theta)d$. Since the residual depends on $\theta$ both directly and through the equilibrium state $z^*(\theta)$, one must determine the first-order change in the equilibrium state induced by the parameter perturbation $d$.  The details of the GN update are given below.

\begin{proposition}[Gauss--Newton step]
\label{prop:gn_direction}
Fix a parameter $\theta\in\Theta$. Let $z^*(\theta)\in\mathbb R^n$ be the state
computed by the inner solver, satisfying
\[
F(z^*(\theta),\theta)=0,
\qquad
\mathsf A z^*(\theta)=\mathsf b .
\]
Assume that $F$ is differentiable at $(z^*(\theta),\theta)$ and that
$z^*(\theta)$ is differentiable with respect to $\theta$. For a parameter increment $d\in\mathbb R^\ell$, denote the
first-order change of the computed state by
$\delta z_d:=Dz^*(\theta)d$. Differentiating the inner system satisfied by
$z^*(\theta)$ gives
\begin{equation}
\label{eq:gn_sensitivity}
\frac{\partial F}{\partial z}\bigl(z^*(\theta),\theta\bigr)\,\delta z_d
+
\frac{\partial F}{\partial \theta}\bigl(z^*(\theta),\theta\bigr)\,d
=0,
\qquad
\mathsf A\,\delta z_d=0.
\end{equation}
In practice, if
there are no separate linear constraints, the equation
$\mathsf A\,\delta z_d=0$ is omitted. Hence, the first-order change of the
residual $r(\theta)$ in the direction $d$ is
\[
\mathsf J_r(\theta)d=
\begin{bmatrix}
\sqrt{\beta}\,W\,\delta z_d\\
\sqrt{\gamma}\,Gd\\
\sqrt{\alpha}\,Jd
\end{bmatrix}.
\]
The GN step is obtained by minimizing the linearized least-squares
problem \eqref{eq:gn-linearized-subproblem}. Its minimizers are characterized by
the normal equation
\begin{equation}
\label{eq:gn_lm_system}
\bigl(\mathsf J_r(\theta)^\top\mathsf J_r(\theta)\bigr)d
=
-\mathsf J_r(\theta)^\top r(\theta).
\end{equation}
\end{proposition}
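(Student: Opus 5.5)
Differentiating the constraints, substituting the result into the linearized residual, and reading off the normal equations of a linear least-squares problem should give the whole proof. The only genuinely new ingredient relative to Proposition~\ref{prop:adjoint_grad} is the sensitivity system \eqref{eq:gn_sensitivity}.

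First, the identities $F(z^*(\theta'),\theta')=0$ and $\mathsf A z^*(\theta')=\mathsf b$ hold for all $\theta'$ near $\theta$. This is implicit in the hypothesis that $z^*$ is differentiable as the inner-solution map; it is the same local assumption as in Proposition~\ref{prop:adjoint_grad}. We apply the chain rule at $\theta'=\theta$ in the direction $d$. Because $F$ is differentiable at $(z^*(\theta),\theta)$ and $z^*$ is differentiable at $\theta$, the derivative of $\theta'\mapsto F(z^*(\theta'),\theta')$ in direction $d$ is $F_z\,Dz^*(\theta)d+F_\theta d$, and this must vanish. The linear constraint differentiates to $\mathsf A\,Dz^*(\theta)d=0$, since $\mathsf b$ is constant. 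Setting $\delta z_d=Dz^*(\theta)d$ gives \eqref{eq:gn_sensitivity}.

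Second, differentiate the three blocks of $r$ in \eqref{eq:gn-residual-def}. The first block is $\sqrt\beta(Wz^*(\theta)-z^{\mathrm o})$; it depends on $\theta$ only through $z^*$, so its directional derivative is $\sqrt\beta\,W\delta z_d$ by linearity of $W$. The remaining two blocks are affine in $\theta$, so their derivatives are $\sqrt\gamma\,Gd$ and $\sqrt\alpha\,Jd$. This yields the stated formula for $\mathsf J_r(\theta)d$.

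Third, the subproblem \eqref{eq:gn-linearized-subproblem} is a convex quadratic $q(d)=\tfrac12\|r+\mathsf J_r d\|^2$ on $\mathbb R^\ell$, with $\mathsf J_r=\mathsf J_r(\theta)$ and $r=r(\theta)$. Its gradient is $\mathsf J_r^\top(r+\mathsf J_r d)$. A point minimizes a differentiable convex function on $\mathbb R^\ell$ exactly when its gradient vanishes, which is \eqref{eq:gn_lm_system}. Existence of a minimizer is also guaranteed: $-\mathsf J_r^\top r$ lies in $\operatorname{range}(\mathsf J_r^\top)=\operatorname{range}(\mathsf J_r^\top\mathsf J_r)$, so the normal equation is always solvable. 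Uniqueness is not claimed, which matches the statement's wording ``minimizers.'' Uniqueness would in fact follow when $\alpha>0$ and $J$ is invertible (e.g.\ $J^\top J=K_V(X,X)^{-1}$), since then $\mathsf J_r^\top\mathsf J_r\succeq \alpha J^\top J\succ 0$.

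The only delicate step is the first. The chain rule requires the constraint identities along a neighborhood of $\theta$, not merely at $\theta$. I would state explicitly that this is what the paper means by ``$z^*(\theta)$ is differentiable with respect to $\theta$'' as a solution map. I would also note that \eqref{eq:gn_sensitivity} may not determine $\delta z_d$ uniquely when $F_z$ is singular; the formula uses the actual derivative $Dz^*(\theta)d$, so no solvability issue arises.
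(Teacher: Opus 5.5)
Your proposal is correct and follows the paper's proof essentially step for step. You differentiate the inner system to obtain the sensitivity equations, differentiate the three blocks of $r(\theta)$ to get $\mathsf J_r(\theta)d$, and set the gradient of the convex quadratic $\tfrac12\|r(\theta)+\mathsf J_r(\theta)d\|_2^2$ to zero. Your extra remarks make explicit points the paper leaves implicit: the inner identities must hold on a neighborhood of $\theta$ for the chain rule, convexity makes the first-order condition sufficient, and the normal equation is always solvable since $\operatorname{range}(\mathsf J_r^\top)=\operatorname{range}(\mathsf J_r^\top\mathsf J_r)$.
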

The proof is given in \Cref{app:appendix}. Once the GN increment $d^k$ has been computed at $\theta^k$, we update
\[
\theta^{k+1}=\theta^k+d^k,
\]
and then recompute the equilibrium state by solving \eqref{eq:eq-constraints-linear} at $\theta^{k+1}$.

In \Cref{sec:numerics}, we observe that GN typically reaches comparable accuracy in fewer outer iterations than GD. Computationally, the GD method requires solving the adjoint system \eqref{eq:adjoint} to evaluate the gradient. The GN method instead relies on the sensitivity system \eqref{eq:gn_sensitivity}, and its additional cost comes from solving \eqref{eq:gn_lm_system}. This step can be carried out matrix-free using a Krylov method such as conjugate gradients, without explicitly forming \(\mathsf{J}_r(\theta)^\top \mathsf{J}_r(\theta)\).

\section{Numerical Results}
\label{sec:numerics}
In this section, we present numerical results for the HRF method and the solver-agnostic inverse framework. We study two solvers for the minimization of the inverse problems. The first is the adjoint-based GD method presented in \Cref{sec:inverse-adjoint-weighted}, which computes gradients via the adjoint method. The second is the GN method presented in \Cref{sec:inverse-gn}. Unless otherwise specified, all forward problems are solved by applying an
implicit Euler scheme to the HRF in the artificial-time variable. We present experiments on several stationary and time-dependent MFG examples. Two examples are worth highlighting. In \Cref{2DMFGexample2}, we consider a stationary non-potential MFG satisfying the Lasry--Lions monotonicity condition, demonstrating that our framework extends beyond the potential MFG setting. In \Cref{2DMFG_solver_free}, we apply the same inverse framework with different forward solvers and show that the reconstruction quality remains consistent across solvers, demonstrating the solver-agnostic property of our framework.

The convergence theorem in Section~\ref{sec:tdmfg} is proved for the
time-dependent MFG model with Hamiltonian \(H=H(x,p)\) and local
Lasry--Lions coupling \(f(m)\). The inverse framework in
Section~\ref{sec:inverse_stationary} is stated more generally, directly in terms
of the discretized MFG system. Therefore, in the inverse-problem experiments,
we also consider MFGs with congestion Hamiltonians and nonlocal couplings as
numerical tests of the inverse framework with the chosen inner solver. These examples are included to examine the numerical behavior of the HRF beyond the setting covered by the convergence result in Section~\ref{sec:tdmfg}. They suggest that the HRF remains effective for more general MFG models, although a complete convergence analysis for such models would require additional arguments.

We assess the mismatch between the recovered function and a reference function using a discrete \(L^2\) metric. Let \(u\) and \(v\) be functions on \([a,b]^2\), sampled on a uniform grid with spacings \(h_x\) and \(h_y\) in the \(x\)- and \(y\)-directions, producing arrays \(\{u_{ij}\}\) and \(\{v_{ij}\}\). The discrete \(L^2\) error is defined by
\begin{align}
\mathcal{E}_2(u,v)
:= \sqrt{\,h_x h_y \sum_{i,j} \bigl|u_{ij} - v_{ij}\bigr|^2}\,.
\end{align}
In the one-dimensional experiments, we use the  analogue $\sqrt{h_x\sum_i |u_i-v_i|^2}$.

In all experiments, the algorithm terminates when the absolute change in the objective value in \eqref{eq:reduced-objective} between two successive iterations falls below $10^{-6}$. 

\subsection{A One-Dimensional Stationary MFG Inverse Problem}
\label{1DMFGexample1}
In this subsection, we consider the inverse problem for the following one-dimensional stationary MFG posed on the torus \(\mathbb{T}\):
\begin{equation}
\begin{cases}
\frac{|P+D_xu|^2}{2}-V(x) = \lambda+\frac{1}{k} \ln m, 
& x \in \mathbb{T}, \\
-\!\left(m (P + D_xu)\right)_x=0, 
& x \in \mathbb{T},
\end{cases}
\label{eq:stationaryMFG_oneDim_inverse_effHam}
\end{equation}
where \(k=100\), \(P=2\), and the spatial cost is chosen as 
  $
  V(x)=\sin(2\pi x)\bigl(1+0.35\cos(4\pi x)\bigr).
  $ For this choice of data, we compute a reference solution \((u^*,m^*,\lambda^*)\) numerically with the HRF solver. In this experiment, our objective is to reconstruct \(u\), \(m\), \(\lambda\), and \(V\) in \eqref{eq:stationaryMFG_oneDim_inverse_effHam} from partial noisy observations of \(m\) and \(V\).

\textbf{Experimental Setup.}
  We identify \(\mathbb{T}\) with the interval \([0,1)\). We solve
  \eqref{eq:stationaryMFG_oneDim_inverse_effHam} using the HRF method of~\cite{gomes2020hessian} to compute
  the reference solution \((u^*,m^*,\lambda^*)\). The domain \(\mathbb{T}\) is discretized with grid spacing
  \(h_x=1/100\), yielding 100 grid points. We select 12 grid points as observation locations for \(m\) and
  12 grid points as observation locations for \(V\). The regularization parameters in
  \eqref{eq:OptGPProb_st} are set to \(\alpha=0.002\), \(\beta=2\), and \(\gamma=2\). We contaminate the
  observations with i.i.d.\ Gaussian noise \(\mathcal{N}(0,\eta^2 I)\) with \(\eta=10^{-3}\). To solve the
  inverse problem, we initialize \(u\equiv 0\), \(m\equiv 1\), and \(V\equiv 0\). We then apply the adjoint-based method and the GN method described in \Cref{sec:inverse-adjoint-weighted} and \Cref{sec:inverse-gn},
  respectively.
  

\textbf{Experimental Results.} 
Table~\ref{table:1D_ErgodicMFGeffHamilton} reports the HRF reference value of \(\lambda\) for \eqref{eq:stationaryMFG_oneDim_inverse_effHam} together with the values recovered by GD and GN. Figure~\ref{1D_ErgodicMFGwithEffectiveHamiltonianPlot} summarizes the numerical results: the HRF reference density \(m^*\) is shown in Figure~\ref{1D_ErgodicMFGwithEffectiveHamiltonianReferenceM}, with the GD and GN reconstructions in Figures~\ref{1D_ErgodicMFGwithEffectiveHamiltonianRecoverdM_GD} and~\ref{1D_ErgodicMFGwithEffectiveHamiltonianRecoverdM_GN}. The same figure also includes the corresponding reference and reconstructed profiles for \(u\) and \(V\). Reconstruction errors are reported as pointwise absolute errors, e.g., \(|m-m^*|\), with analogous curves for \(u\) and \(V\). Finally, Figure~\ref{1D_ErgodicMFGwithEffectiveHamiltonianLoss} compares the outer-level objective in \eqref{eq:OptGPProb_st} and shows that GN converges in substantially fewer iterations than GD.

\begin{table}[h]
    \centering
    \caption{Numerical results for \(\lambda\) in the first-order stationary MFG
  \eqref{eq:stationaryMFG_oneDim_inverse_effHam} using different methods. The reference solution is computed
  by the HRF \cite{gomes2020hessian}, and the recovered solution is obtained by the GD method and the GN
  method.}
    \label{table:1D_ErgodicMFGeffHamilton}
    \begin{tabular}{|c|c|c|c|}
      \hline
      Method & Reference & GD & GN \\
      \hline
      \(\lambda\) & 2.04463428454 & 2.04405032726 & 2.04406397895\\
      \hline
    \end{tabular}
  \end{table}

\begin{figure}[!htbp]
    \centering
    \begin{subfigure}[b]{0.23\textwidth}
        \centering
        \includegraphics[width=\linewidth]{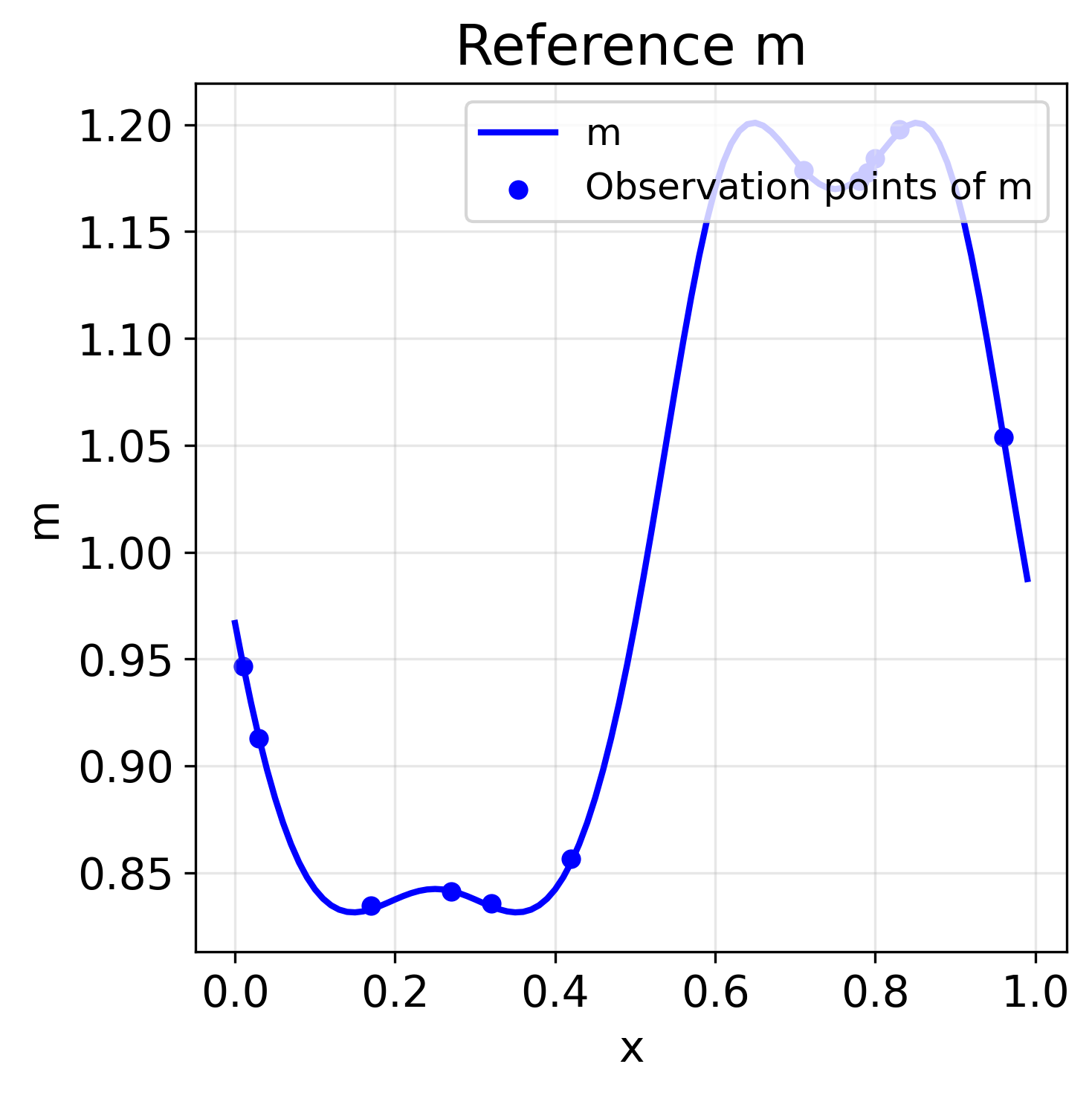}
        \caption{$m$ reference}
        \label{1D_ErgodicMFGwithEffectiveHamiltonianReferenceM}
    \end{subfigure}%
    \hspace{1mm}
    \begin{subfigure}[b]{0.23\textwidth}
        \centering
        \includegraphics[width=\linewidth]{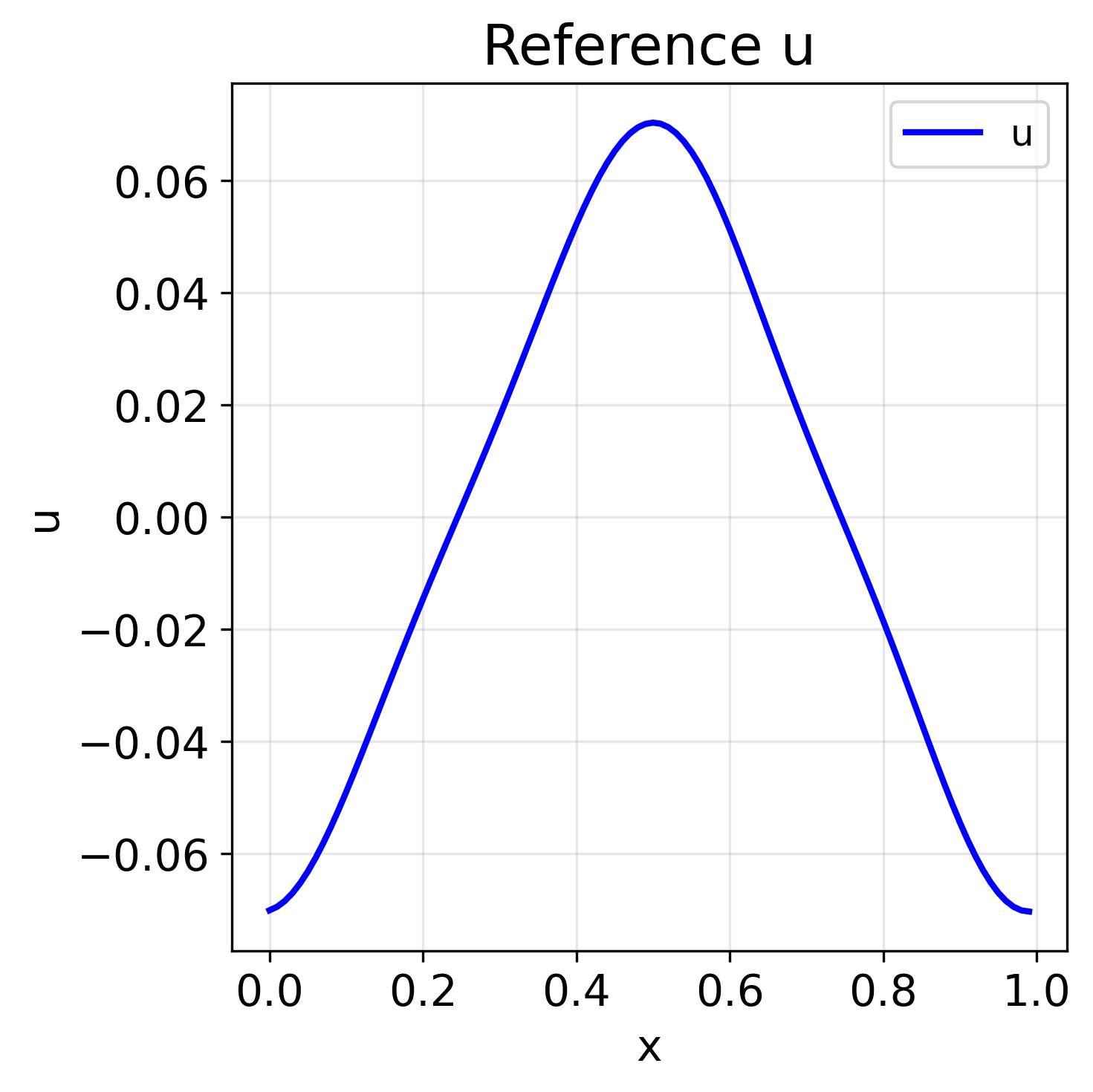}
        \caption{$u$ reference}
        \label{1D_ErgodicMFGwithEffectiveHamiltonianReferenceU}
    \end{subfigure}%
    \hspace{1mm}
    \begin{subfigure}[b]{0.23\textwidth}
        \centering
        \includegraphics[width=\linewidth]{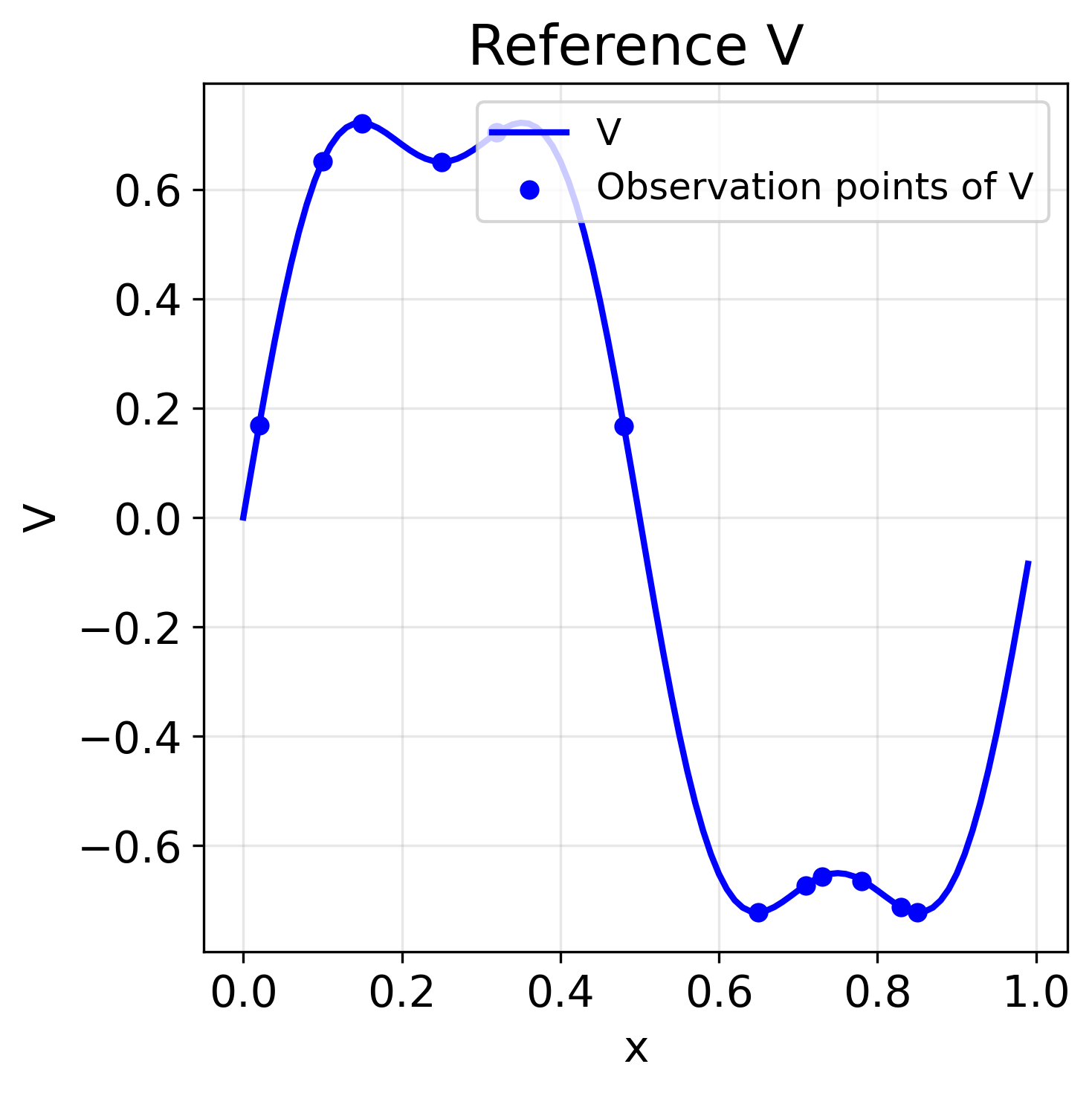}
        \caption{$V$ reference}
        \label{1D_ErgodicMFGwithEffectiveHamiltonianReferenceV}
    \end{subfigure}%
    \hspace{1mm}
    \begin{subfigure}[b]{0.23\textwidth}
        \centering
        \includegraphics[width=\linewidth]{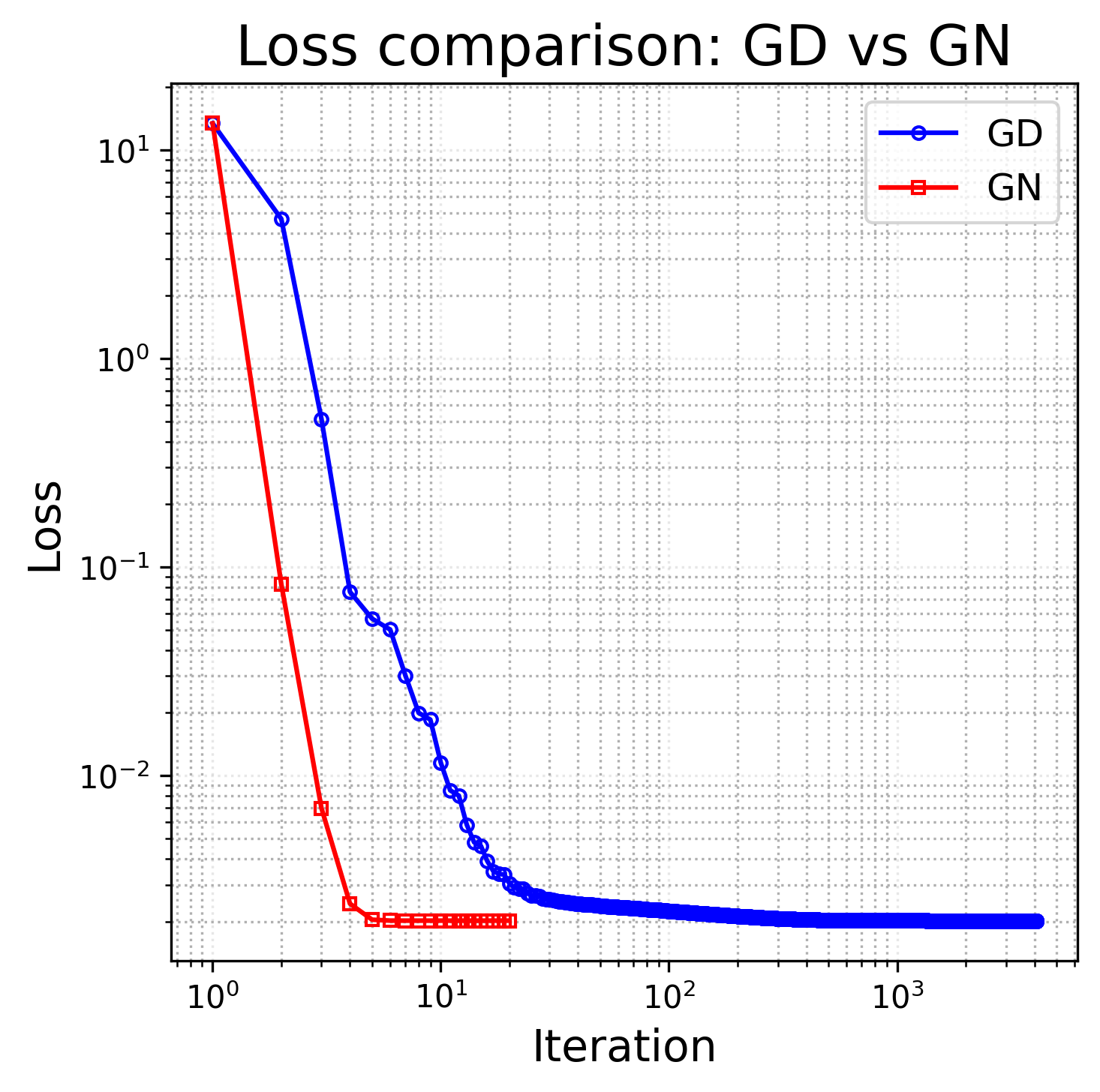}
        \caption{Loss comparison}
        \label{1D_ErgodicMFGwithEffectiveHamiltonianLoss}
    \end{subfigure}
    
    \begin{subfigure}[b]{0.23\textwidth}
        \centering
        \includegraphics[width=\linewidth]{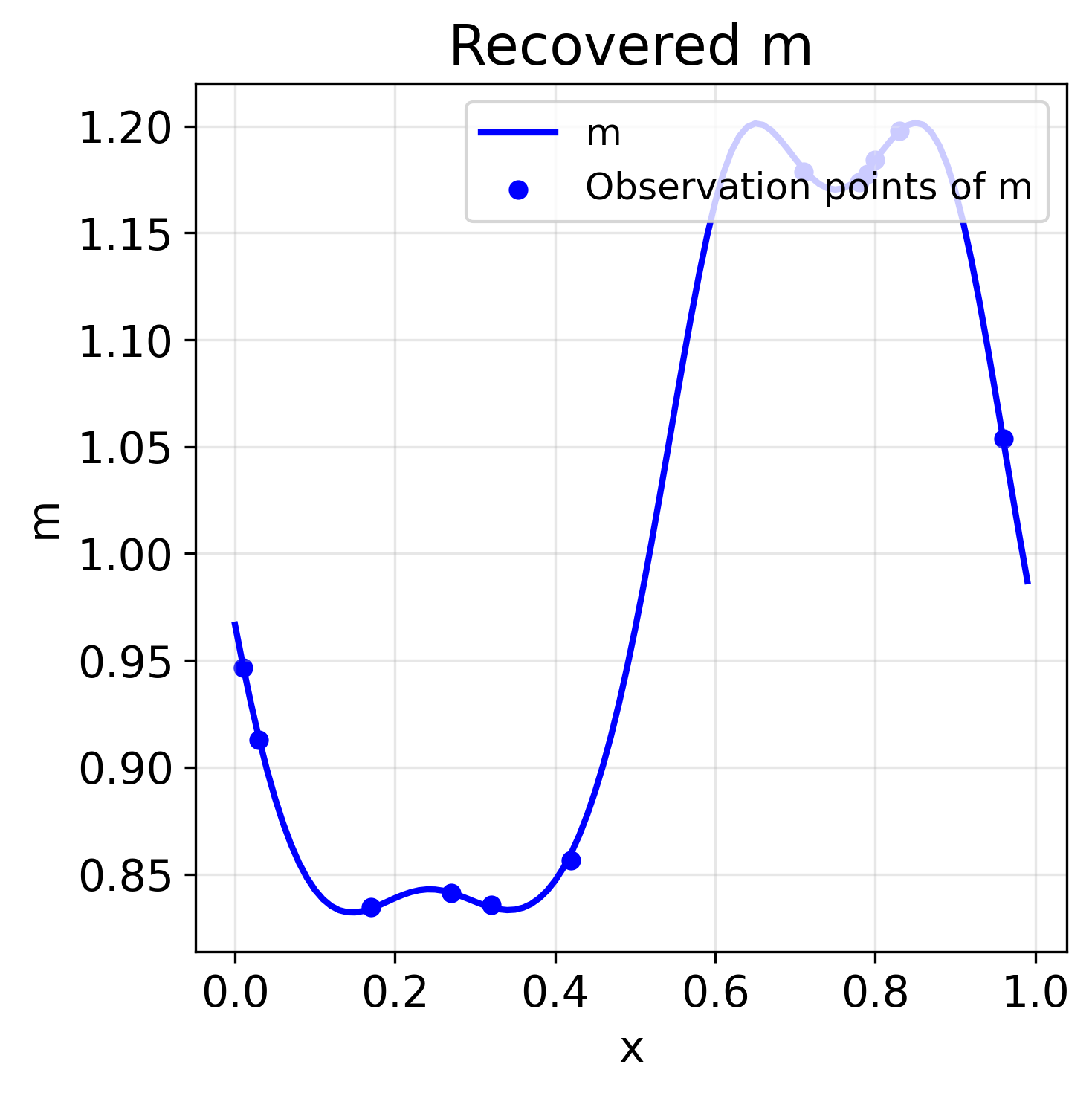}
        \caption{Recovered $m$ via GD}
        \label{1D_ErgodicMFGwithEffectiveHamiltonianRecoverdM_GD}
    \end{subfigure}%
    \hspace{1mm}
    \begin{subfigure}[b]{0.23\textwidth}
        \centering
        \includegraphics[width=\linewidth]{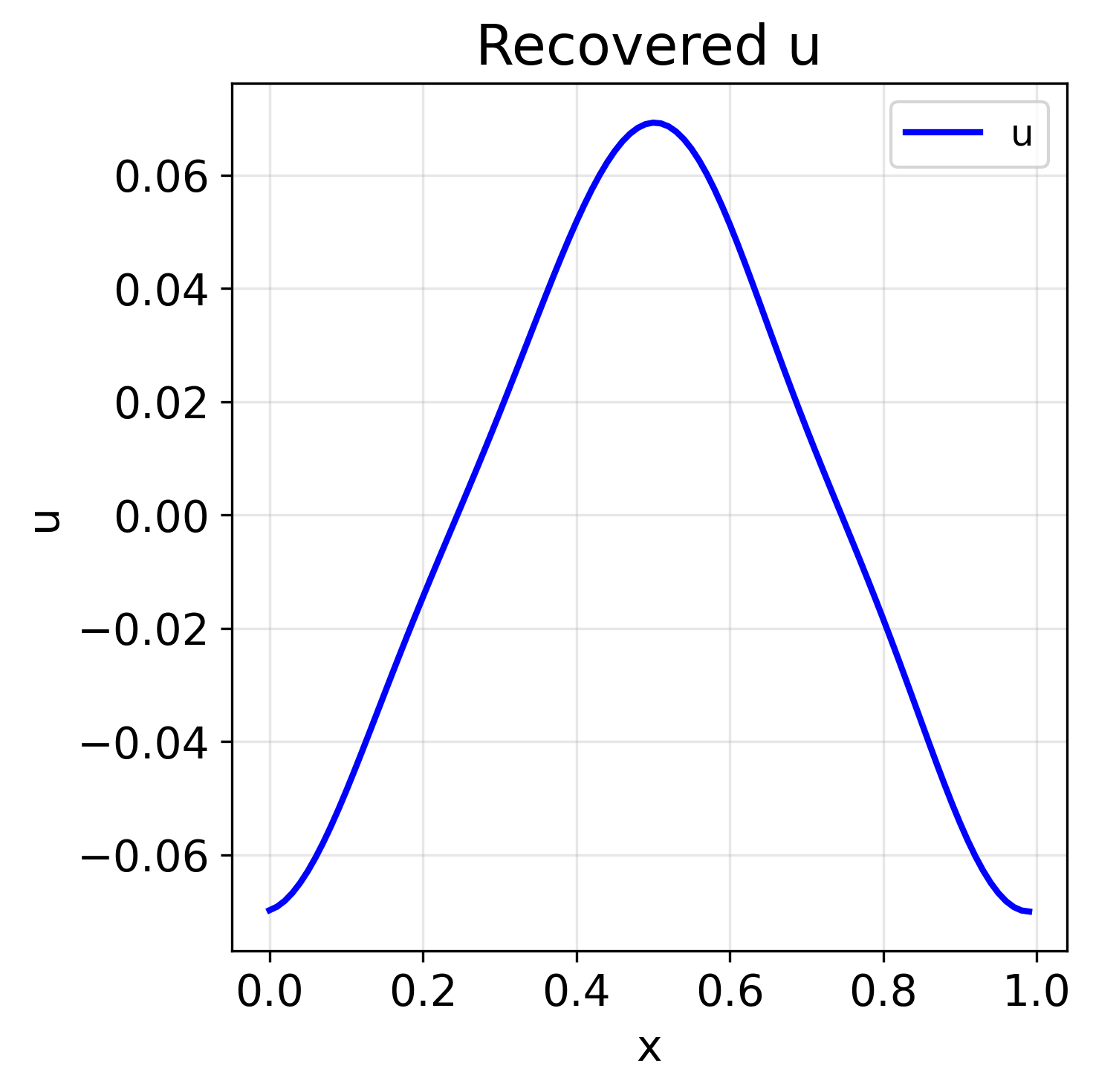}
        \caption{Recovered $u$ via GD}
        \label{1D_ErgodicMFGwithEffectiveHamiltonianRecoverdU_GD}
    \end{subfigure}%
    \hspace{1mm}
    \begin{subfigure}[b]{0.23\textwidth}
        \centering
        \includegraphics[width=\linewidth]{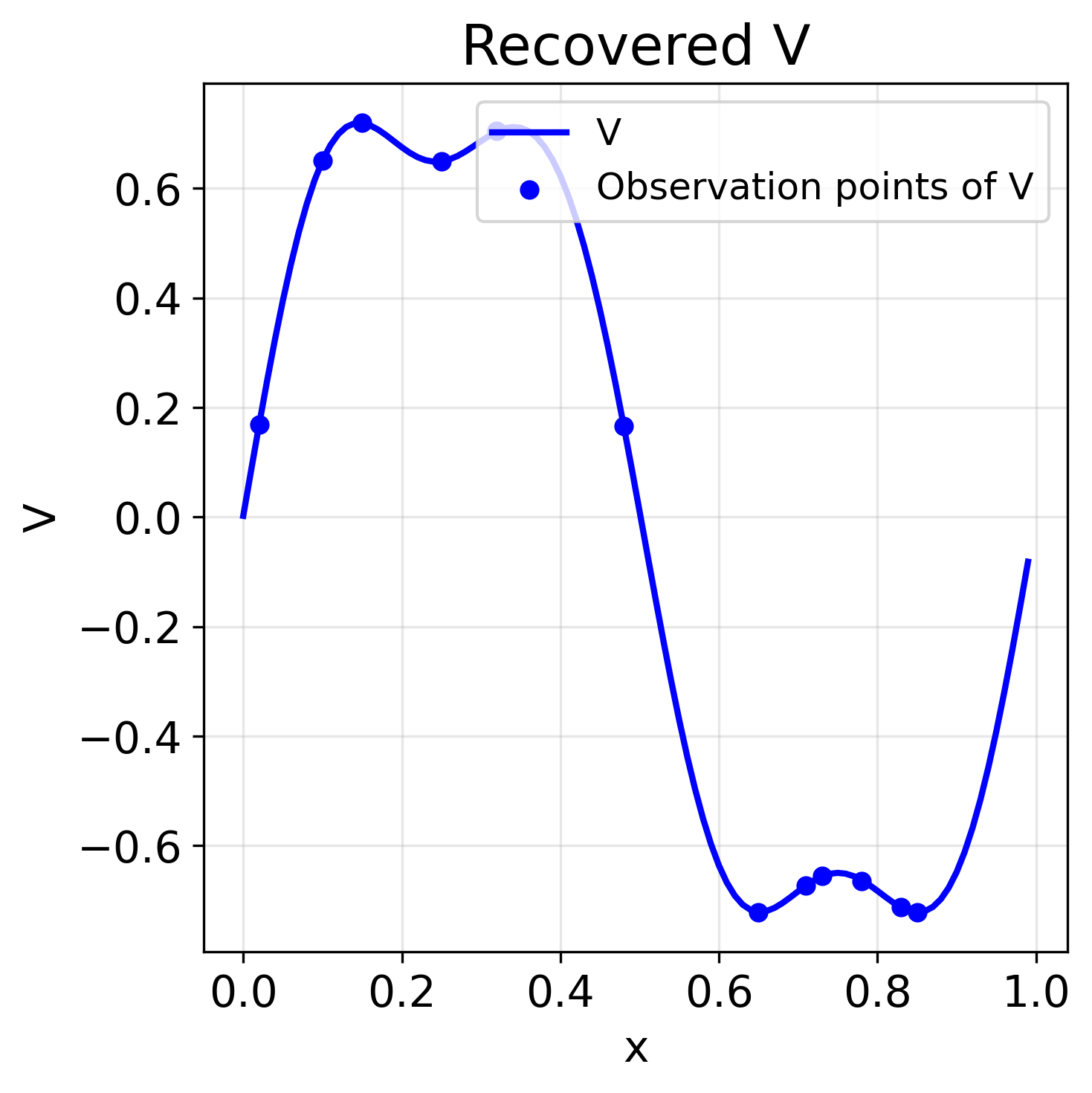}
        \caption{Recovered $V$ via GD}
        \label{1D_ErgodicMFGwithEffectiveHamiltonianRecoverdV_GD}
    \end{subfigure}%
    \hspace{1mm}
    \begin{subfigure}[b]{0.23\textwidth}
        \centering
        \includegraphics[width=\linewidth]{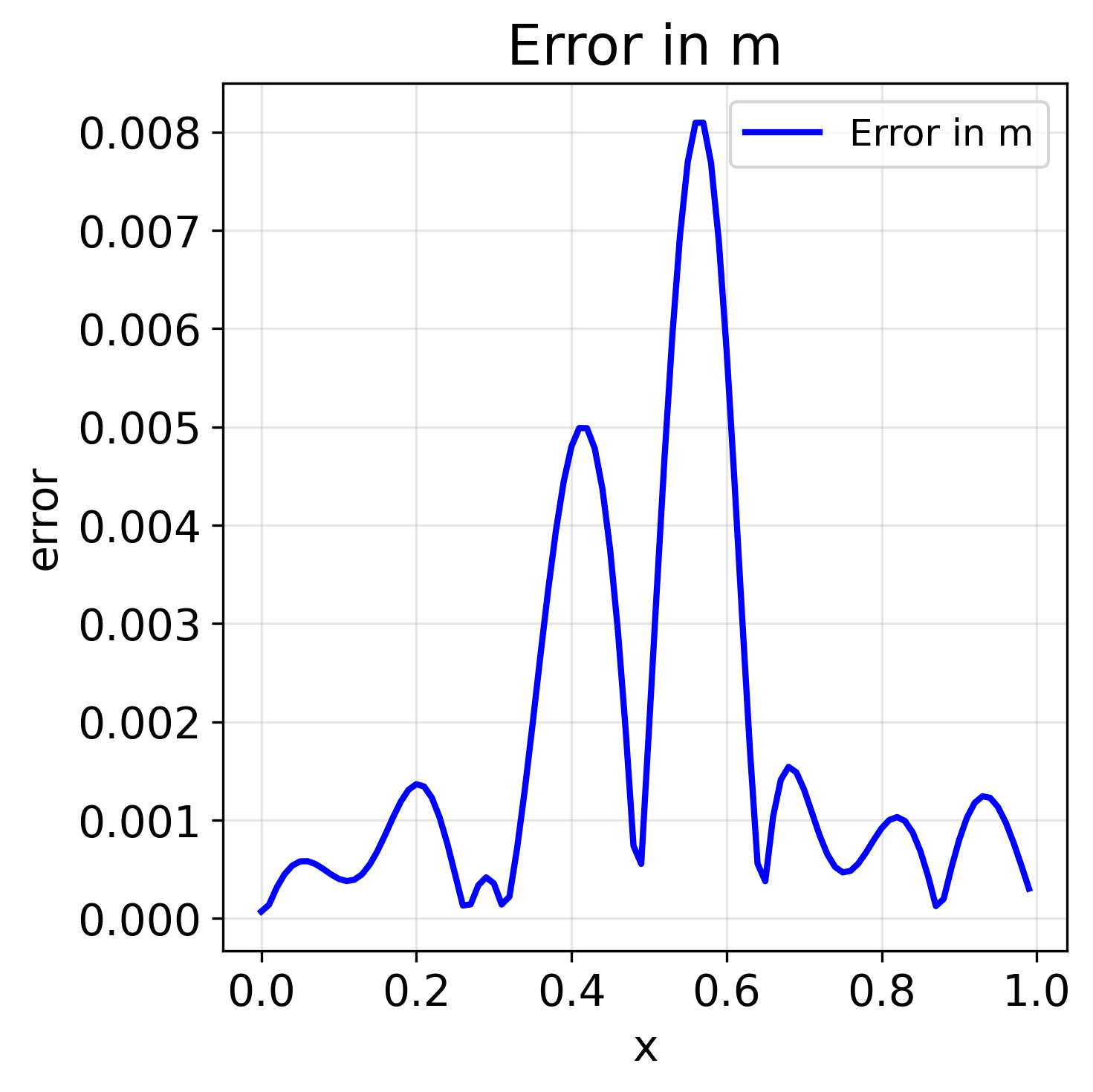}
        \caption{Error of $m$ via GD}
        \label{1D_ErgodicMFGwithEffectiveHamiltonianErrorM_GD}
    \end{subfigure}
    
    \begin{subfigure}[b]{0.23\textwidth}
        \centering
        \includegraphics[width=\linewidth]{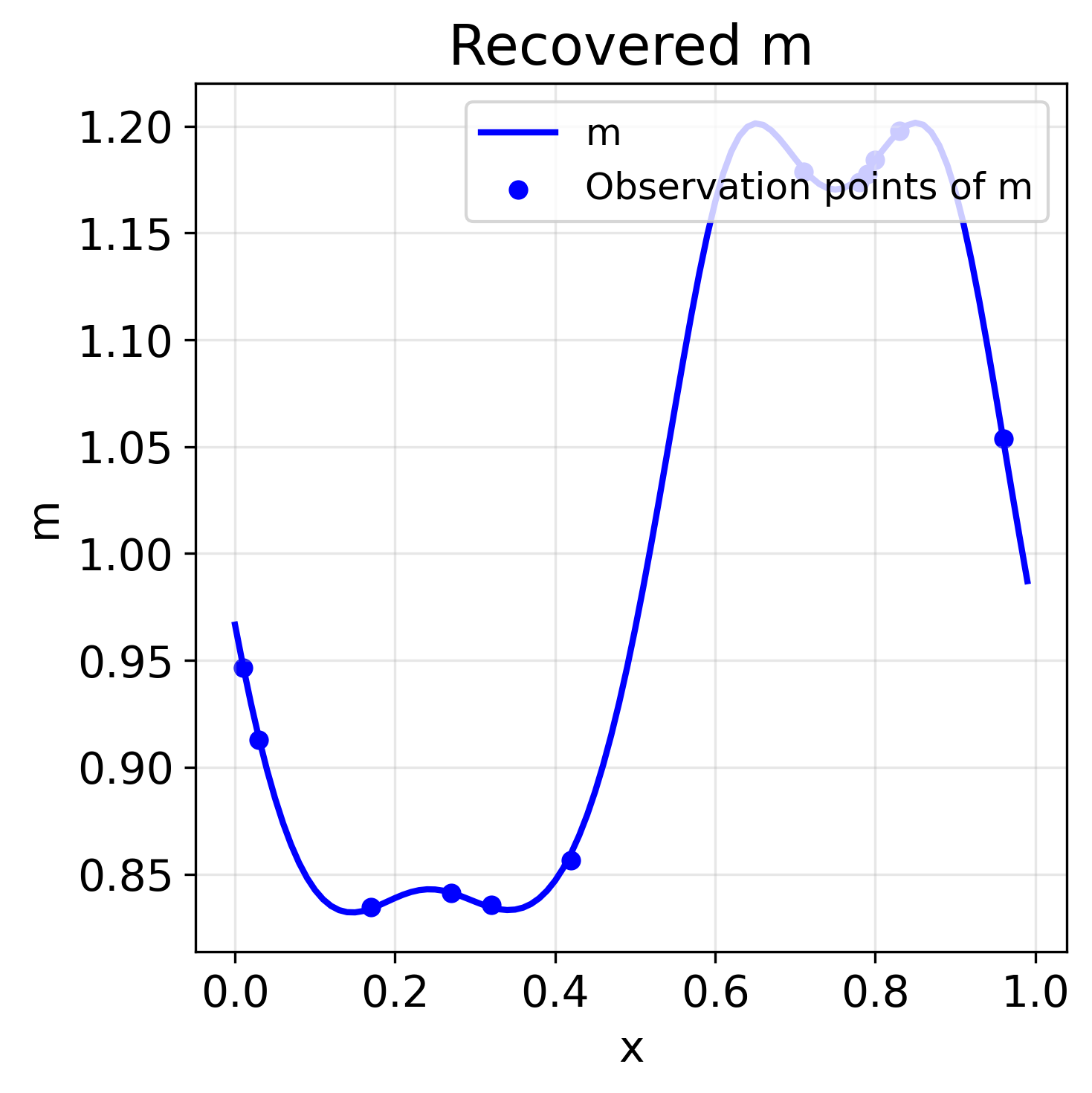}
        \caption{Recovered $m$ via GN}
        \label{1D_ErgodicMFGwithEffectiveHamiltonianRecoverdM_GN}
    \end{subfigure}%
    \hspace{1mm}
    \begin{subfigure}[b]{0.23\textwidth}
        \centering
        \includegraphics[width=\linewidth]{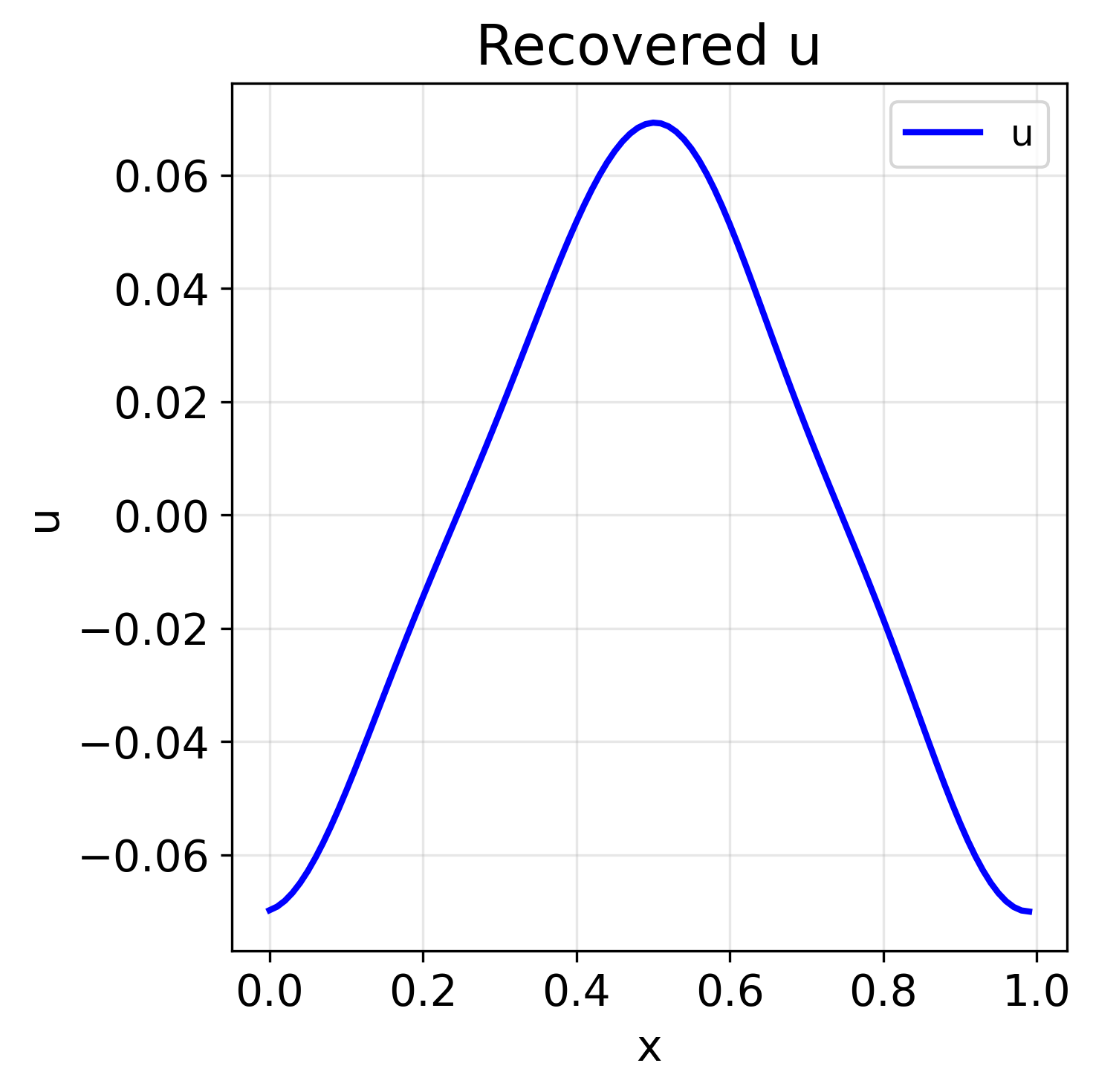}
        \caption{Recovered $u$ via GN}
        \label{1D_ErgodicMFGwithEffectiveHamiltonianRecoverdU_GN}
    \end{subfigure}%
    \hspace{1mm}
    \begin{subfigure}[b]{0.23\textwidth}
        \centering
        \includegraphics[width=\linewidth]{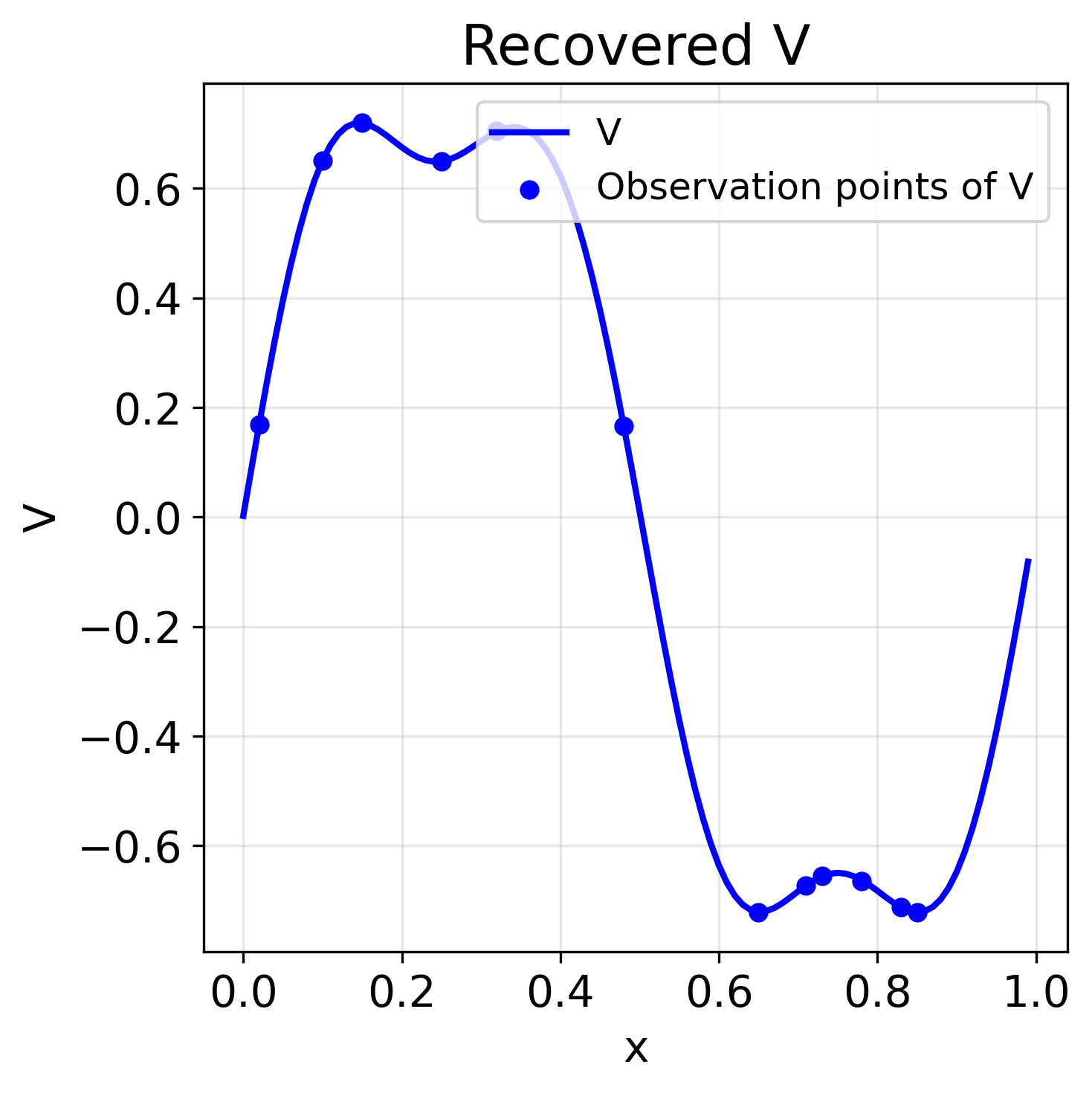}
        \caption{Recovered $V$ via GN}
        \label{1D_ErgodicMFGwithEffectiveHamiltonianRecoverdV_GN}
    \end{subfigure}%
    \hspace{1mm}
    \begin{subfigure}[b]{0.23\textwidth}
        \centering
        \includegraphics[width=\linewidth]{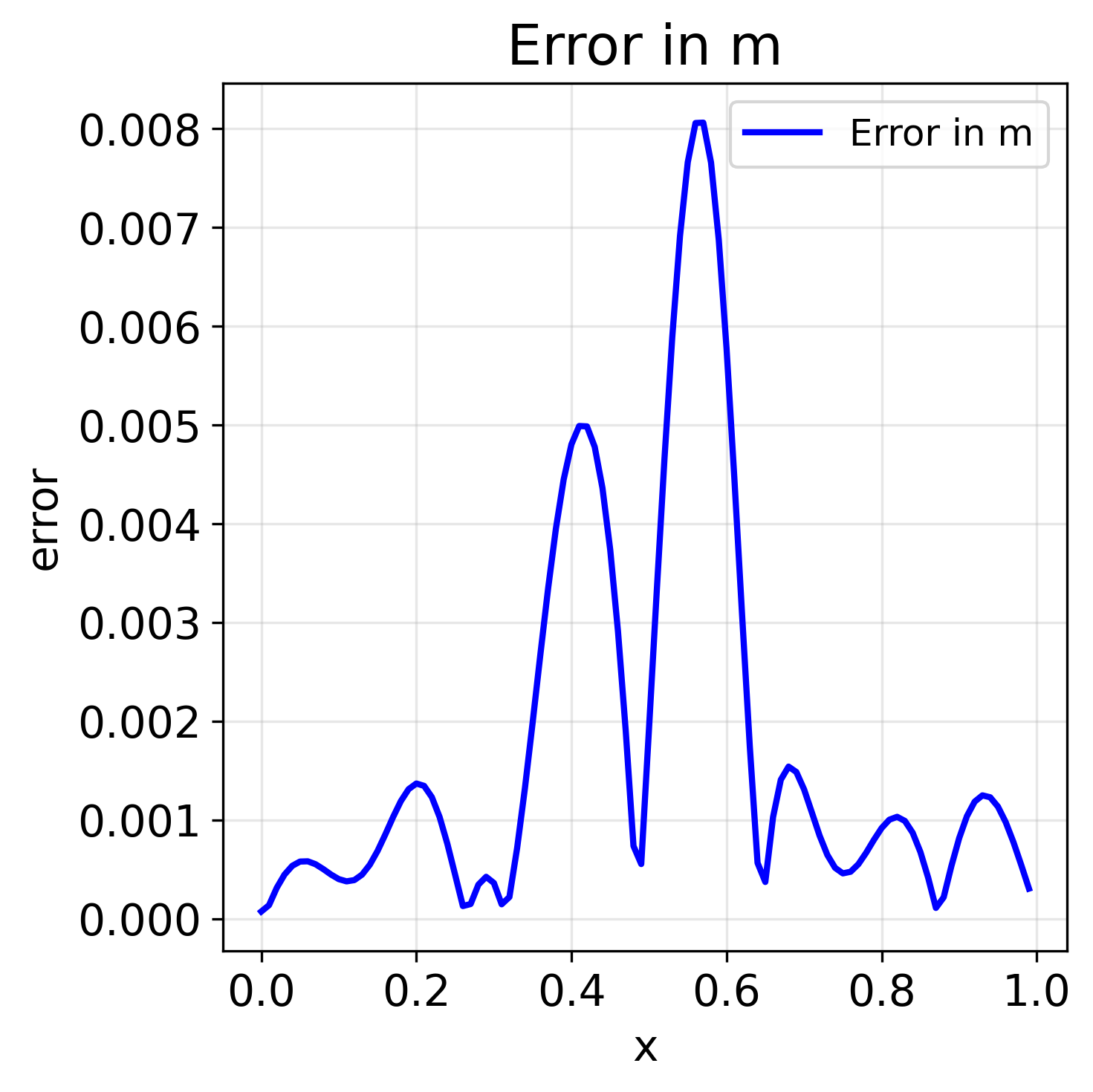}
        \caption{Error of $m$ via GN}
        \label{1D_ErgodicMFGwithEffectiveHamiltonianErrorM_GN}
    \end{subfigure}
    
    \begin{subfigure}[b]{0.23\textwidth}
        \centering
        \includegraphics[width=\linewidth]{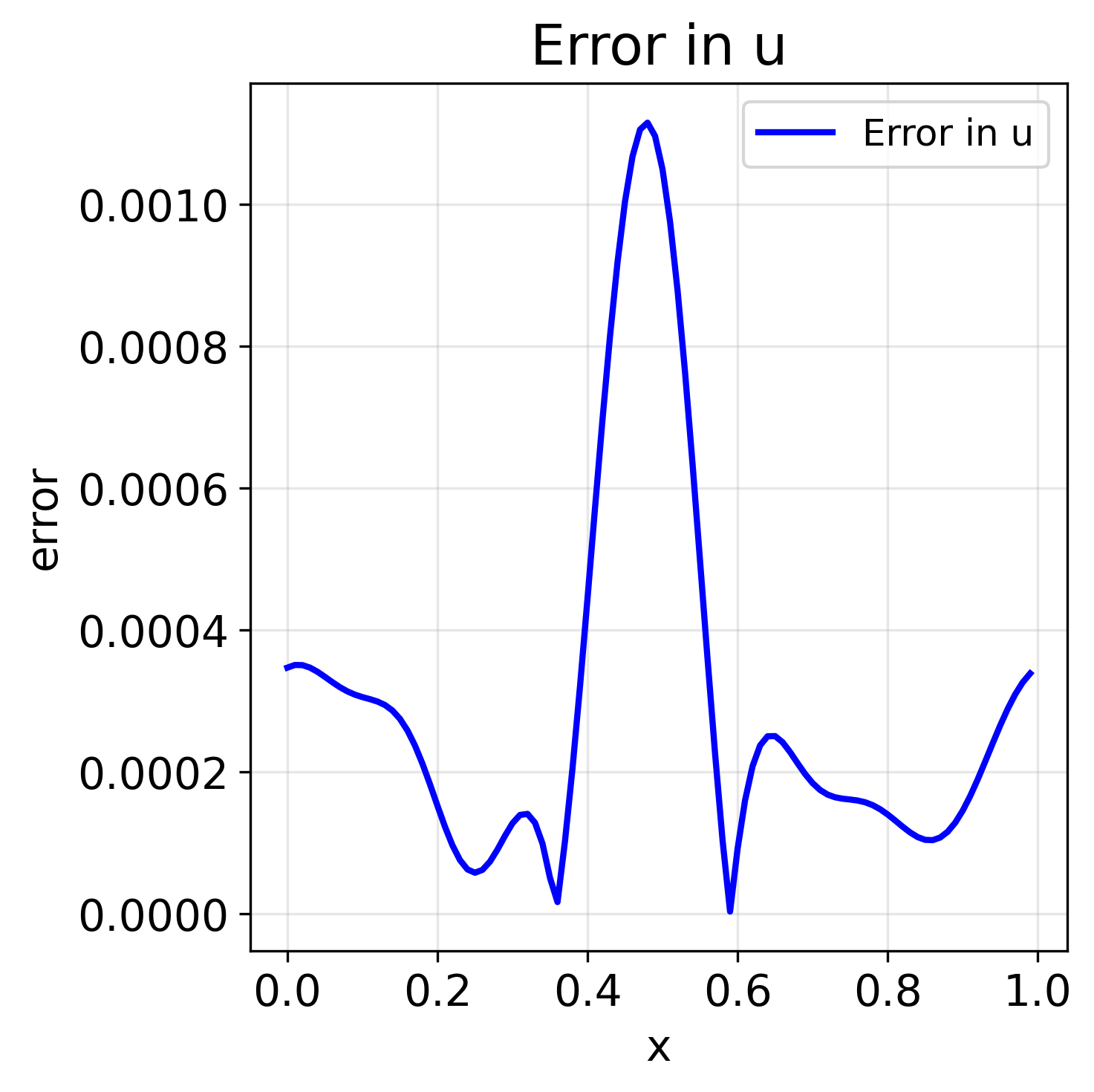}
        \caption{Error of $u$ via GD}
        \label{1D_ErgodicMFGwithEffectiveHamiltonianErrorU_GD}
    \end{subfigure}
    \hspace{1mm}%
    \begin{subfigure}[b]{0.23\textwidth}
        \centering
        \includegraphics[width=\linewidth]{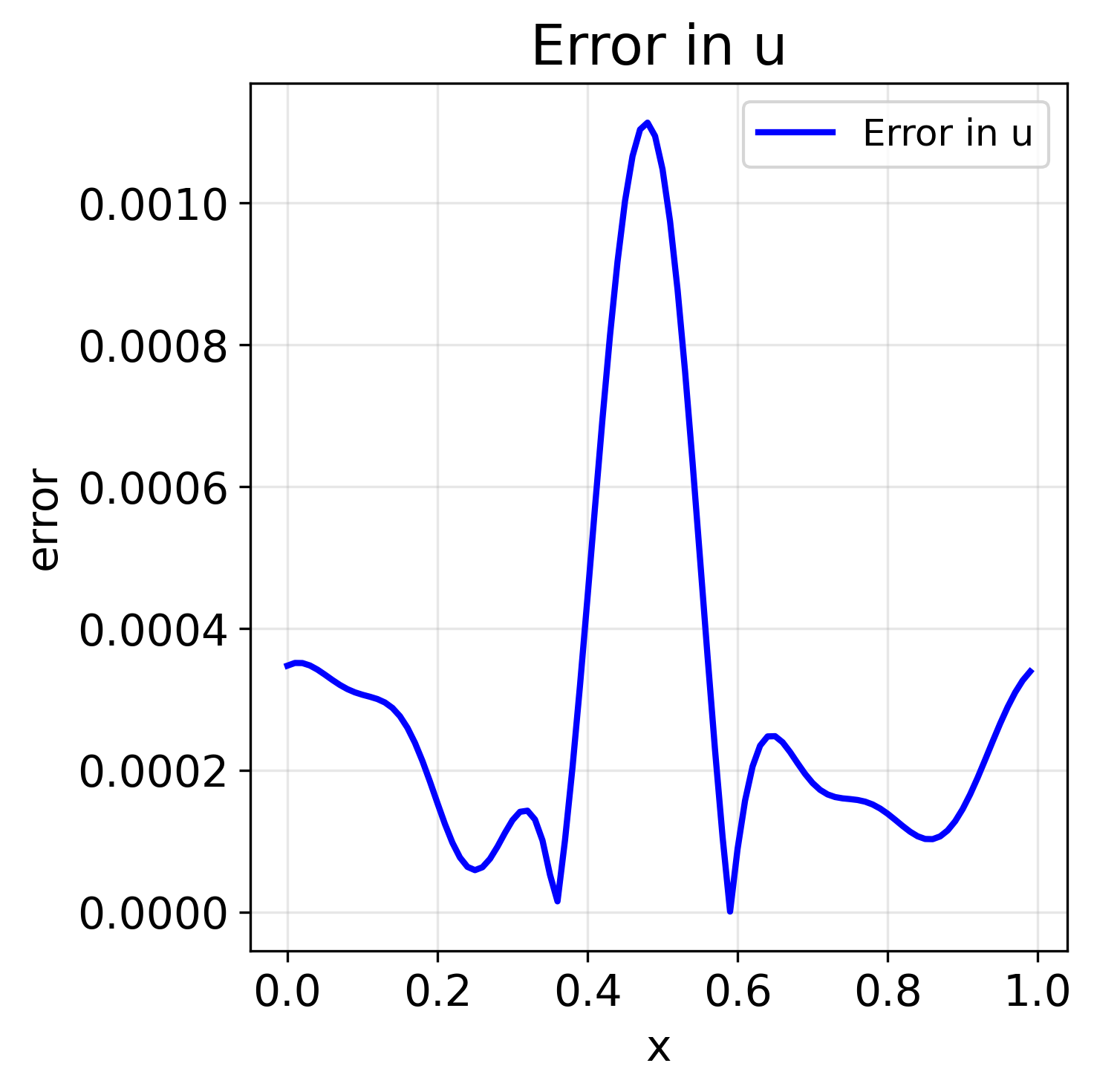}
        \caption{Error of $u$ via GN}
        \label{1D_ErgodicMFGwithEffectiveHamiltonianErrorU_GN}
    \end{subfigure}
    \hspace{1mm}
    \begin{subfigure}[b]{0.23\textwidth}
        \centering
        \includegraphics[width=\linewidth]{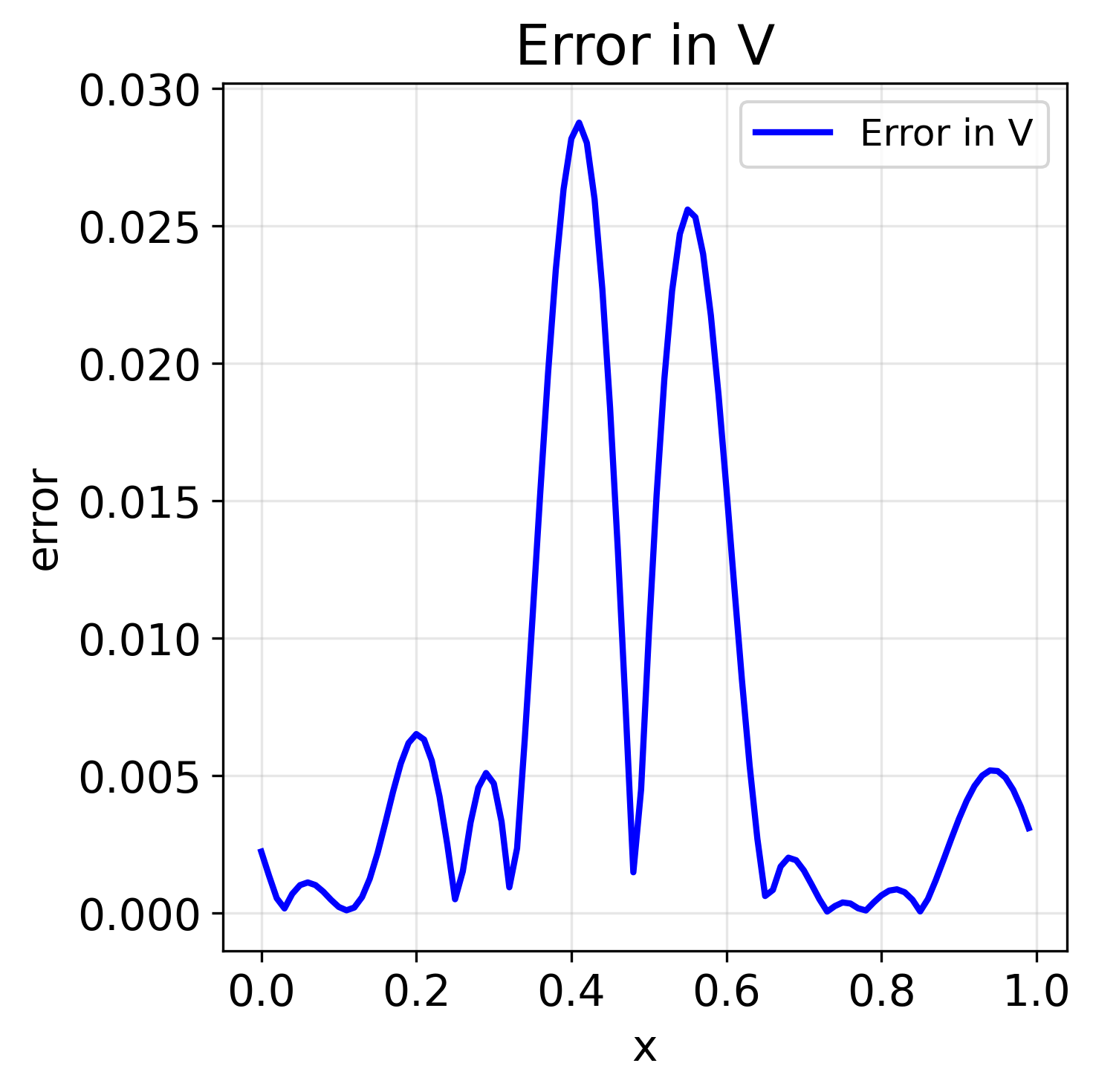}
        \caption{Error of $V$ via GD}   \label{1D_ErgodicMFGwithEffectiveHamiltonianErrorV_GD}
    \end{subfigure}%
    \hspace{1mm}%
    \begin{subfigure}[b]{0.23\textwidth}
        \centering
        \includegraphics[width=\linewidth]{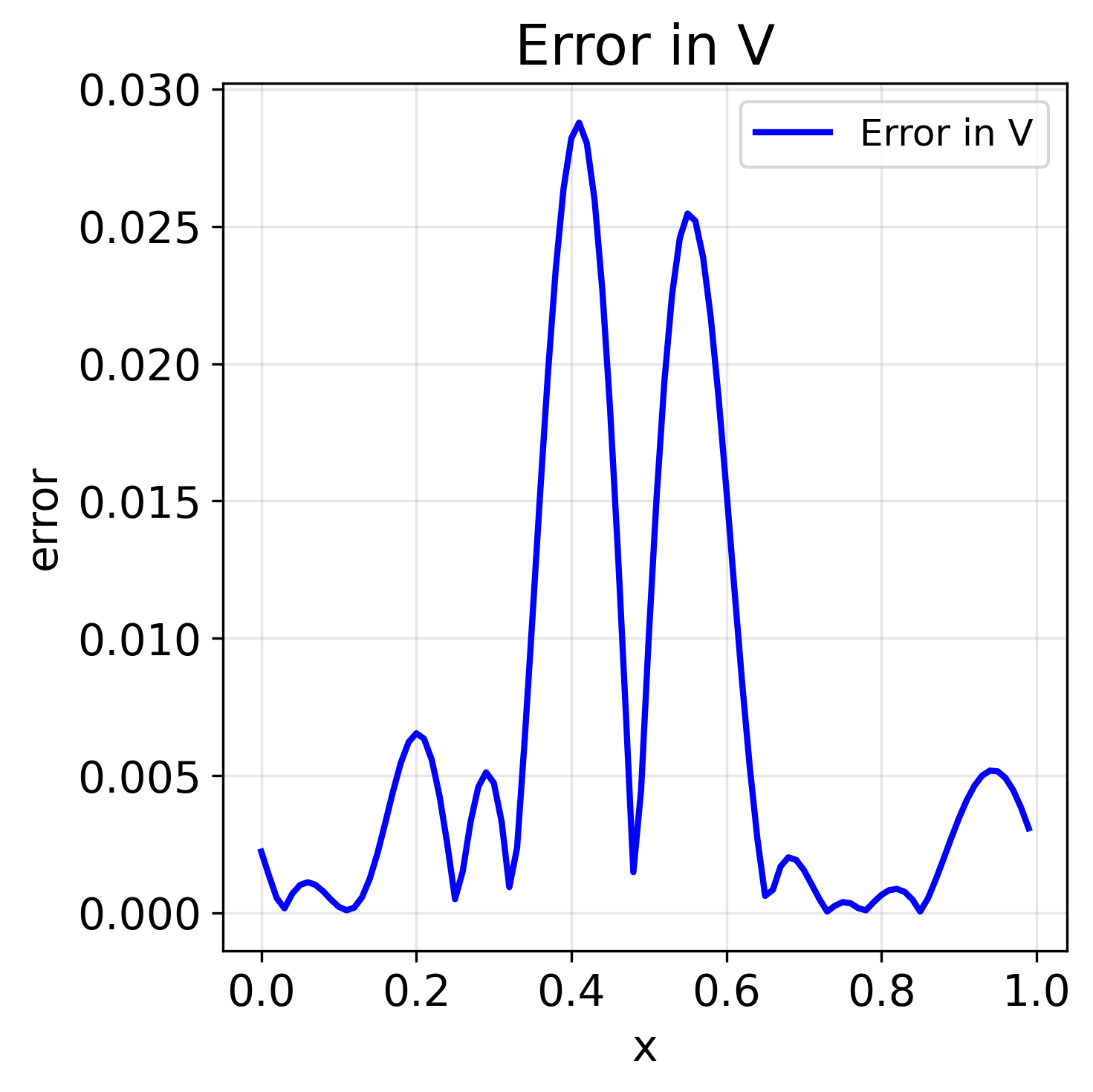}
        \caption{Error of $V$ via GN}
        \label{1D_ErgodicMFGwithEffectiveHamiltonianErrorV_GN}
    \end{subfigure}%

    \caption{Numerical results for the inverse problem of the one-dimensional stationary  MFG in \eqref{eq:stationaryMFG_oneDim_inverse_effHam}.  (a), (b), (c) are references for $m,u,V$; (d) log-log plot comparing the GD and GN losses across iterations; (e), (f), (g) recovered $m,u,V$ via GD; (h), (m), (o) errors of $m,u,V$ via GD; (i), (j), (k) recovered $m,u,V$ via GN; (l), (n), (p) errors of $m,u,V$ via GN.}
    \label{1D_ErgodicMFGwithEffectiveHamiltonianPlot}
\end{figure}

\subsection{Two-Dimensional Stationary MFG Inverse Problem}
In this study, we consider inverse problems for two stationary MFGs: a first-order model with congestion and a non-potential second-order model. We also examine the robustness of the solver-agnostic formulation by varying the inner forward solver used in the inverse problem.

\subsubsection{A First-Order Stationary MFG with Congestion}
\label{2DMFGexample1}
Here, we consider the first-order stationary MFG with congestion
\begin{align}
\label{eq:first_order_MFGS_congestion}
\begin{cases}
    \mathcal H(\nabla u,m)  = \lambda + m^3 + V(x, y), & (x,y)\in\mathbb{T}^2, \\
    -\div\!\bigl(D_p\mathcal{H}(\nabla u,m)\,m\bigr) = 0, & (x,y)\in\mathbb{T}^2, \\
    \displaystyle \int_{\mathbb{T}^2} u\,\mathrm{d}x\,\mathrm{d}y = 0,\qquad 
    \displaystyle \int_{\mathbb{T}^2} m\,\mathrm{d}x\,\mathrm{d}y = 1,
\end{cases}
\end{align}
where \(\lambda\in\mathbb{R}\).  The convergence theorem in Section~\ref{sec:tdmfg} is proved for separable
Hamiltonians. The congestion Hamiltonian considered here is
nonseparable because it depends on both \(p\) and \(m\), and is therefore not a
direct instance of the theorem. 
This example shows that the HRF method and the inverse pipeline remain applicable beyond the separable case. A convergence proof for this
nonseparable model would require a separate analysis of the corresponding
monotonicity and dissipation identities, and we keep the present experiment
focused on the numerical behavior of the method.  We take the congestion Hamiltonian $
\mathcal H(p,m)=\frac{|Q+p|^b}{b\,m^a}$, $b=2.0$,  $a=1.5$, $Q=(1,3)$, and 
$V(x,y)=2\sin\!\left(2\pi\Bigl(x+\tfrac{1}{4}\Bigr)\right)\cos\!\left(2\pi\Bigl(y+\tfrac{1}{4}\Bigr)\right).$
For this choice of coefficients, we compute a reference state \((u^*,m^*,\lambda^*)\) numerically with the HRF method. In this example, we study the inverse problem of recovering \(u\), \(m\), \(V\), and \(\lambda\) from partial noisy observations of \(m\) and \(V\).


\textbf{Experimental Setup.}
We identify \(\mathbb{T}^2\) with \([0,1)^2\) and discretize it with a uniform grid of spacing \(h_x=h_y=1/40\), yielding \(1600\) grid points. We then sample \(128\) observation locations for \(m\) and \(320\) observation locations for \(V\) independently and uniformly at random from \([0,1)^2\).
The regularization parameters are \(\alpha=0.04\), \(\beta=2\), and \(\gamma=2\). We add i.i.d.\ Gaussian noise \(\mathcal{N}(0,\eta^2 I)\) with \(\eta=10^{-3}\) to the observations. For the inverse problem, we initialize \(u\equiv 0\), \(m\equiv 1\), and \(V\equiv 0\). The reference solution \((u^*,m^*,\lambda^*)\) is computed by solving \eqref{eq:first_order_MFGS_congestion} with the HRF method. We then solve the inverse problem using GD and GN.

\textbf{Experimental Results.}
Figures~\ref{2D_ErgodicMFGwithCongestionSamplesM}--\ref{2D_ErgodicMFGwithCongestionSamplesV} visualize the collocation grid and the selected observation locations for \(m\) and \(V\). Figure~\ref{fig:L2_vs_obs_congestion_N40} reports reconstruction accuracy versus the number of observed spatial points, showing systematic decreases in the discrete \(L^2\) errors for \(m\), \(u\), and \(V\) as observation density increases. Table~\ref{table:2D_ErgodicMFGwithCongestionHbar} compares the HRF reference value of \(\lambda\) with the values recovered by GD and GN. Figure~\ref{2D_ErgodicMFGwithCongestionPlot} summarizes the inverse-recovery results for \eqref{eq:first_order_MFGS_congestion}: it includes the HRF reference fields, the GD and GN reconstructions, and the corresponding pointwise absolute error maps for \(m\), \(u\), and \(V\). Finally, Figure~\ref{2D_ErgodicMFGwithCongestionLoss} shows that GN converges faster than GD.
\begin{figure}[!htbp]
    \centering%
    \begin{subfigure}[b]{0.23\textwidth}
        \centering
        \includegraphics[width=\linewidth]{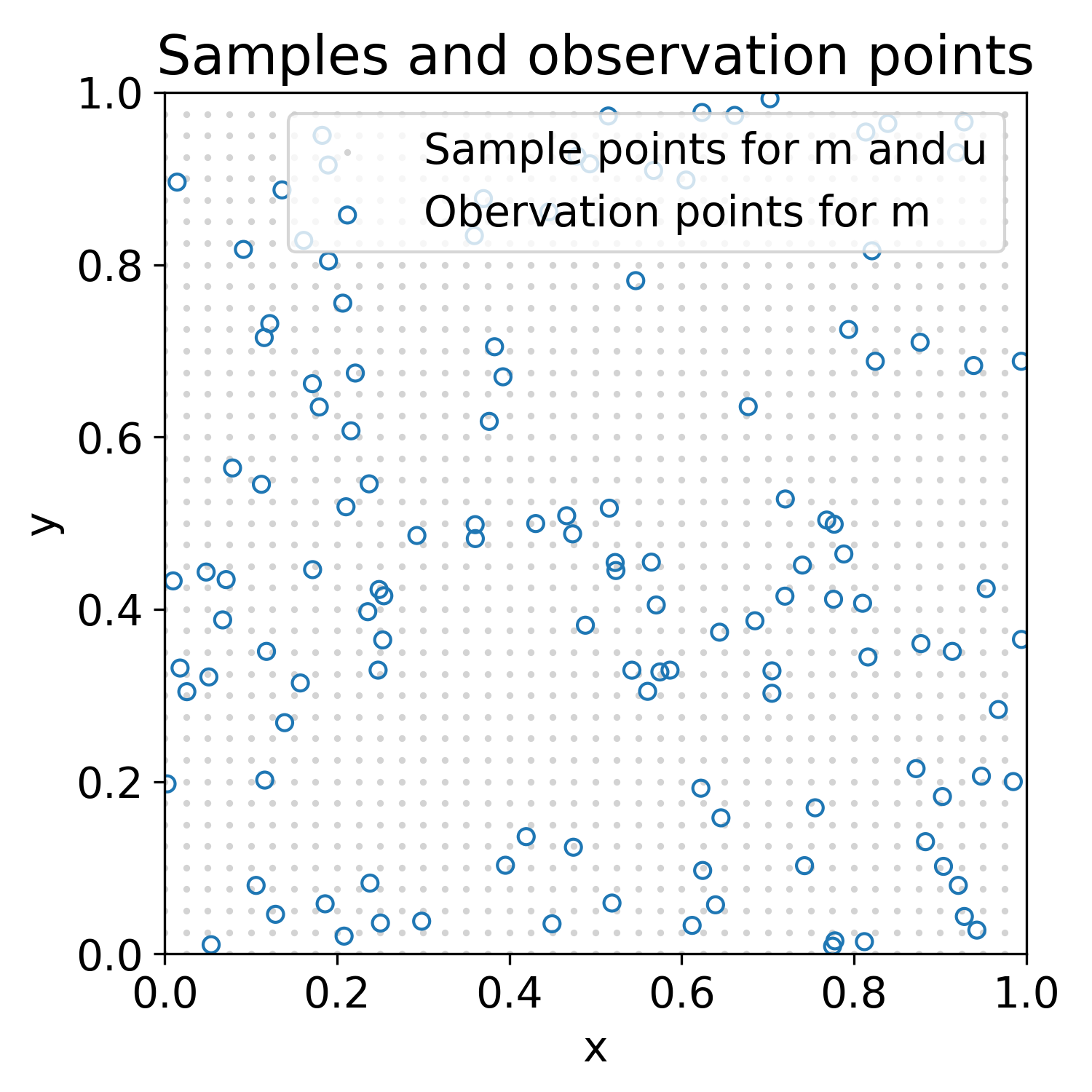}
        \caption{Samples and observations for $m$}
        \label{2D_ErgodicMFGwithCongestionSamplesM}
    \end{subfigure}
    \hspace{1mm}  
    \begin{subfigure}[b]{0.23\textwidth}
        \centering
        \includegraphics[width=\linewidth]{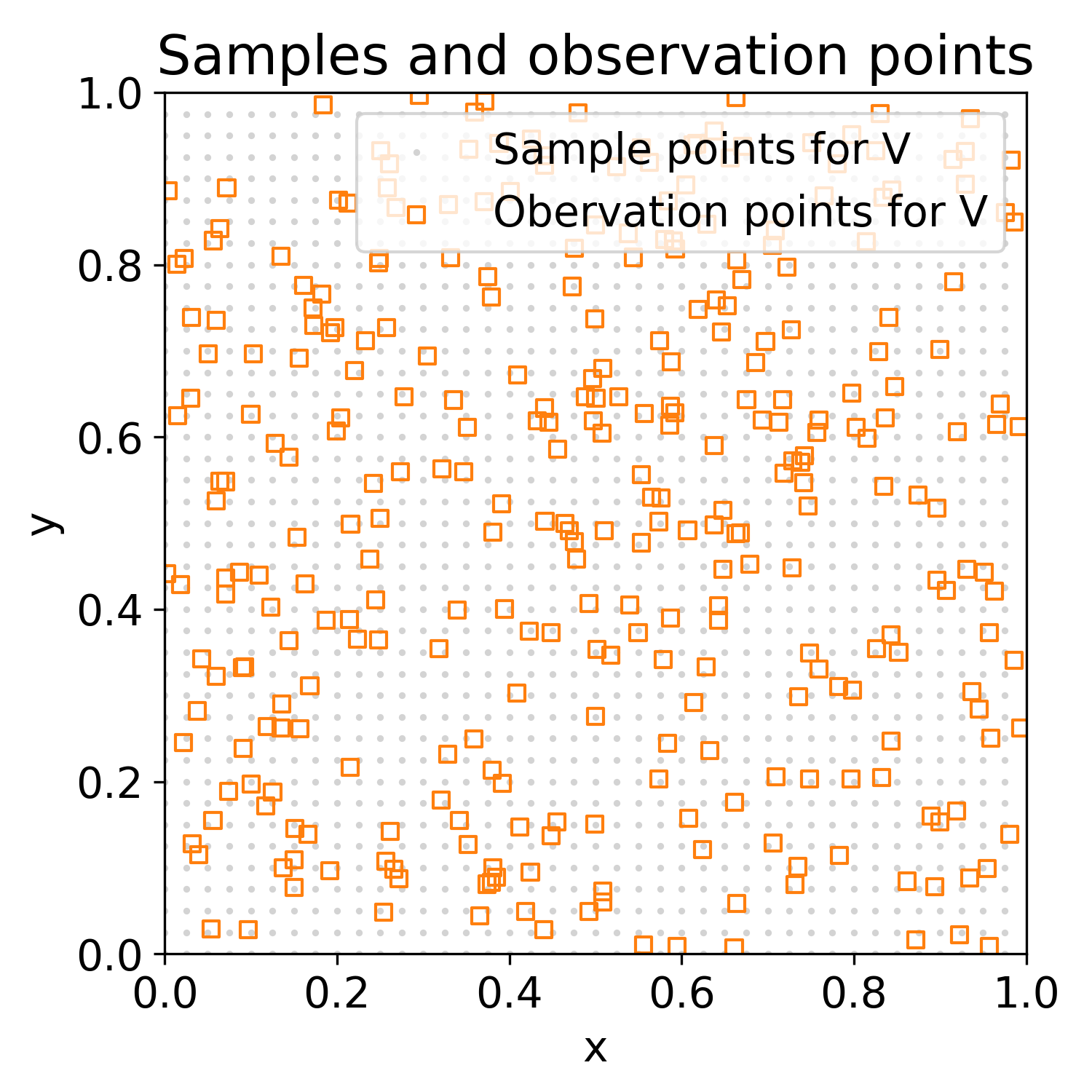}
        \caption{Samples and observations for $V$}    \label{2D_ErgodicMFGwithCongestionSamplesV}
    \end{subfigure}
    \caption{The inverse problem of the stationary MFG  \eqref{eq:first_order_MFGS_congestion}: samples for $m$ and $V$ and corresponding observation points.}
    \label{fig:samples_observationsmulti_2D_ErgodicMFGwithCongestion}
\end{figure}

\begin{figure}[!htbp]
    \centering  %
    \begin{subfigure}[b]{0.23\textwidth}
        \centering
        \includegraphics[width=\linewidth]{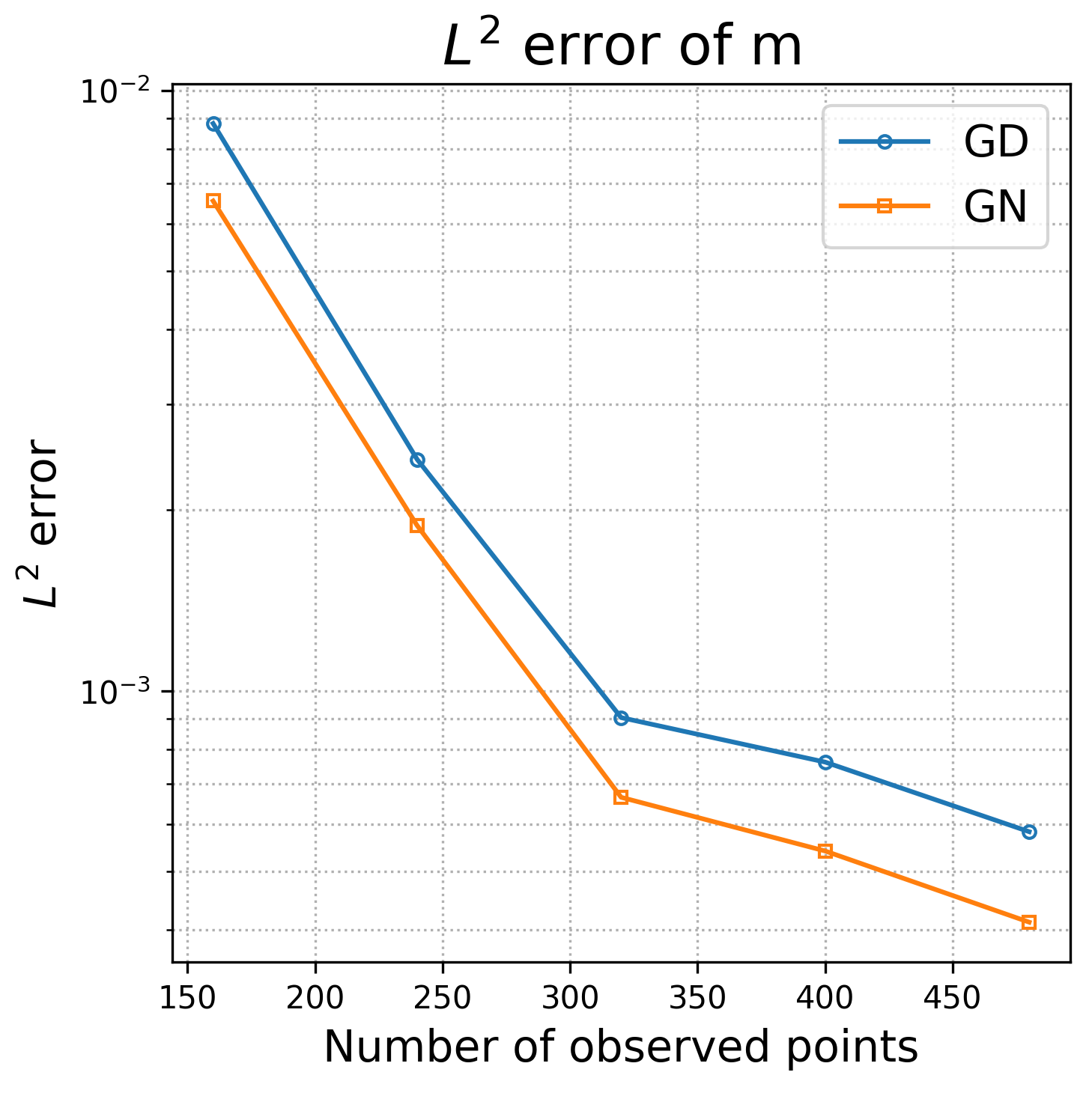}
        \caption{$L^2$ error of $m$}
        \label{fig:congestion_m_L2_vs_obs_N40}
    \end{subfigure}
    \hspace{1mm}
    \begin{subfigure}[b]{0.23\textwidth}
        \centering
        \includegraphics[width=\linewidth]{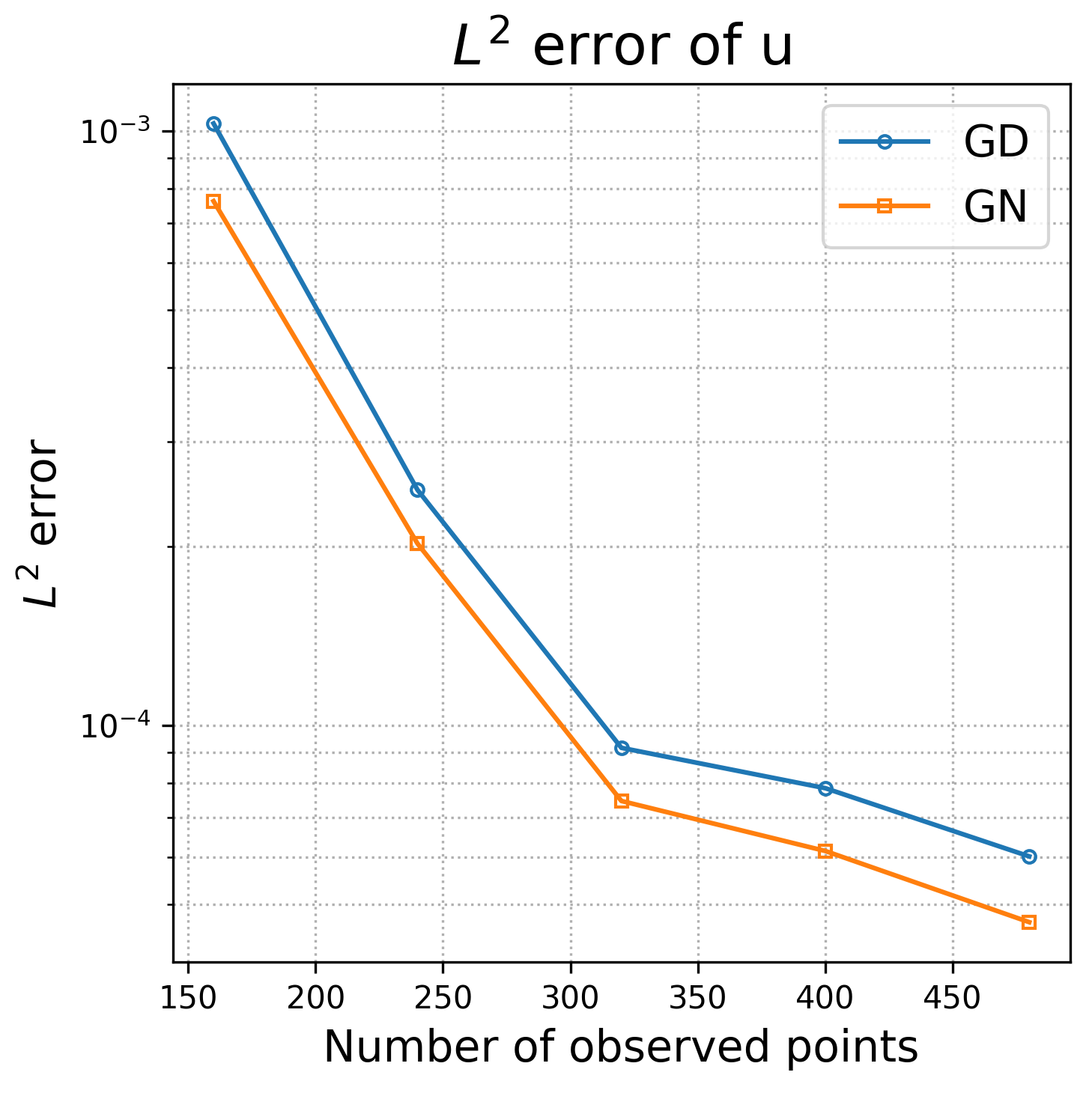}
        \caption{$L^2$ error of $u$}
    \label{fig:congestion_u_L2_vs_obs_N40}
    \end{subfigure}
    \hspace{1mm}
    \begin{subfigure}[b]{0.23\textwidth}
        \centering
        \includegraphics[width=\linewidth]{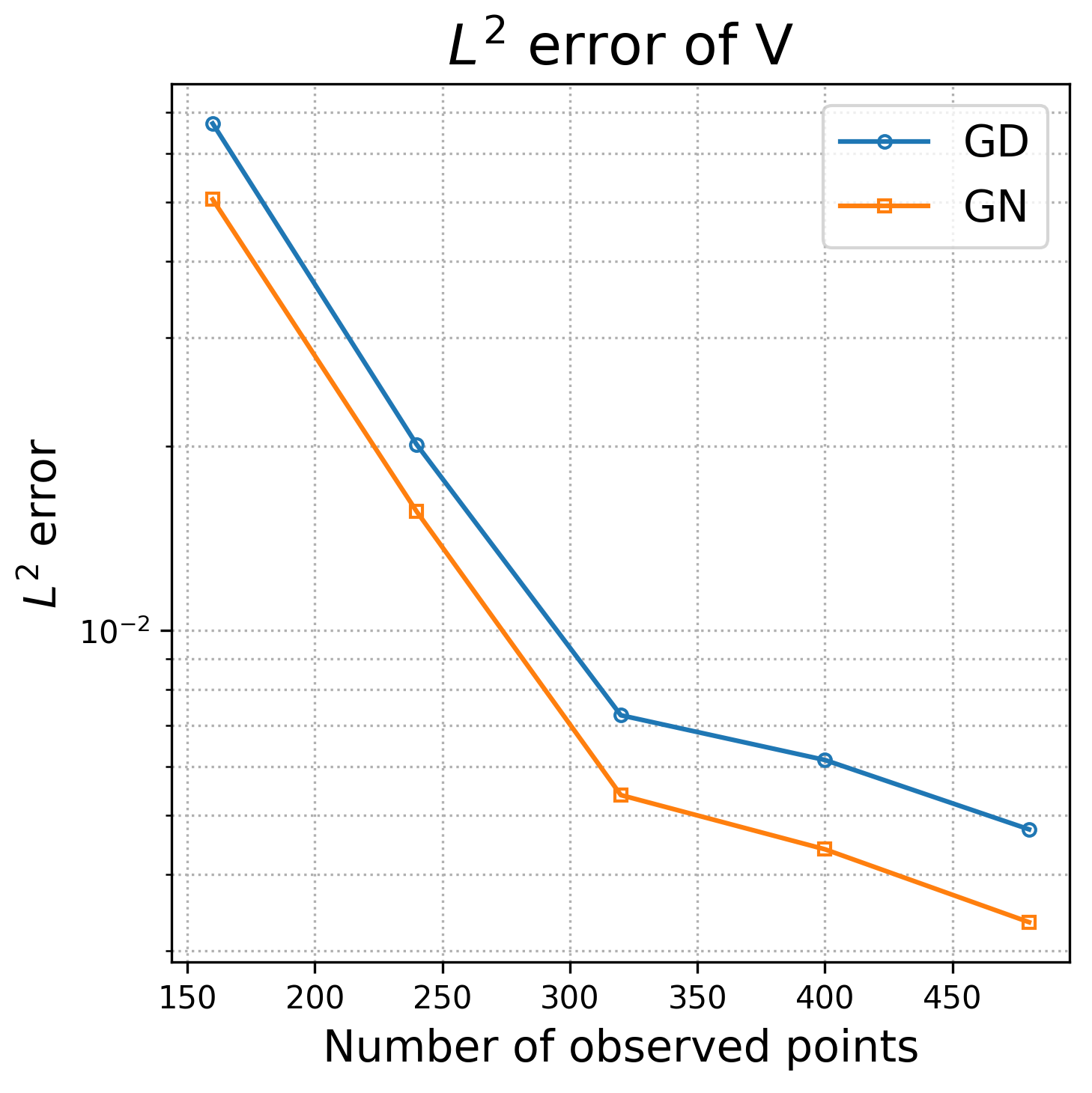}
        \caption{$L^2$ error of $V$}
        \label{fig:congestion_V_L2_vs_obs_N40}
    \end{subfigure}
    \caption{Numerical results for solving the inverse problem of the MFG system in \eqref{eq:first_order_MFGS_congestion} using the adjoint-based GD method and the GN method: (a)-(c) show the $L^2$ errors for $m$, $u$, and $V$, respectively, versus their reference counterparts as the number of observation points increases.
    }
    \label{fig:L2_vs_obs_congestion_N40}
\end{figure}

\begin{table}[!htbp]
  \centering
  \caption{Numerical results for \(\lambda\) in the first-order stationary MFG with congestion problem \eqref{eq:first_order_MFGS_congestion} using different methods. The reference solution is computed by the HRF-based solver, and the recovered solution is obtained by the GD method and the GN method.}
  \label{table:2D_ErgodicMFGwithCongestionHbar}
  \begin{tabular}{|c|c|c|c|}
    \hline
    Method & Reference & GD & GN \\
    \hline
    \(\lambda\) & 4.04456433468 & 4.04563955831 & 4.04507371305\\
    \hline
  \end{tabular}
\end{table}

\begin{figure}[!htbp]
    \centering
    \begin{subfigure}[b]{0.23\textwidth}
        \centering
        \includegraphics[width=\linewidth]{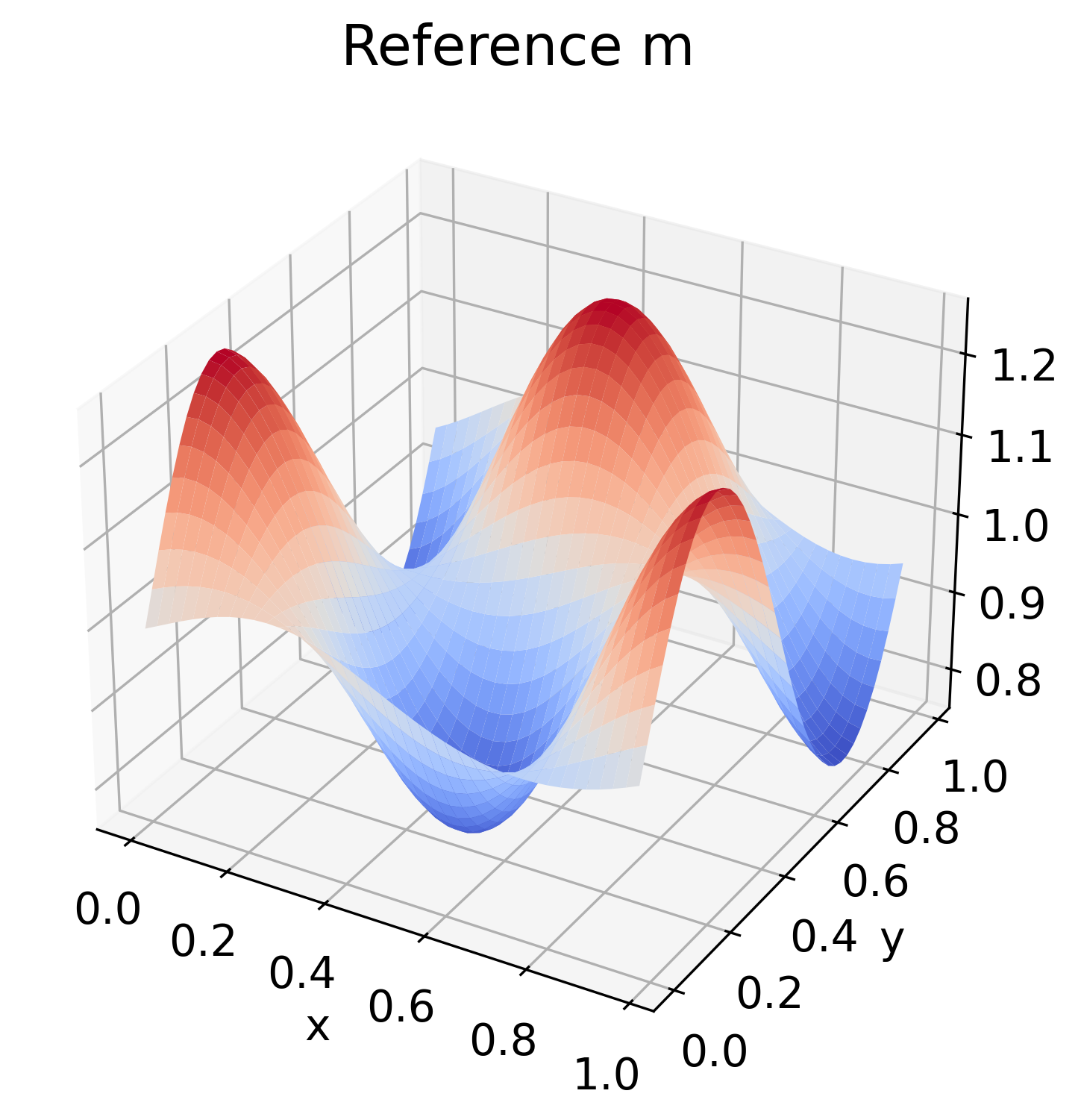}
        \caption{$m$ reference}
        \label{2D_ErgodicMFGwithCongestionReferenceM}
    \end{subfigure}%
    \hspace{1mm}
    \begin{subfigure}[b]{0.23\textwidth}
        \centering
        \includegraphics[width=\linewidth]{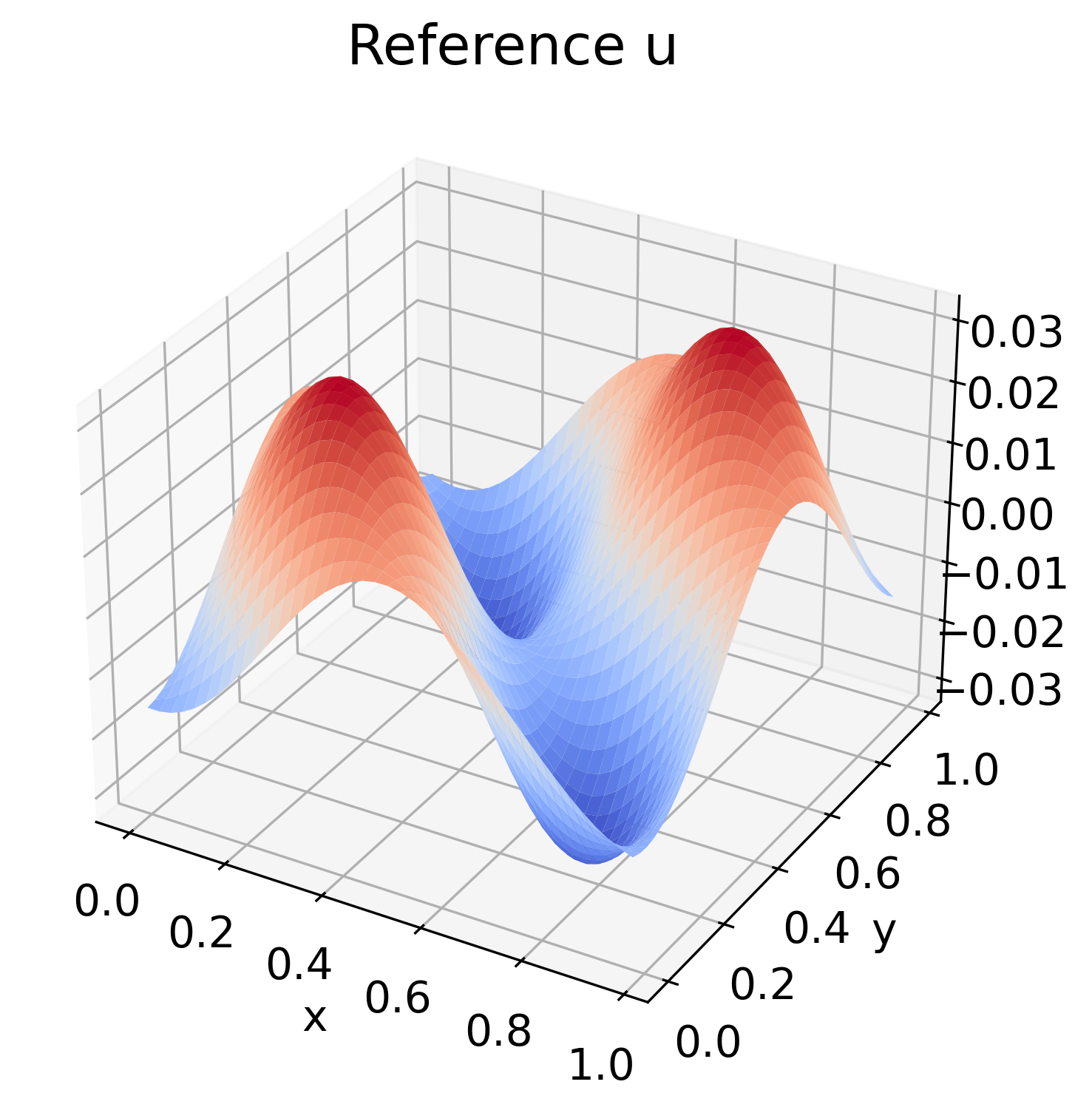}
        \caption{$u$ reference}
        \label{2D_ErgodicMFGwithCongestionReferenceU}
    \end{subfigure}%
    \hspace{1mm}
    \begin{subfigure}[b]{0.23\textwidth}
        \centering
        \includegraphics[width=\linewidth]{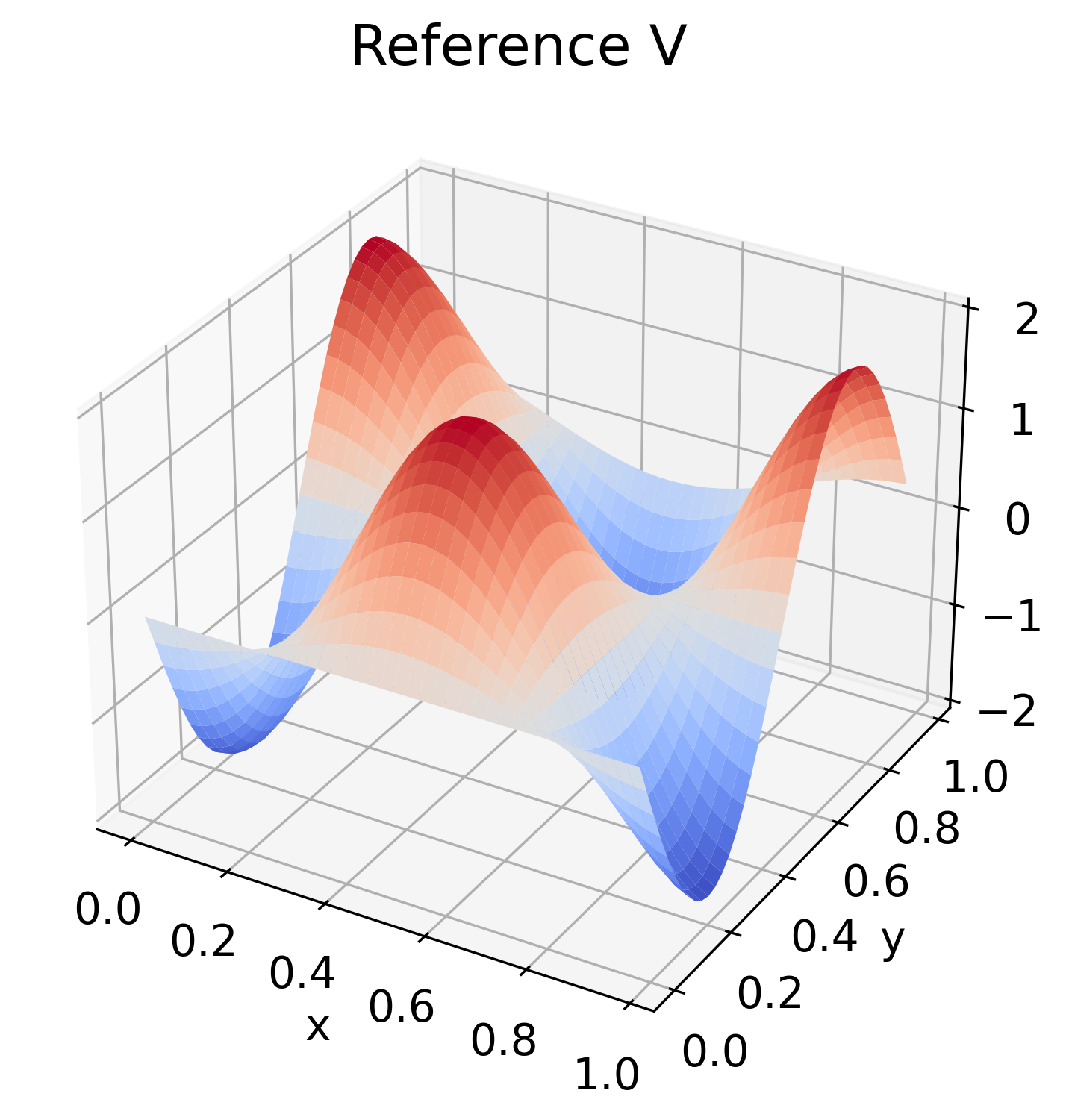}
        \caption{$V$ reference}
        \label{2D_ErgodicMFGwithCongestionReferenceV}
    \end{subfigure}%
    \hspace{1mm}
    \begin{subfigure}[b]{0.23\textwidth}
        \centering
        \includegraphics[width=\linewidth]{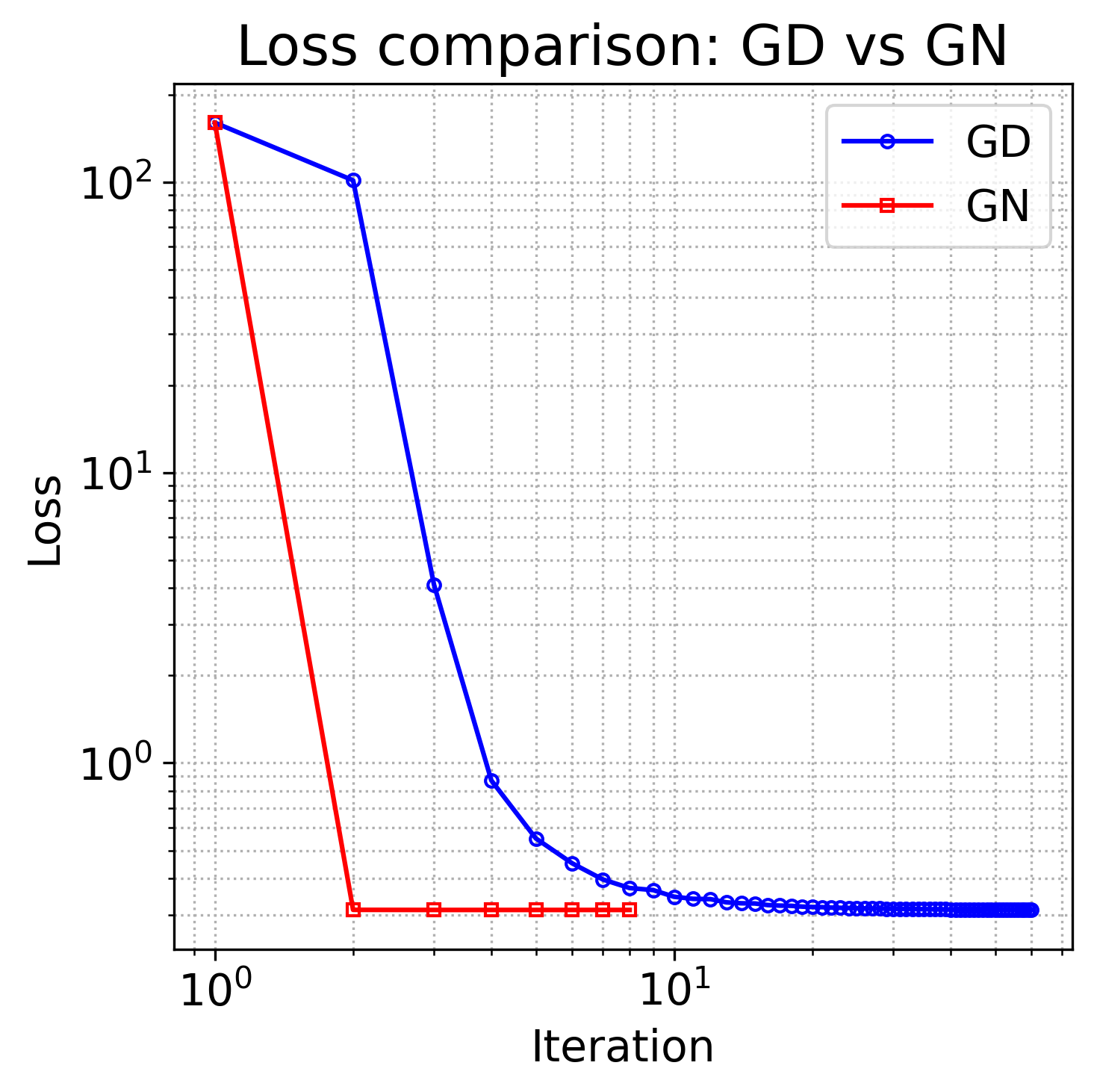}
        \caption{Loss comparison}
        \label{2D_ErgodicMFGwithCongestionLoss}
    \end{subfigure}
    
    \begin{subfigure}[b]{0.23\textwidth}
        \centering
        \includegraphics[width=\linewidth]{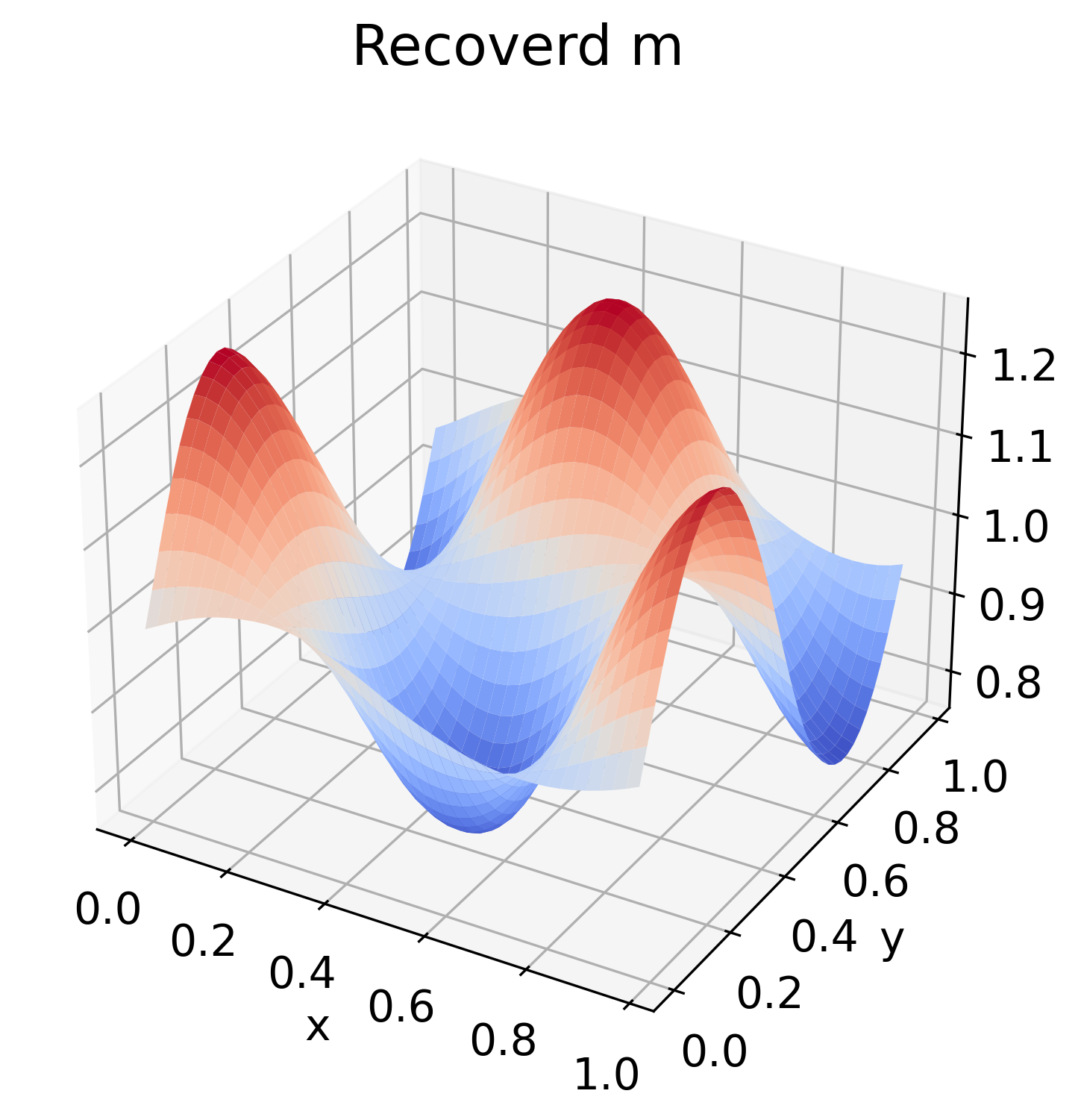}
        \caption{Recovered $m$ via GD}
        \label{2D_ErgodicMFGwithCongestionRecoverdM_GD}
    \end{subfigure}%
    \hspace{1mm}
    \begin{subfigure}[b]{0.23\textwidth}
        \centering
        \includegraphics[width=\linewidth]{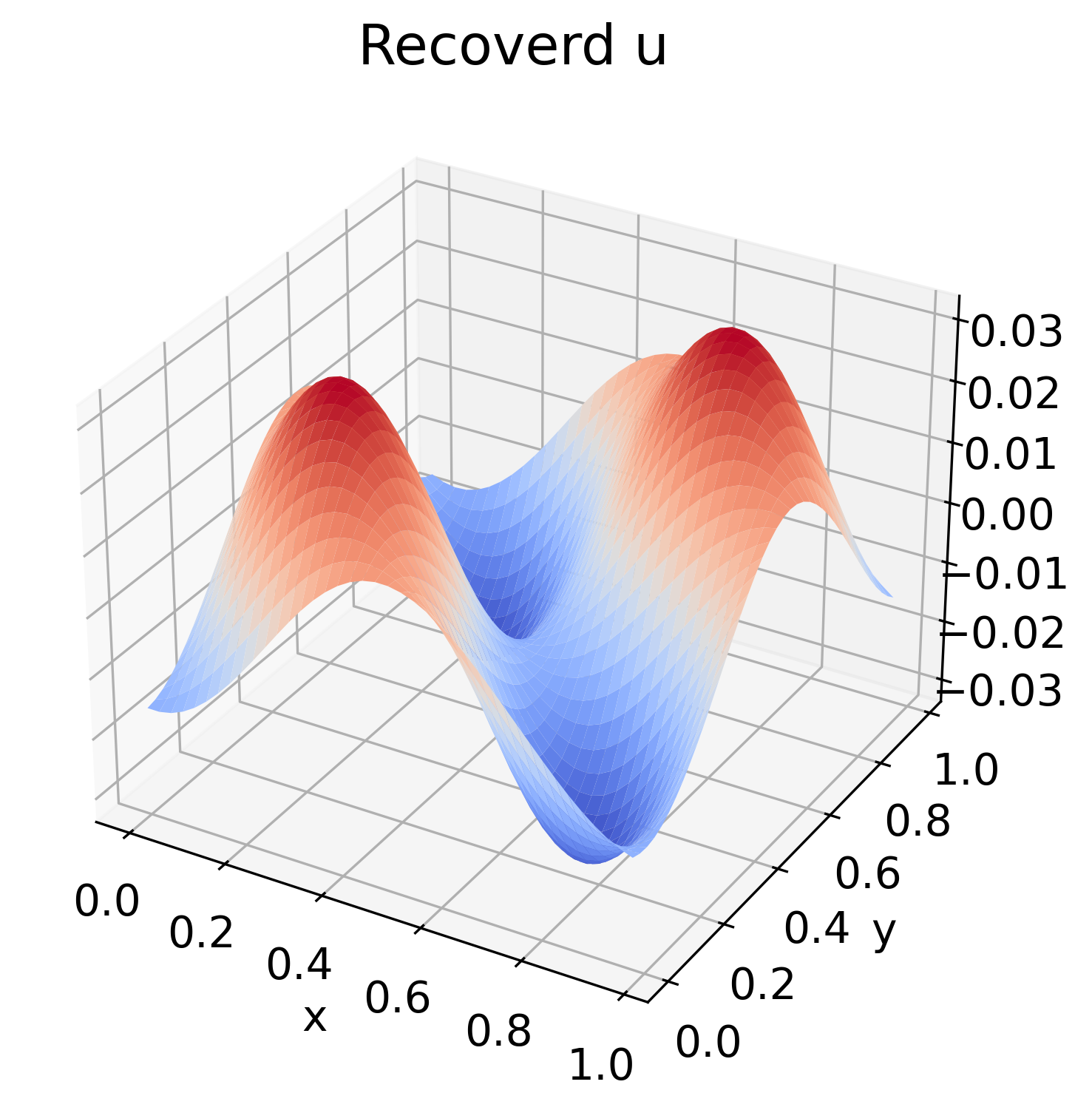}
        \caption{Recovered $u$ via GD}
        \label{2D_ErgodicMFGwithCongestionRecoverdU_GD}
    \end{subfigure}%
    \hspace{1mm}
    \begin{subfigure}[b]{0.23\textwidth}
        \centering
        \includegraphics[width=\linewidth]{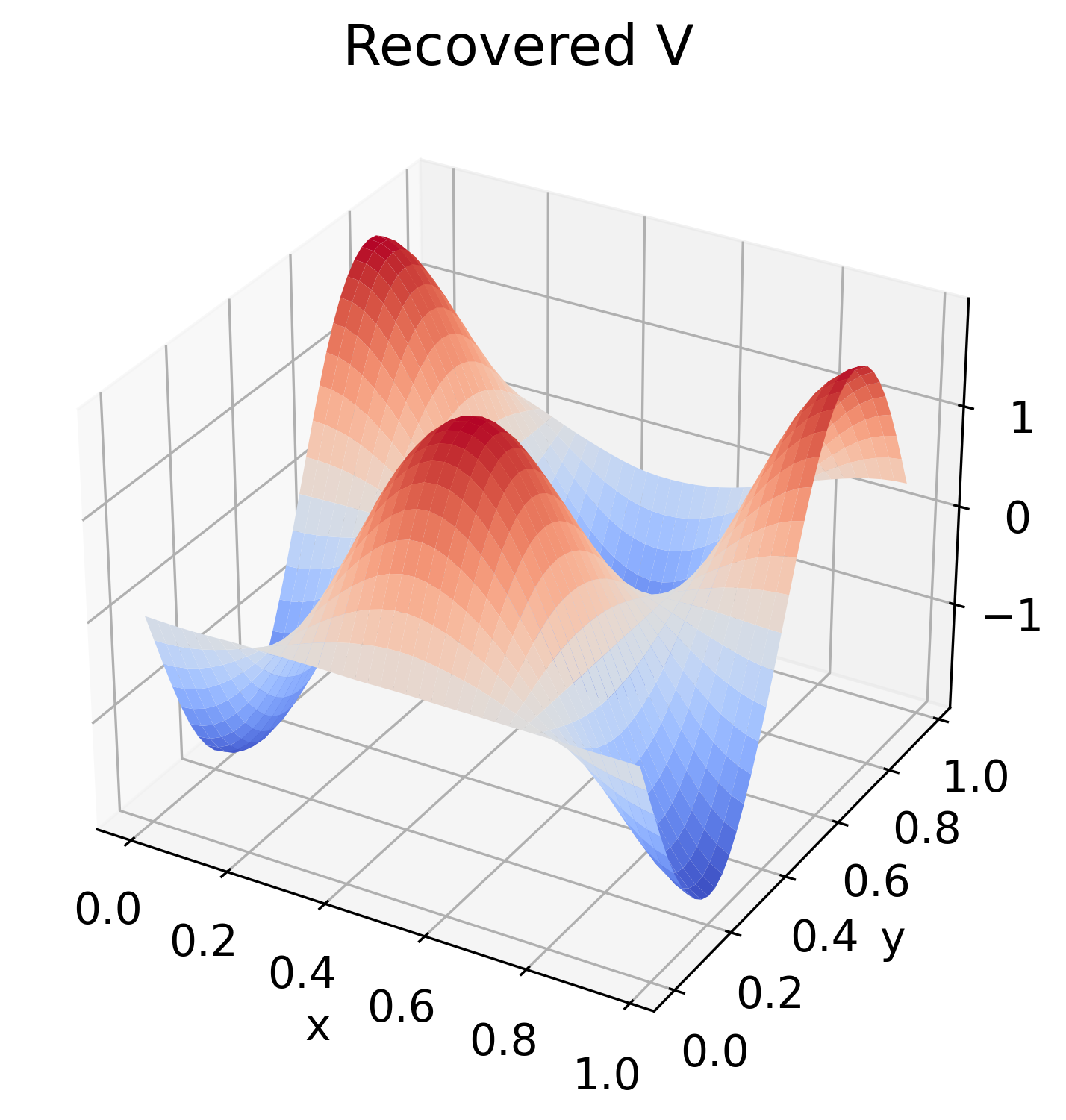}
        \caption{Recovered $V$ via GD}
        \label{2D_ErgodicMFGwithCongestionRecoverdV_GD}
    \end{subfigure}%
    \hspace{1mm}
    \begin{subfigure}[b]{0.23\textwidth}
        \centering
        \includegraphics[width=\linewidth]{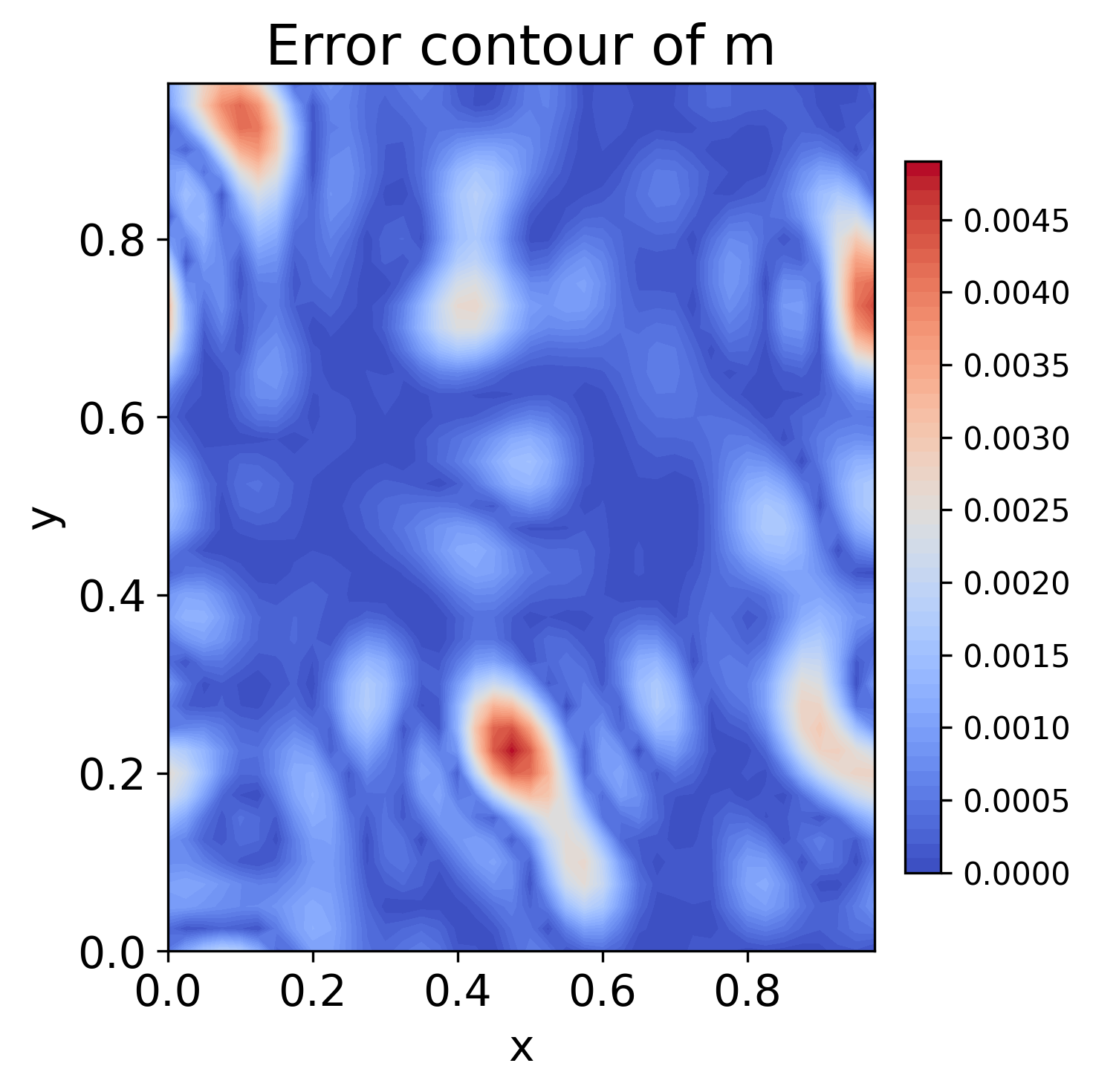}
        \caption{Error of $m$ via GD}
        \label{2D_ErgodicMFGwithCongestionErrorM_GD}
    \end{subfigure}
    
    \begin{subfigure}[b]{0.23\textwidth}
        \centering
        \includegraphics[width=\linewidth]{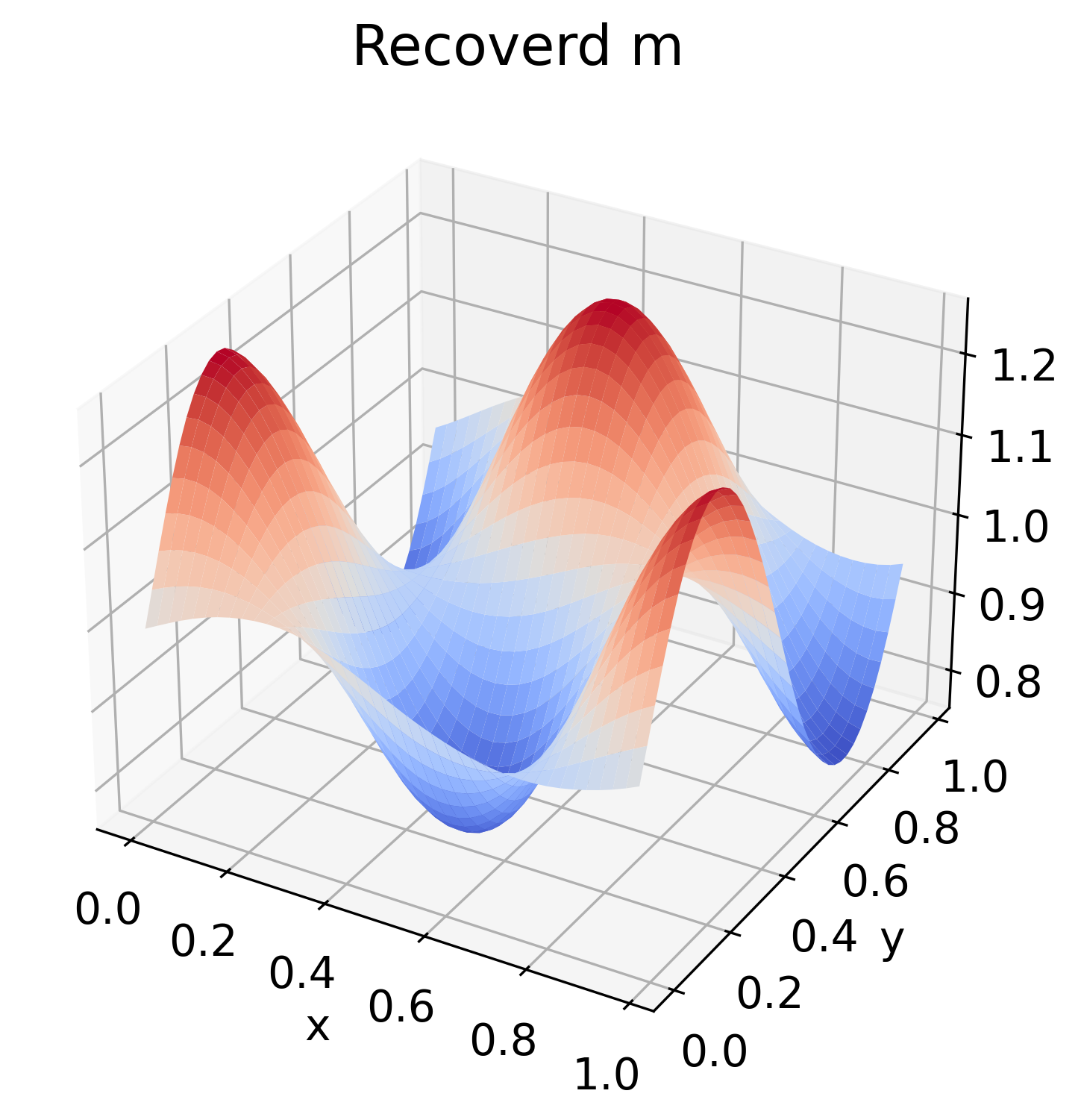}
        \caption{Recovered $m$ via GN}
        \label{2D_ErgodicMFGwithCongestionRecoverdM_GN}
    \end{subfigure}%
    \hspace{1mm}
    \begin{subfigure}[b]{0.23\textwidth}
        \centering
        \includegraphics[width=\linewidth]{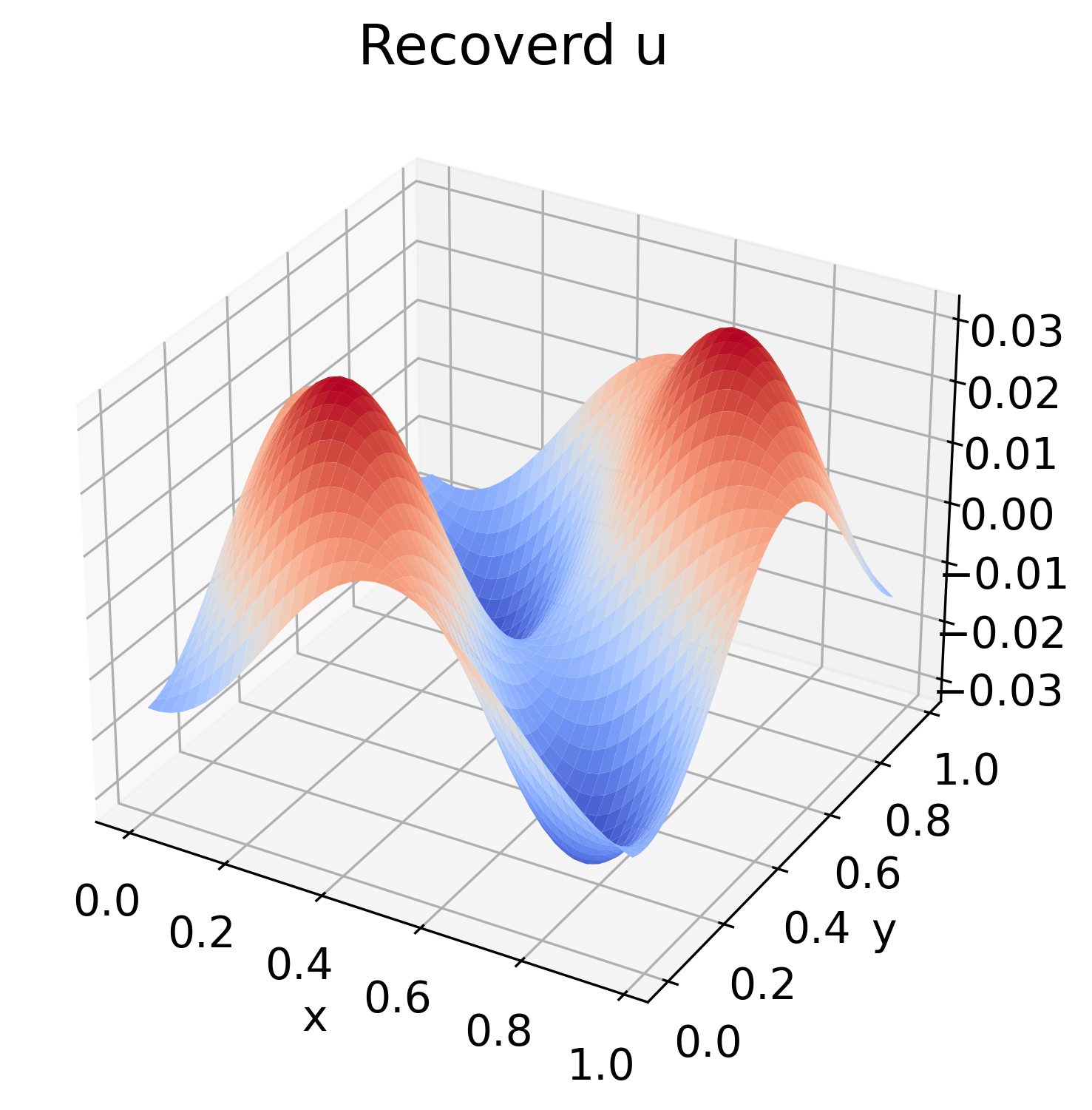}
        \caption{Recovered $u$ via GN}
        \label{2D_ErgodicMFGwithCongestionRecoverdU_GN}
    \end{subfigure}%
    \hspace{1mm}
    \begin{subfigure}[b]{0.23\textwidth}
        \centering
        \includegraphics[width=\linewidth]{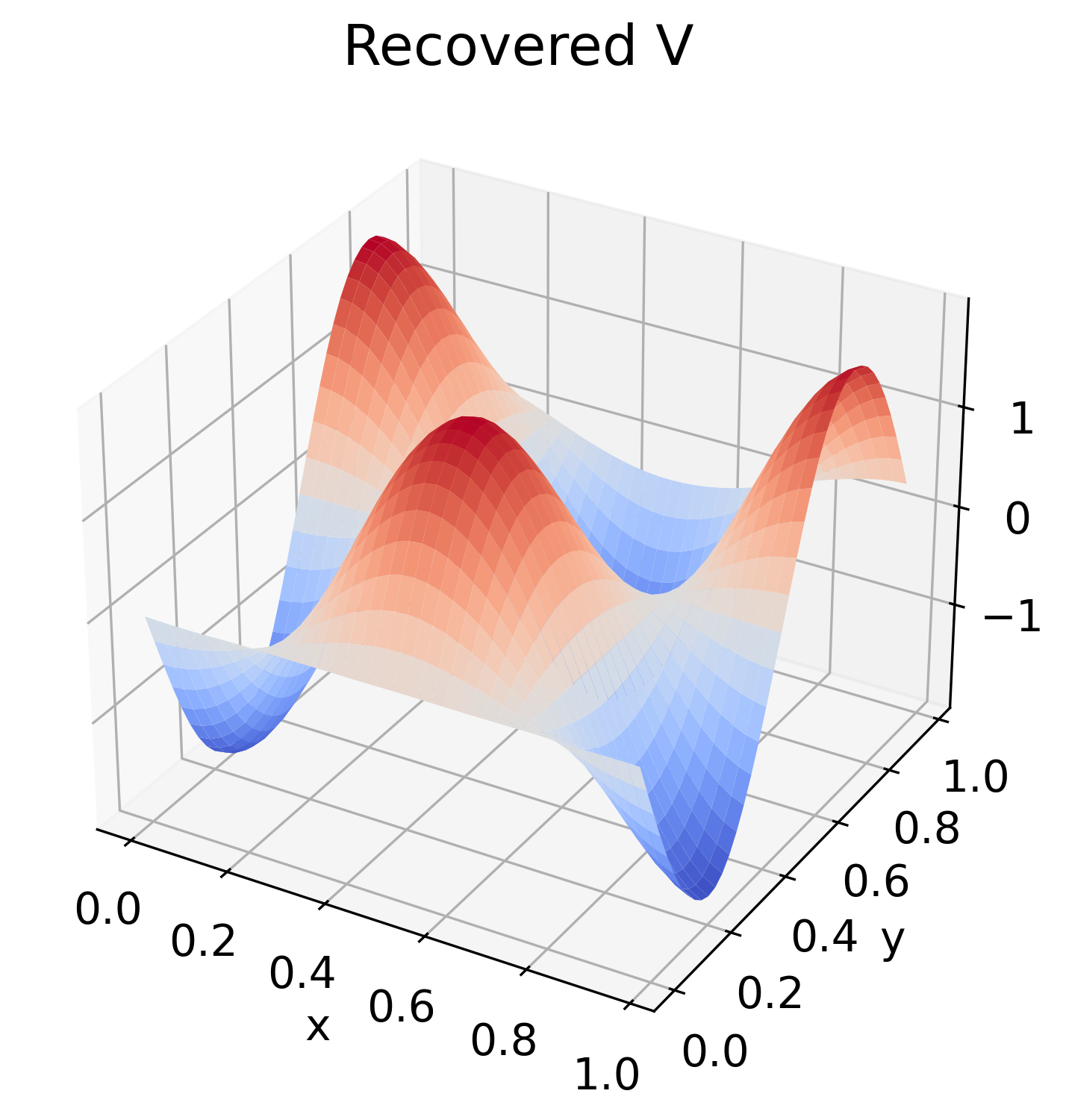}
        \caption{Recovered $V$ via GN}
        \label{2D_ErgodicMFGwithCongestionRecoverdV_GN}
    \end{subfigure}%
    \hspace{1mm}
    \begin{subfigure}[b]{0.23\textwidth}
        \centering
        \includegraphics[width=\linewidth]{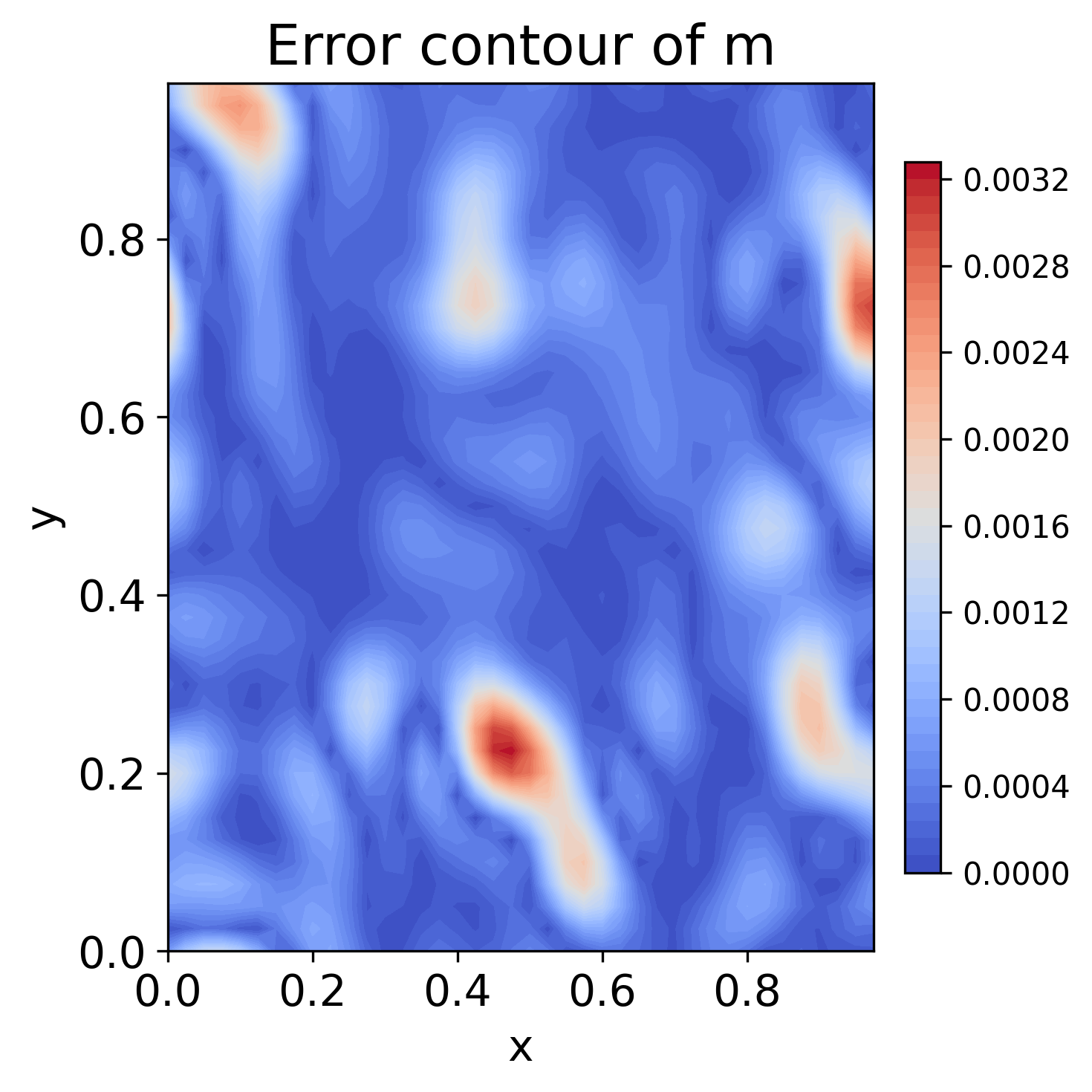}
        \caption{Error of $m$ via GN}
        \label{2D_ErgodicMFGwithCongestionErrorM_GN}
    \end{subfigure}
    
    \begin{subfigure}[b]{0.23\textwidth}
        \centering
        \includegraphics[width=\linewidth]{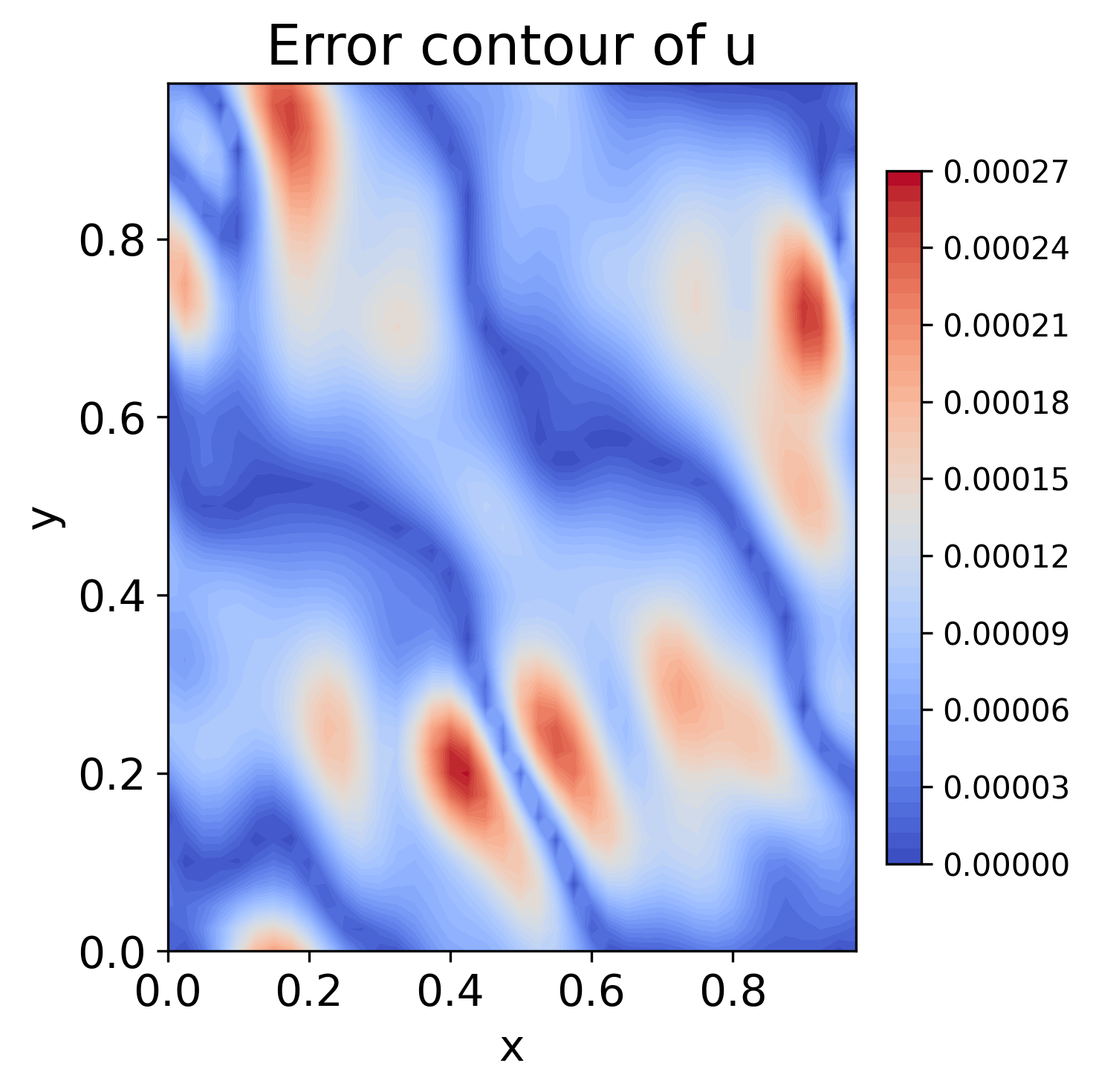}
        \caption{Error of $u$ via GD}
        \label{2D_ErgodicMFGwithCongestionErrorU_GD}
    \end{subfigure}
    \hspace{1mm}
    \begin{subfigure}[b]{0.23\textwidth}
        \centering
        \includegraphics[width=\linewidth]{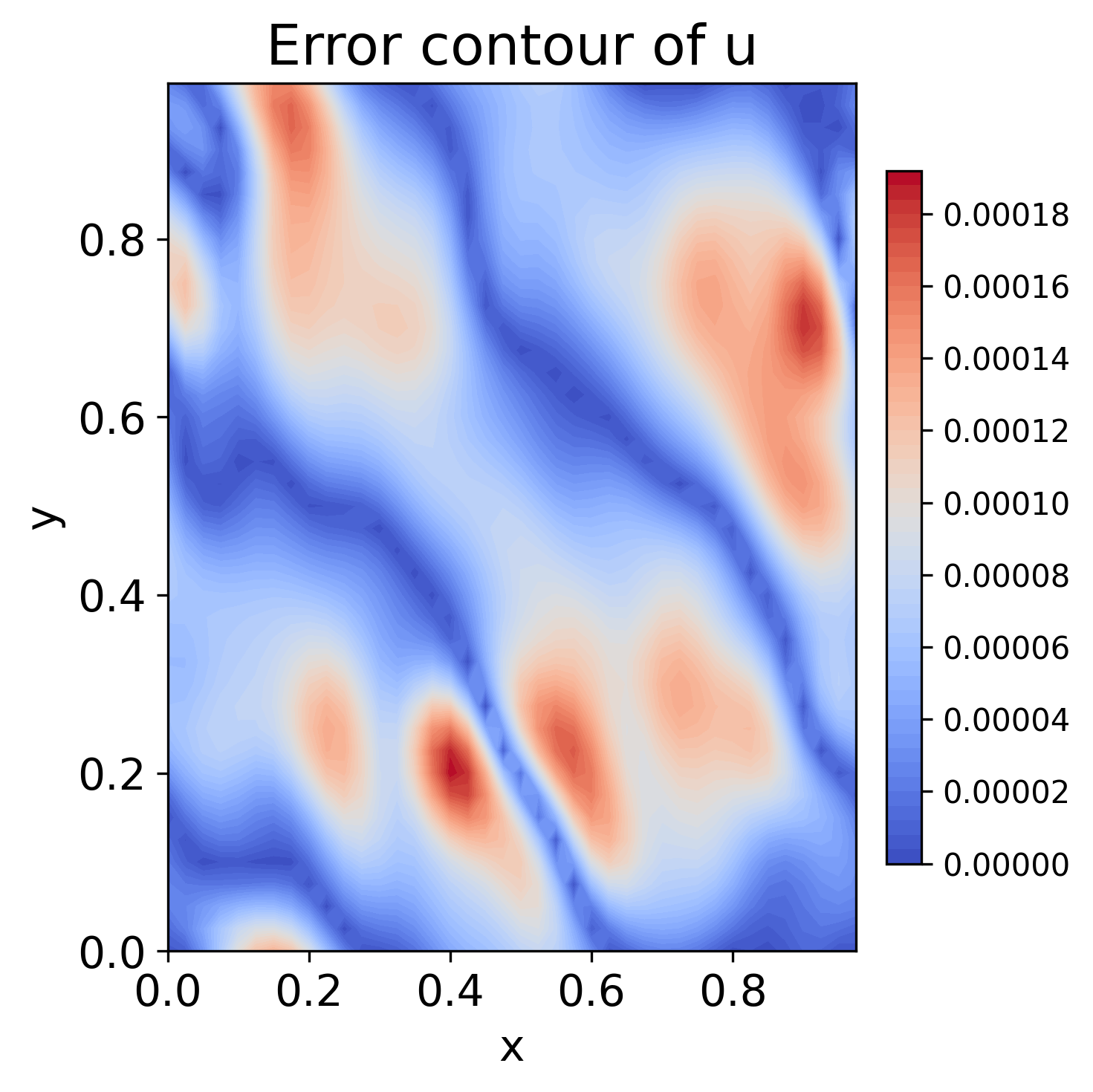}
        \caption{Error of $u$ via GN}
        \label{2D_ErgodicMFGwithCongestionErrorU_GN}
    \end{subfigure}
    \hspace{1mm}
    \begin{subfigure}[b]{0.23\textwidth}
        \centering
        \includegraphics[width=\linewidth]{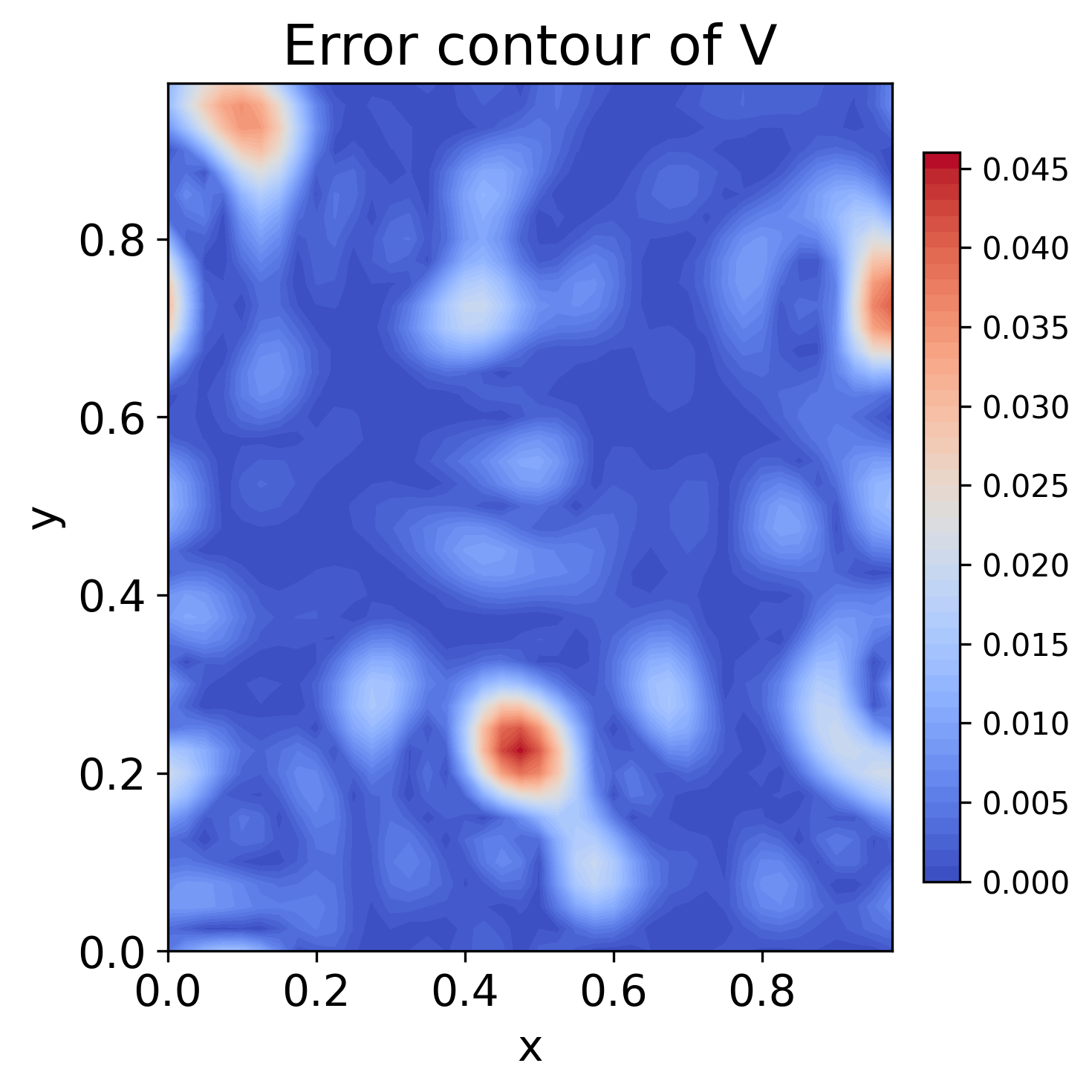}
        \caption{Error of $V$ via GD}
        \label{2D_ErgodicMFGwithCongestionErrorV_GD}
    \end{subfigure}%
    \hspace{1mm}
    \begin{subfigure}[b]{0.23\textwidth}
        \centering
        \includegraphics[width=\linewidth]{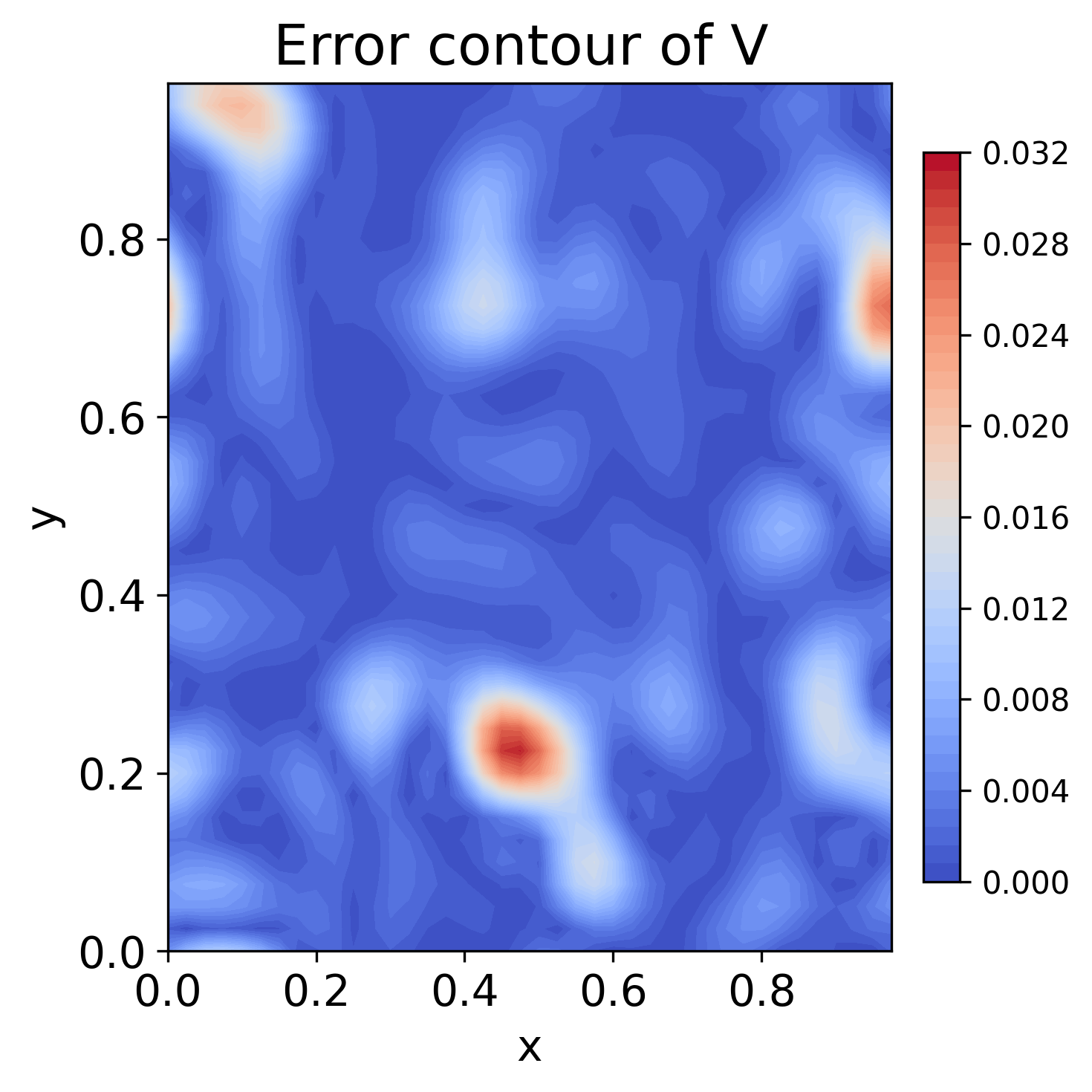}
        \caption{Error of $V$ via GN}
        \label{2D_ErgodicMFGwithCongestionErrorV_GN}
    \end{subfigure}

    \caption{Numerical results for the inverse problem of the stationary MFG in \eqref{eq:first_order_MFGS_congestion}.  (a), (b), (c) are references for $m,u,V$; (d) log-log plot of the loss comparison for GD and GN across iterations; (e), (f), (g) recovered $m,u,V$ via GD; (h), (m), (o) errors of $m,u,V$ via GD; (i), (j), (k) recovered $m,u,V$ via GN; (l), (n), (p) errors of $m,u,V$ via GN.}
    \label{2D_ErgodicMFGwithCongestionPlot}
\end{figure}

\subsubsection{A Non-Potential  Second-Order Stationary MFG }
\label{2DMFGexample2}
Here, we consider the following non-potential stationary MFG on \(\mathbb{T}^2\):
\begin{align}
\label{eq:2DstationaryMFGinvSmall_nu}
\begin{cases}
    -\nu \Delta u + \frac{|\nabla u|^2}{2} - V(x,y) = \lambda + f[m], & (x,y)\in\mathbb{T}^2, \\
    -\nu \Delta m - \div\!\bigl(m \nabla u\bigr) = 0, & (x,y)\in\mathbb{T}^2, \\
    \displaystyle \int_{\mathbb{T}^2} u\,\mathrm{d}x\,\mathrm{d}y = 0,\qquad 
    \displaystyle \int_{\mathbb{T}^2} m\,\mathrm{d}x\,\mathrm{d}y = 1.
\end{cases}
\end{align}
We take \(\nu=0.1\),  $f[m]= m^2 +  b(x, y)\cdot\nabla m$, $b(x,y) = (-\sin(2\pi y),\, \sin(2\pi x))$, for which $\operatorname{div}(b)=0$, and the spatial cost $
V(x,y)= -\bigl(\sin(2\pi x)+\cos(4\pi x)+\sin(2\pi y)\bigr)$. We compute a reference solution \((u^*,m^*,\lambda^*)\) numerically for \eqref{eq:2DstationaryMFGinvSmall_nu}. The inverse problem is to recover \((u,m,\lambda,V)\) from partial noisy observations of \(m\) and \(V\). This MFG still satisfies the Lasry--Lions monotonicity condition. However,
because of the derivative term, the model is not a potential MFG and does not
admit the variational structure used in \cite{briceno2018proximal}. Therefore,
the globally convergent optimization algorithm in \cite{briceno2018proximal}
does not apply here. In contrast, the HRF method developed in this paper can be
applied to this monotone non-potential setting, and we use this example to test
the inverse framework beyond the potential MFG case.


\textbf{Experimental Setup.} We identify \(\mathbb{T}^2\) with \([0,1)^2\) and discretize it with \(h_x=h_y=\tfrac{1}{30}\), yielding 900 grid points. The observation points are sampled at random: 72 locations for $m$ and 180 locations for $V$ are selected uniformly from $[0,1)^2$.  The regularization parameters are set to \(\alpha = 0.04\), \(\beta =1 \), \(\gamma = 1\). Gaussian noise \(\mathcal{N}(0,\eta^2 I)\) with \(\eta = 10^{-3}\) is added to the observations. The system is initialized with $u\equiv 0$, $m\equiv 1$, and $V\equiv 0$ for the numerical simulation. We compute the reference solution by the HRF and then solve the inverse problem with GD and GN.

\textbf{Experimental Results.}
Figures~\ref{2D_ErgodicMFGwithSmallViscositySamplesM}--\ref{2D_ErgodicMFGwithSmallViscositySamplesV} show the collocation grid and the observation locations for \(m\) and \(V\). Table~\ref{table:2D_ErgodicMFGwithSmallViscosityHbar} compares the HRF reference value of \(\lambda\) with the values recovered by GD and GN. Figure~\ref{2D_ErgodicMFGwithSmallViscosityPlot} summarizes the numerical results for \eqref{eq:2DstationaryMFGinvSmall_nu}: it includes the HRF reference fields, the GD and GN reconstructions (e.g., \(m\) in Panels~\ref{2D_ErgodicMFGwithSmallViscosityReferenceM}, \ref{2D_ErgodicMFGwithSmallViscosityRecoverdM_GD}, and~\ref{2D_ErgodicMFGwithSmallViscosityRecoverdM_GN}), and the corresponding pointwise absolute error maps for \(m\), \(u\), and \(V\). Finally, Figure~\ref{2D_ErgodicMFGwithSmallViscosityLoss} shows that GN reaches a comparable optimum in fewer iterations than GD and yields more accurate reconstructions.

\begin{figure}[!htbp]
    \centering
    \begin{subfigure}[b]{0.23\textwidth}
        \centering
        \includegraphics[width=\linewidth]{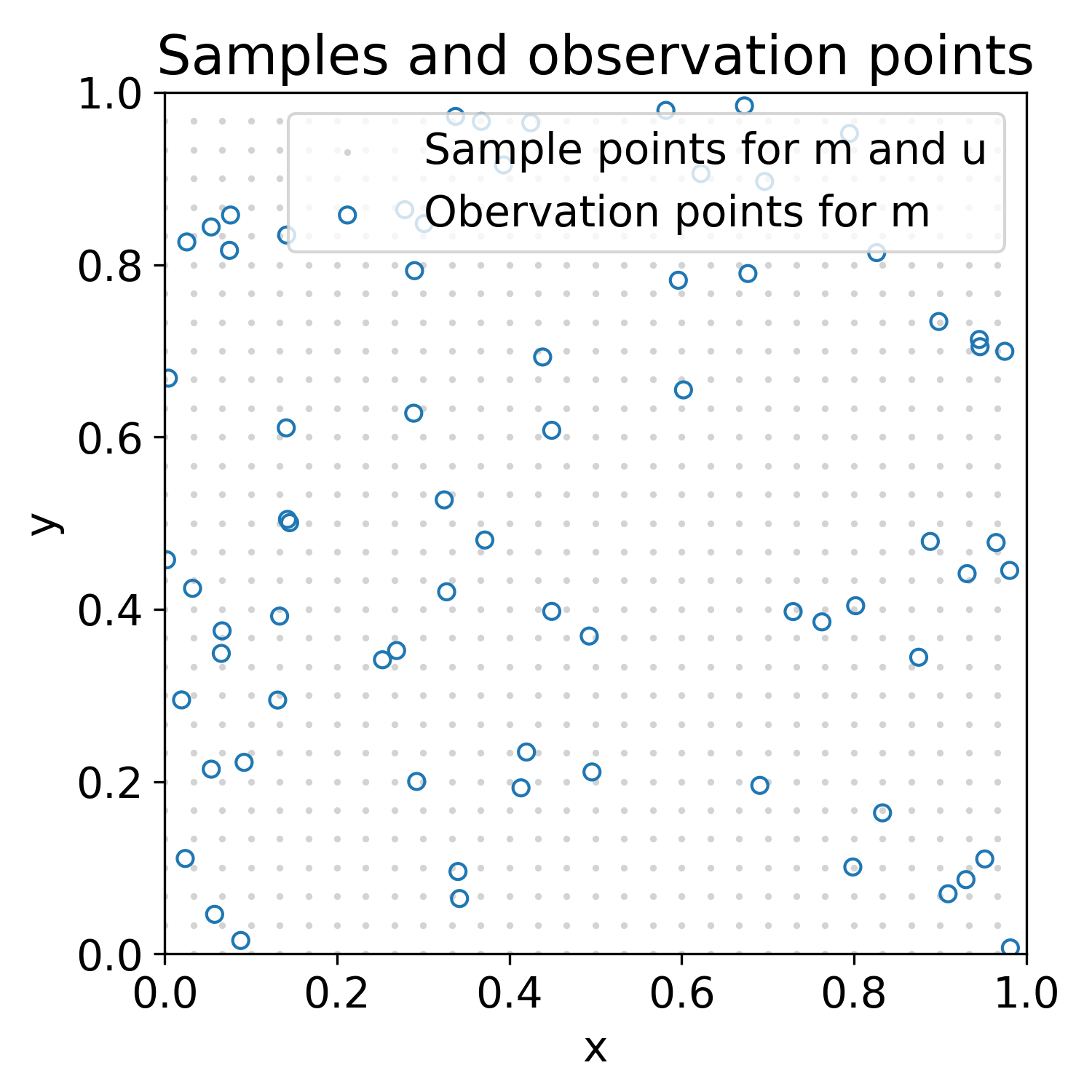}
        \caption{Samples and observations for $m$}
        \label{2D_ErgodicMFGwithSmallViscositySamplesM}
    \end{subfigure}
    \hspace{1mm}  
    \begin{subfigure}[b]{0.23\textwidth}
        \centering
        \includegraphics[width=\linewidth]{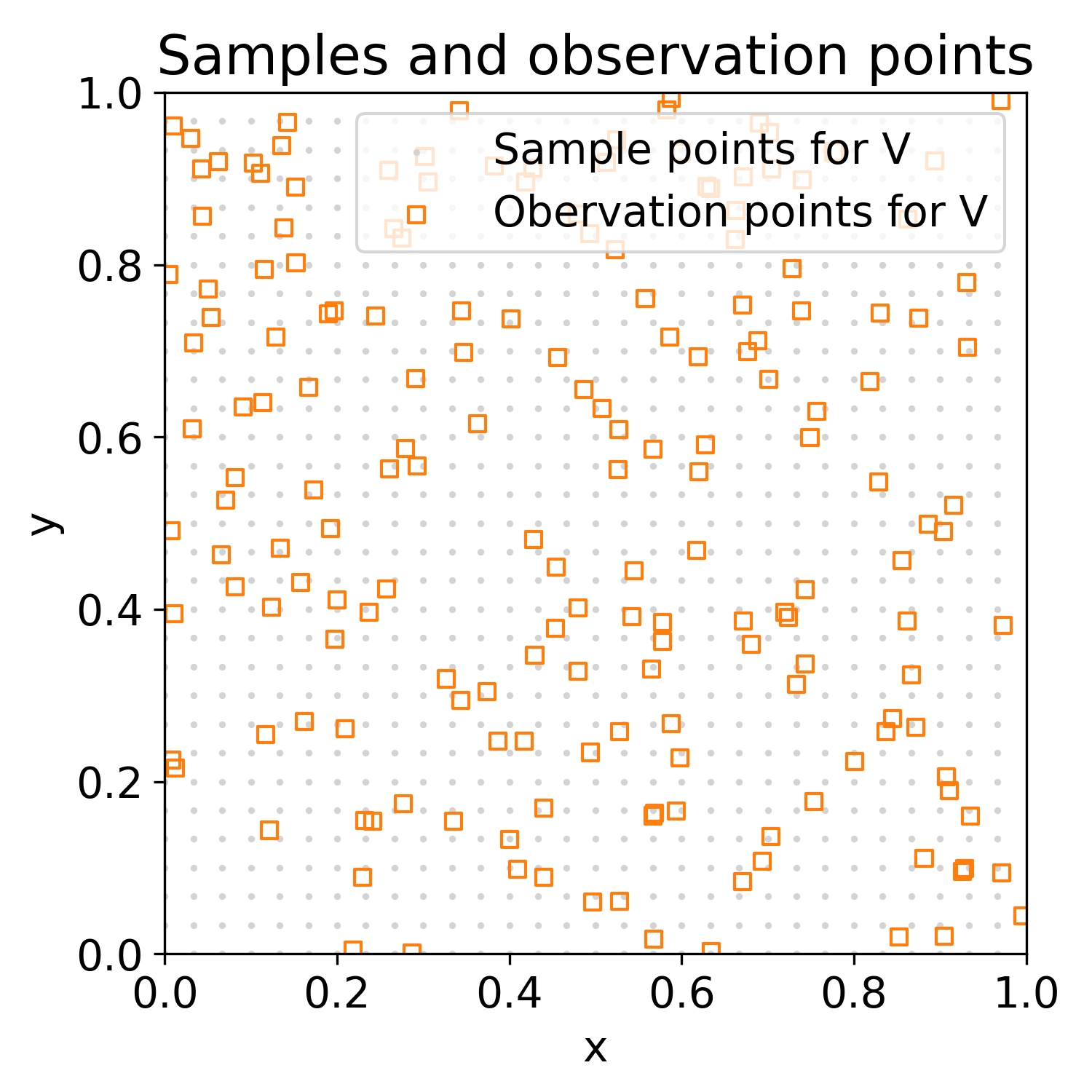}
        \caption{Samples and observations for $V$}      \label{2D_ErgodicMFGwithSmallViscositySamplesV}
    \end{subfigure}
    \caption{The inverse problem of the stationary MFG \eqref{eq:2DstationaryMFGinvSmall_nu}: samples for $m$ and $V$ and corresponding observation points.}
    \label{fig:samples_observationsmulti_2D_ErgodicMFGwithSmallViscosity}
\end{figure}

\begin{table}[!htbp]
  \centering
  \caption{Numerical results for \(\lambda\) in the stationary MFG  \eqref{eq:2DstationaryMFGinvSmall_nu} using different methods. The reference solution is computed by the HRF, and the recovered solution is obtained by the GD method and the GN method.}
  \label{table:2D_ErgodicMFGwithSmallViscosityHbar}
  \begin{tabular}{|c|c|c|c|}
    \hline
    Method & Reference & GD & GN \\
    \hline
    \(\lambda\) & -0.538470584730 & -0.543568902915 & -0.543101122058
\\
    \hline
  \end{tabular}
\end{table}

\begin{figure}[!htbp]
    \centering
    \begin{subfigure}[b]{0.23\textwidth}
        \centering
        \includegraphics[width=\linewidth]{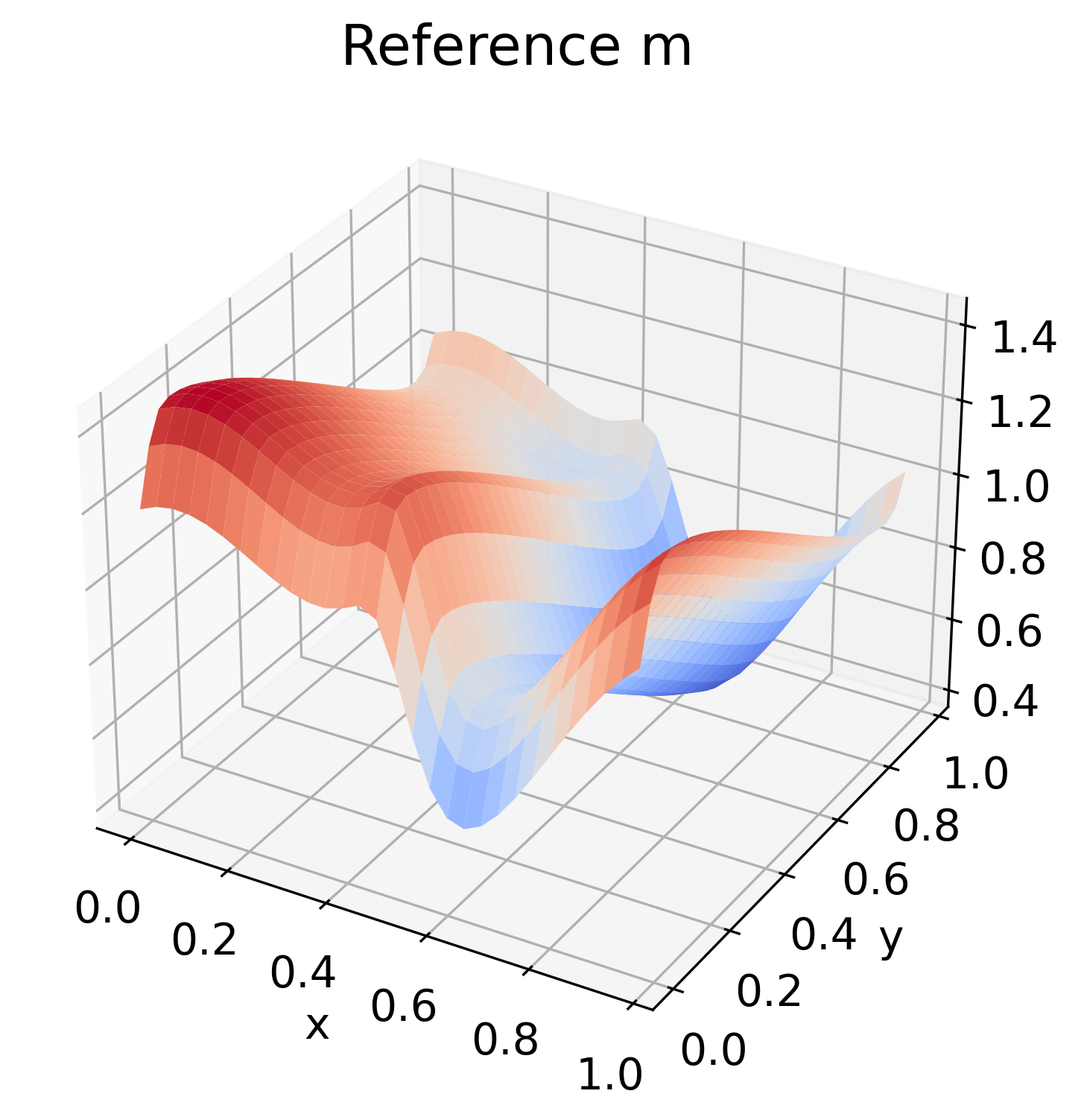}
        \caption{$m$ reference}
        \label{2D_ErgodicMFGwithSmallViscosityReferenceM}
    \end{subfigure}%
    \hspace{1mm}
    \begin{subfigure}[b]{0.23\textwidth}
        \centering
        \includegraphics[width=\linewidth]{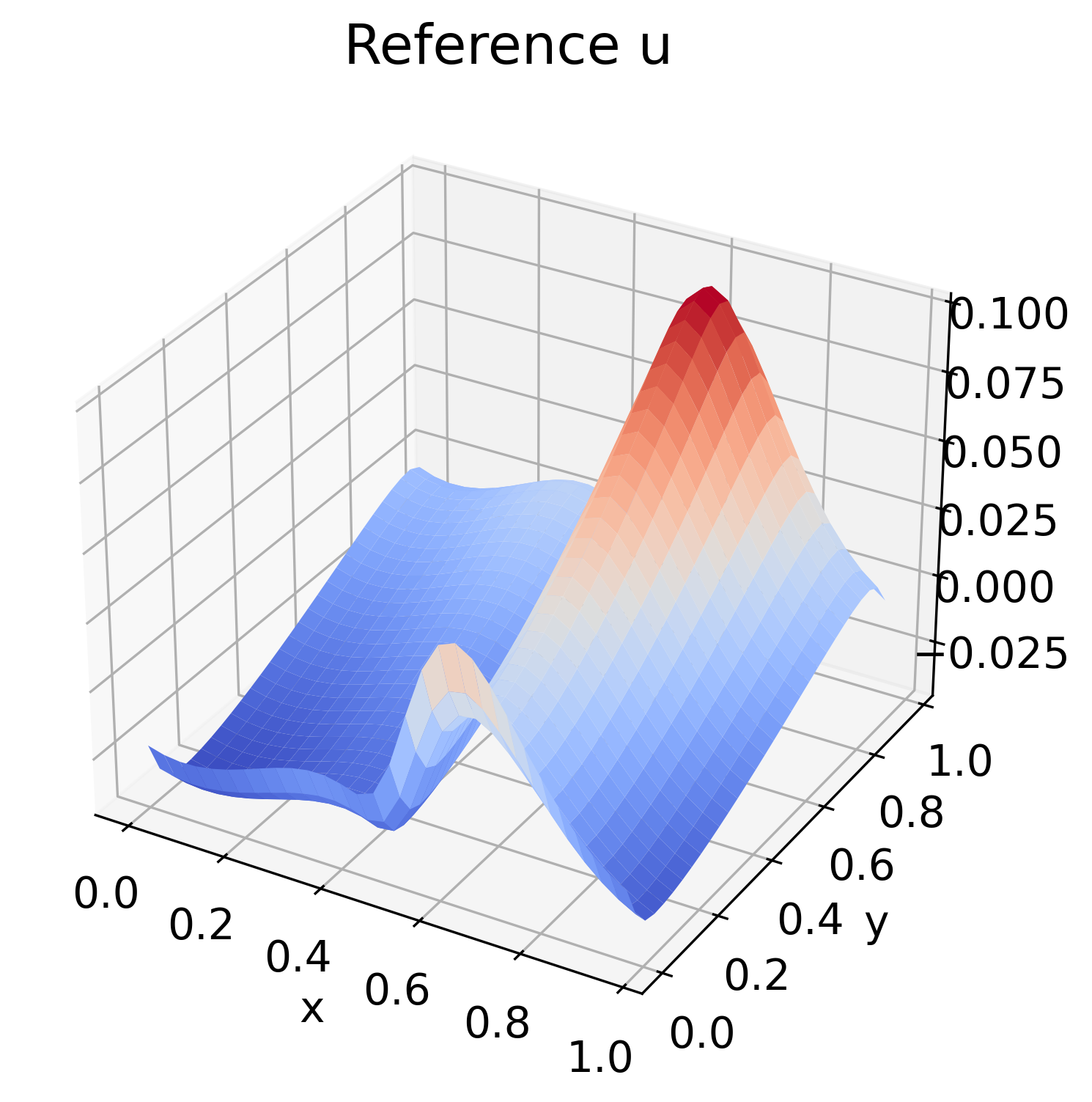}
        \caption{$u$ reference}
        \label{2D_ErgodicMFGwithSmallViscosityReferenceU}
    \end{subfigure}%
    \hspace{1mm}
    \begin{subfigure}[b]{0.23\textwidth}
        \centering
        \includegraphics[width=\linewidth]{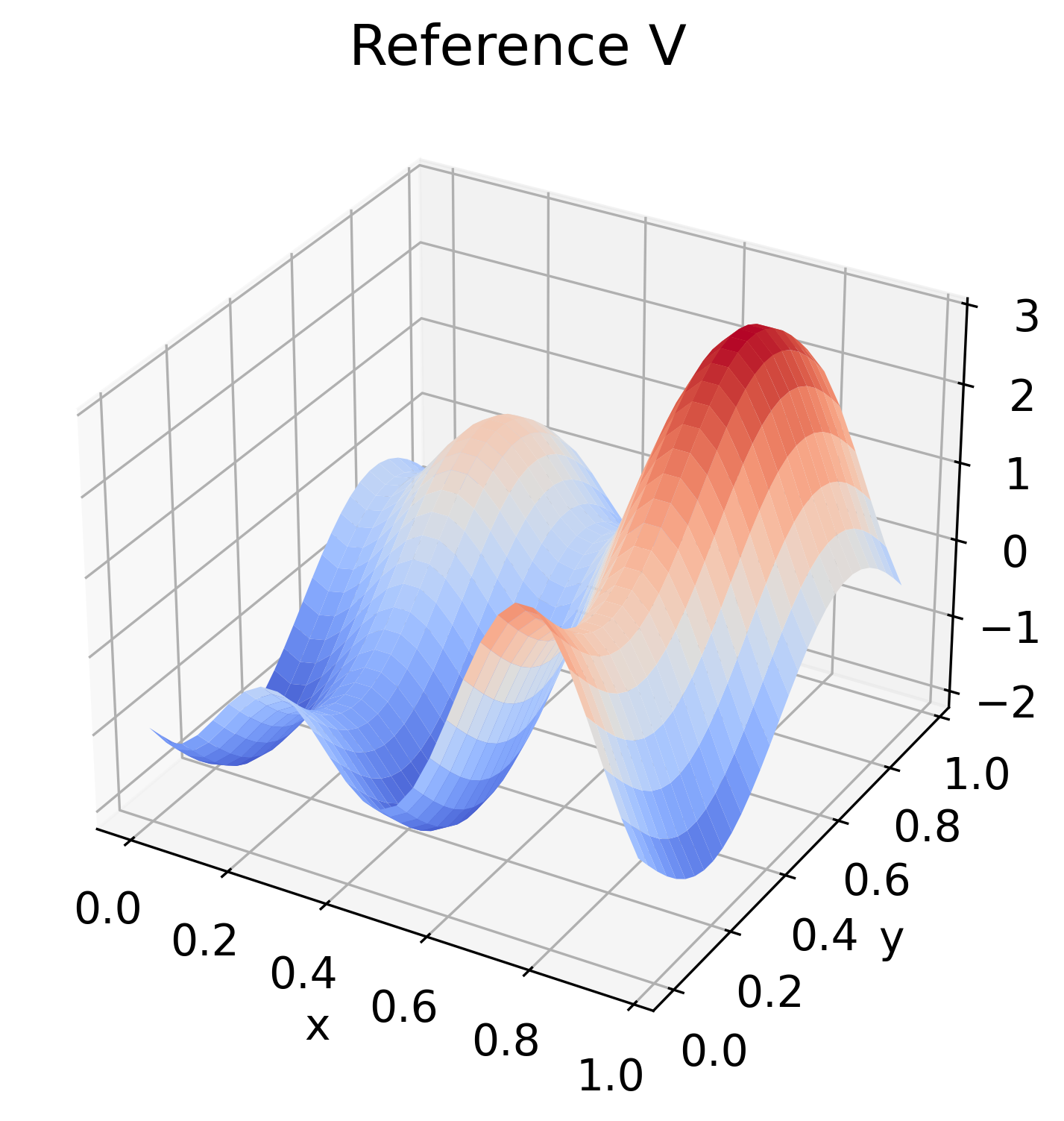}
        \caption{$V$ reference}
        \label{2D_ErgodicMFGwithSmallViscosityReferenceV}
    \end{subfigure}%
    \hspace{1mm}
    \begin{subfigure}[b]{0.23\textwidth}
        \centering
        \includegraphics[width=\linewidth]{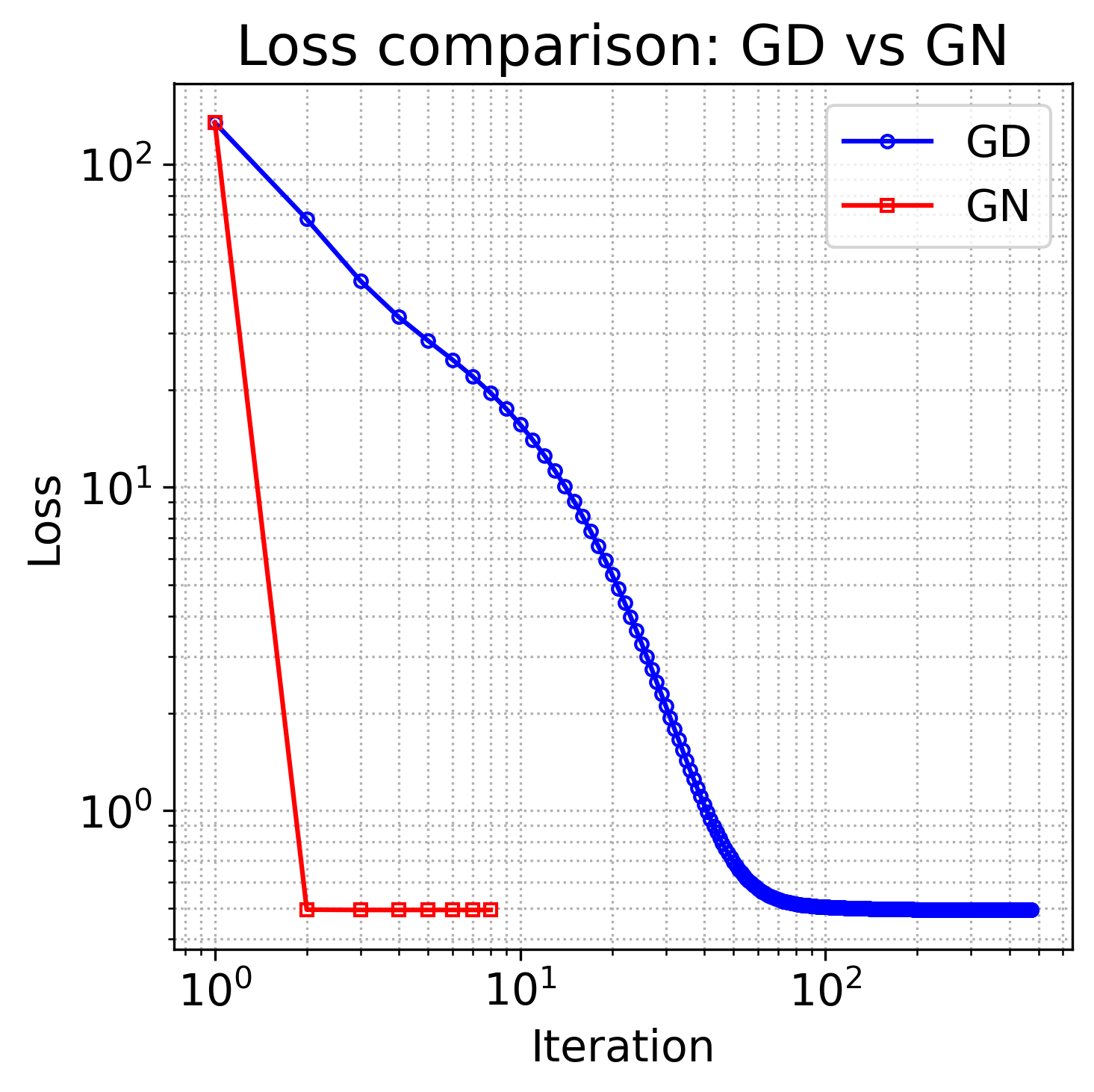}
        \caption{Loss comparison}
        \label{2D_ErgodicMFGwithSmallViscosityLoss}
    \end{subfigure}
    
    \begin{subfigure}[b]{0.23\textwidth}
        \centering
        \includegraphics[width=\linewidth]{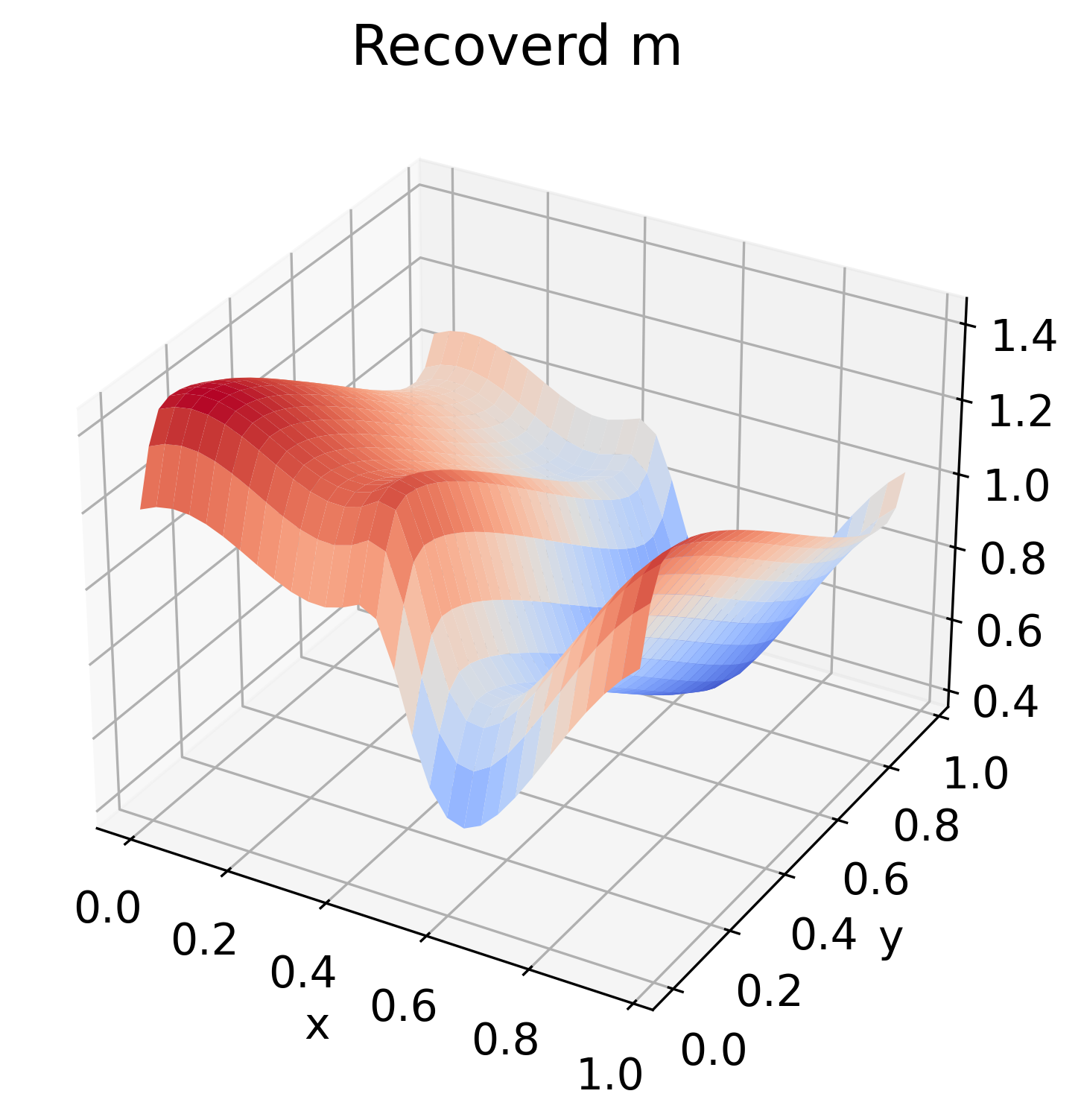}
        \caption{Recovered $m$ via GD}
        \label{2D_ErgodicMFGwithSmallViscosityRecoverdM_GD}
    \end{subfigure}%
    \hspace{1mm}
    \begin{subfigure}[b]{0.23\textwidth}
        \centering
        \includegraphics[width=\linewidth]{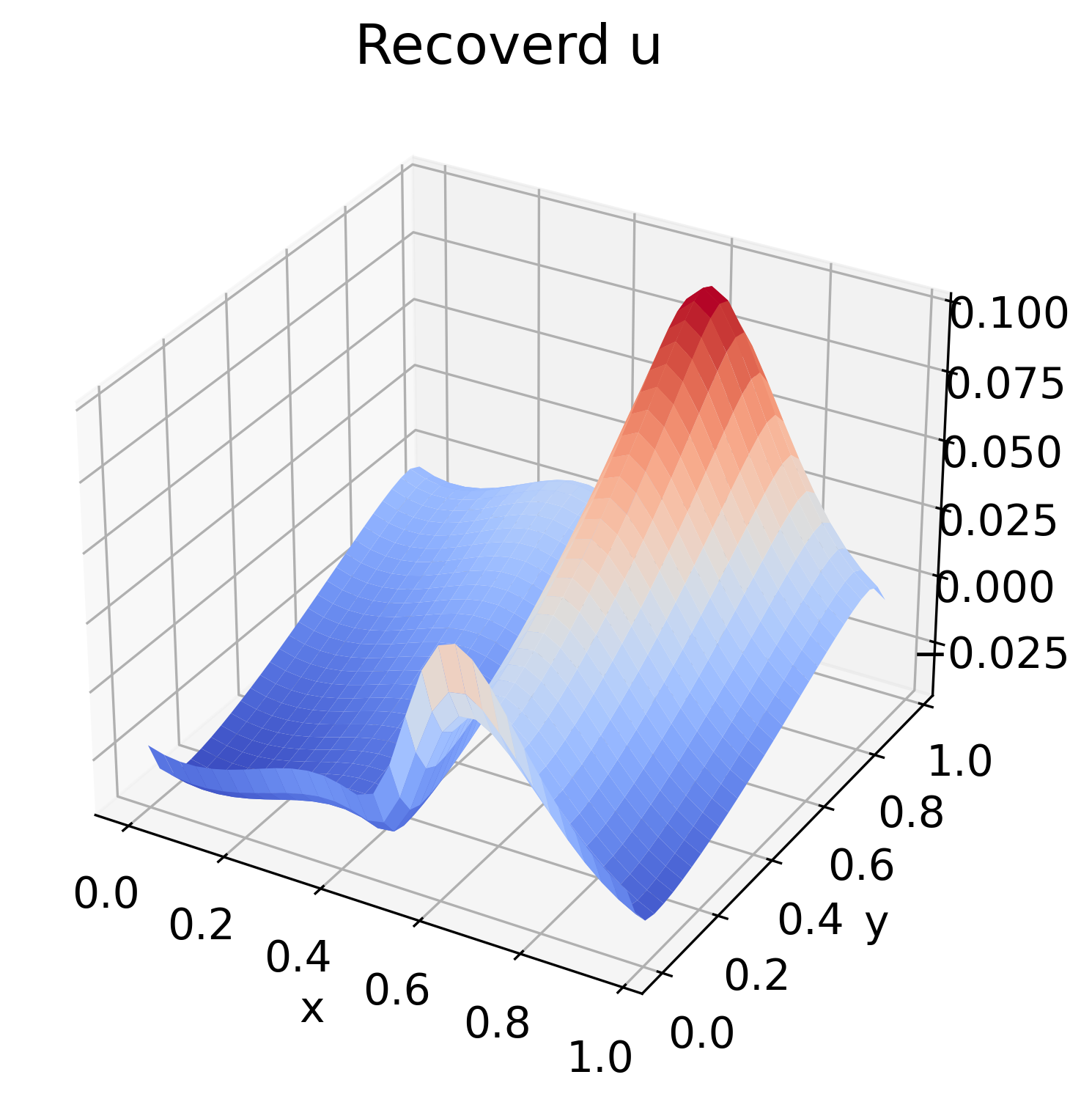}
        \caption{Recovered $u$ via GD}
        \label{2D_ErgodicMFGwithSmallViscosityRecoverdU_GD}
    \end{subfigure}%
    \hspace{1mm}
    \begin{subfigure}[b]{0.23\textwidth}
        \centering
        \includegraphics[width=\linewidth]{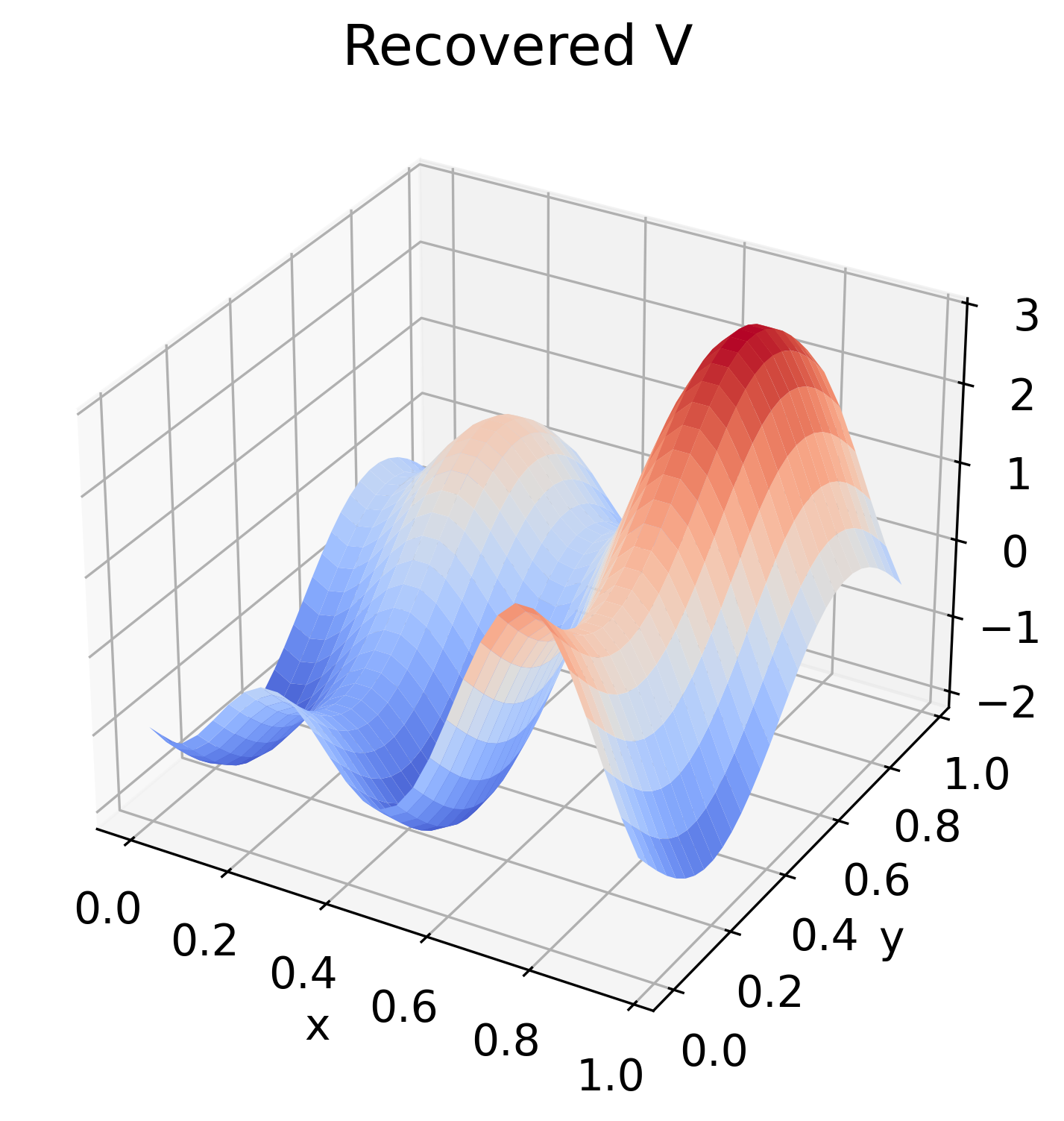}
        \caption{Recovered $V$ via GD}
        \label{2D_ErgodicMFGwithSmallViscosityRecoverdV_GD}
    \end{subfigure}%
    \hspace{1mm}
    \begin{subfigure}[b]{0.23\textwidth}
        \centering
        \includegraphics[width=\linewidth]{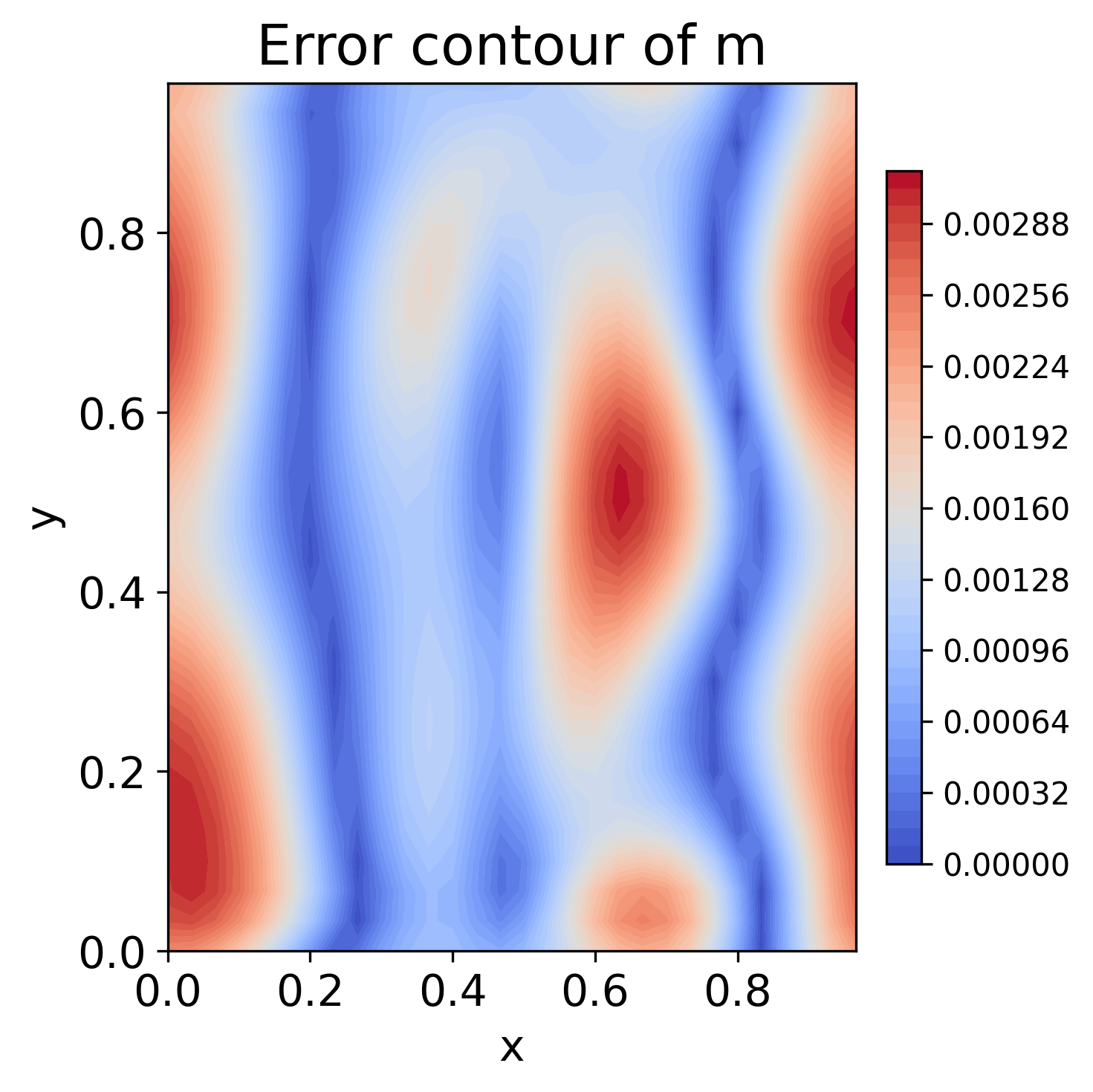}
        \caption{Error of $m$ via GD}
        \label{2D_ErgodicMFGwithSmallViscosityErrorM_GD}
    \end{subfigure}
    
    \begin{subfigure}[b]{0.23\textwidth}
        \centering
        \includegraphics[width=\linewidth]{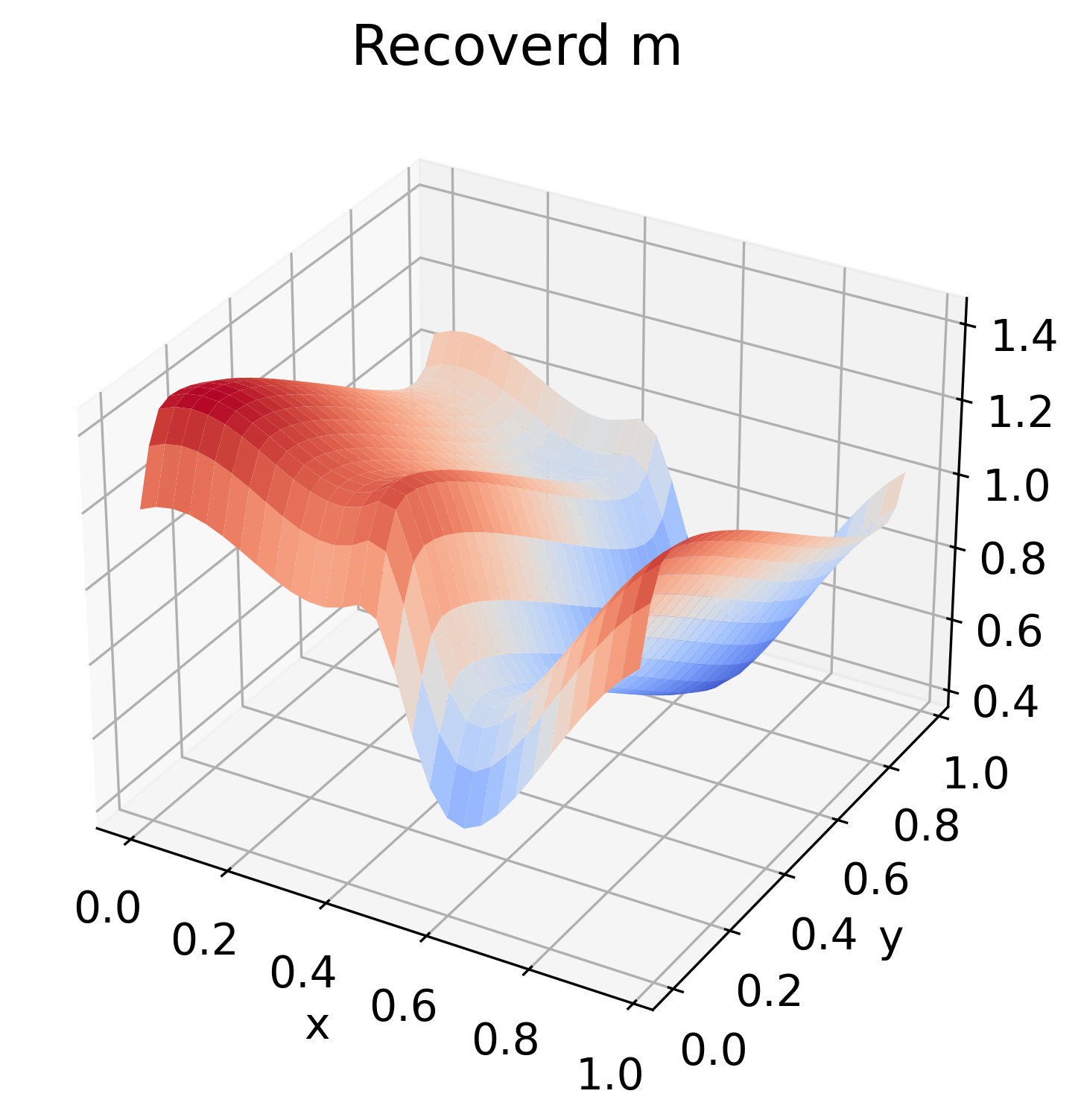}
        \caption{Recovered $m$ via GN}
        \label{2D_ErgodicMFGwithSmallViscosityRecoverdM_GN}
    \end{subfigure}%
    \hspace{1mm}
    \begin{subfigure}[b]{0.23\textwidth}
        \centering
        \includegraphics[width=\linewidth]{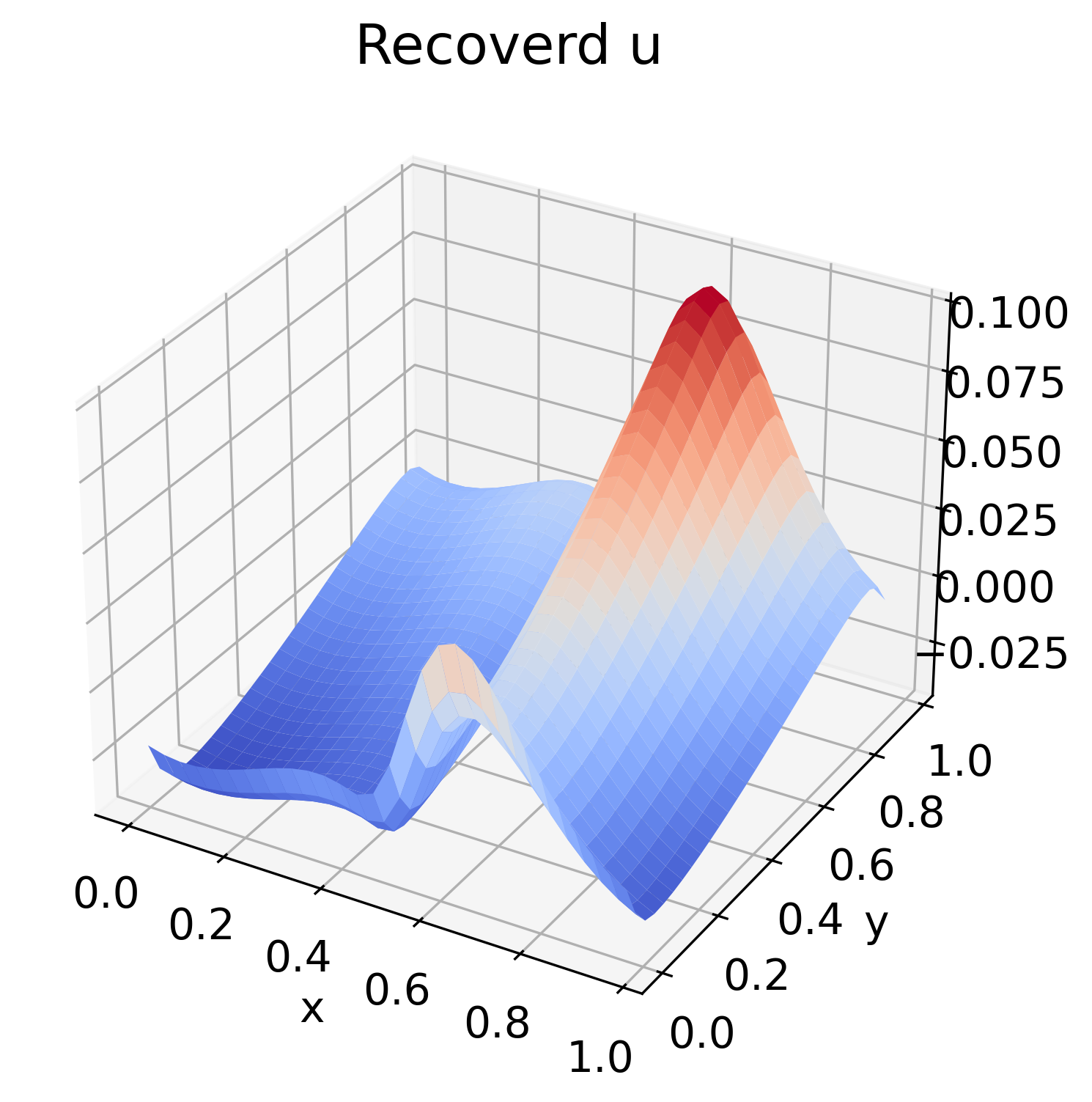}
        \caption{Recovered $u$ via GN}
        \label{2D_ErgodicMFGwithSmallViscosityRecoverdU_GN}
    \end{subfigure}%
    \hspace{1mm}
    \begin{subfigure}[b]{0.23\textwidth}
        \centering
        \includegraphics[width=\linewidth]{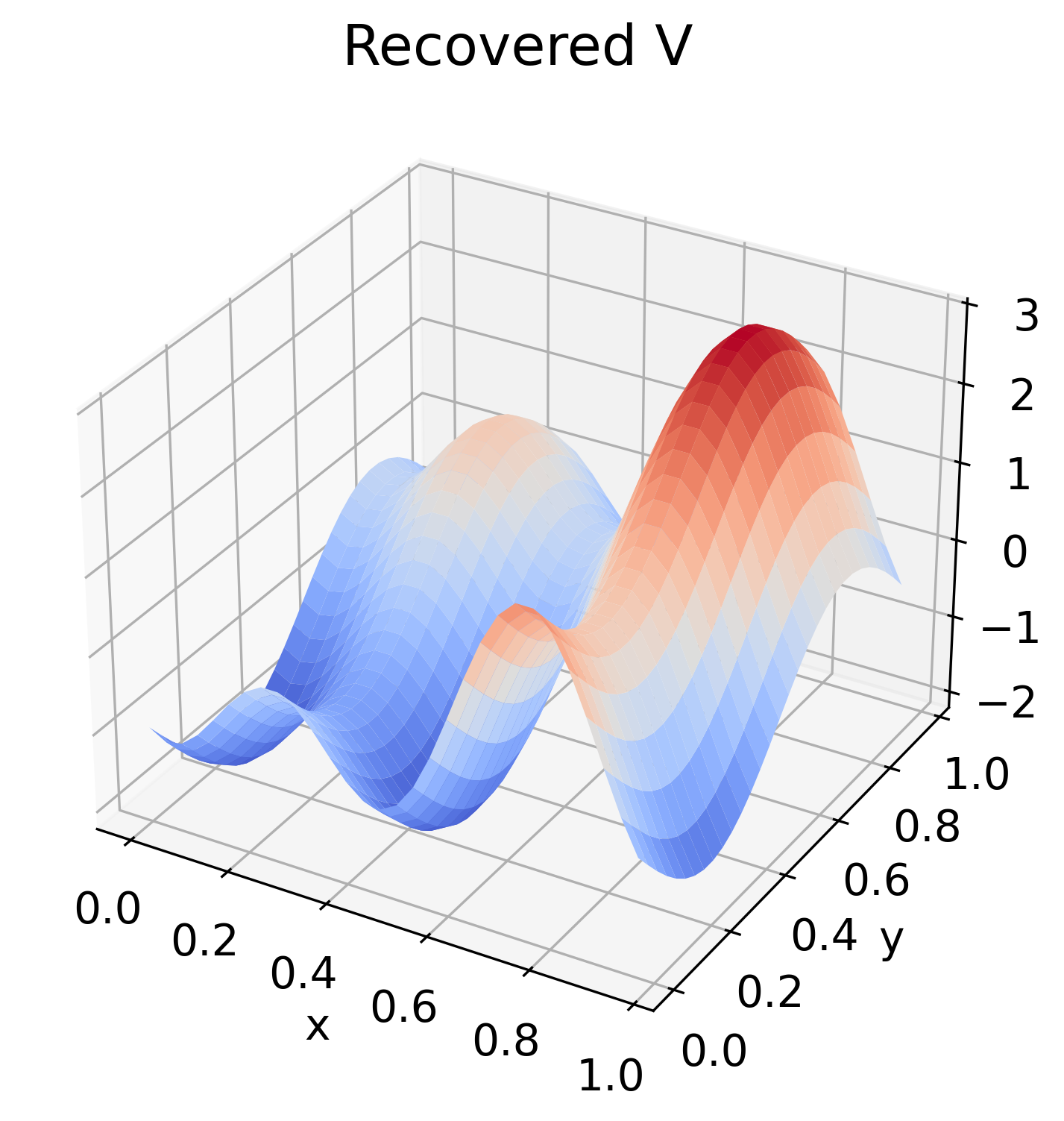}
        \caption{Recovered $V$ via GN}
        \label{2D_ErgodicMFGwithSmallViscosityRecoverdV_GN}
    \end{subfigure}%
    \hspace{1mm}
    \begin{subfigure}[b]{0.23\textwidth}
        \centering
        \includegraphics[width=\linewidth]{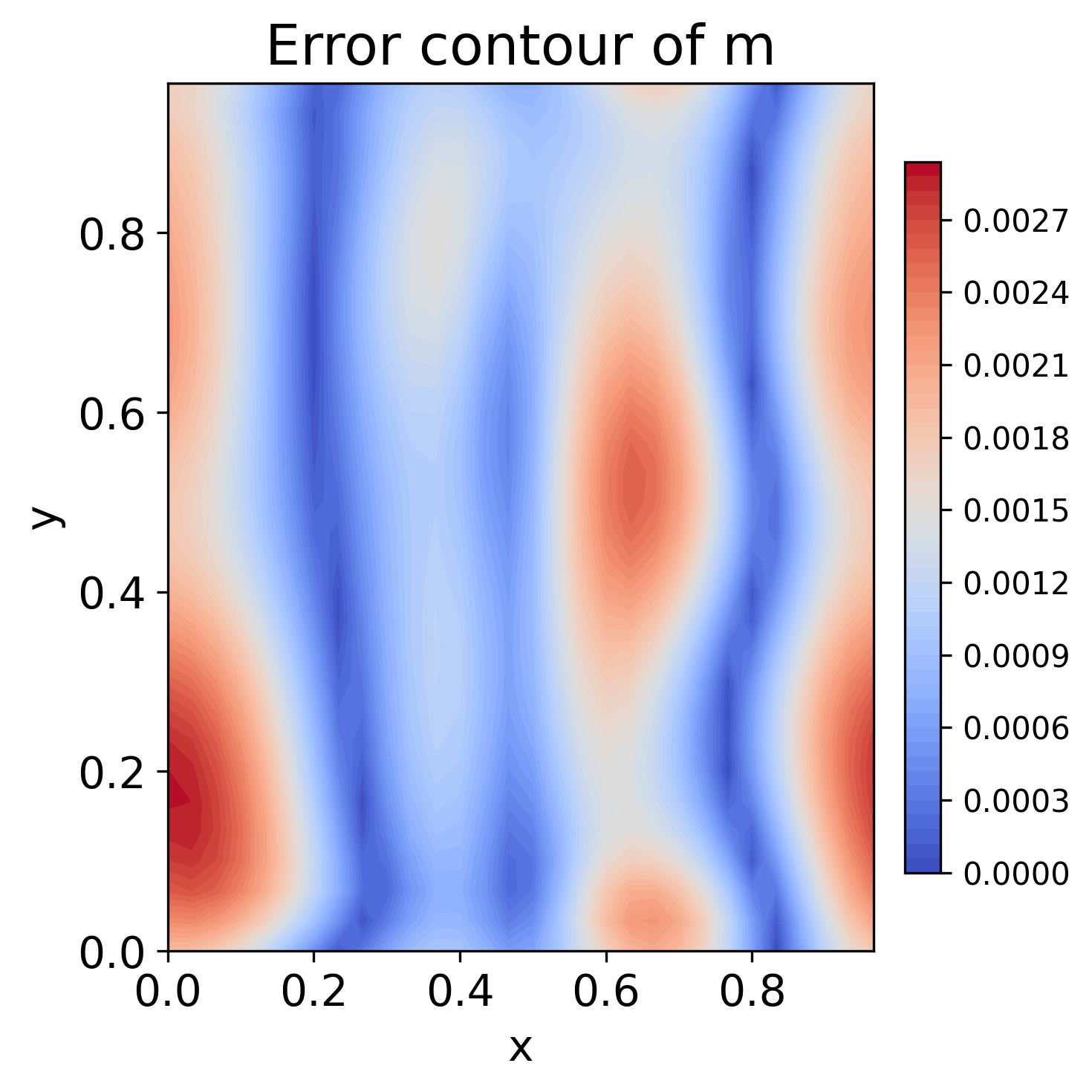}
        \caption{Error of $m$ via GN}
        \label{2D_ErgodicMFGwithSmallViscosityErrorM_GN}
    \end{subfigure}
    
    \begin{subfigure}[b]{0.23\textwidth}
        \centering
        \includegraphics[width=\linewidth]{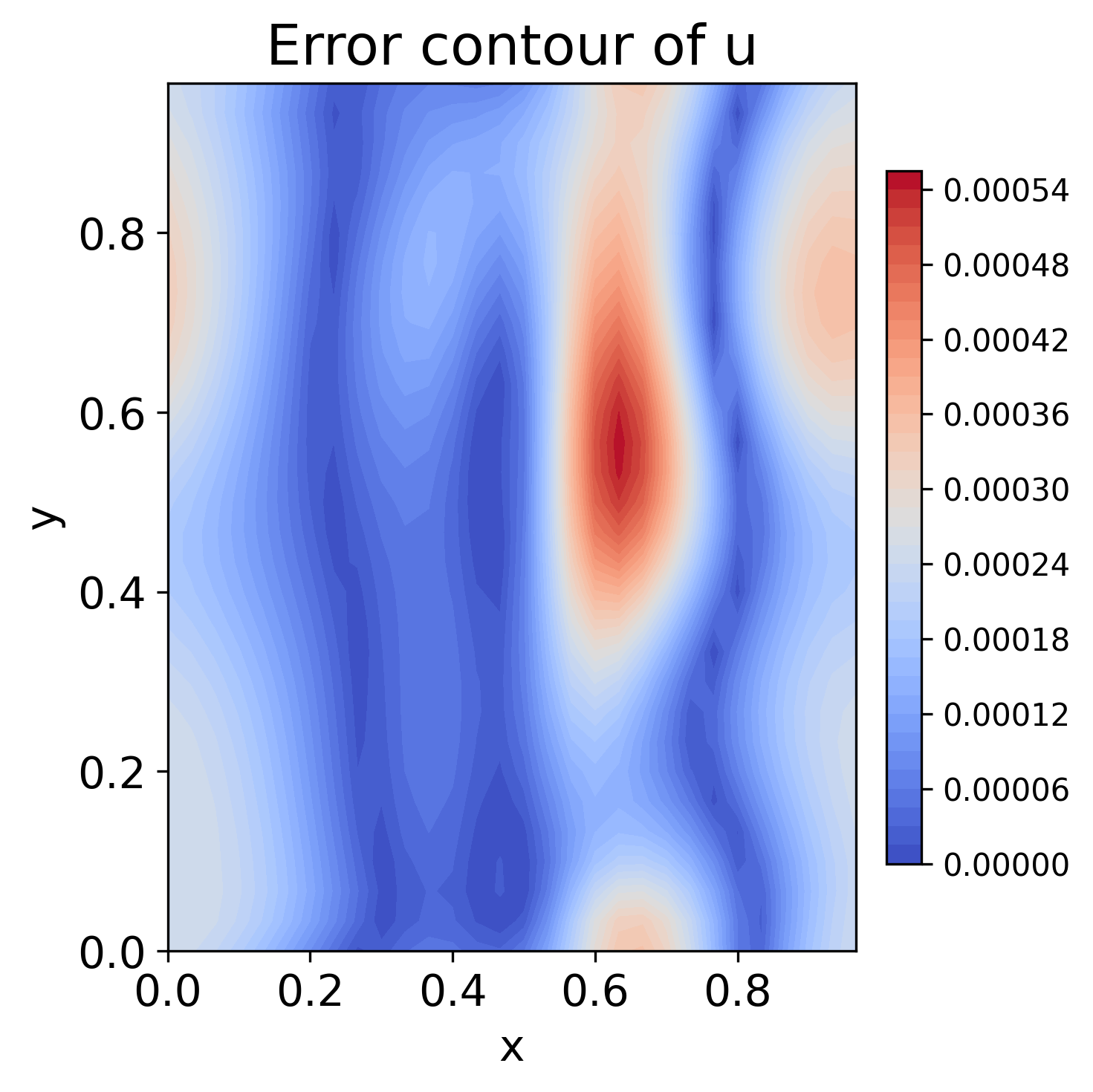}
        \caption{Error of $u$ via GD}
        \label{2D_ErgodicMFGwithSmallViscosityErrorU_GD}
    \end{subfigure}
    \hspace{1mm}
    \begin{subfigure}[b]{0.23\textwidth}
        \centering
        \includegraphics[width=\linewidth]{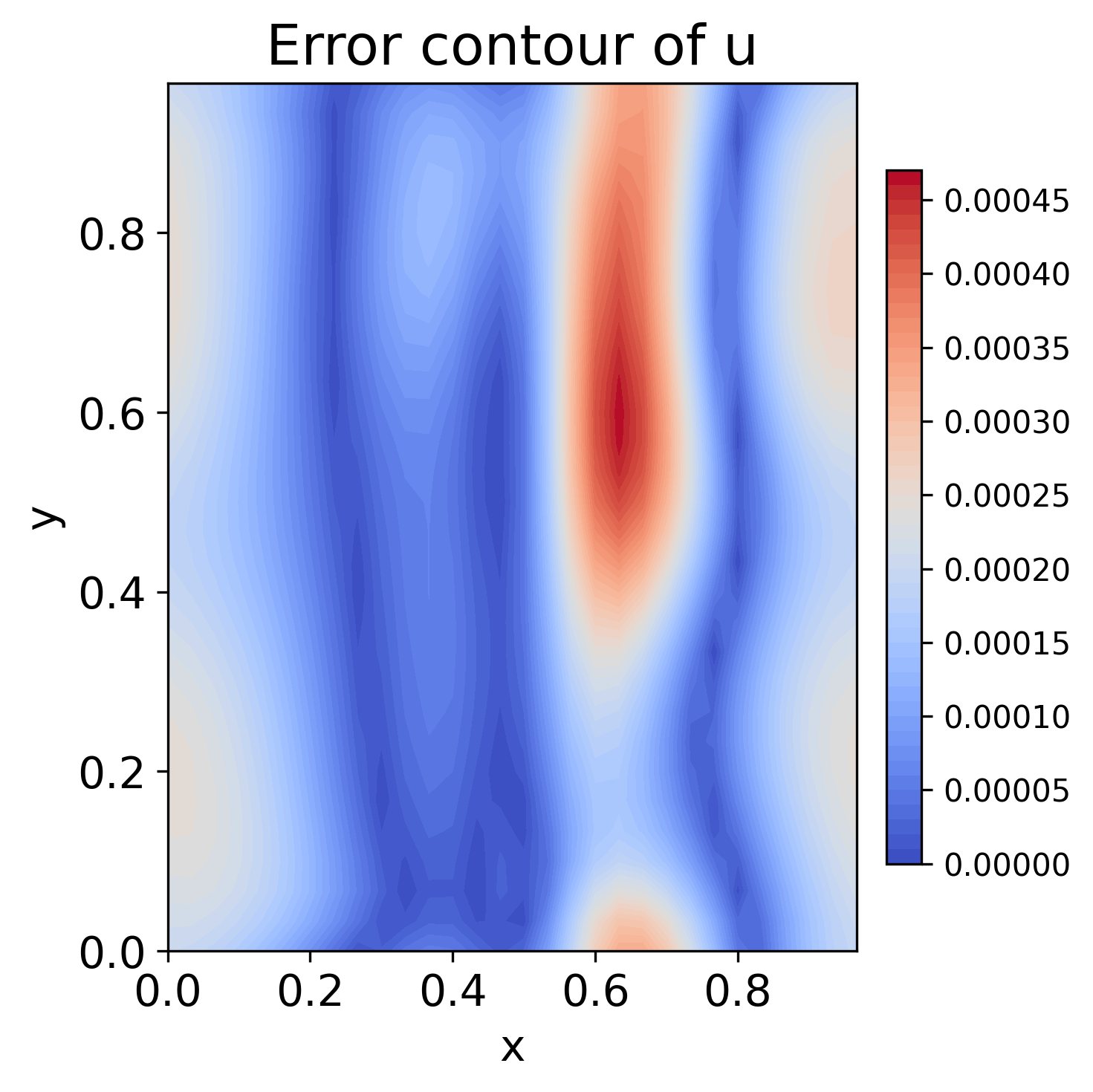}
        \caption{Error of $u$ via GN}
        \label{2D_ErgodicMFGwithSmallViscosityErrorU_GN}
    \end{subfigure}
    \hspace{1mm}
    \begin{subfigure}[b]{0.23\textwidth}
        \centering
        \includegraphics[width=\linewidth]{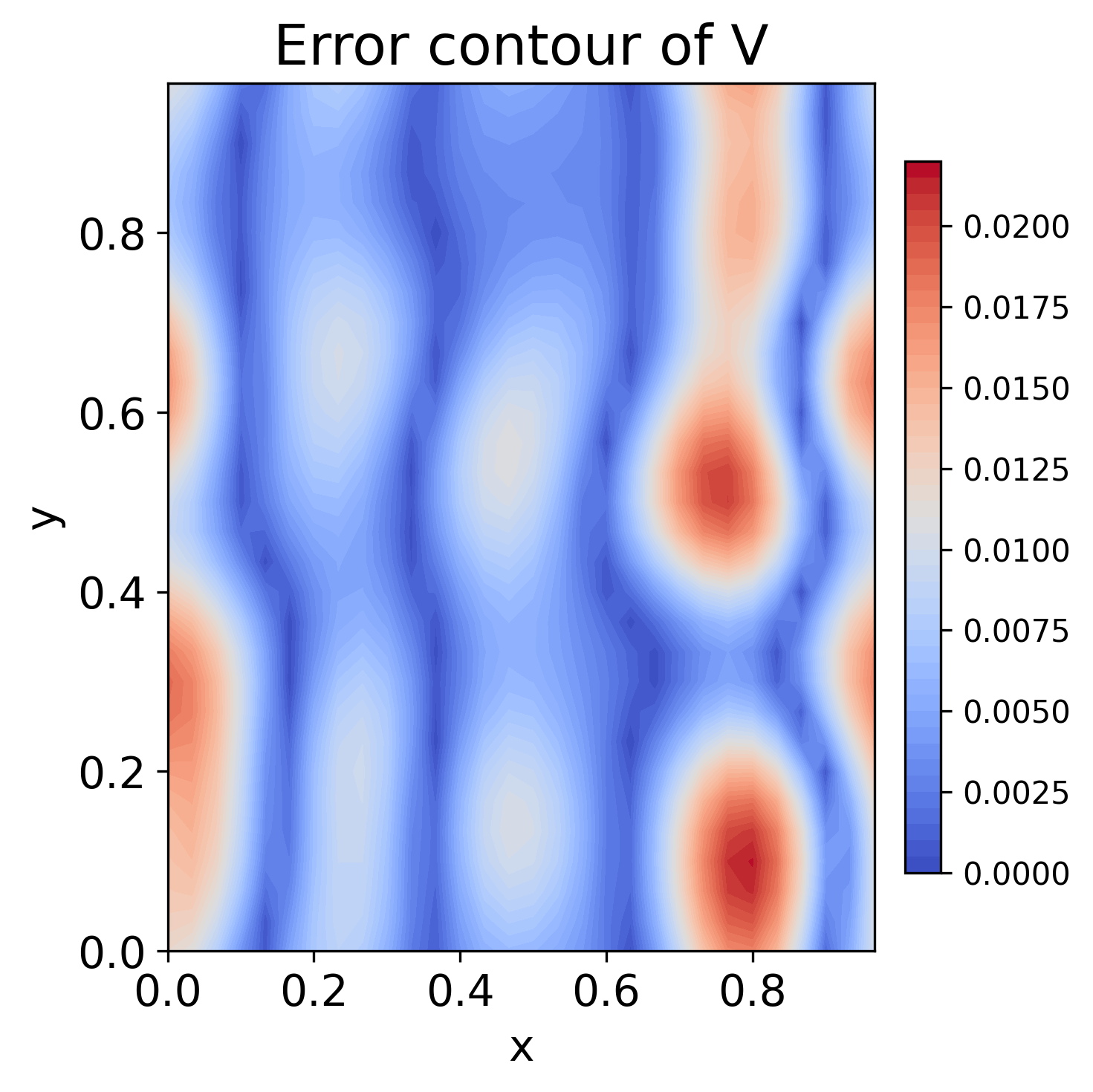}
        \caption{Error of $V$ via GD}
        \label{2D_ErgodicMFGwithSmallViscosityErrorV_GD}
    \end{subfigure}%
    \hspace{1mm}
    \begin{subfigure}[b]{0.23\textwidth}
        \centering
        \includegraphics[width=\linewidth]{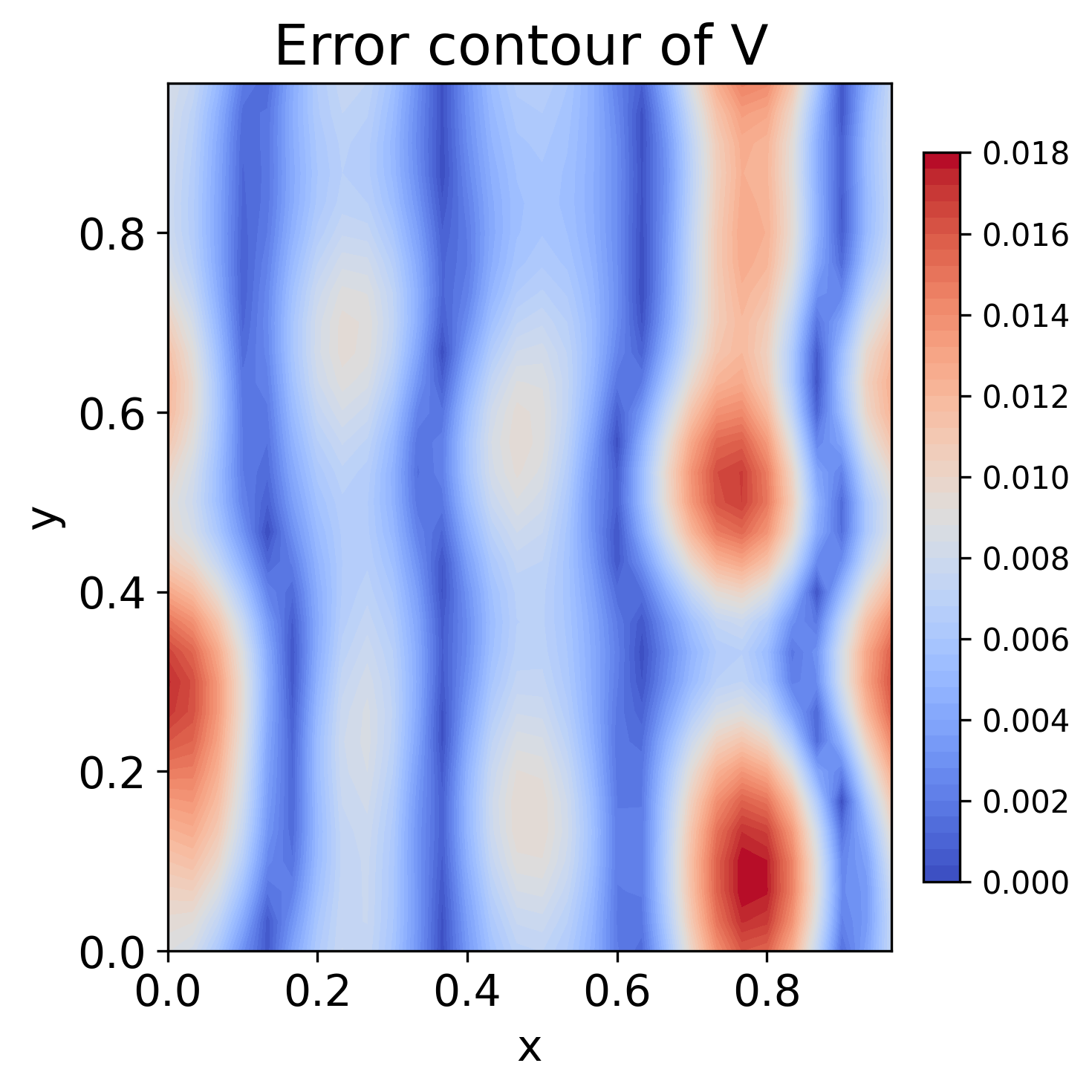}
        \caption{Error of $V$ via GN}
        \label{2D_ErgodicMFGwithSmallViscosityErrorV_GN}
    \end{subfigure}%

    \caption{Numerical results for the inverse problem of the second-order stationary MFG in \eqref{eq:2DstationaryMFGinvSmall_nu}.  (a), (b), (c) are references for $m,u,V$; (d) log-log plot of the loss comparison for GD and GN across iterations; (e), (f), (g) recovered $m,u,V$ via GD; (h), (m), (o) errors of $m,u,V$ via GD; (i), (j), (k) recovered $m,u,V$ via GN; (l), (n), (p) errors of $m,u,V$ via GN.}
    \label{2D_ErgodicMFGwithSmallViscosityPlot}
\end{figure}

\subsubsection{Solver-Agnostic Property of the Inverse Problem Framework}
\label{2DMFG_solver_free}
To illustrate the solver-agnostic nature of the framework in \Cref{sec:inverse_stationary}, we compare three different inner solvers for \eqref{eq:2DstationaryMFGinvSmall_nu}: the HRF~\cite{gomes2020hessian}, the Newton-based method~\cite{achdou2010mean}, and the policy iteration method~\cite{cacace2021policy}. We compare reconstruction accuracy and convergence behavior across these implementations to validate the robustness of the proposed inverse framework.  In this comparison, the three inner solvers are used to solve the same
discretized stationary MFG system in
\eqref{eq:st_forward_problem_discretized}. They differ only in how they compute
the state for a given parameter. After the inner solver has converged, the outer
solver forms the adjoint and sensitivity equations from the same discrete
residual. Thus, the outer derivative is taken with respect
to the discretized MFG equations, not through the iteration history of
the inner solver. 

\textbf{Experimental Setup.}
We consider \eqref{eq:2DstationaryMFGinvSmall_nu} with coupling \(f[m]=50(1-\Delta)^{-1}(1-\Delta)^{-1}m\) and spatial cost
$V(x,y)=\sin(2\pi x)+\cos(2\pi x)+\sin(4\pi y)$. We set \(\nu=0.2\) and discretize \(\mathbb{T}^2\) with \(h_x=h_y=1/30\). We use 72 observation points for \(m\) and 180 for \(V\). The regularization parameters are \(\alpha=0.04\), \(\beta=2\), and \(\gamma=2\), and we add i.i.d.\ Gaussian noise with level \(\eta=10^{-3}\) to the observations. This nonlocal coupling is not directly covered by the convergence theorem
for the HRF method in Section~\ref{sec:tdmfg}, which is proved for local
couplings. Nevertheless, the MFG still satisfies the Lasry--Lions monotonicity
condition, and the HRF method can be applied to the corresponding discrete
system. This example, therefore, also illustrates the use of the solver beyond the local-coupling setting covered by the theorem.


\textbf{Experimental Results.}
Table~\ref{table:Comparison_diff_solver} reports the discrete \(L^2\) errors for the recovered fields \(m,u,V\), together with the scalar error \(|\lambda-\lambda^*|\), for three inner MFG solvers and for both outer schemes (GD and GN). Figure~\ref{fig:solverfree_loss_GD_GN} shows the corresponding log-log plots of the loss versus iteration under each solver. The comparable accuracy and convergence behavior across all three implementations support the solver-agnostic nature of our framework. In particular, the inner solver can be chosen based on computational resources or implementation constraints without materially affecting reconstruction quality.

\begin{table}[htbp]
\centering
\caption{Recovery errors for the second-order stationary MFG in Section~\ref{2DMFG_solver_free}, using different inner MFG solvers. The columns for $m,u,V$ report discrete $L^2$ errors, while the last column reports the scalar error $|\lambda-\lambda^*|$.}
\label{table:Comparison_diff_solver}
\begin{tabular}{llcccc}
\toprule
\multirow{2}{*}{Inner MFG Solver} & \multirow{2}{*}{Inverse Solver} & \multicolumn{3}{c}{Discrete $L^2$ errors} & Scalar error \\
\cmidrule(lr){3-5} \cmidrule(lr){6-6}
& & $m$ & $u$ & $V$ & $|\lambda-\lambda^*|$ \\
\midrule
\multirow{2}{*}{HRF-based solver} & GD & 1.305434e-03 & 2.946326e-04 & 7.805980e-03 & 1.103745e-03 \\
 & GN & 1.262210e-03 & 2.821916e-04 & 7.552066e-03 & 1.126094e-03 \\
\addlinespace
\multirow{2}{*}{Newton-based Method} & GD & 1.411298e-03 & 3.200899e-04 & 8.291286e-03 & 1.136011e-03 \\
 & GN & 1.262438e-03 & 2.822591e-04 & 7.552039e-03 & 1.126042e-03 \\
\addlinespace
\multirow{2}{*}{Policy Iteration} & GD & 1.081458e-03 & 2.259894e-04 & 6.784161e-03 & 1.228694e-03 \\
 & GN & 1.024403e-03 & 2.122809e-04 & 6.232931e-03 & 1.062980e-03 \\
\bottomrule
\end{tabular}
\end{table}

\begin{figure}[!htbp]
    \centering
    \begin{subfigure}[b]{0.23\textwidth}
        \centering
        \includegraphics[width=\linewidth]{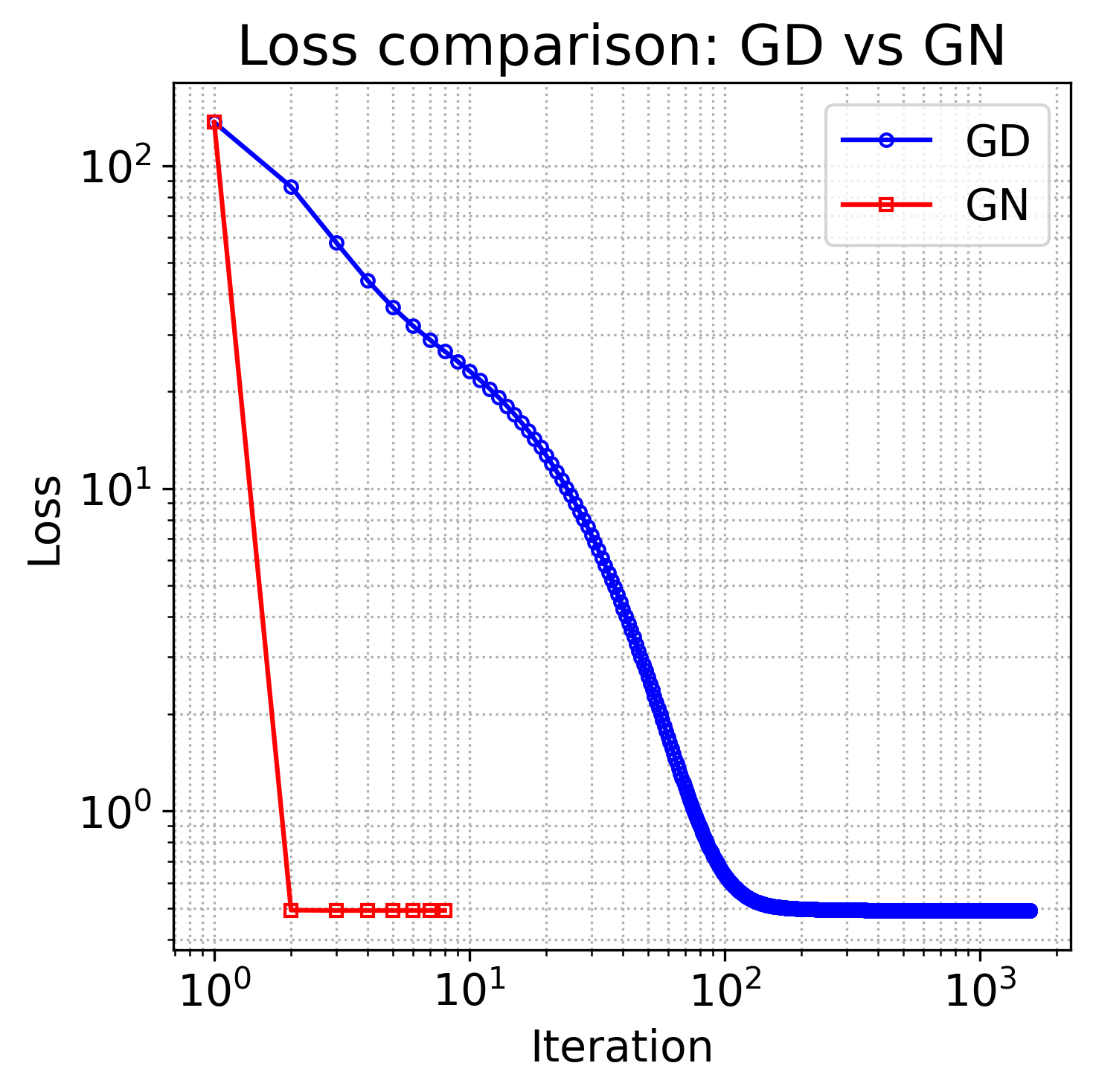}
        \caption{Loss comparison for the HRF}
        \label{fig:solverfree_loss_monotone}
    \end{subfigure}
    \hspace{1mm}
    \begin{subfigure}[b]{0.23\textwidth}
        \centering
        \includegraphics[width=\linewidth]{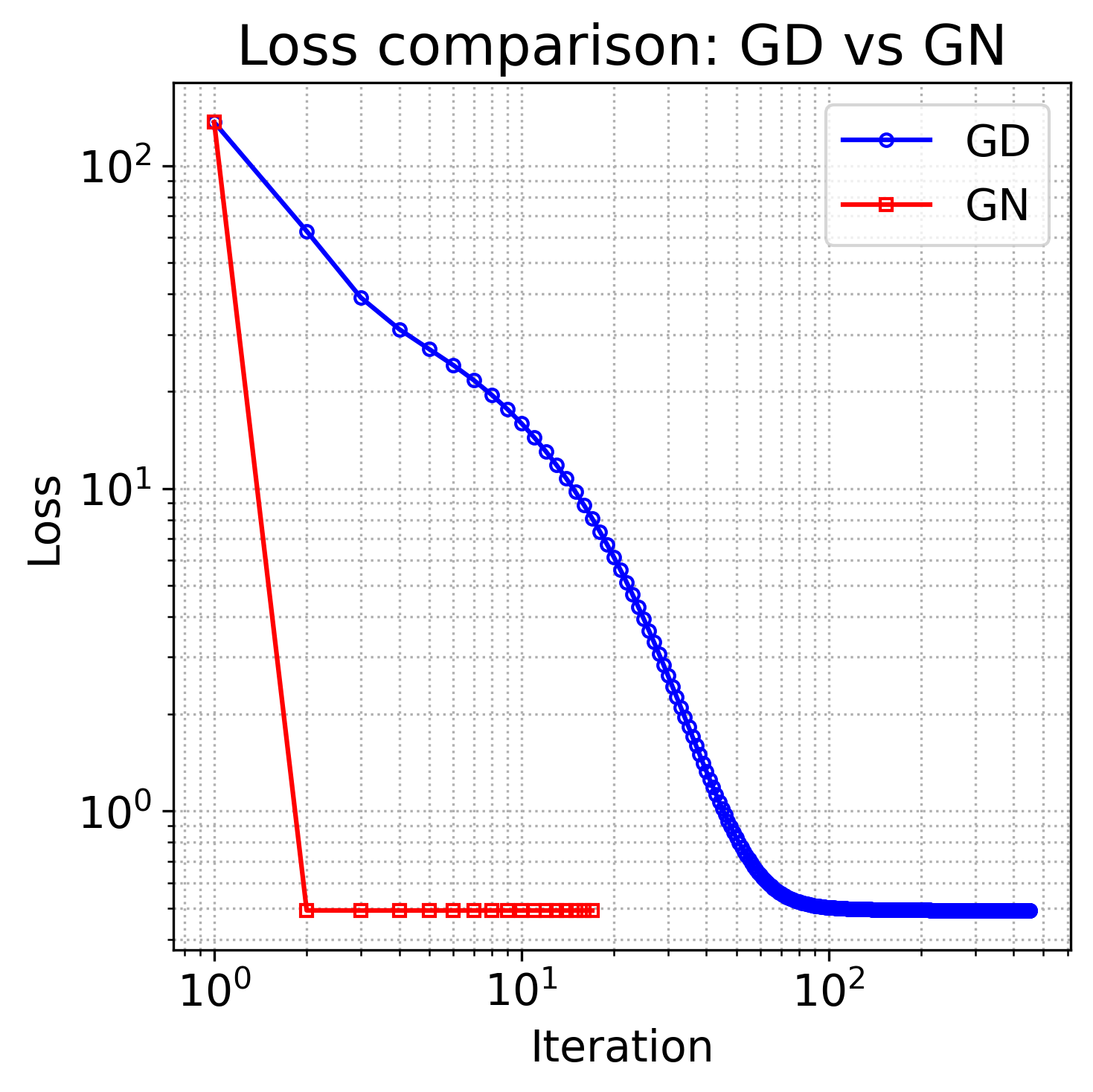}
        \caption{Loss comparison for the Newton-based method}
        \label{fig:solverfree_loss_newton}
    \end{subfigure}
    \hspace{1mm}
    \begin{subfigure}[b]{0.23\textwidth}
        \centering
        \includegraphics[width=\linewidth]{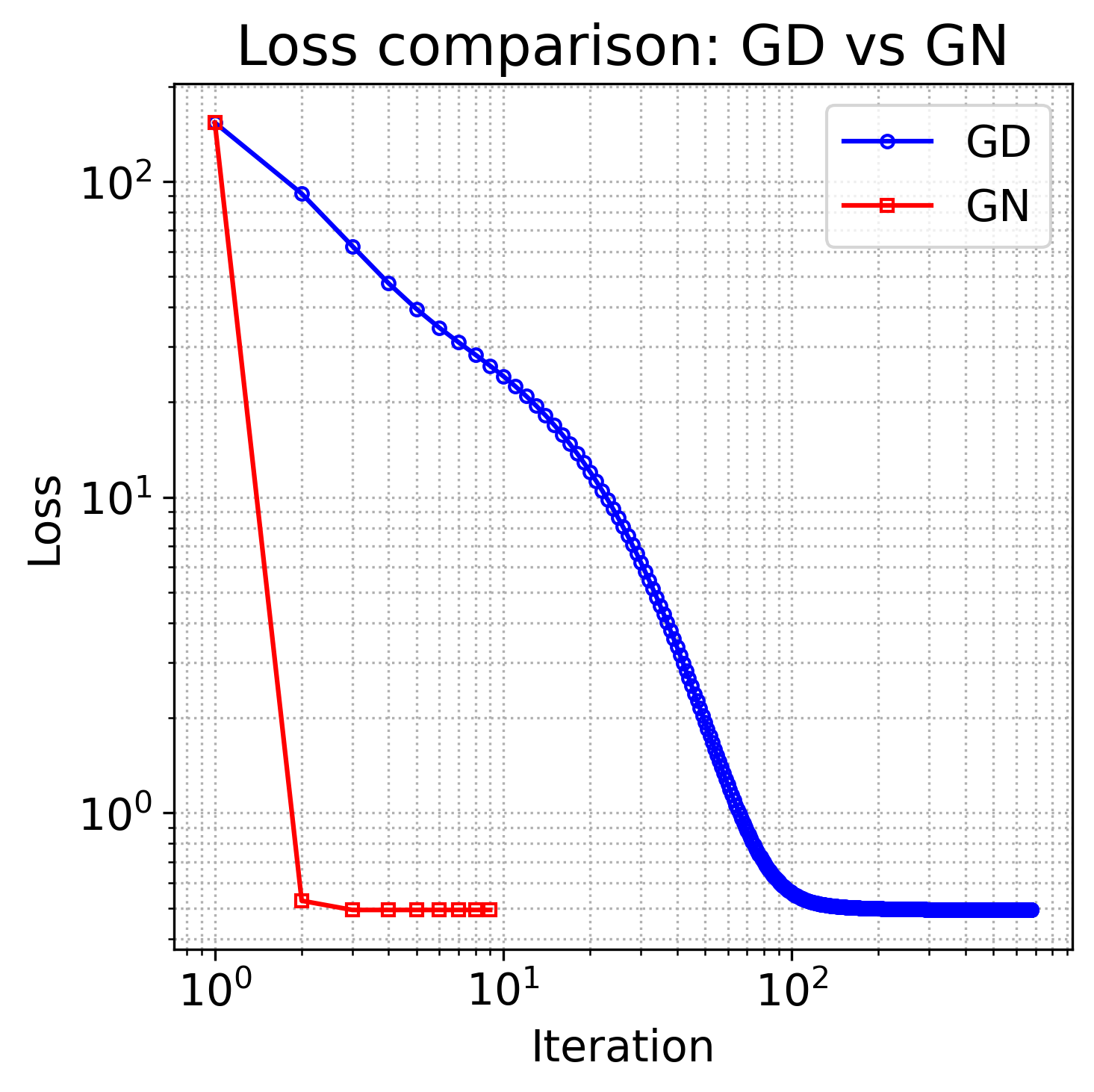}
        \caption{Loss comparison for the policy iteration}
        \label{fig:solverfree_loss_policy}
    \end{subfigure}
    \caption{Log-log plots of the loss versus iteration for GD and GN applied to the inverse problem in Section~\ref{2DMFG_solver_free}, using different inner MFG solvers. Panels (a)--(c) correspond to HRF, Newton-based method, and policy iteration, respectively.}
    \label{fig:solverfree_loss_GD_GN}
\end{figure}

\subsection{Time-Dependent MFG Inverse Problems}
In this subsection, we consider inverse problems for time-dependent MFGs. Fixing \(\nu \ge 0\), we restrict attention to choices of \(H\), \(f\), \(V\), \(m_0\), and \(u_T\) for which the MFG system below is well-posed:
\begin{equation}
\begin{cases}
-\partial_t u - \nu \Delta u + H(x,D_xu) - V(x)= f[m], & \forall (x, t) \in \mathbb{T}^d \times (0, T), \\ 
\partial_t m - \nu\Delta m - \operatorname{div}\!\left( D_p H(x,D_xu)\, m \right) = 0, & \forall (x, t) \in \mathbb{T}^d \times (0, T), \\
m(x, 0) = m_0(x), \quad u(x, T) = u_T(x).
\end{cases}
\label{eqtimedependinv}
\end{equation}

\subsubsection{One-Dimensional Time-Dependent MFG Inverse Problem}
\label{timedependenton1Dexample1}
In this example, we set \(d=1\) and specialize \eqref{eqtimedependinv} to the one-dimensional time-dependent MFG setting. We choose the Hamiltonian $H(p)=\tfrac{1}{2}\lvert p\rvert^2$. We identify \(\mathbb{T}\) with \([0,1)\), set \(T=1\), and take \(m_0(\cdot)\equiv 1\) and \(u_T(\cdot)\equiv 0\). The coupling is chosen as $f[m]=(1-\Delta)^{-1}(1-\Delta)^{-1}m$,
and the spatial cost is $V(x)=\sin(2\pi x)+\cos(4\pi x)$. 
We set the viscosity coefficient to \(\nu=0.1\). 

\textbf{Experimental Setup.}
We use \(h_x=h_t=\tfrac{1}{40}\). Each reduced block \(M\) or \(U\) has \(40\times 40=1600\) nodal values. After including the fixed boundary slices \(M_0\) and \(U_{N_T}\), each
full field has \(40\times 41=1640\) nodal values. The endpoint data are imposed at \(t=0\) for \(m\) and at \(t=T\) for \(u\). We sample 96 observations of \(m\). We use 10 spatial observation points for \(V\). The regularization parameters are set to \(\alpha=0.04\), \(\beta=2\), and \(\gamma=2\). Gaussian noise \(\mathcal{N}(0,\eta^2 I)\) with \(\eta=10^{-3}\) is added to the observations. The initial guesses are \(u\equiv 0\), \(m\equiv 1\), and \(V\equiv 0\). We solve \eqref{eq:td_forward_problem_discretized} numerically using the
HRF method and use the computed solution as the reference \((u^*,m^*)\). We then consider the inverse problem of recovering \(u\), \(m\), and \(V\) from noisy and partial observations of \(m\) and \(V\), using the adjoint algorithm and the GN method described in \Cref{sec:inverse_stationary}.

\textbf{Experimental Results.} 
Figure~\ref{fig:samples_observationsmulti} depicts the sampling pattern for the one-dimensional space-time fields: Panel~(a) shows the sample points for \(m\) together with the \(m\)-observation locations, while Panel~(b) shows the sample points for \(u\). The spatial observation locations for \(V\) are not displayed in this figure.
Figure~\ref{timedependV} summarizes the reconstruction performance by displaying the reference fields, the reconstructions, and the corresponding pointwise error contours for \(m\), \(u\), and \(V\).
Finally, the loss curves in Figure~\ref{Timedependentinversefig5} show that the GN method converges markedly faster than adjoint-based GD, attaining comparable or smaller reconstruction errors in far fewer iterations.
\begin{figure}[!htbp]
    \centering
    \raisebox{-3.8mm}{%
    \begin{subfigure}[b]{0.23\textwidth}
        \centering
        \includegraphics[width=\linewidth]{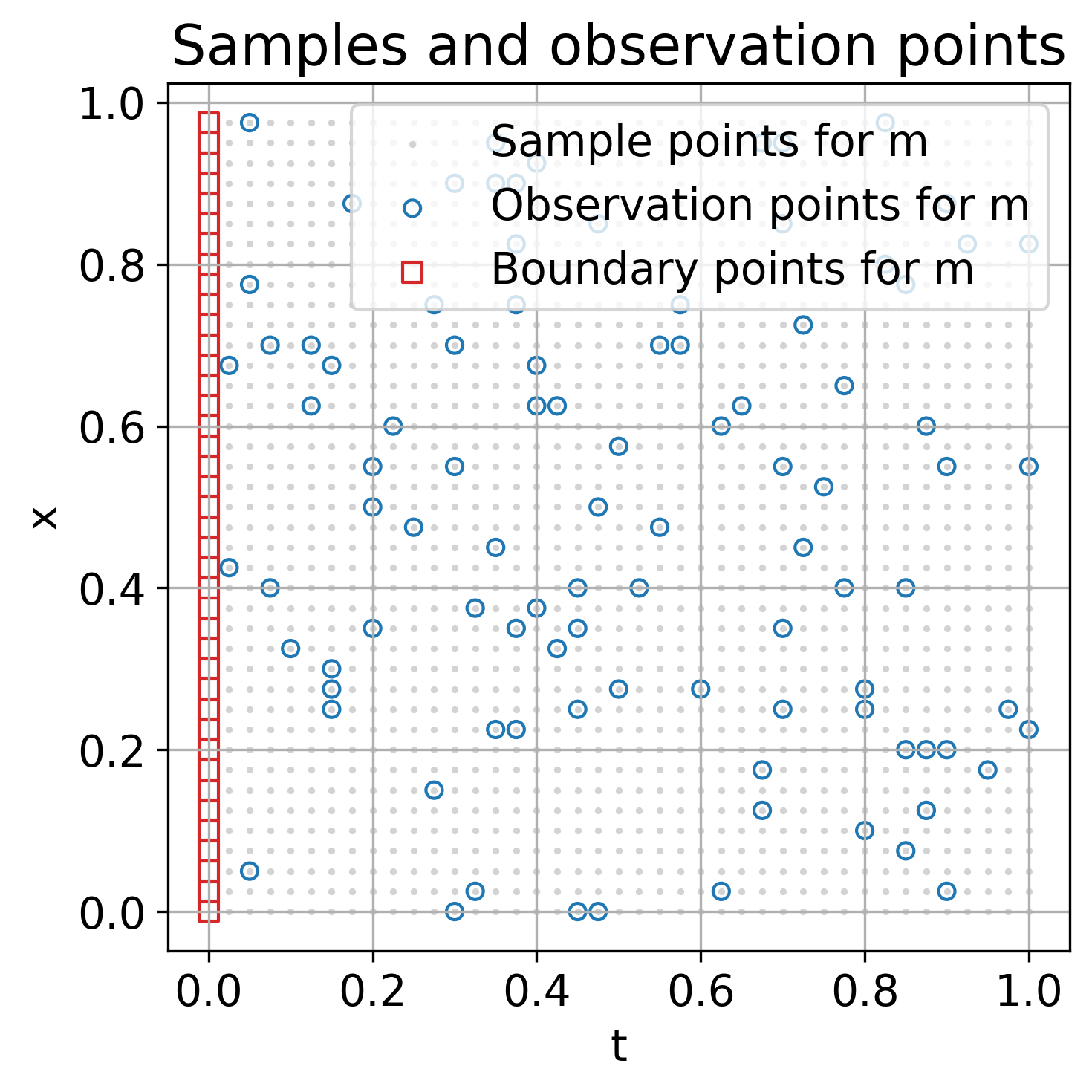}
        \caption{Samples and observations for $m$}
        \label{Multidimensioninversefig4}
    \end{subfigure}}
    \hspace{1mm}  
    \begin{subfigure}[b]{0.23\textwidth}
        \centering
        \includegraphics[width=\linewidth]{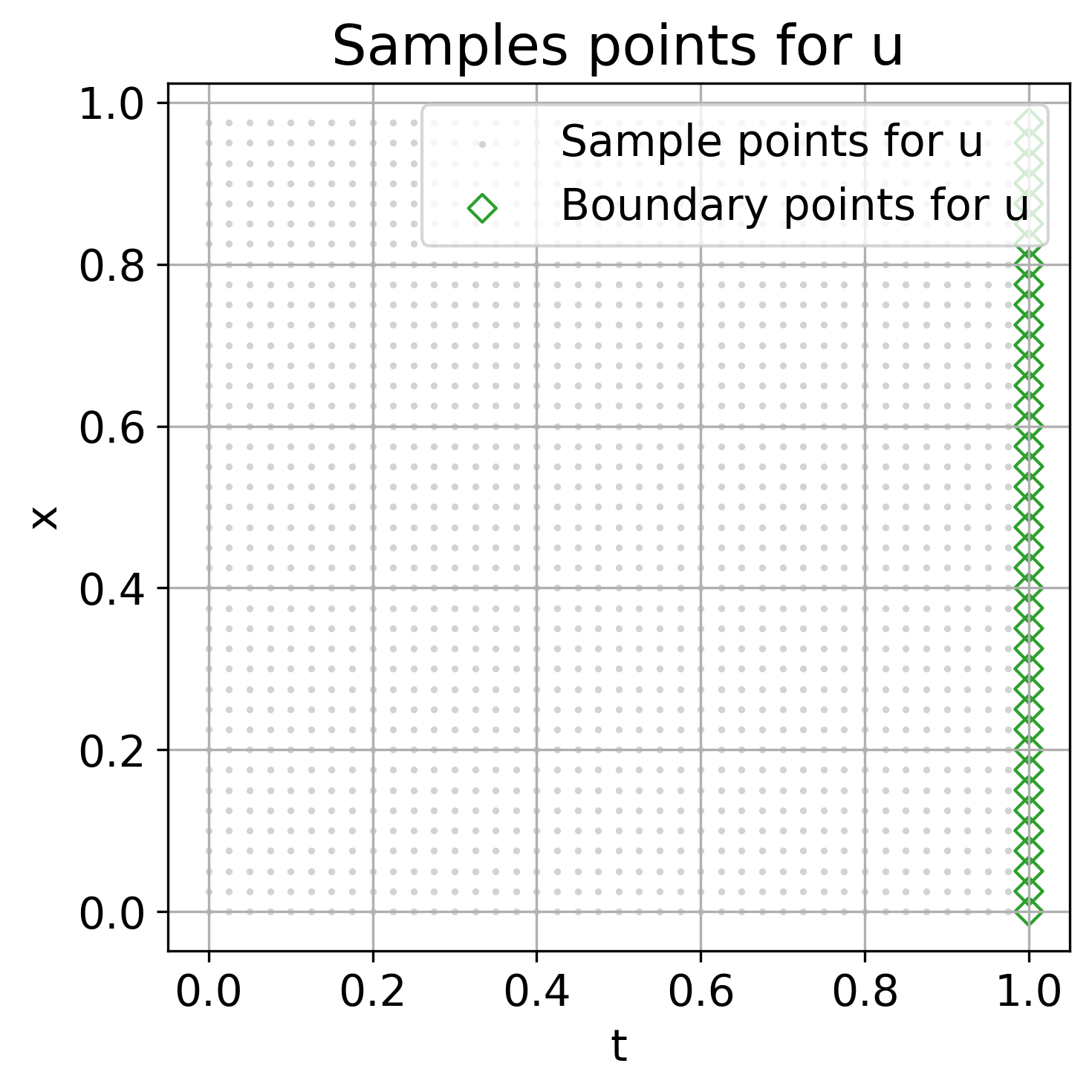}
        \caption{Samples for $u$}
        \label{Multidimensioninversefig0}
    \end{subfigure}
    \caption{The inverse problem of the time-dependent MFG in Section~\ref{timedependenton1Dexample1}: sample points for $m$ (with $m$-observation locations) and sample points for $u$.}
    \label{fig:samples_observationsmulti}
\end{figure}

\begin{figure}[!htbp]
    \centering
    \begin{subfigure}[b]{0.23\textwidth}
        \centering
        \includegraphics[width=\linewidth]{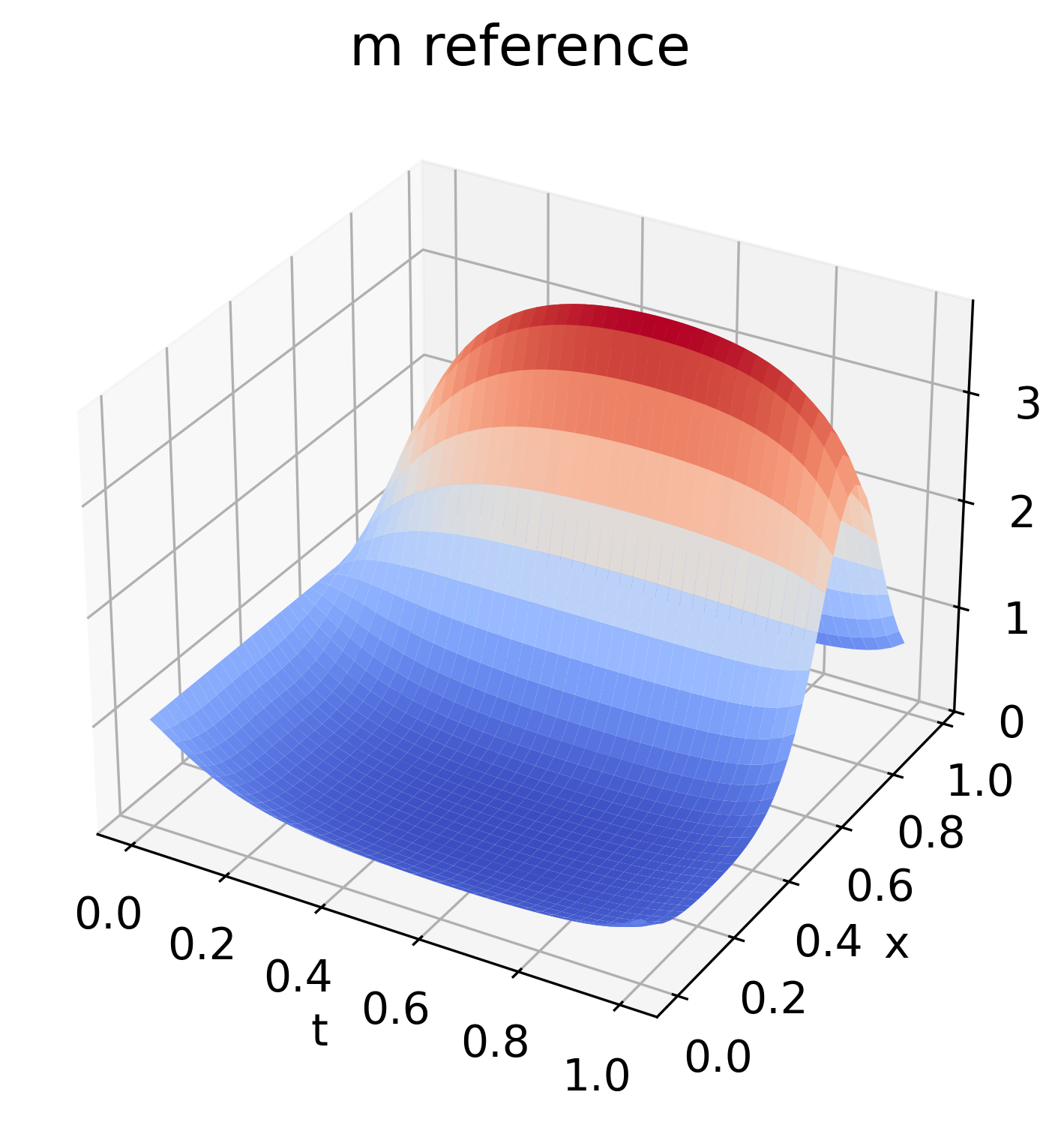}
        \caption{$m$ reference}
        \label{Timedependentinversefig1}
    \end{subfigure}%
    \hspace{1mm}
    \begin{subfigure}[b]{0.23\textwidth}
        \centering
        \includegraphics[width=\linewidth]{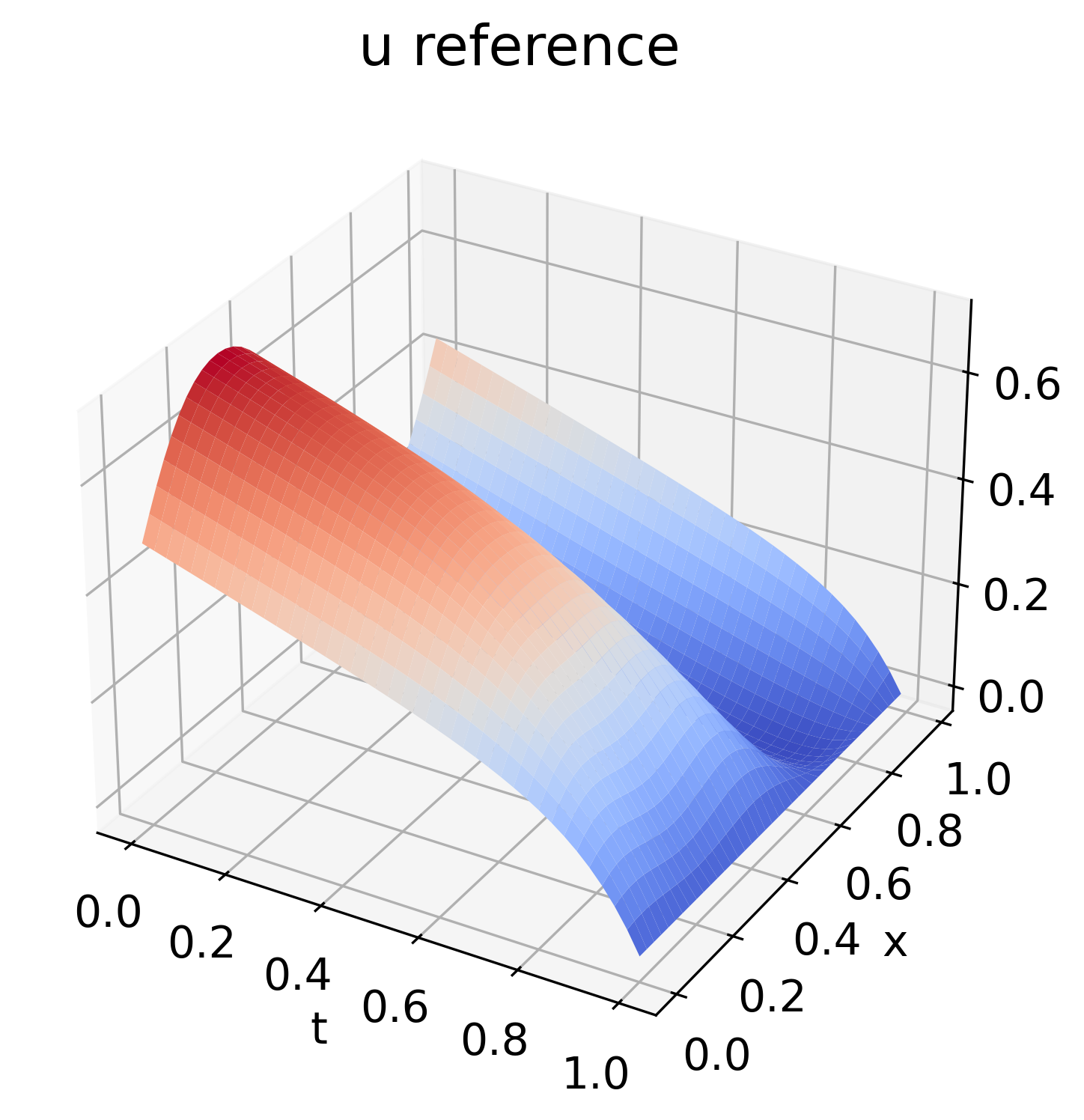}
        \caption{$u$ reference}
        \label{Timedependentinversefig2}
    \end{subfigure}%
    \hspace{1mm}
    \begin{subfigure}[b]{0.23\textwidth}
        \centering
        \includegraphics[width=\linewidth]{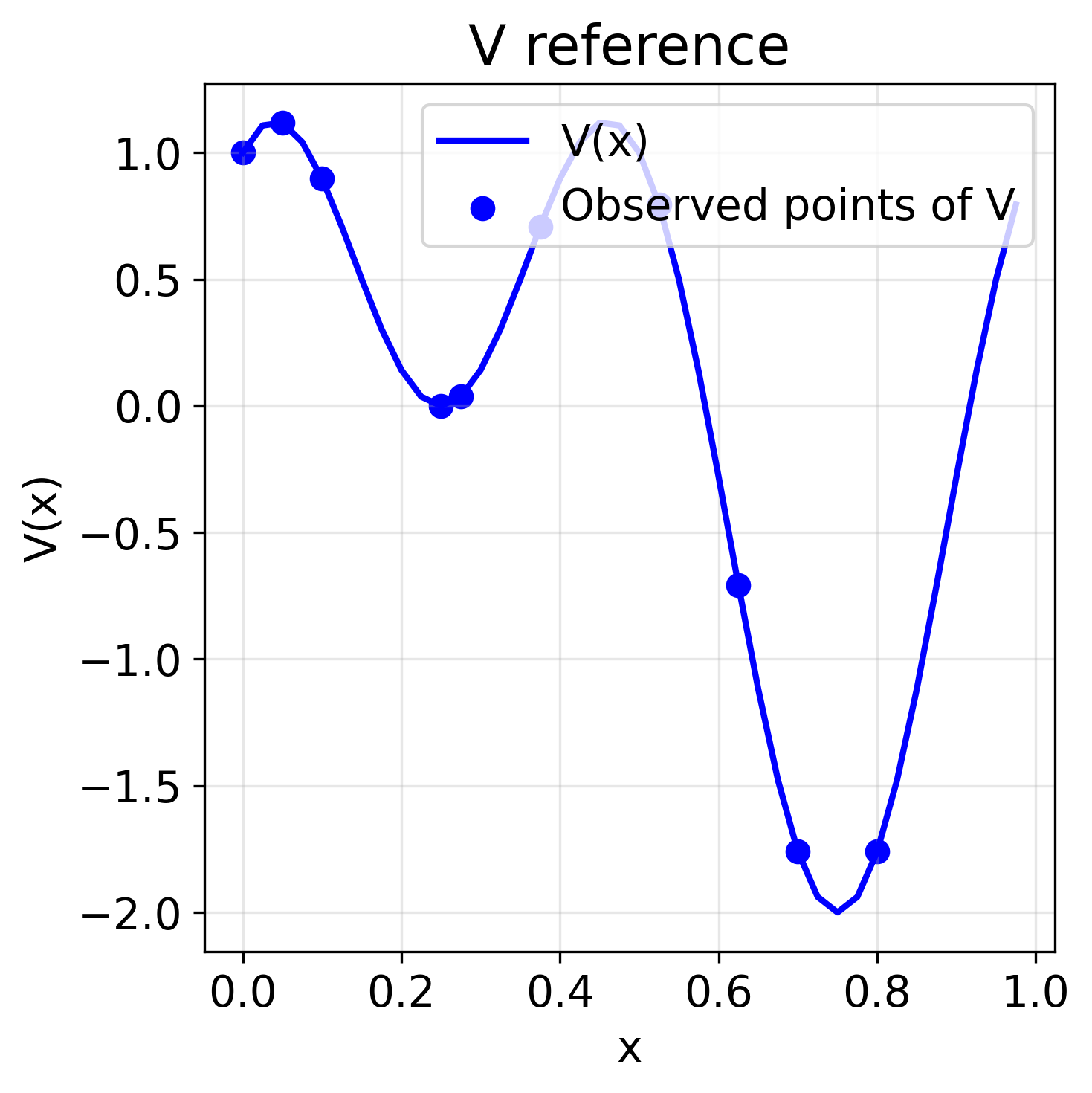}
        \caption{$V$ reference}
        \label{Timedependentinversefig3}
    \end{subfigure}%
    \hspace{1mm}
    \begin{subfigure}[b]{0.23\textwidth}
        \centering
        \includegraphics[width=\linewidth]{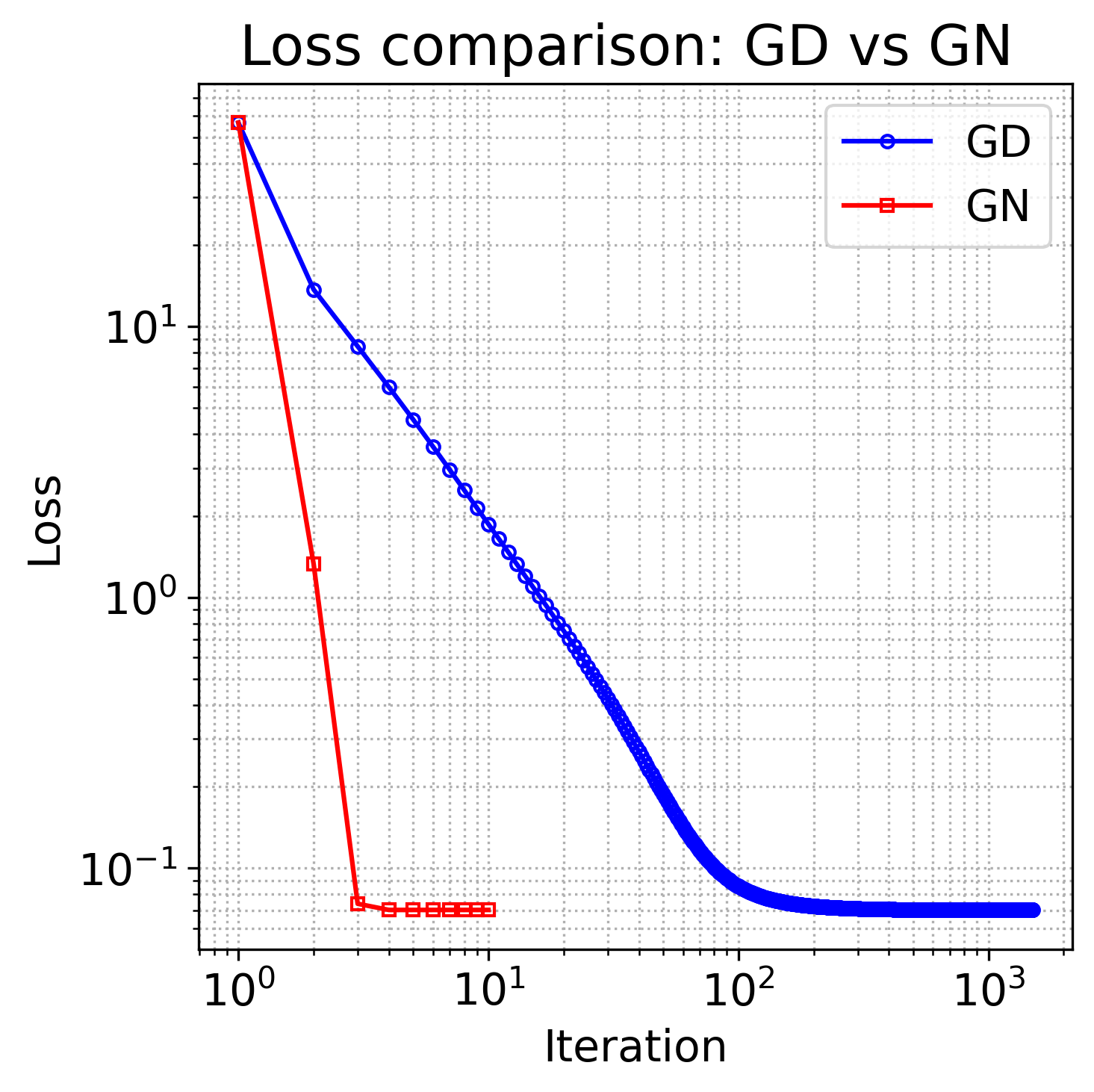}
        \caption{Loss comparison}
        \label{Timedependentinversefig5}
    \end{subfigure}
    
    \begin{subfigure}[b]{0.23\textwidth}
        \centering
        \includegraphics[width=\linewidth]{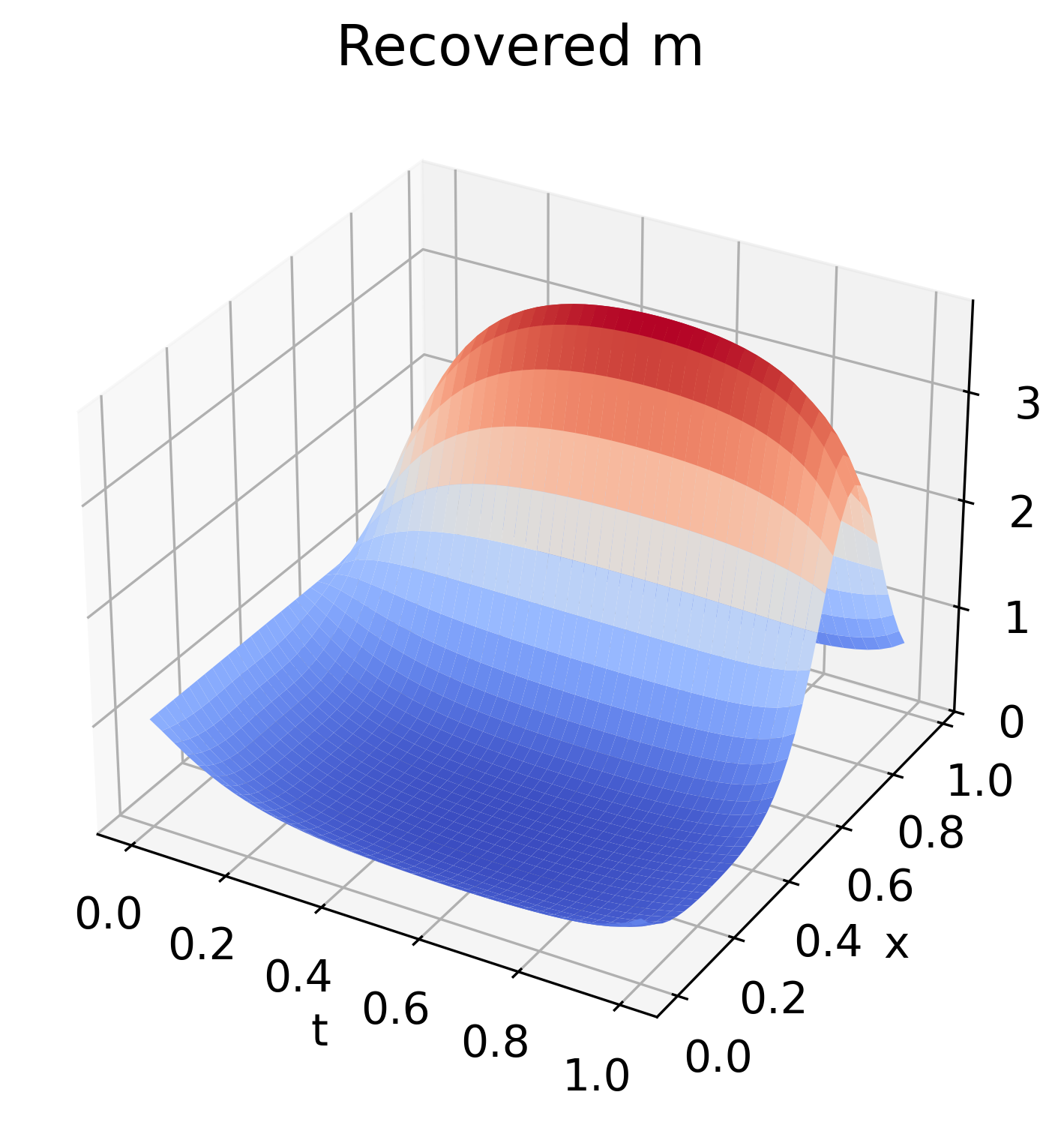}
        \caption{Recovered $m$ via GD}
        \label{Timedependentinversefig6}
    \end{subfigure}%
    \hspace{1mm}
    \begin{subfigure}[b]{0.23\textwidth}
        \centering
        \includegraphics[width=\linewidth]{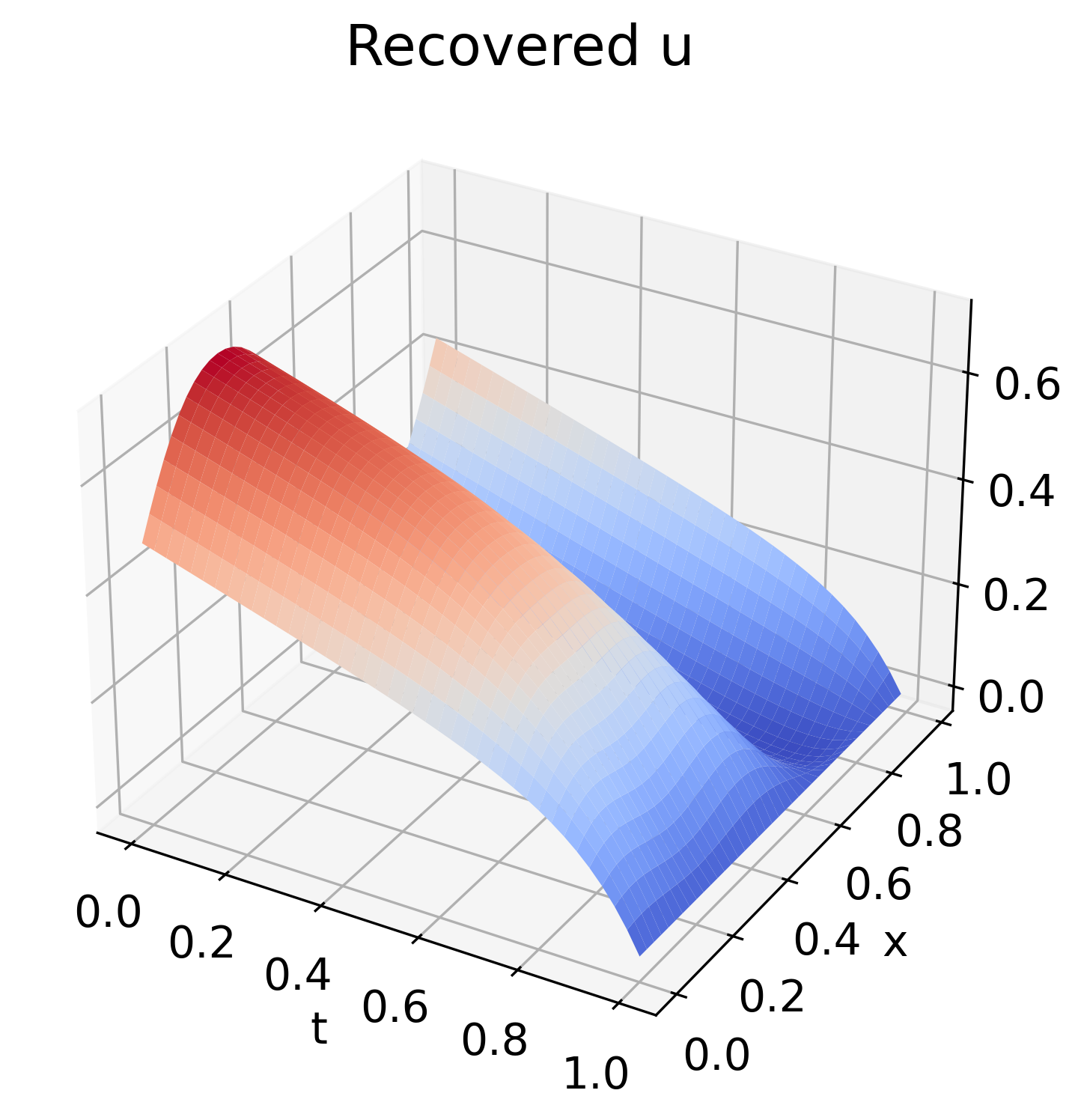}
        \caption{Recovered $u$ via GD}
        \label{Timedependentinversefig7}
    \end{subfigure}%
    \hspace{1mm}
    \begin{subfigure}[b]{0.23\textwidth}
        \centering
        \includegraphics[width=\linewidth]{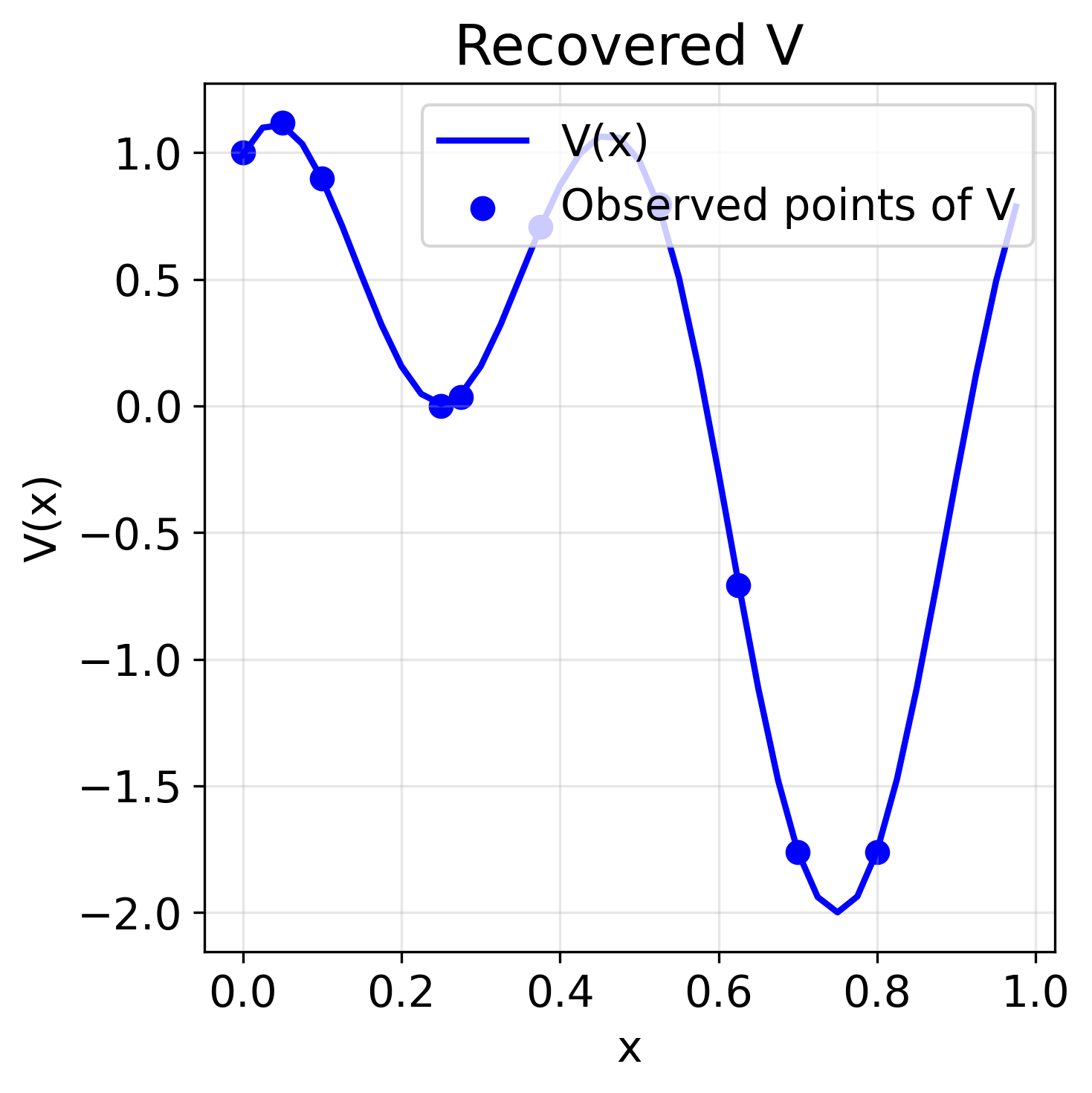}
        \caption{Recovered $V$ via GD}
        \label{Timedependentinversefig8}
    \end{subfigure}%
    \hspace{1mm}
    \begin{subfigure}[b]{0.23\textwidth}
        \centering
        \includegraphics[width=\linewidth]{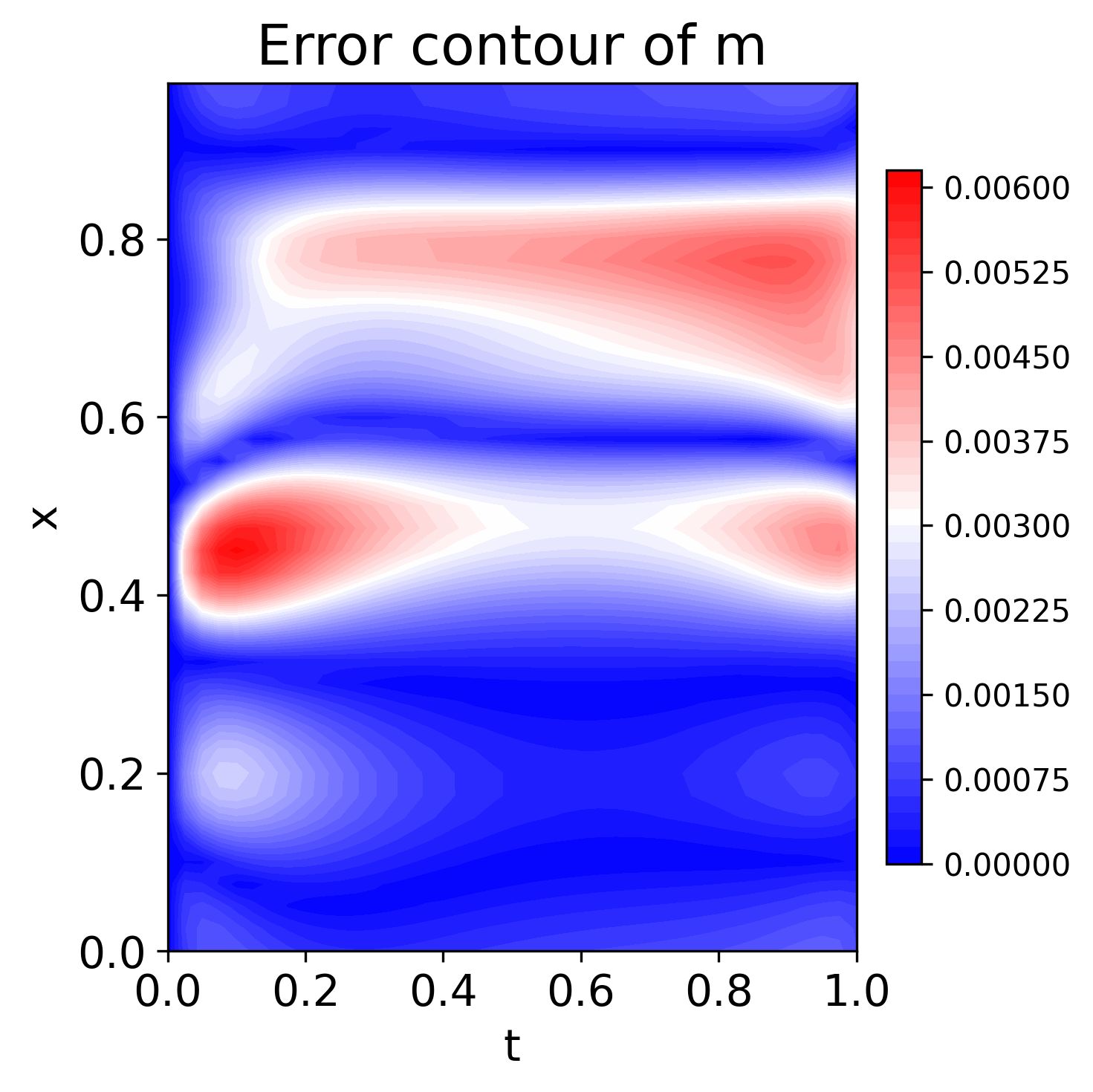}
        \caption{Error of $m$ via GD}
        \label{Timedependentinversefig9}
    \end{subfigure}
    
    \begin{subfigure}[b]{0.23\textwidth}
        \centering
        \includegraphics[width=\linewidth]{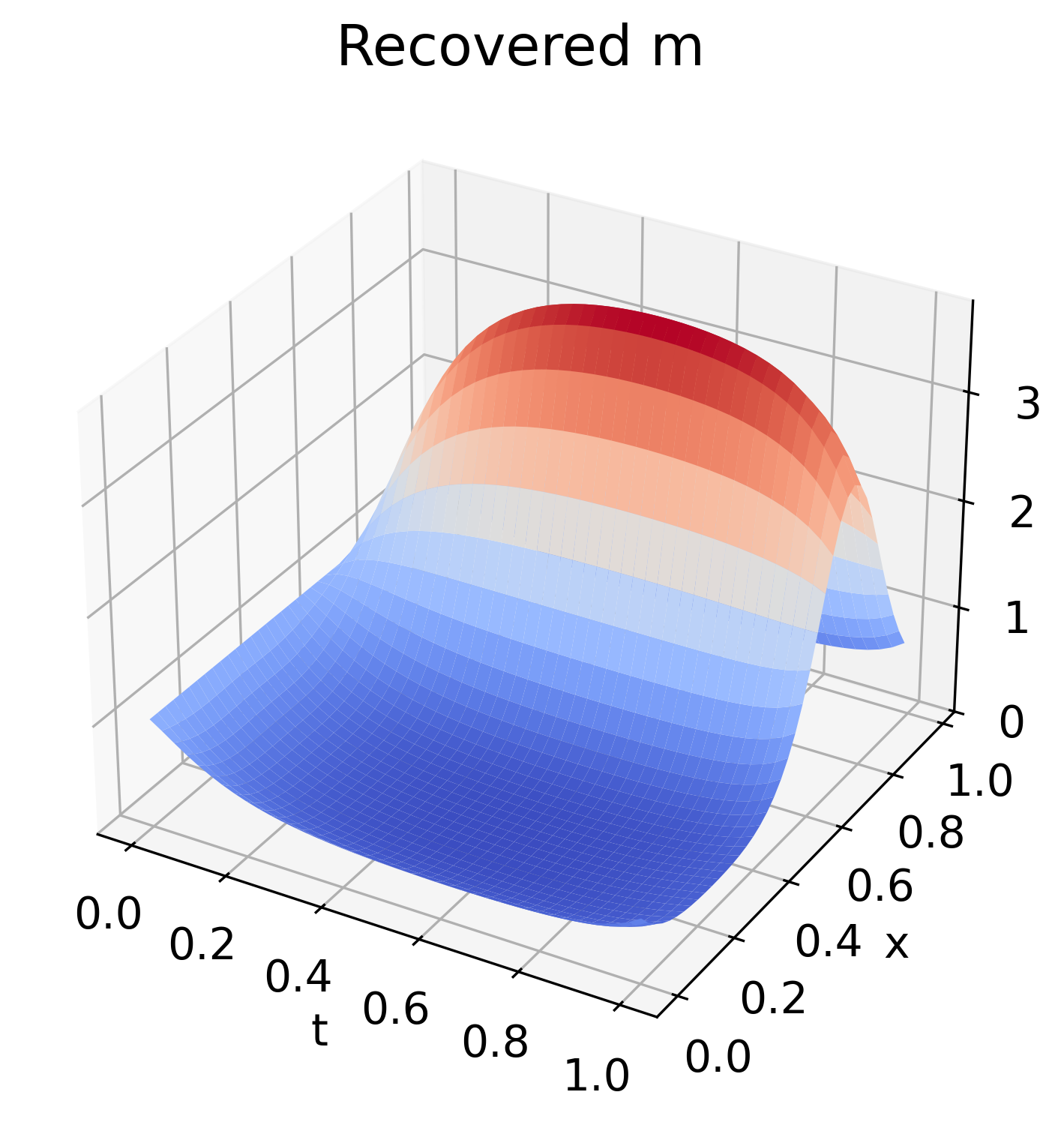}
        \caption{Recovered $m$ via GN}
        \label{Timedependentinversefig11}
    \end{subfigure}%
    \hspace{1mm}
    \begin{subfigure}[b]{0.23\textwidth}
        \centering
        \includegraphics[width=\linewidth]{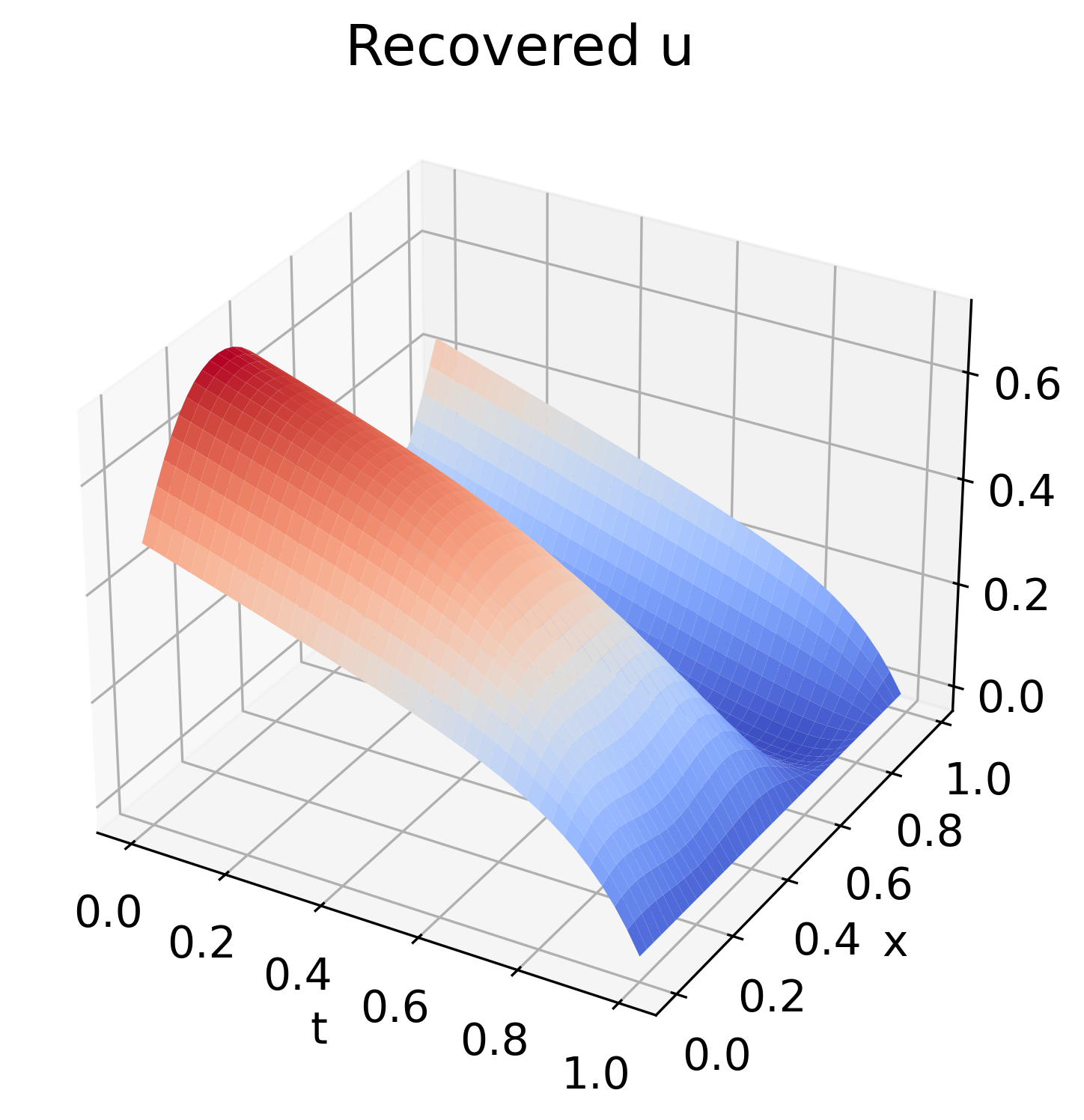}
        \caption{Recovered $u$ via GN}
        \label{Timedependentinversefig12}
    \end{subfigure}%
    \hspace{1mm}
    \begin{subfigure}[b]{0.23\textwidth}
        \centering
        \includegraphics[width=\linewidth]{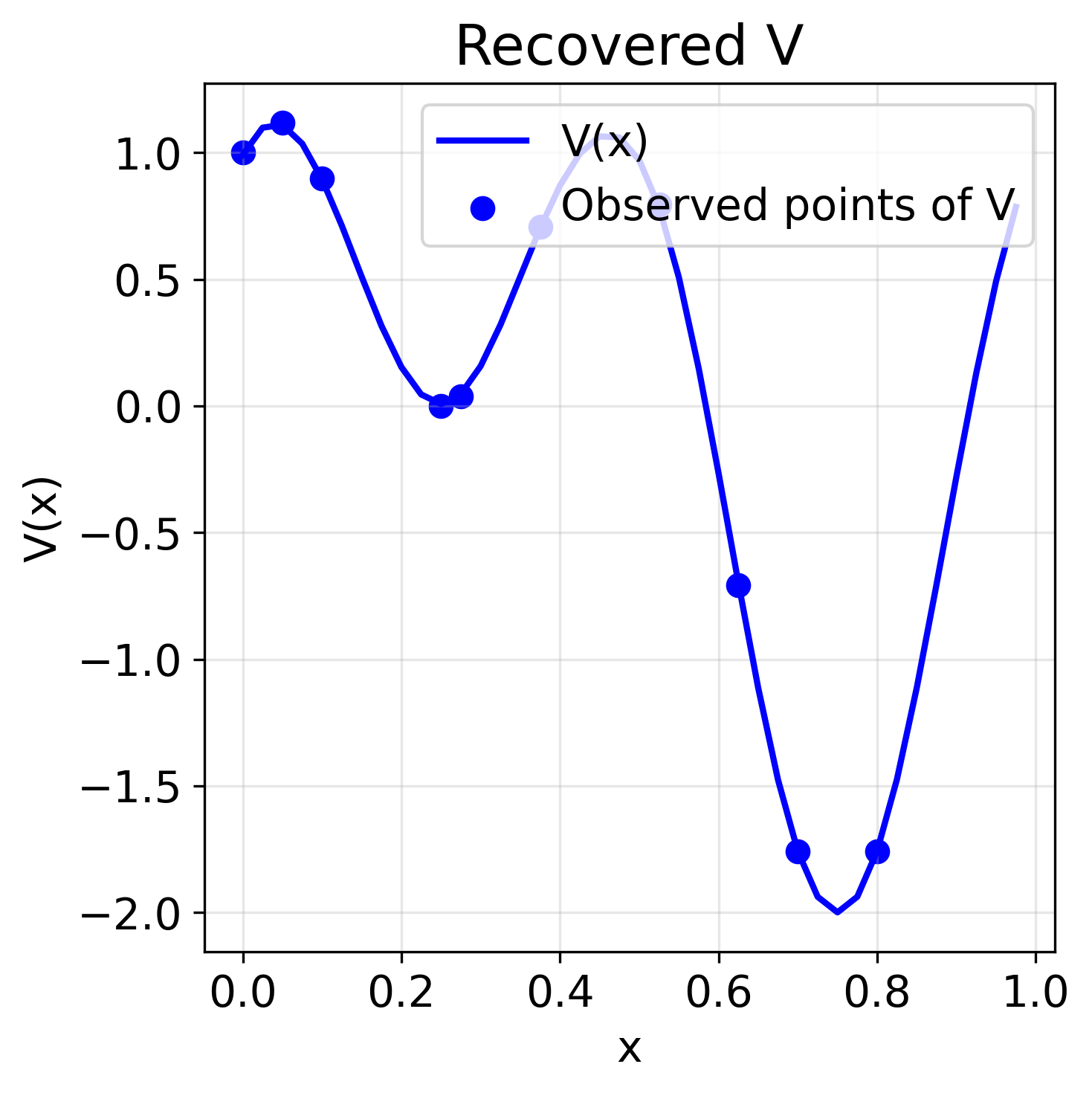}
        \caption{Recovered $V$ via GN}
        \label{Timedependentinversefig13}
    \end{subfigure}%
    \hspace{1mm}
    \begin{subfigure}[b]{0.23\textwidth}
        \centering
        \includegraphics[width=\linewidth]{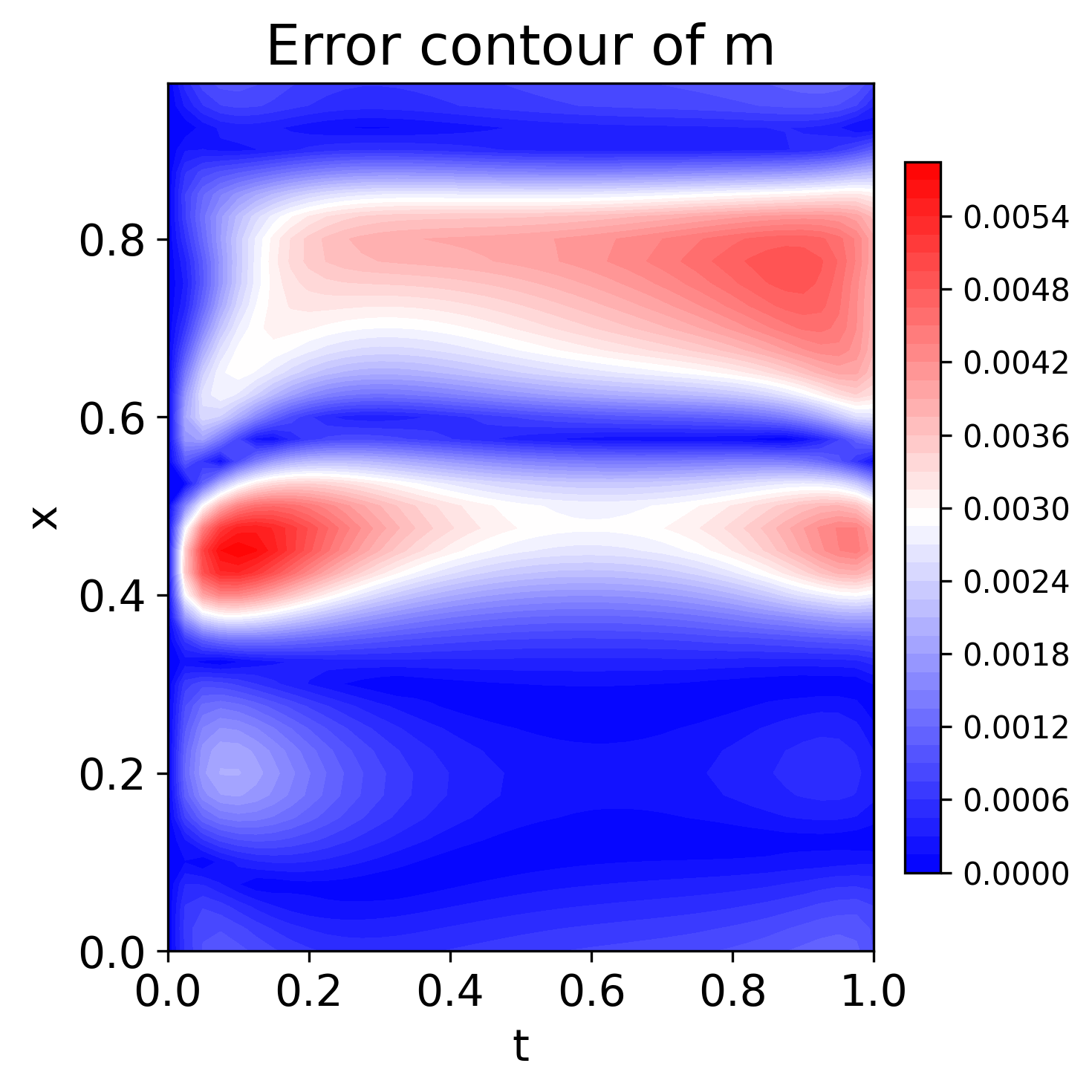}
        \caption{Error of $m$ via GN}
        \label{Timedependentinversefig14}
    \end{subfigure}
    
    \begin{subfigure}[b]{0.23\textwidth}
        \centering
        \includegraphics[width=\linewidth]{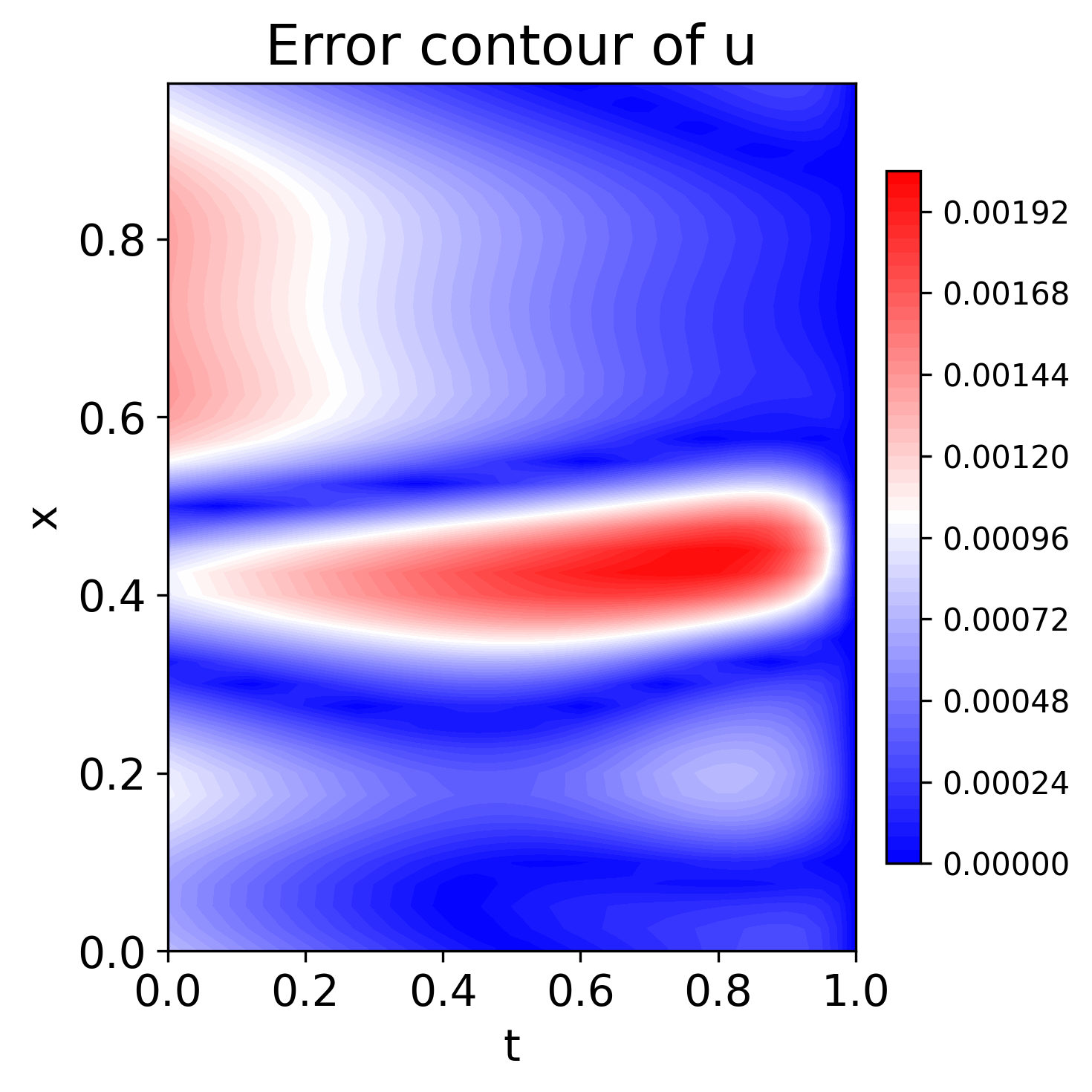}
        \caption{Error of $u$ via GD}
        \label{Timedependentinversefig10}
    \end{subfigure}
    \hspace{1mm}
    \begin{subfigure}[b]{0.23\textwidth}
        \centering
        \includegraphics[width=\linewidth]{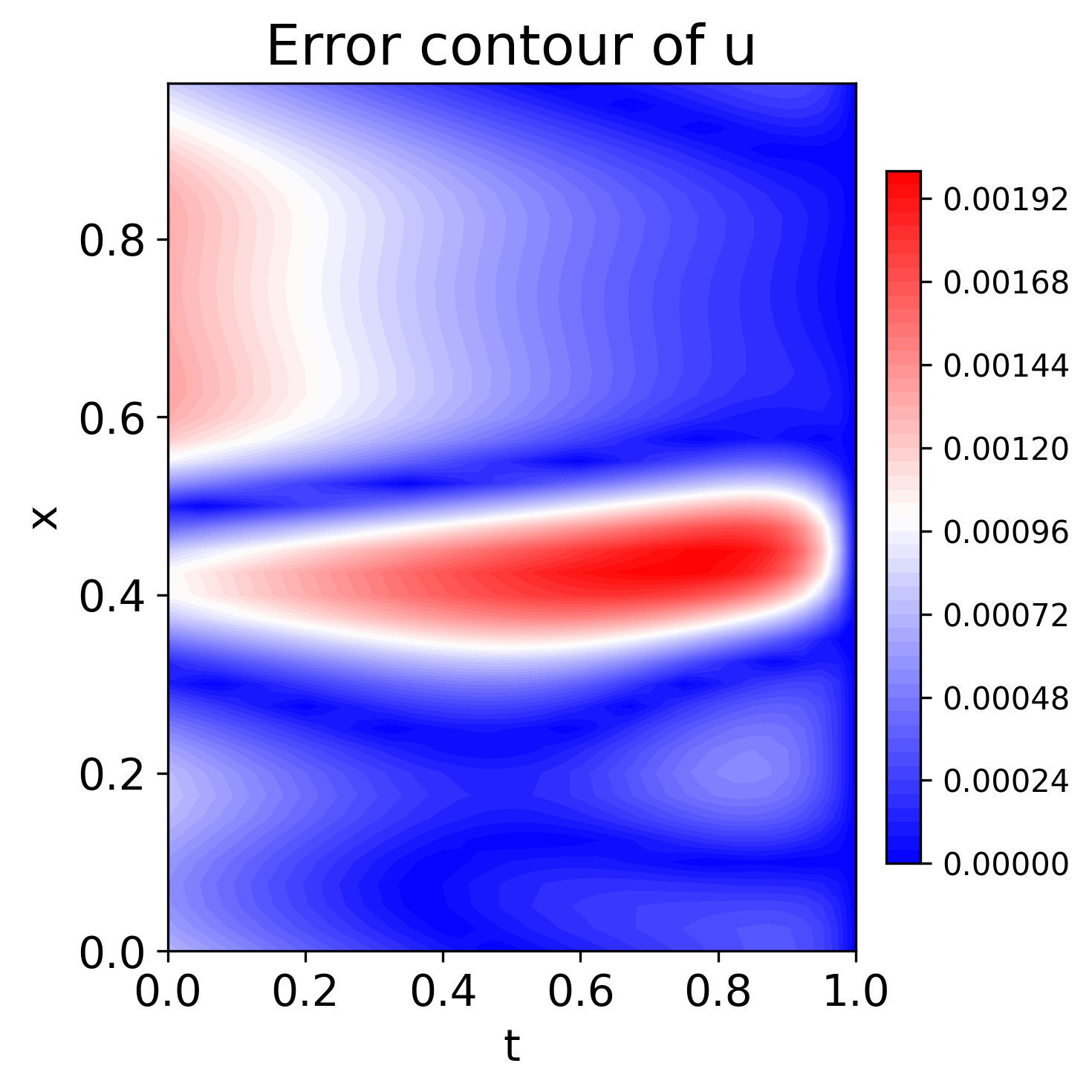}
        \caption{Error of $u$ via GN}
        \label{Timedependentinversefig15}
    \end{subfigure}
    \hspace{1mm}
    \begin{subfigure}[b]{0.23\textwidth}
        \centering
        \includegraphics[width=\linewidth]{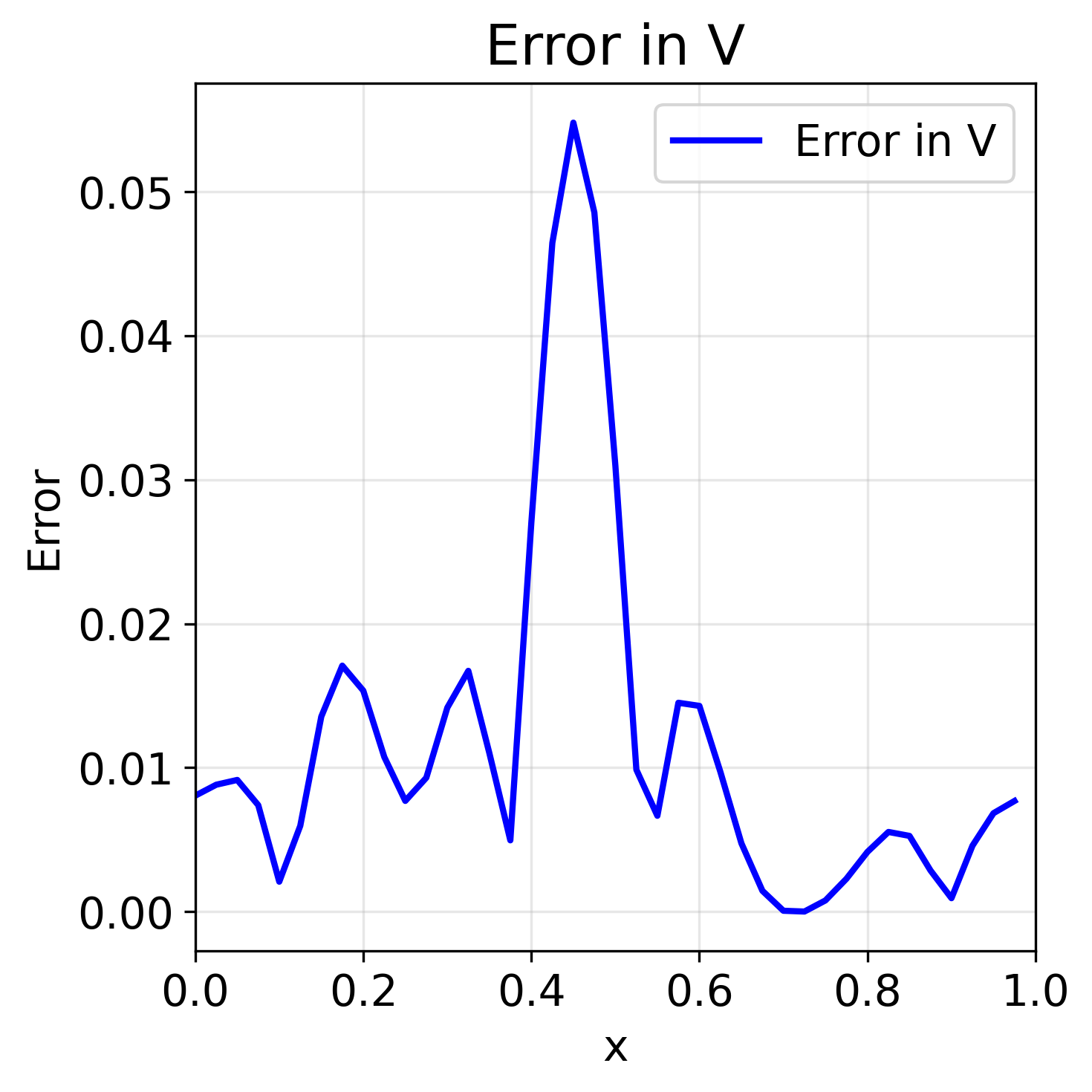}
        \caption{Error of $V$ via GD}
        \label{Timedependentinversefig16}
    \end{subfigure}%
    \hspace{1mm}
    \begin{subfigure}[b]{0.23\textwidth}
        \centering
        \includegraphics[width=\linewidth]{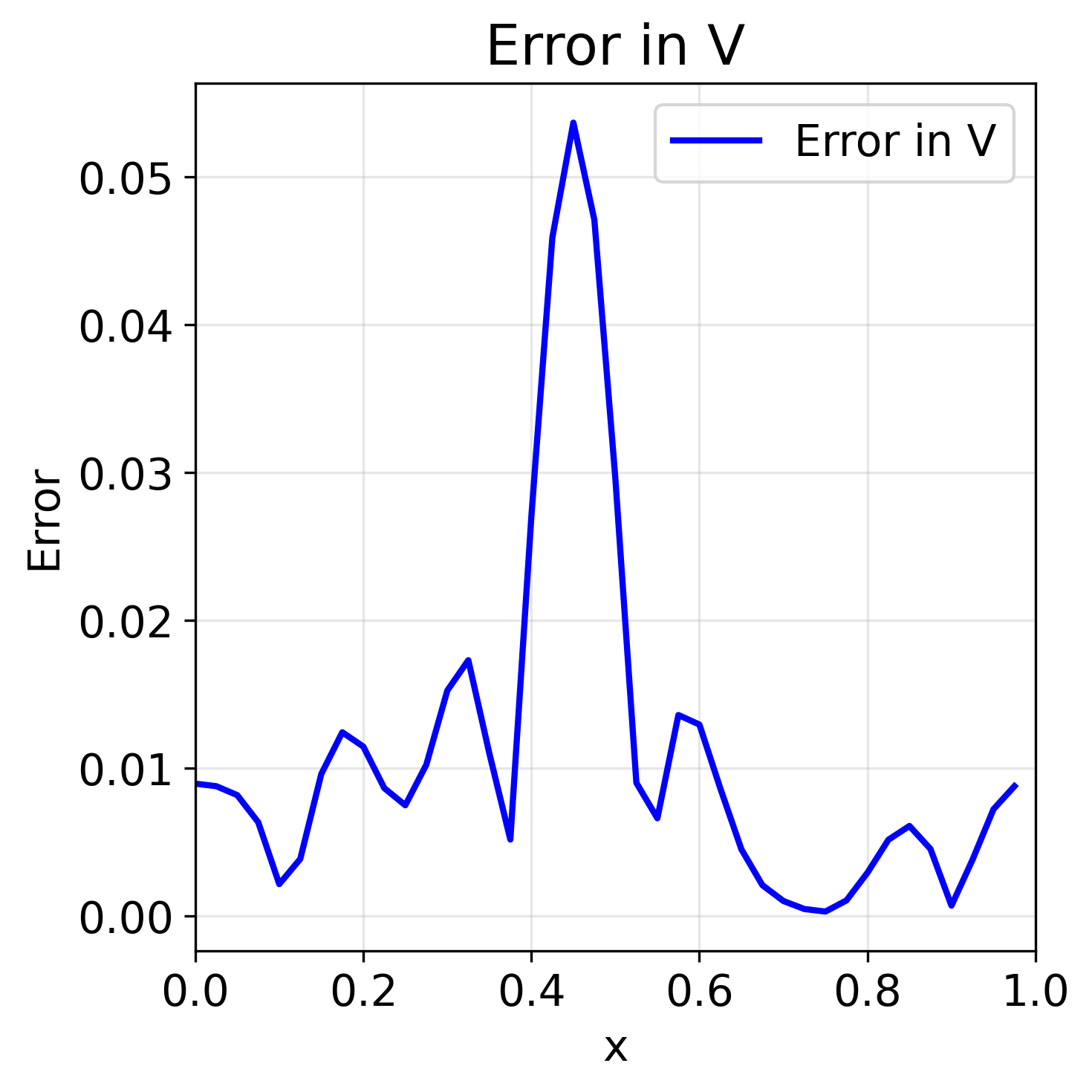}
        \caption{Error of $V$ via GN}
        \label{Timedependentinversefig17}
    \end{subfigure}
    \caption{Numerical results for the inverse problem of the time-dependent MFG in Section~\ref{timedependenton1Dexample1}.  (a), (b), (c) are references for $m,u,V$; (d) log-log plot of the loss comparison for GD and GN across iterations; (e), (f), (g) recovered $m,u,V$ via GD; (h), (m), (o) errors of $m,u,V$ via GD; (i), (j), (k) recovered $m,u,V$ via GN; (l), (n), (p) errors of $m,u,V$ via GN.}
    \label{timedependV}
\end{figure}

\begin{figure}[!htbp]
    \centering

    \begin{subfigure}[b]{0.23\textwidth}
        \centering
        \includegraphics[width=\linewidth]{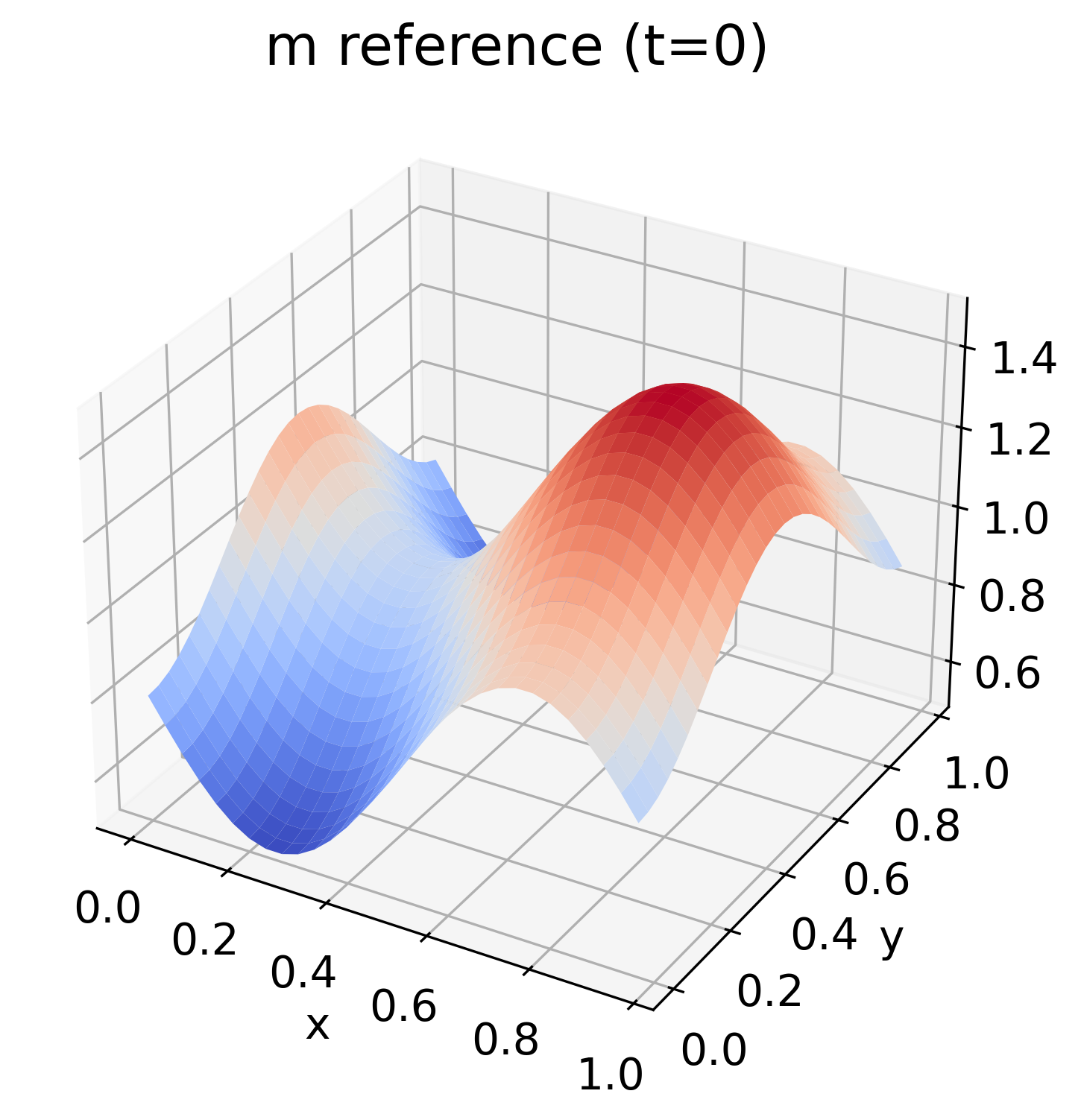}
        \caption{$m$ reference ($t=0$)}
        \label{fig:mref_t0}
    \end{subfigure}\hspace{1mm}
    \begin{subfigure}[b]{0.23\textwidth}
        \centering
        \includegraphics[width=\linewidth]{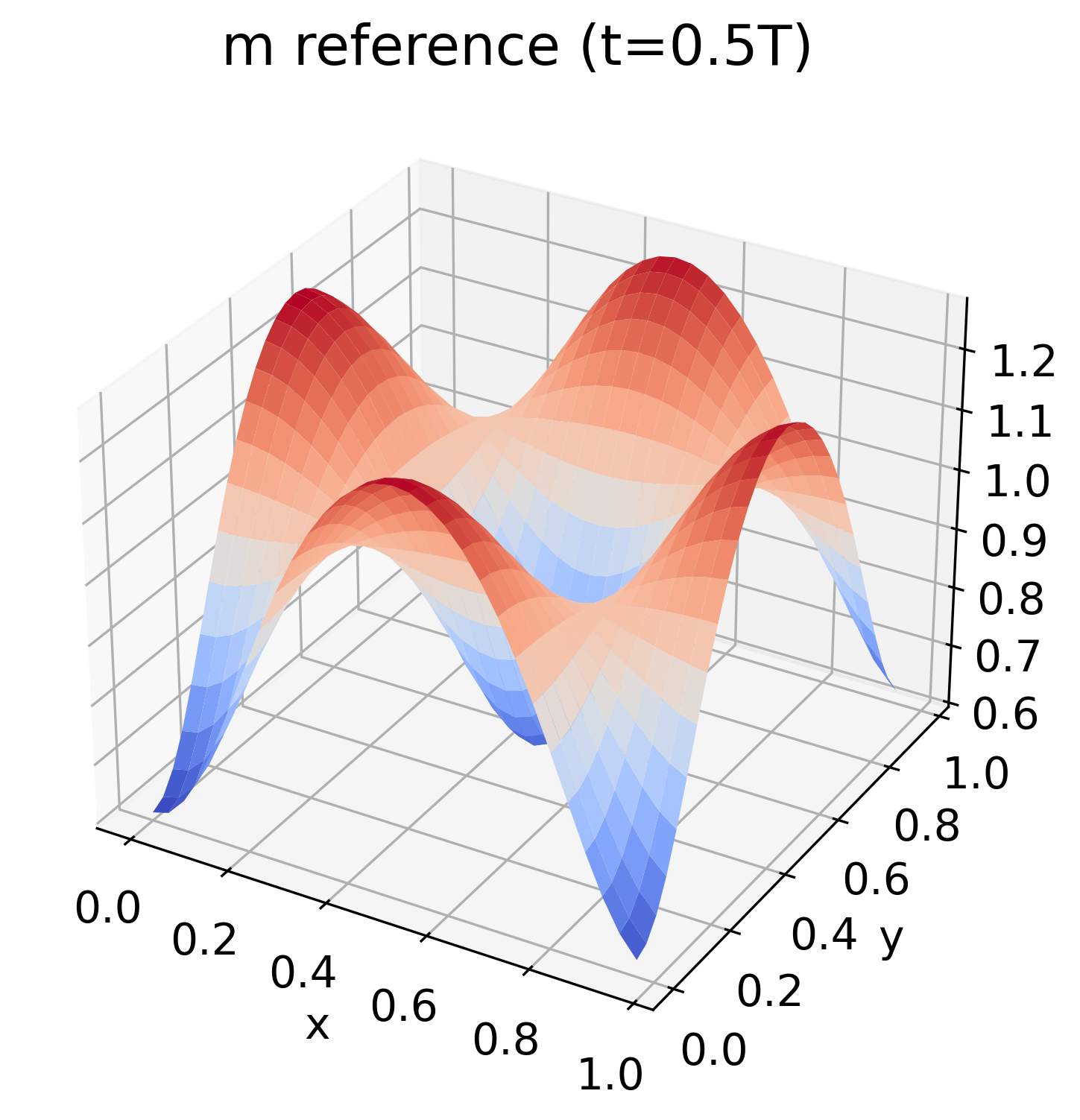}
        \caption{$m$ reference ($t=0.5T$)}
        \label{fig:mref_t05}
    \end{subfigure}\hspace{1mm}
    \begin{subfigure}[b]{0.23\textwidth}
        \centering
        \includegraphics[width=\linewidth]{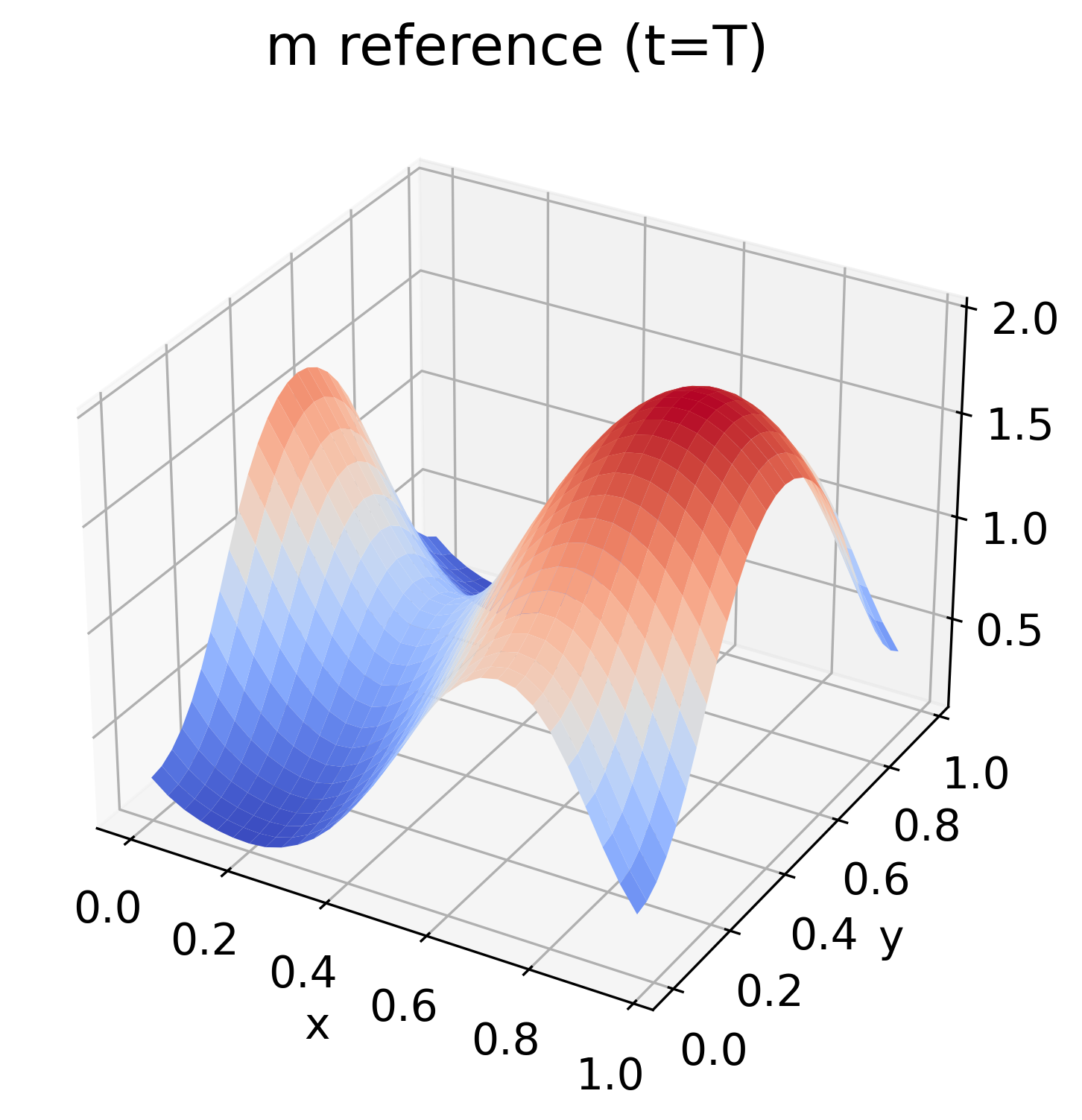}
        \caption{$m$ reference ($t=T$)}
        \label{fig:mref_t1}
    \end{subfigure}\hspace{1mm}
    \begin{subfigure}[b]{0.23\textwidth}
        \centering
        \includegraphics[width=\linewidth]{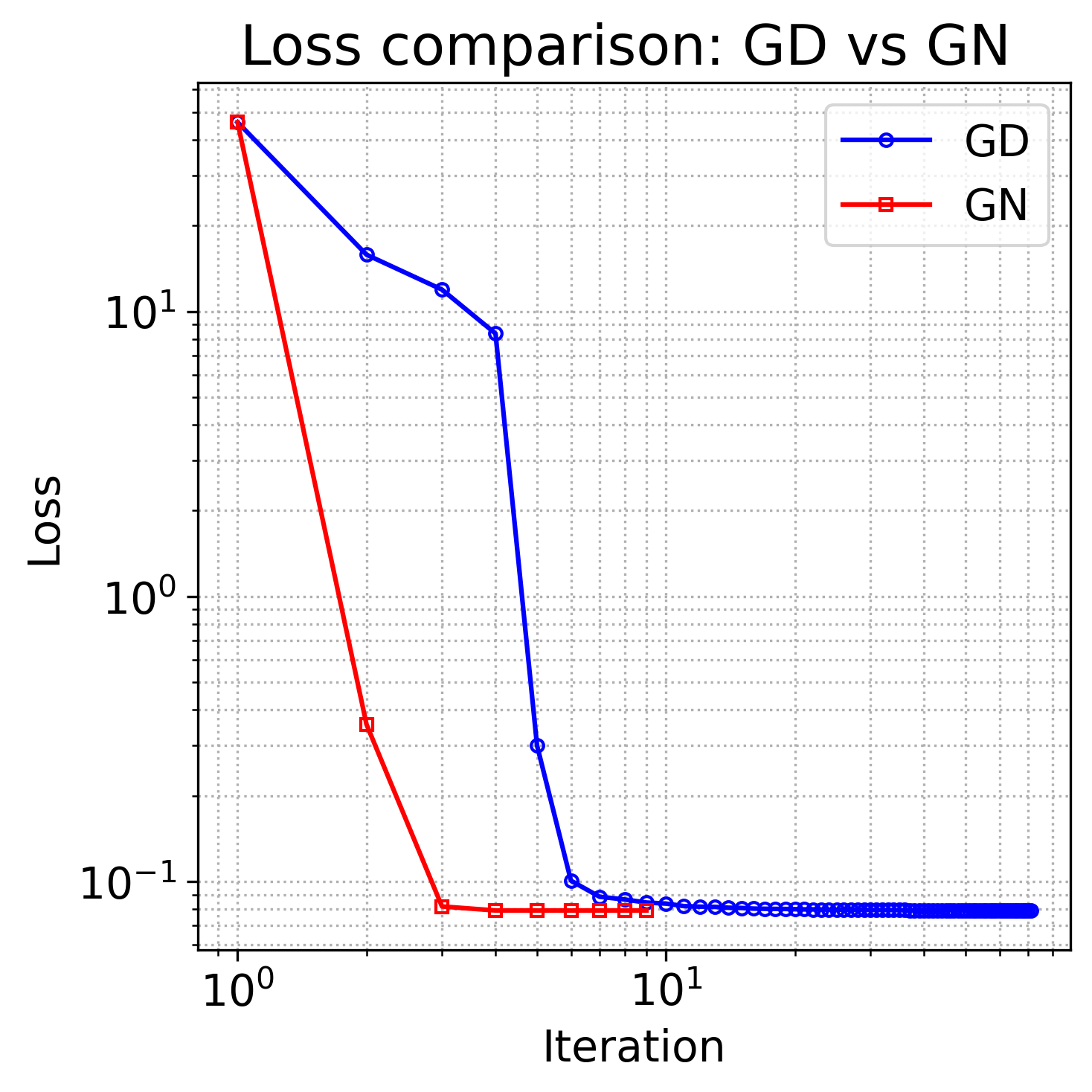}
        \caption{Loss comparison}
        \label{fig:loss}
    \end{subfigure}

    \begin{subfigure}[b]{0.23\textwidth}
        \centering
        \includegraphics[width=\linewidth]{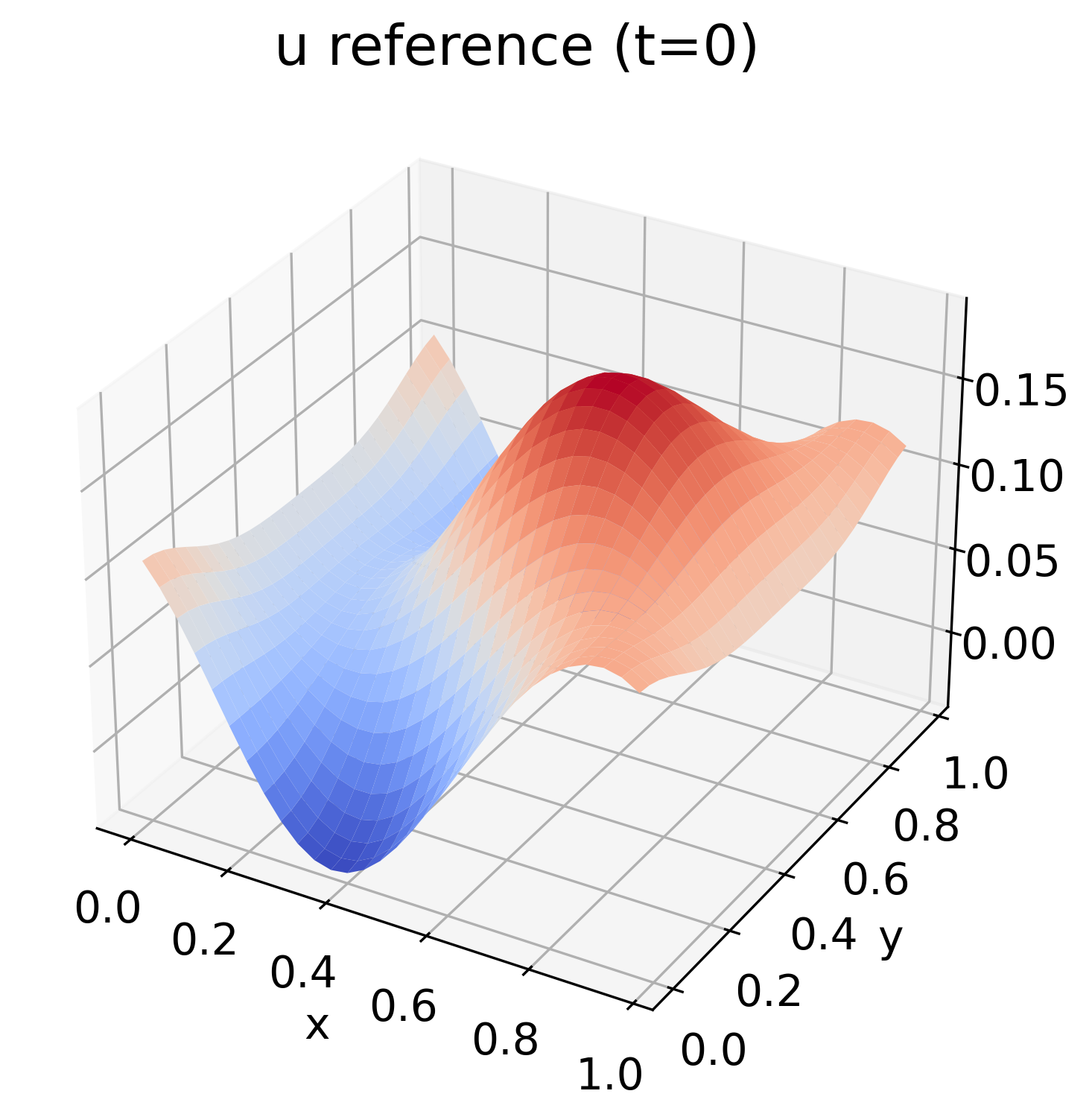}
        \caption{$u$ reference ($t=0$)}
        \label{fig:uref_t0}
    \end{subfigure}\hspace{1mm}
    \begin{subfigure}[b]{0.23\textwidth}
        \centering
        \includegraphics[width=\linewidth]{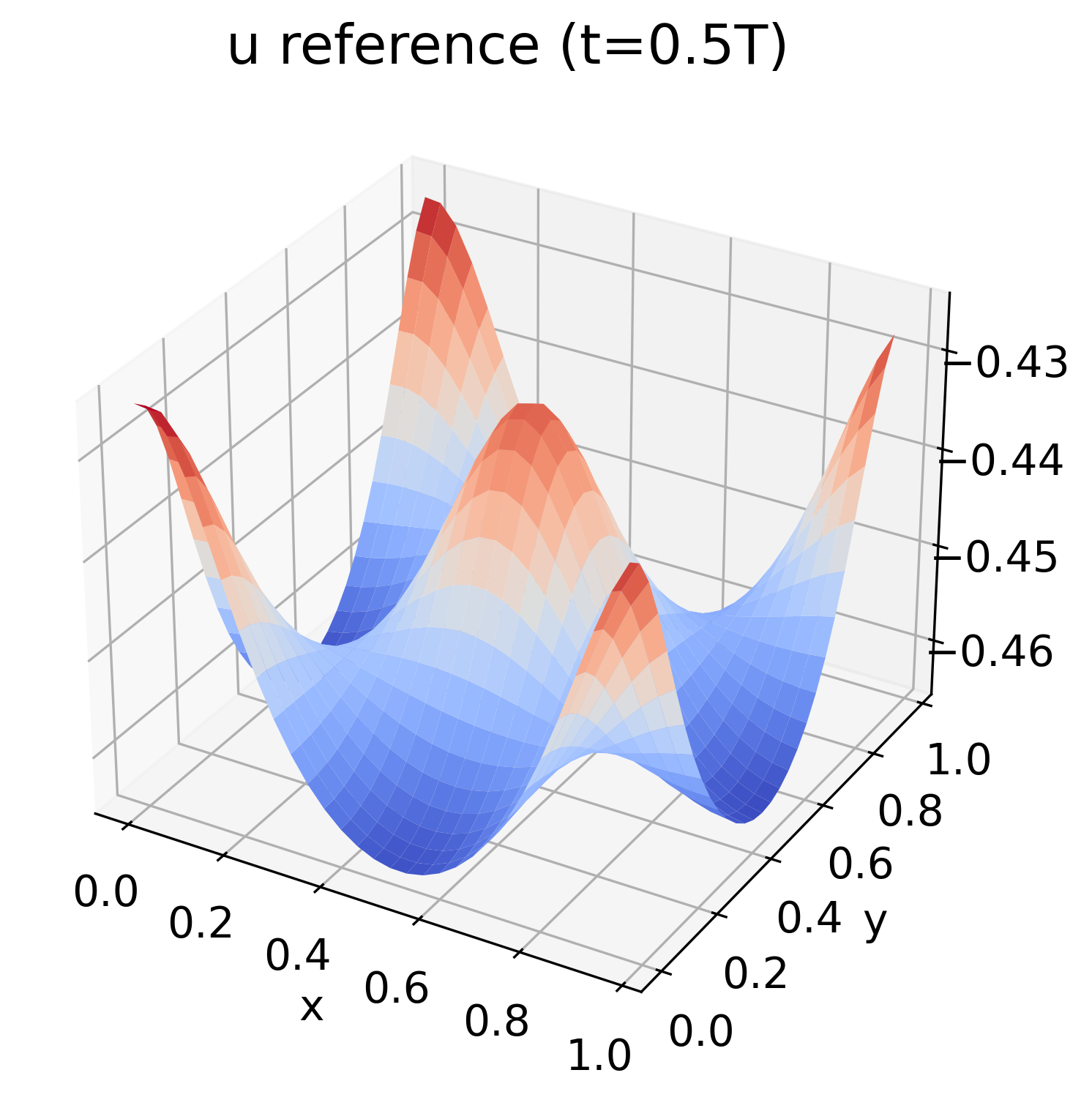}
        \caption{$u$ reference ($t=0.5T$)}
        \label{fig:uref_t05}
    \end{subfigure}\hspace{1mm}
    \begin{subfigure}[b]{0.23\textwidth}
        \centering
        \includegraphics[width=\linewidth]{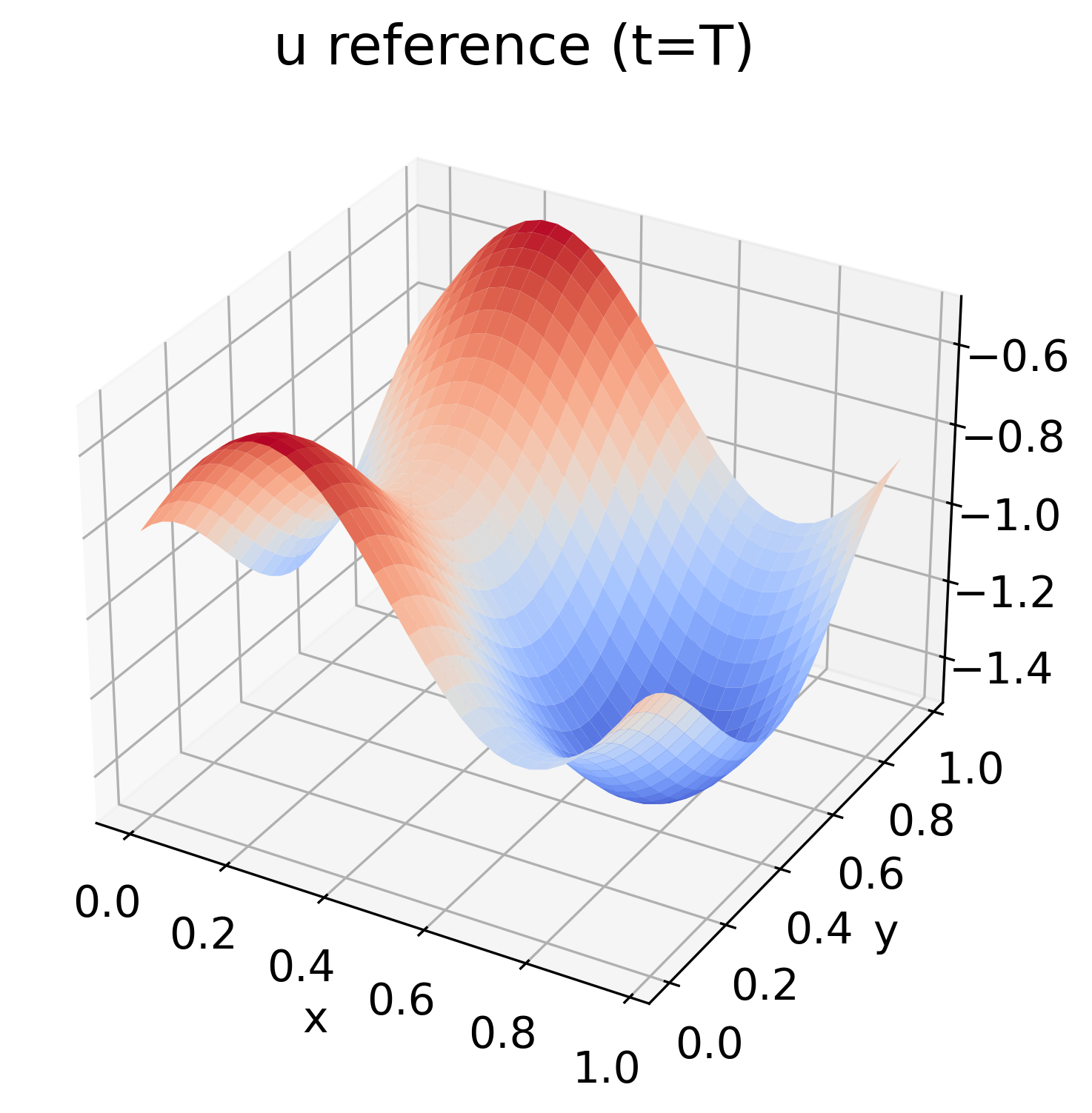}
        \caption{$u$ reference ($t=T$)}
        \label{fig:uref_t1}
    \end{subfigure}\hspace{1mm}
    \begin{subfigure}[b]{0.23\textwidth}
        \centering
        \includegraphics[width=\linewidth]{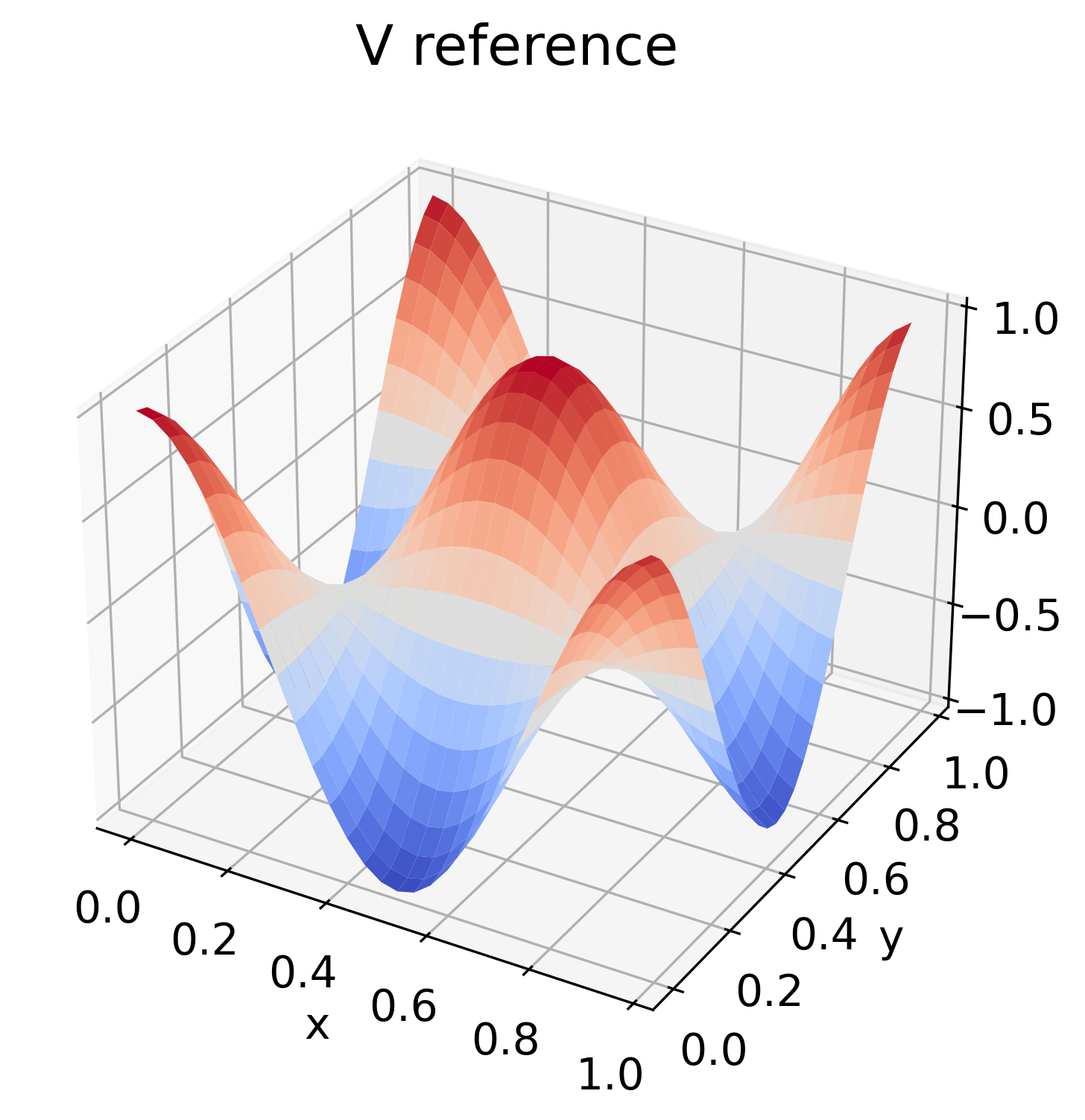}
        \caption{$V$ reference}
        \label{fig:Vref}
    \end{subfigure}

    \begin{subfigure}[b]{0.23\textwidth}
        \centering
        \includegraphics[width=\linewidth]{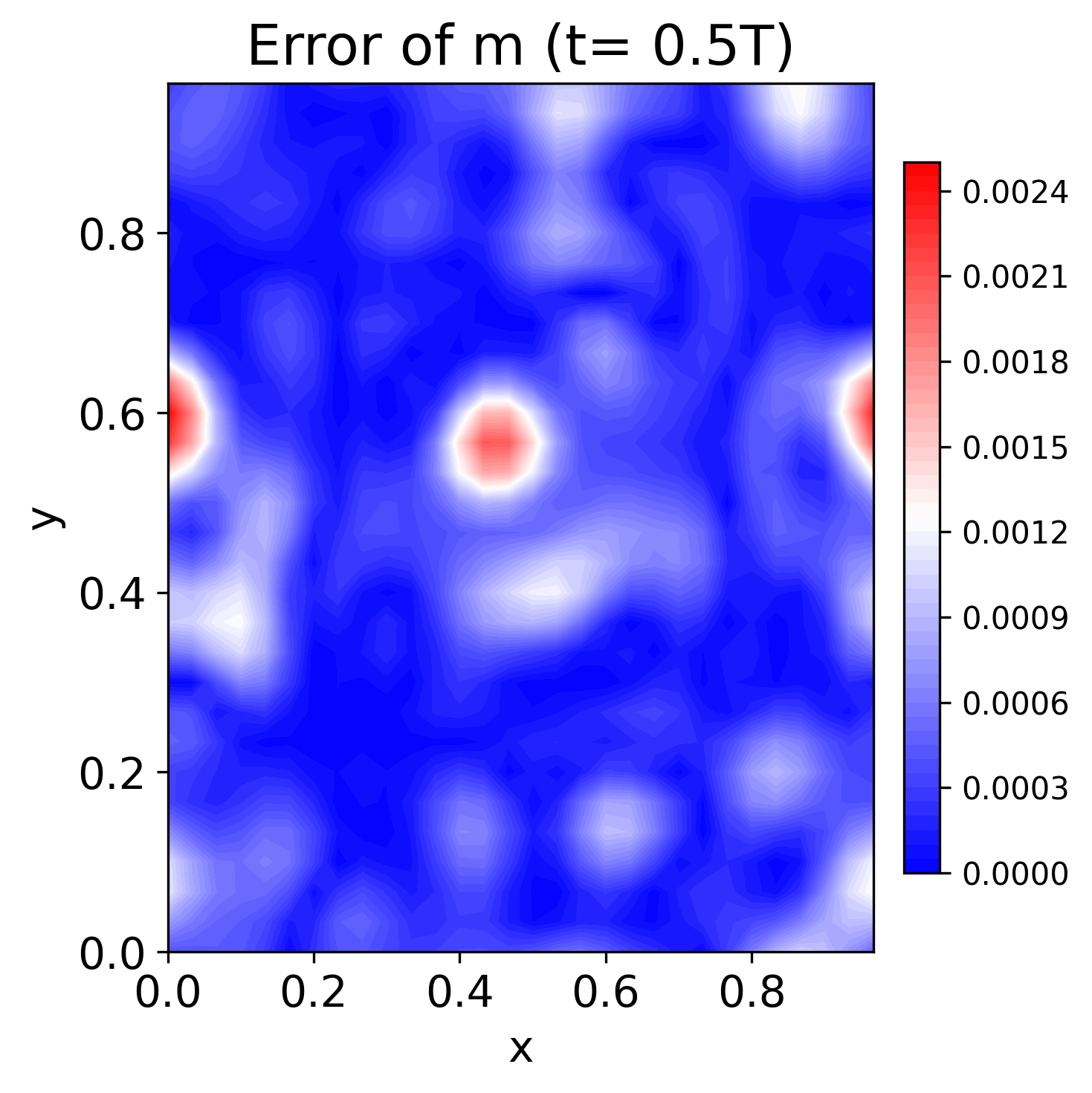}
        \caption{$m$ error (GD, $t=0.5T$)}
        \label{fig:merr_gd_t05}
    \end{subfigure}\hspace{1mm}
    \begin{subfigure}[b]{0.23\textwidth}
        \centering
        \includegraphics[width=\linewidth]{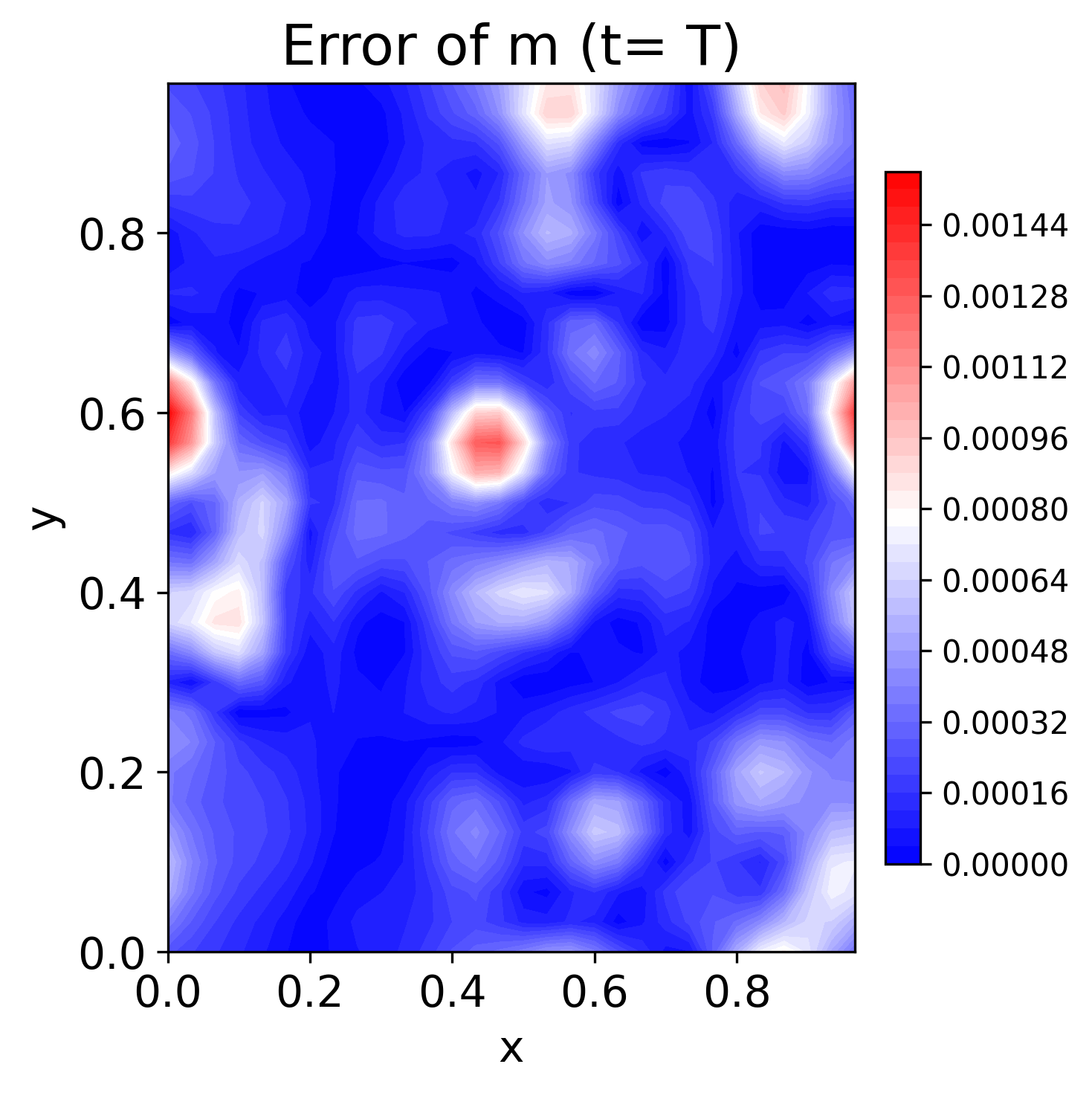}
        \caption{$m$ error (GD, $t=T$)}
        \label{fig:merr_gd_t1}
    \end{subfigure}\hspace{1mm}
    \begin{subfigure}[b]{0.23\textwidth}
        \centering
        \includegraphics[width=\linewidth]{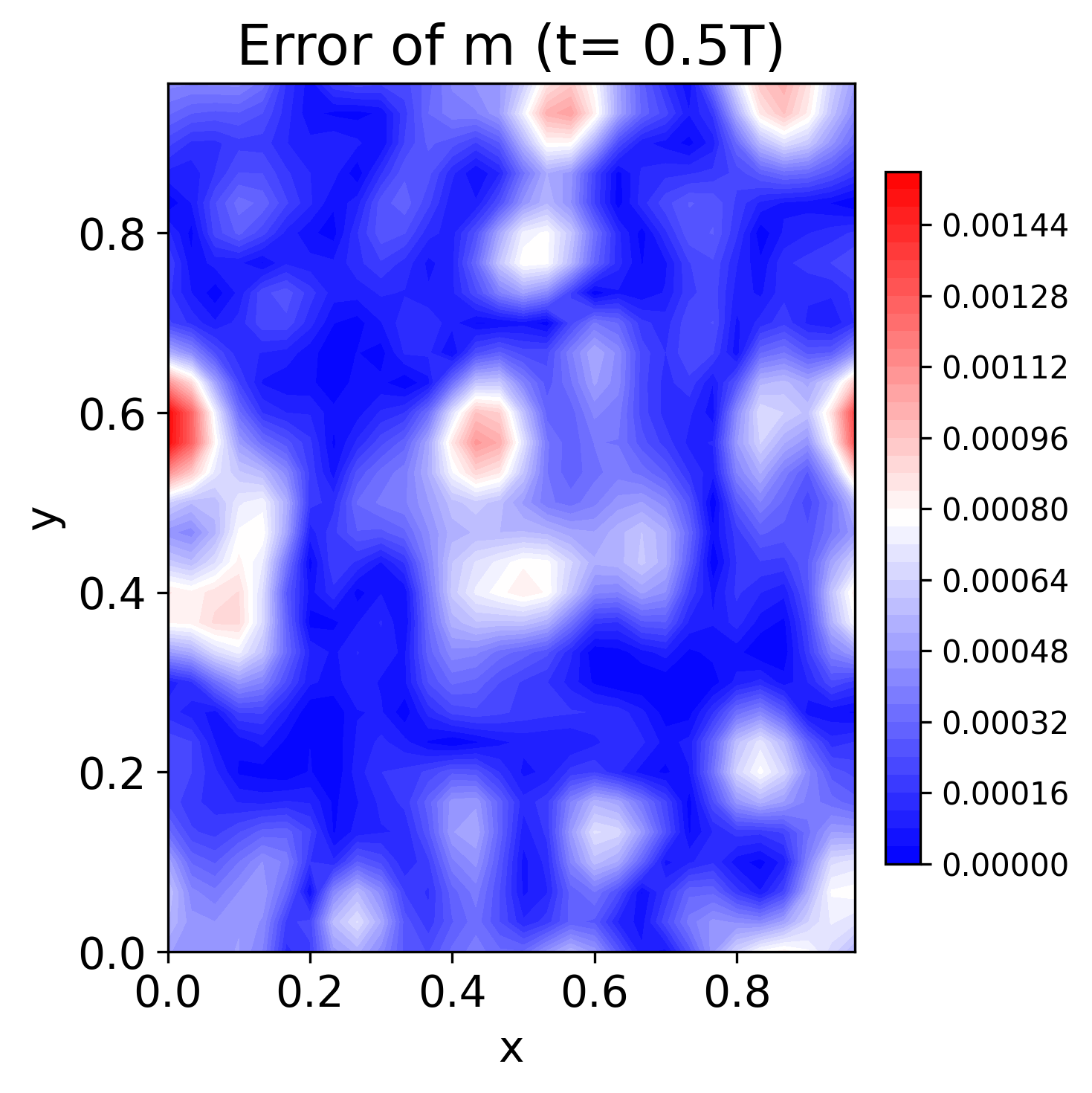}
        \caption{$m$ error (GN, $t=0.5T$)}
        \label{fig:merr_gn_t05}
    \end{subfigure}\hspace{1mm}
    \begin{subfigure}[b]{0.23\textwidth}
        \centering
        \includegraphics[width=\linewidth]{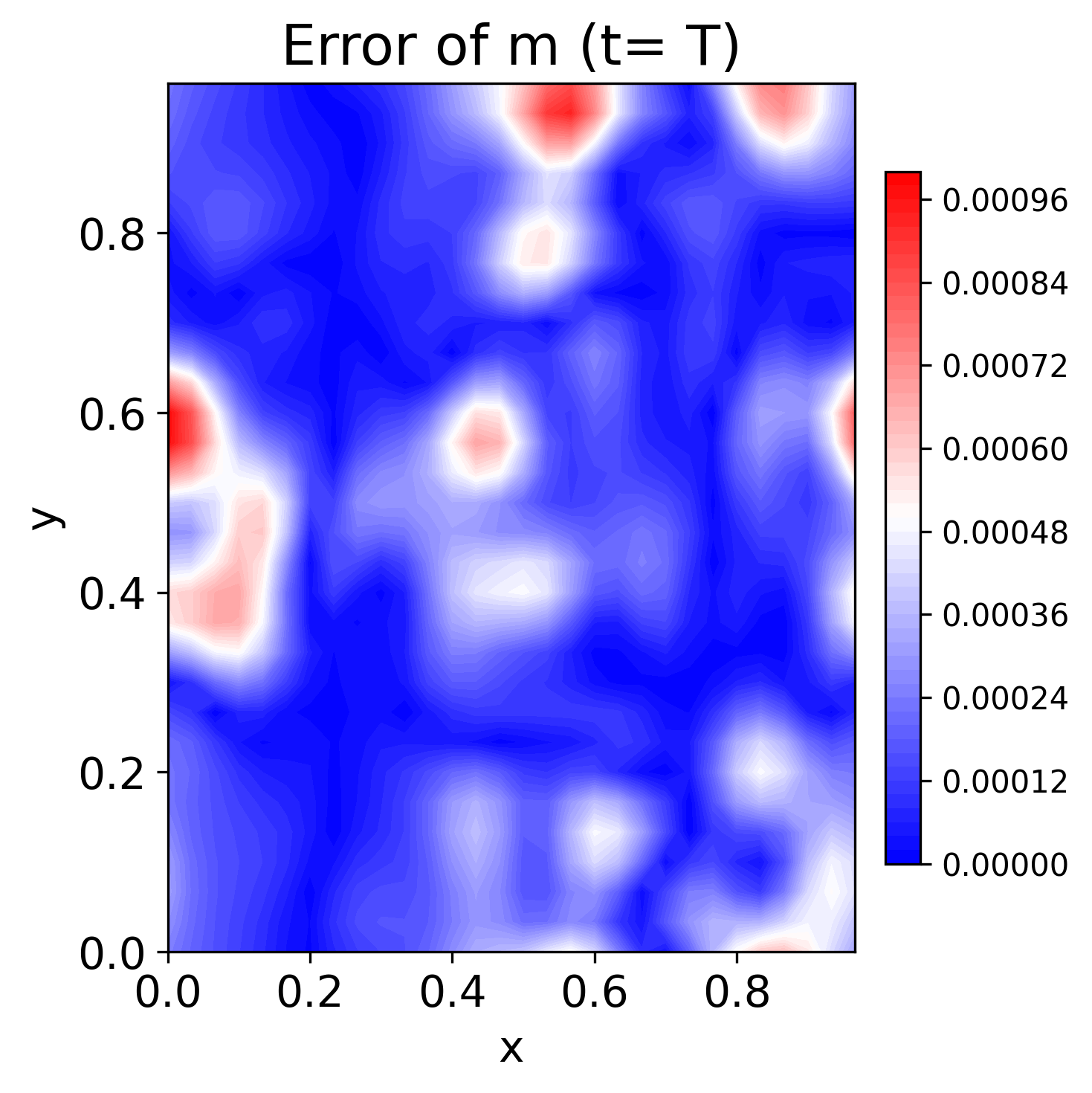}
        \caption{$m$ error (GN, $t=T$)}
        \label{fig:merr_gn_t1}
    \end{subfigure}

    \begin{subfigure}[b]{0.23\textwidth}
        \centering
        \includegraphics[width=\linewidth]{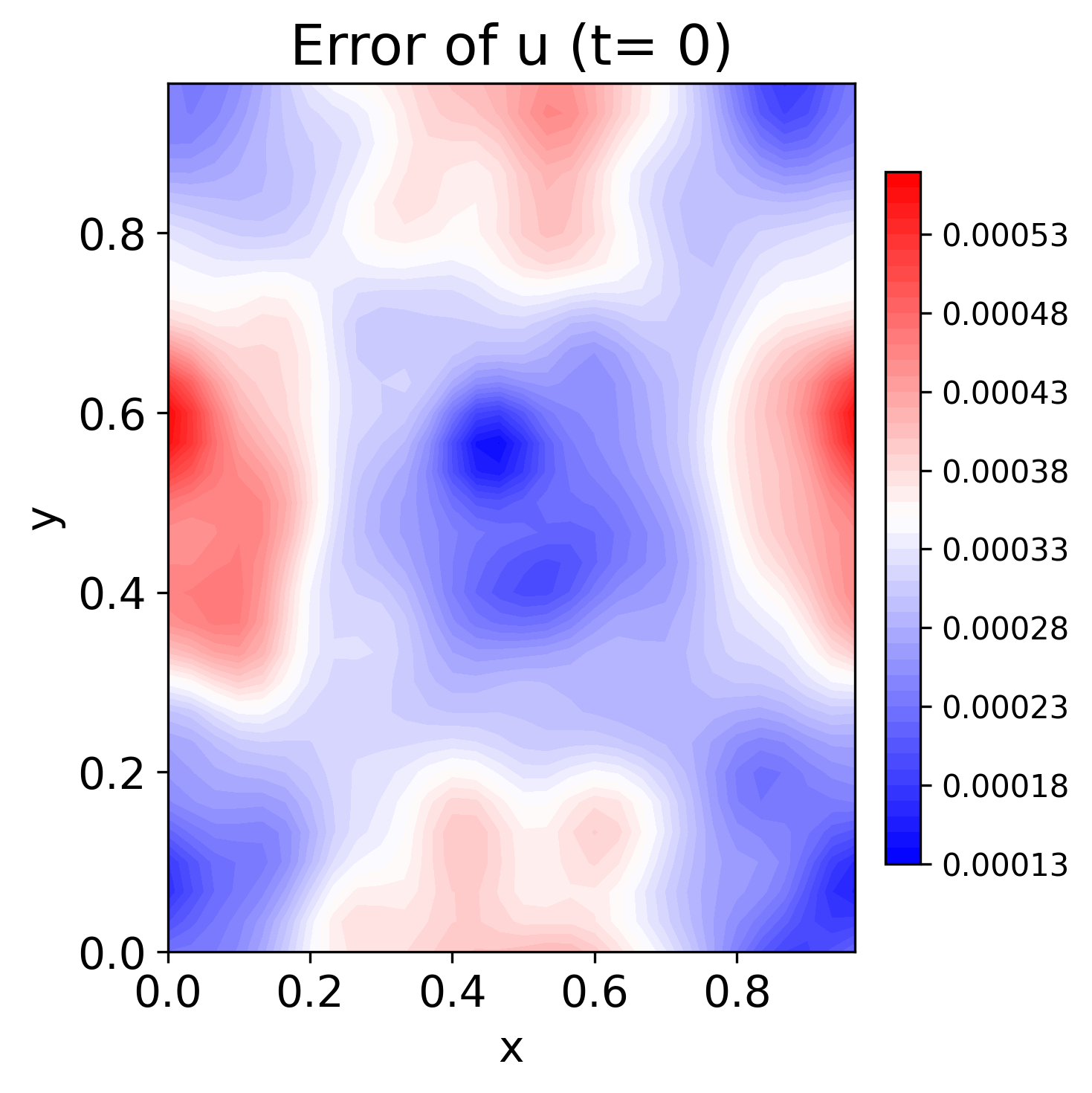}
        \caption{$u$ error (GD, $t=0$)}
        \label{fig:uerr_gd_t0}
    \end{subfigure}\hspace{1mm}
    \begin{subfigure}[b]{0.23\textwidth}
        \centering
        \includegraphics[width=\linewidth]{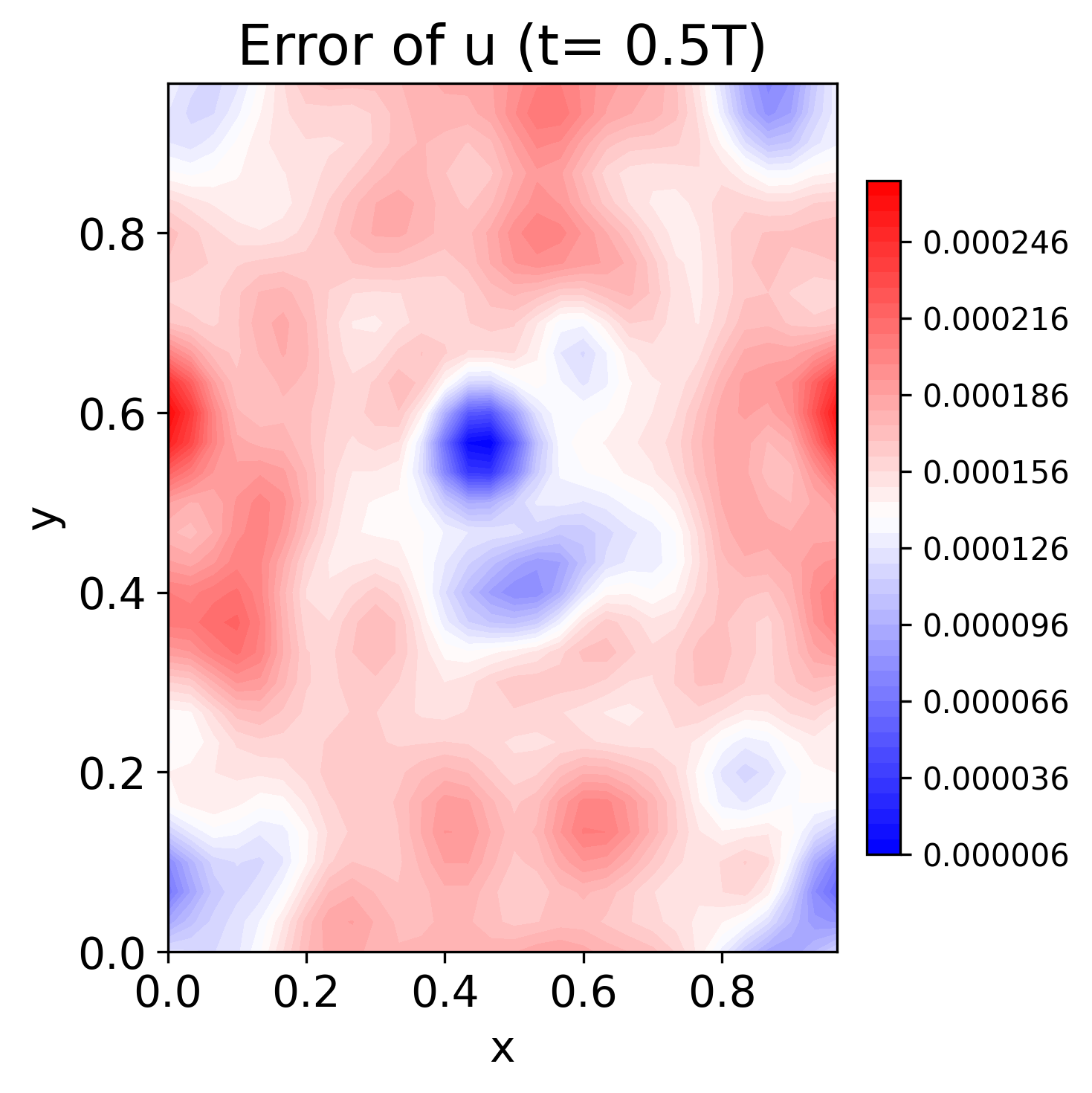}
        \caption{$u$ error (GD, $t=0.5T$)}
        \label{fig:uerr_gd_t05}
    \end{subfigure}\hspace{1mm}
    \begin{subfigure}[b]{0.23\textwidth}
        \centering
        \includegraphics[width=\linewidth]{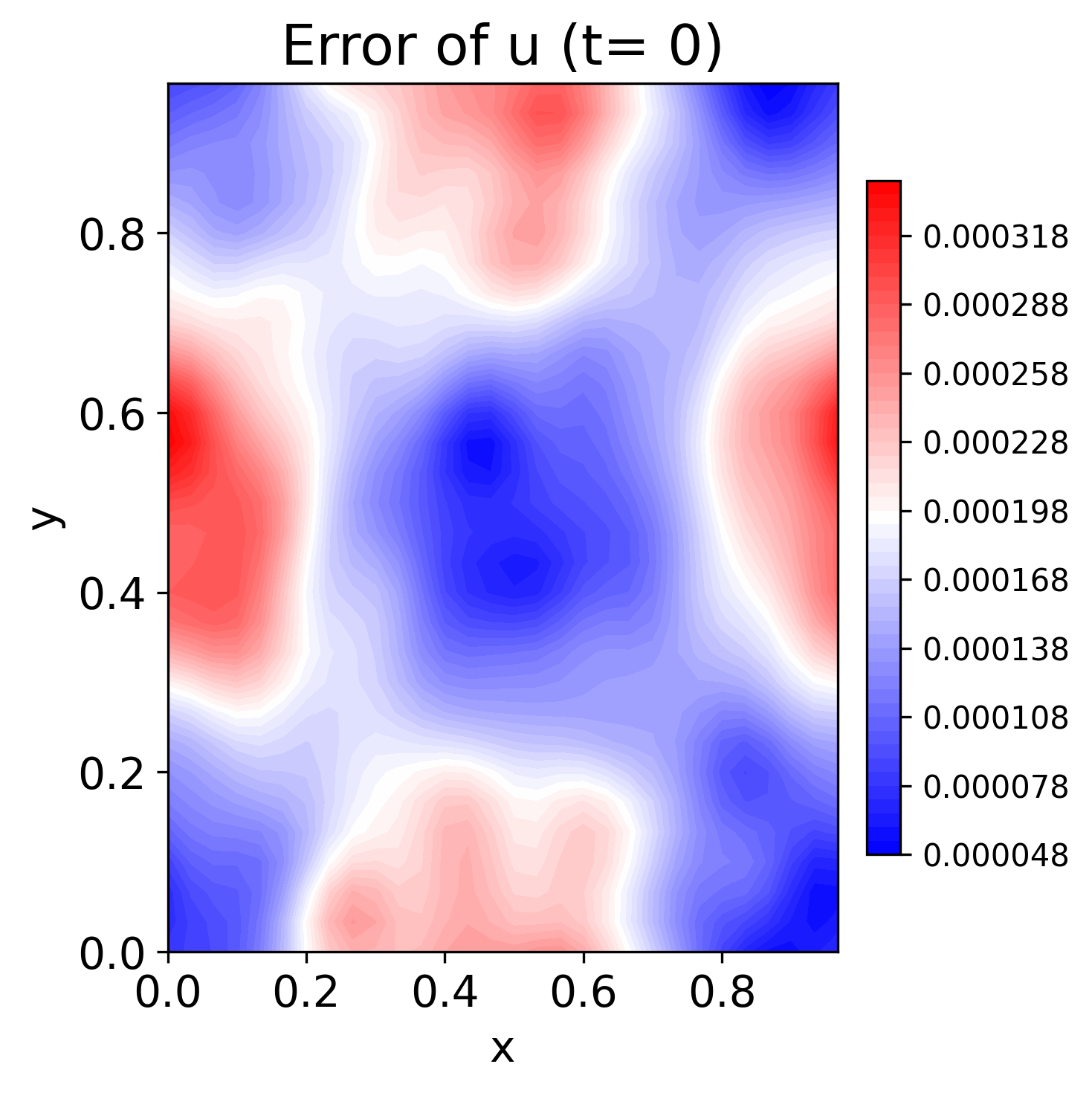}
        \caption{$u$ error (GN, $t=0$)}
        \label{fig:uerr_gn_t0}
    \end{subfigure}\hspace{1mm}
    \begin{subfigure}[b]{0.23\textwidth}
        \centering
        \includegraphics[width=\linewidth]{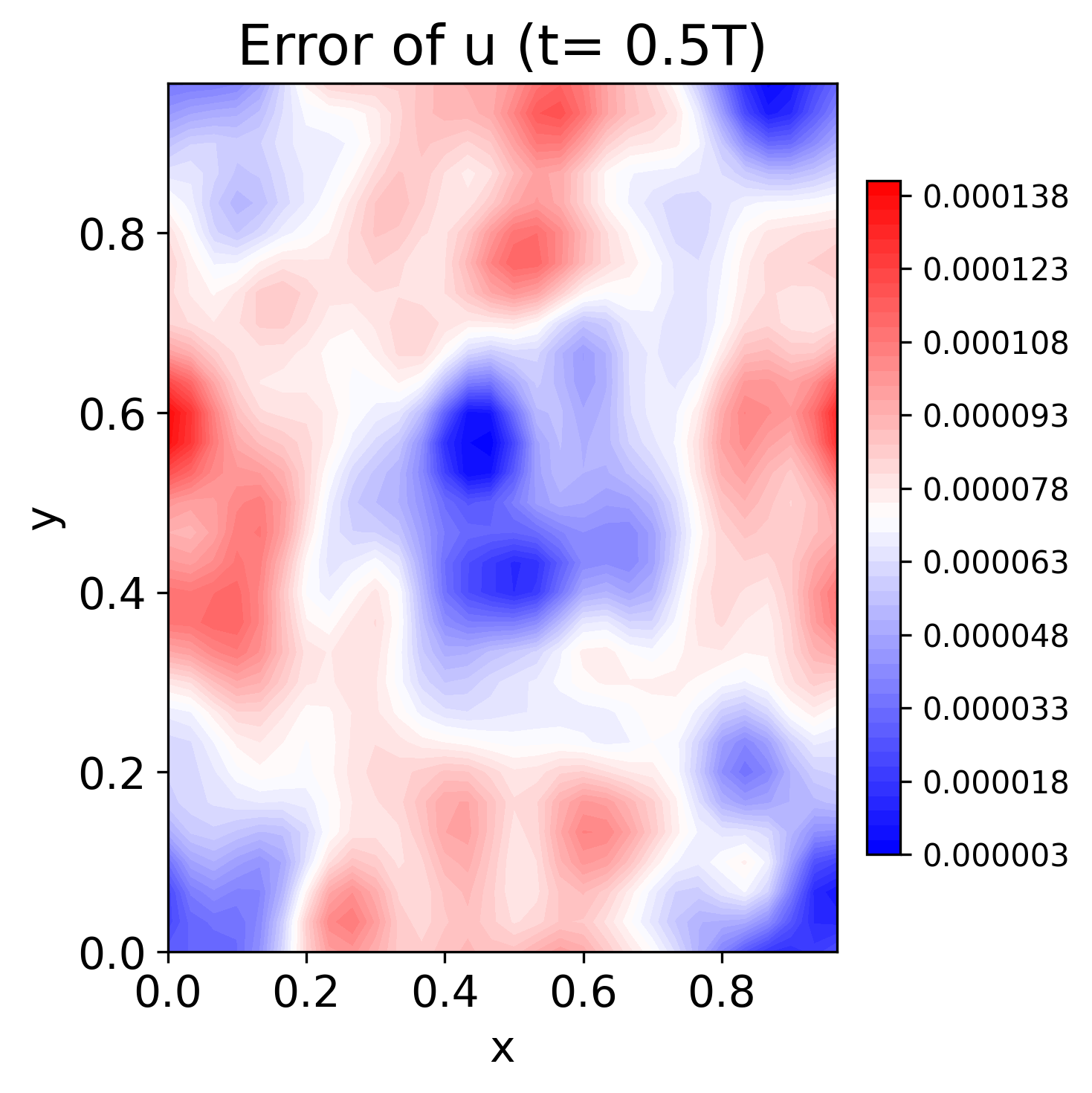}
        \caption{$u$ error (GN, $t=0.5T$)}
        \label{fig:uerr_gn_t05}
    \end{subfigure}

    \caption{Numerical results for the inverse problem of the 2D time-dependent MFG in \Cref{2Dtimedependentexample1}.  (a)-(c) reference $m$ at $t=0$, $t=0.5T$, and $t=T$; (d) log-log loss comparison of GD vs. GN; (e)-(g) reference $u$ at $t=0$, $t=0.5T$, and $t=T$; (h) time-independent reference $V$; (i)-(j) error of $m$ (GD) at $t=0.5T$ and $t=T$; (k)-(l) error of $m$ (GN) at $t=0.5T$ and $t=T$; (m)-(n) error of $u$ (GD) at $t=0$ and $t=0.5T$; (o)-(p) error of $u$ (GN) at $t=0$ and $t=0.5T$.}
    \label{fig:timedep_2d_inverse}
\end{figure}

\subsubsection{Two-Dimensional Time-Dependent MFG Inverse Problem}
\label{2Dtimedependentexample1}
In this example, we study the inverse problem for \eqref{eqtimedependinv} on the flat torus \(\mathbb{T}^2 \simeq [0,1)^2\) with \(T=1\). We take \(H(p)=\tfrac{1}{2}\lvert p\rvert^2\) and \(\nu=0.05\). The initial density is \(m_0(x,y)=1+a_1\cos\bigl(2\pi(x-\tfrac12)\bigr)+a_2\sin\bigl(2\pi(y-\tfrac12)\bigr)\) where $a_1=0.2$, $a_2=0.3$ and the terminal condition is \(u_T(x,y)=-m_0(x,y)\). We set \(f(m)=m^3\) and set the exact spatial cost \(V^*(x,y)=\cos(2\pi x)\cos(2\pi y)\). With \(H\), \(\nu\), \(f\), and \(V^*\) fixed, we compute the reference state \((u^*,m^*)\) numerically by running the implicit-Euler discretization of the HRF. We then run GD and GN to reconstruct \((u,m,V)\), and compare against \((u^*,m^*,V^*)\).

\textbf{Experimental Setup.}
 We discretize \(\mathbb{T}^2\simeq[0,1)^2\) on a uniform
\(30\times 30\) periodic grid with spacing \(h_x=h_y=1/30\), and discretize
\([0,1]\) into \(N_T=30\) time intervals with \(h_t=1/30\). Thus the full
space--time grid has \(N_T+1=31\) time levels and contains
\(30^2\times 31=27900\) nodes. In the HRF formulation, the fixed endpoint
slices \(M_0\) and \(U_{N_T}\) are reinstated when forming the full stacked
fields, while each reduced unknown block
\(M=(M_1,\ldots,M_{N_T})\) or \(U=(U_0,\ldots,U_{N_T-1})\) contains
\(30^2\times 30=27000\) free nodal values. We select
\(N_m=2160\) density observation samples
uniformly at random from the reduced density block \(M\), and we use \(180\)
spatial observation locations for \(V\). The regularization parameters are
\(\alpha=0.04\), \(\beta=2\), and \(\gamma=2\). We perturb the observations
with i.i.d. Gaussian noise \(\mathcal{N}(0,\eta^2I)\), with
\(\eta=10^{-3}\). The free unknown blocks are initialized by
\(U_k\equiv0\), \(M_k\equiv1\), and \(V\equiv0\), while the endpoint slices
\(M_0\) and \(U_{N_T}\) are kept fixed.

\textbf{Experimental Results.}
Figure~\ref{fig:timedep_2d_inverse} summarizes the two-dimensional, time-dependent MFG inverse problem. Panels~\ref{fig:mref_t0}--\ref{fig:mref_t1} show the reference density \(m^*\) at \(t=0\), \(t=0.5T\), and \(t=T\). Panels~\ref{fig:uref_t0}--\ref{fig:uref_t1} show the corresponding value function \(u^*\) at the same times. Panel~\ref{fig:Vref} shows the exact spatial cost \(V^*\).

To assess reconstruction quality, Panel~\ref{fig:loss} plots the loss history. Pointwise absolute error contours for \(m\) are reported at \(t=0.5T\) and at \(t=T\) for GD (Panels~\ref{fig:merr_gd_t05}--\ref{fig:merr_gd_t1}) and GN (Panels~\ref{fig:merr_gn_t05}--\ref{fig:merr_gn_t1}). Likewise, pointwise error maps for \(u\) are shown at \(t=0\) and \(t=0.5T\) for GD (Panels~\ref{fig:uerr_gd_t0}--\ref{fig:uerr_gd_t05}) and GN (Panels~\ref{fig:uerr_gn_t0}--\ref{fig:uerr_gn_t05}).

The reconstruction of \(V\) is presented separately in Figure~\ref{fig:V_only_panel}, which compares the recovered fields from GD and GN (Panels~\ref{fig:V_rec_gd} and \ref{fig:V_rec_gn}) together with their pointwise absolute errors (Panels~\ref{fig:V_err_gd} and \ref{fig:V_err_gn}). Overall, the loss history and the error fields across all reported time slices show that GN converges markedly faster than adjoint-based GD, attaining smaller errors in substantially fewer outer iterations.

\begin{figure}[!htbp]
    \centering

    \begin{subfigure}[b]{0.23\textwidth}
        \centering
        \includegraphics[width=\linewidth]{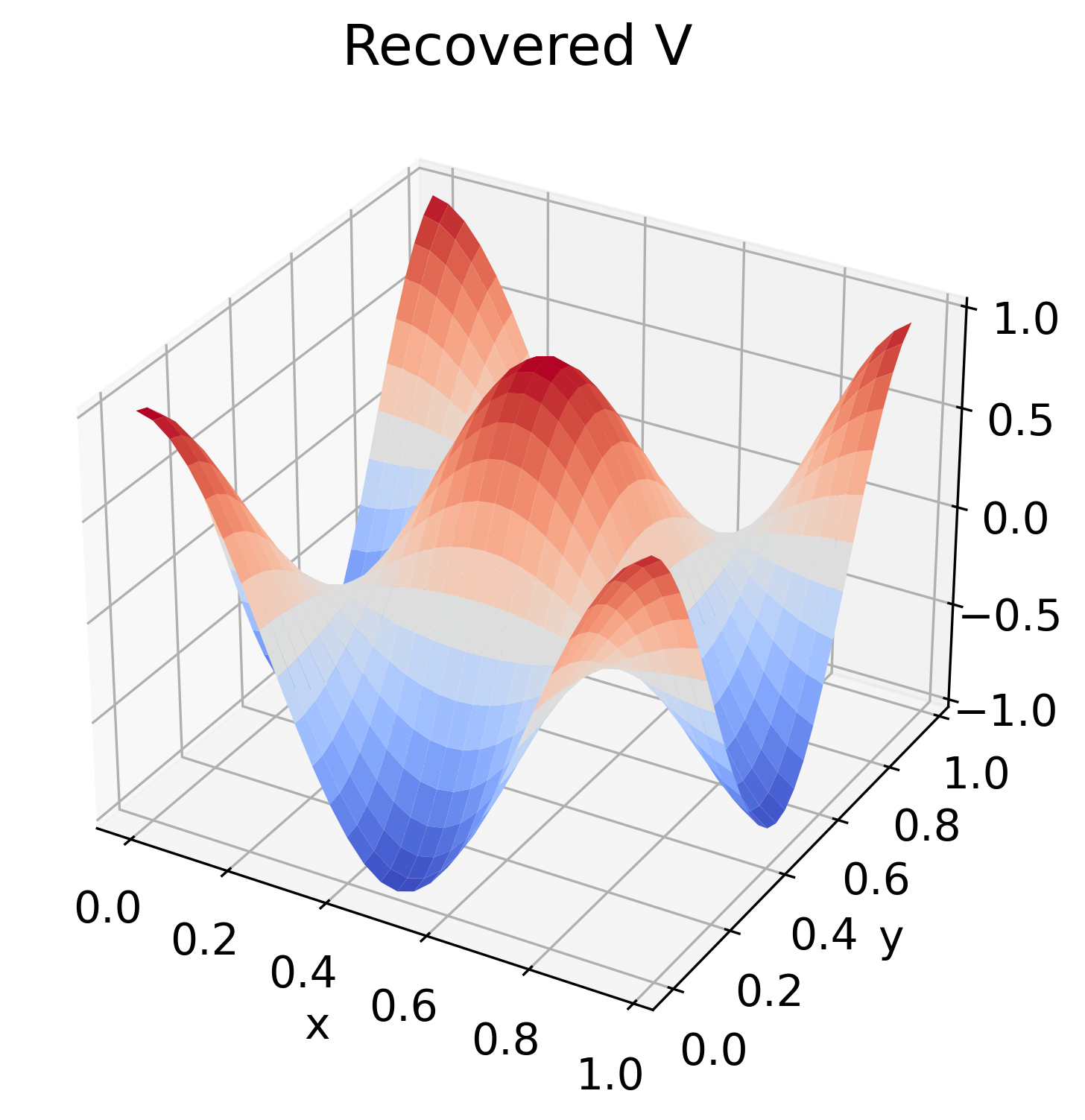}
        \caption{Recovered $V$ via GD}
        \label{fig:V_rec_gd}
    \end{subfigure}\hspace{1mm}
    \begin{subfigure}[b]{0.23\textwidth}
        \centering
        \includegraphics[width=\linewidth]{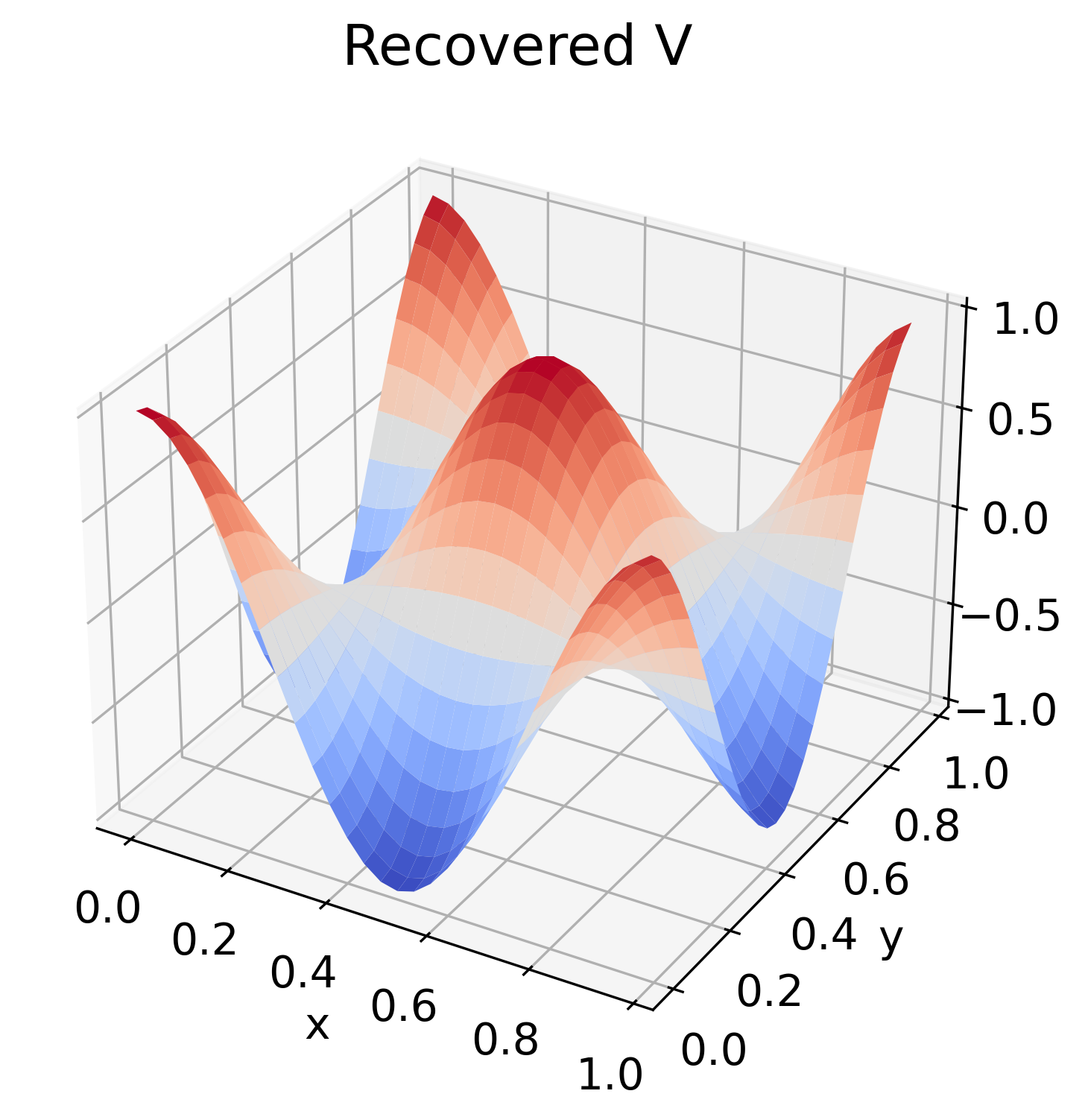}
        \caption{Recovered $V$ via GN}
        \label{fig:V_rec_gn}
    \end{subfigure}\hspace{1mm}
    \begin{subfigure}[b]{0.23\textwidth}
        \centering
        \includegraphics[width=\linewidth]{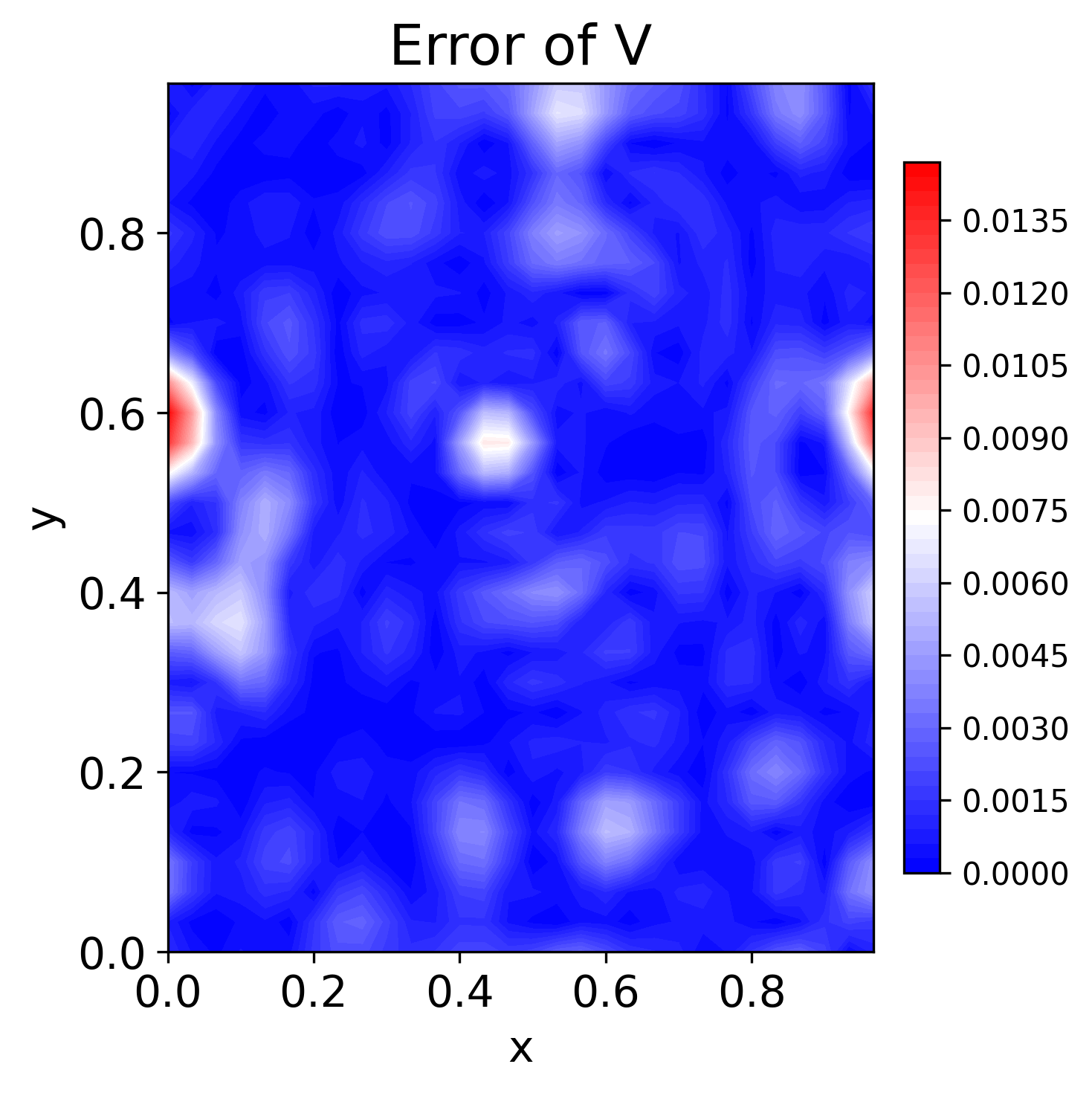}
        \caption{Error of $V$ via GD}
        \label{fig:V_err_gd}
    \end{subfigure}\hspace{1mm}
    \begin{subfigure}[b]{0.23\textwidth}
        \centering
        \includegraphics[width=\linewidth]{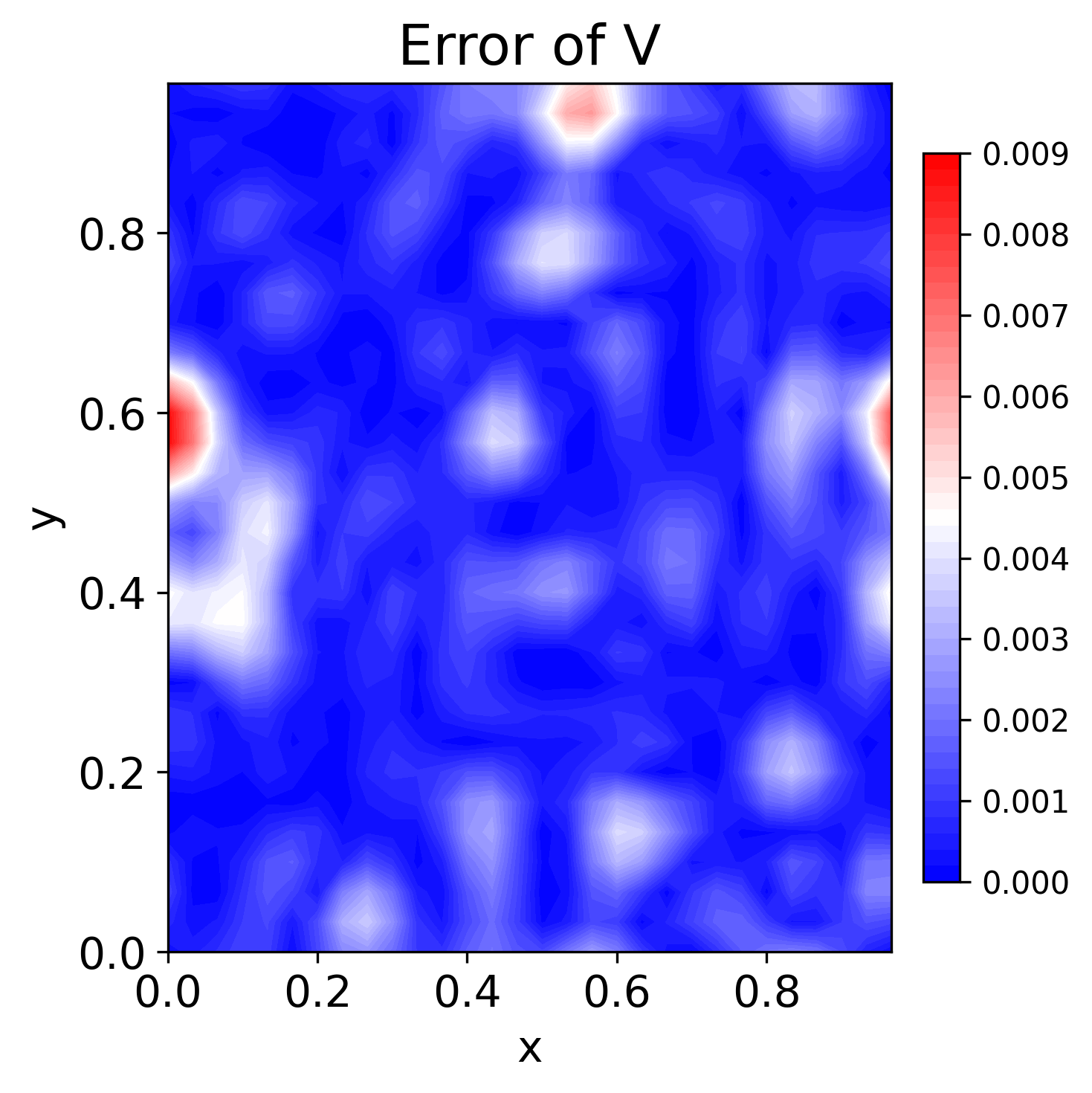}
        \caption{Error of $V$ via GN}
        \label{fig:V_err_gn}
    \end{subfigure}

    \caption{Spatial cost $V$ in 2D time-dependent MFG in \Cref{2Dtimedependentexample1}. (a) recovered $V$ via GD; (b) recovered $V$ via GN; (c) error of $V$ via GD; (d) error of $V$ via GN.}
    \label{fig:V_only_panel}
\end{figure}

\section{Conclusion}
\label{sec:conclusion}

In this paper, we developed a positivity-preserving flow method for
time-dependent MFGs and a solver-agnostic framework for MFG inverse problems.

First, we introduced a fully discrete HRF method for time-dependent MFG systems.
After discretizing the MFG system in space and time, we keep the boundary time
slices fixed in order to preserve the initial density and the terminal value
function. The flow then evolves only the remaining unknowns. The density
variables evolve on the positive probability simplex at each time slice, so
positivity and mass conservation are preserved by construction. Under the  monotonicity and convexity assumptions stated in
Section~\ref{sec:tdmfg}, the resulting flow is globally defined, preserves
positivity and mass, and converges to a state which solves the projected discretized MFG system. From this limit, one can recover a solution of the full discretized
MFG system by adding suitable spatial constants to the value-function slices, as
described in \Cref{rem:td-postprocess-full}. 

Second, we proposed a solver-agnostic framework for inverse problems in
stationary and time-dependent MFGs. The inverse problem is written as an outer
optimization problem over the unknown parameter, while the inner problem solves
the discretized MFG system for the current parameter value. The key point is
that the outer derivative is computed from the discrete equations satisfied by
the converged inner state, not by differentiating through the iterations of a
particular inner solver. This leads to an adjoint-based gradient formula and a GN acceleration. 

The numerical experiments show that the proposed framework can be used with
different inner solvers and with several stationary and time-dependent MFG
models. In particular, the GN method consistently reduces the number
of outer iterations compared with gradient descent. 

Several extensions are natural. The discrete HRF construction is not tied to the
particular uniform periodic grid used in this paper, and it would be useful to
develop the analysis for non-periodic boundary conditions, nonuniform meshes,
and broader classes of Hamiltonians and couplings. The same inverse framework
can also accommodate richer parameterizations of the unknown model components.
For example, beyond recovering the spatial cost, one may infer the Hamiltonian
or the coupling function from observations of equilibrium states or agent
trajectories.

\section*{Acknowledgments}
HY acknowledges support in part from Hong Kong ECS Grant 21302521, Hong Kong RGC Grant 11311422, and Hong Kong RGC Grant 11303223. XY acknowledges support from the Air Force Office of Scientific Research through the MURI award FA9550-20-1-0358 (Machine Learning and Physics-Based Modeling and Simulation). XY also acknowledges support from the Air Force Office of Scientific Research under MURI award FOA-AFRL-AFOSR-2023-0004 (Mathematics of Digital Twins), the Department of Energy under award DE-SC0023163 (SEA-CROGS: Scalable, Efficient, and Accelerated Causal Reasoning Operators, Graphs and Spikes for Earth and Embedded Systems), the National Science Foundation under award 2425909 (Discovering the Law of Stress Transfer and Earthquake Dynamics in a Fault Network using a Computational Graph Discovery Approach), and the VBFF under ONR-N000142512035. JZ acknowledges support from the NUS Risk Management Institute research scholarship and the IoTeX Foundation Industry Grant A-8001180-00-00. We wish to express our sincere gratitude to Professor Mou Chenchen from the City University of Hong Kong and Professor Zhou Chao from the National University of Singapore for their fruitful discussions.

\appendix
\crefalias{section}{appendix}
\Crefname{appendix}{Appendix}{Appendices}
\crefname{appendix}{appendix}{appendices}
\section{\texorpdfstring{Proofs of the Main Results}{Proofs of the Main Results}}
\label{app:appendix}
We first recall the classical result from \cite{achdou2010mean} that the discrete transport operator in the FP equation is the adjoint of the linearization of the discrete Hamiltonian.
\begin{lemma}[Discrete adjoint structure of the transport operator]
\label{lem:Bh_adjoint}
For any grid functions $U$, $M$, and $V$, one has
\begin{equation}
\label{eq:Bh_adjoint_identity}
\big\langle \mathcal{B}_h(U,M),\,V\big\rangle_h
=
\big\langle M\,\nabla_q g(\cdot,[D_hU]),\,[D_hV]\big\rangle_h, 
\end{equation}
where \(M\,\nabla_q g(\cdot,[D_hU])\) denotes componentwise multiplication. The right-hand side is understood with the vector-valued pairing introduced above.
Equivalently, for fixed \(U\), define the linearized Hamiltonian operator $L_U:\mathbb R^{N_x}\to\mathbb R^{N_x}$ by
\[
(L_UV)_{i,j}:=\nabla_q g_{i,j}([D_hU]_{i,j})\cdot [D_hV]_{i,j}.
\]
Then the map $M\mapsto \mathcal B_h(U,M)$ is the adjoint of $L_U$ with respect to \(\langle\cdot,\cdot\rangle_h\).
\end{lemma}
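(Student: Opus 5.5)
The identity \eqref{eq:Bh_adjoint_identity} is a discrete summation-by-parts statement. I will prove it one stencil direction at a time and then add the four contributions. The only tool needed is the periodic summation-by-parts rule for the one-sided differences in \eqref{eq:one-side-diff}:
\[
\langle D_1^{-}a,\,b\rangle_h=-\langle a,\,D_1^{+}b\rangle_h,
\qquad
\langle D_1^{+}a,\,b\rangle_h=-\langle a,\,D_1^{-}b\rangle_h,
\]
and the analogous pair for $D_2^\pm$. These follow by writing out $h^2\sum_{i,j}\frac{a_{i,j}-a_{i-1,j}}{h}b_{i,j}$ and shifting the index $i\mapsto i+1$ in the second sum. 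The shift is legitimate because of periodic indexing on $\mathbb{T}^2_h$, so no boundary terms appear.

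With this rule, I pair each term of $\mathcal B_h(U,M)$ in \eqref{eq:Bh_def} with $V$. The first term gives
\[
-\langle D_1^{-}(M\alpha^{(1)}(U)),V\rangle_h=\langle M\alpha^{(1)}(U),D_1^{+}V\rangle_h .
\]
By \eqref{eq:stcil}, $D_1^+V$ is exactly the first component of $[D_hV]$. In the same way, the $D_1^{+}$ term paired with $\alpha^{(2)}$ gives $\langle M\alpha^{(2)}(U),D_1^{-}V\rangle_h$, which is the second stencil slot. The $D_2^{-}$ term with $\alpha^{(3)}$ pairs with $D_2^{+}V$, the third slot. The $D_2^{+}$ term with $\alpha^{(4)}$ pairs with $D_2^{-}V$, the fourth slot.

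Summing the four terms gives $h^2\sum_{i,j}M_{i,j}\sum_{\ell=1}^4\partial_{q_\ell}g_{i,j}([D_hU]_{i,j})\,([D_hV]_{i,j})_\ell$. This is precisely the right-hand side of \eqref{eq:Bh_adjoint_identity} under the vector-valued pairing $\langle A,B\rangle_h:=h^2\sum_{i,j}A_{i,j}\cdot B_{i,j}$ for $\mathbb{R}^4$-valued grid functions. The reformulation through $L_U$ is then immediate. By definition, the same sum equals $\langle M,L_UV\rangle_h$, so $\langle \mathcal B_h(U,M),V\rangle_h=\langle M,L_UV\rangle_h$ for all $M,V$. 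Since $\mathcal B_h(U,\cdot)$ is linear in $M$, this says it is the adjoint of $L_U$ with respect to $\langle\cdot,\cdot\rangle_h$.

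No step here is genuinely hard. The one place that needs care is the bookkeeping of signs and index pairing: one has to check that the backward difference $D_1^-$ acting on the $q_1$-flux transposes to the forward difference $D_1^+$, which is the $q_1$ slot of the stencil. Otherwise the index order in $[D_hU]$ and the placement of $D^\pm$ in \eqref{eq:Bh_def} would not match. I would verify this mapping explicitly for $\ell=1$ and cite symmetry for the other three directions.
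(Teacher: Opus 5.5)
Your proposal is correct and follows the paper's proof essentially verbatim. Like the paper, you apply periodic summation by parts to each of the four flux terms, match each transposed difference to its stencil slot in $[D_hV]$, and read off the $L_U$ adjoint statement as $\langle \mathcal B_h(U,M),V\rangle_h=\langle M,L_UV\rangle_h$.
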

\begin{proof}
Fix $U$, $M$, and $V$. Write
\[
Q_{i,j}:=[D_hU]_{i,j}\in\mathbb{R}^4,
\qquad
\nabla_q g_{i,j}(Q_{i,j})
=
\Bigl(
\frac{\partial g_{i,j}}{\partial q_1}(Q_{i,j}),
\frac{\partial g_{i,j}}{\partial q_2}(Q_{i,j}),
\frac{\partial g_{i,j}}{\partial q_3}(Q_{i,j}),
\frac{\partial g_{i,j}}{\partial q_4}(Q_{i,j})
\Bigr).
\]
By the definition of $\mathcal B_h$ in \eqref{eq:Bh_def},
\begin{align*}
\big\langle \mathcal B_h(U,M),V\big\rangle_h
&=
\sum_{i,j} h^2\,\mathcal B_h(U,M)_{i,j}\,V_{i,j}.
\end{align*}
For arbitrary scalar grid functions $A=(A_{i,j})$ and $B=(B_{i,j})$ on the periodic grid, the one-sided difference operators satisfy the discrete summation-by-parts identities
\[
\langle -D_1^- A,B\rangle_h=\langle A,D_1^+B\rangle_h,
\qquad
\langle -D_1^+ A,B\rangle_h=\langle A,D_1^-B\rangle_h.
\]
The completely analogous identities with $D_2^\pm$ in place of $D_1^\pm$ are used for the last two flux terms.
Substituting the divergence form of $\mathcal B_h(U,M)$ and applying these identities yields
\begin{align*}
\big\langle \mathcal B_h(U,M),V\big\rangle_h
&=
\sum_{i,j} h^2\,M_{i,j}
\Bigl(
\frac{\partial g_{i,j}}{\partial q_1}(Q_{i,j})\,(D_1^+V)_{i,j}
+
\frac{\partial g_{i,j}}{\partial q_2}(Q_{i,j})\,(D_1^-V)_{i,j}
\\
&\hspace{7em}
+
\frac{\partial g_{i,j}}{\partial q_3}(Q_{i,j})\,(D_2^+V)_{i,j}
+
\frac{\partial g_{i,j}}{\partial q_4}(Q_{i,j})\,(D_2^-V)_{i,j}
\Bigr).
\end{align*}
By the definition of the stencil
\[
[D_hV]_{i,j}
=
\bigl((D_1^+V)_{i,j},(D_1^-V)_{i,j},(D_2^+V)_{i,j},(D_2^-V)_{i,j}\bigr),
\]
the right-hand side is exactly
\[
\sum_{i,j} h^2\,M_{i,j}\,
\nabla_q g_{i,j}(Q_{i,j})\cdot [D_hV]_{i,j},
\]
which is
\[
\big\langle M\,\nabla_q g(\cdot,[D_hU]),\,[D_hV]\big\rangle_h.
\]
This proves \eqref{eq:Bh_adjoint_identity}.

To state the adjoint relation without mixing the roles of $U$ and $M$, fix $U$ and define the linearization
\[
(L_UV)_{i,j}:=\nabla_q g_{i,j}([D_hU]_{i,j})\cdot [D_hV]_{i,j},
\qquad V\in\mathbb R^{N_x}.
\]
Then \eqref{eq:Bh_adjoint_identity} reads
\[
\langle \mathcal B_h(U,M),V\rangle_h=\langle M,L_UV\rangle_h
\qquad\text{for all }M,V\in\mathbb R^{N_x},
\]
which is exactly the statement that, for fixed $U$, the map $M\mapsto \mathcal B_h(U,M)$ is the adjoint of the linearized Hamiltonian operator $L_U$.
\end{proof}

We now verify that the rigidity condition in Assumption~\ref{ass:disc_g} is
satisfied by the Godunov-type fluxes used in the paper. The point is simple but
slightly subtle. A Godunov flux is not necessarily strictly convex in the full
variable \(q=(q_1,q_2,q_3,q_4)\), so \(D_g(x;q,\widetilde q)=0\) does not
immediately imply \(q=\widetilde q\). However, the flux depends on \(q\) only
through the upwind parts $\rho(q)=(q_1^-,q_2^+,q_3^-,q_4^+)$,
and it is strictly convex in these upwind variables. Therefore, the vanishing of
the Bregman term first gives equality of the upwind parts. On a periodic grid,
the missing positive or negative parts are recovered from neighboring nodes
because backward differences are shifted forward differences. This is enough to
recover all one-sided differences.
\begingroup
\begin{lemma}[Godunov-type fluxes satisfy the rigidity condition]
\label{lem:godunov_rigidity}
Let \(g\) be a numerical Hamiltonian of the form
\[
g(x,q)=\widehat g(x,q_1^-,q_2^+,q_3^-,q_4^+),
\]
where \(a^+:=\max\{a,0\}\) and \(a^-:=\max\{-a,0\}\). Assume that, for each
fixed \(x\), the map \(q\mapsto g(x,q)\) is convex and \(C^1\) on
\(\mathbb R^4\). Assume further that, for each fixed \(x\), the map
\(r\mapsto \widehat g(x,r)\) is \(C^1\), nondecreasing in each coordinate, and
strictly convex on \(\mathbb R_+^4\). Then \(g\) satisfies the rigidity
condition in Assumption~\ref{ass:disc_g}: for any periodic grid functions
\(W,\widetilde W\), if
\[
D_g\bigl(x_{ij};[D_hW]_{ij},[D_h\widetilde W]_{ij}\bigr)=0
\qquad
\text{for every node }(i,j),
\]
then
\[
[D_hW]=[D_h\widetilde W].
\]
\end{lemma}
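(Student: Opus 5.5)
The plan has two stages. First, I will show that at every node the vanishing Bregman term forces the upwind parts to agree:
\[
\rho\bigl([D_hW]_{ij}\bigr)=\rho\bigl([D_h\widetilde W]_{ij}\bigr).
\]
Second, I will use periodicity to pass from equality of the upwind parts to equality of all four one-sided differences.

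For the first stage, fix a node and write $q=[D_hW]_{ij}$ and $q'=[D_h\widetilde W]_{ij}$. Set $q_t:=q'+t(q-q')$ and $\varphi(t):=g(x_{ij},q_t)$ for $t\in[0,1]$.

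\begin{itemize}
\item Since $g(x_{ij},\cdot)$ is convex and $C^1$, the function $\varphi$ is convex and $C^1$, with $\varphi'(0)=\nabla_q g(x_{ij},q')\cdot(q-q')$.
\item The hypothesis $D_g=0$ reads $\varphi(1)=\varphi(0)+\varphi'(0)$. A convex function lies above its tangent line at $0$ and below its chord on $[0,1]$, and here the two lines coincide. Hence $\varphi$ is affine: $\varphi(t)=(1-t)\varphi(0)+t\varphi(1)$.
\item The maps $a\mapsto a^+$ and $a\mapsto a^-$ are convex. Therefore, componentwise,
\[
\rho(q_t)\le (1-t)\rho(q')+t\rho(q).
\]
\item Combining this with the monotonicity and then the convexity of $\widehat g$ gives
\[
\varphi(t)=\widehat g(x_{ij},\rho(q_t))\le \widehat g\bigl(x_{ij},(1-t)\rho(q')+t\rho(q)\bigr)\le (1-t)\widehat g(x_{ij},\rho(q'))+t\,\widehat g(x_{ij},\rho(q)).
\]
\item The right-hand side equals $(1-t)\varphi(0)+t\varphi(1)=\varphi(t)$. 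So every inequality in the chain is an equality.
\item In particular, at $t=\tfrac12$ the convexity inequality for $\widehat g$ on $\mathbb R_+^4$ is an equality. By strict convexity of $\widehat g$, this is only possible if $\rho(q)=\rho(q')$.
\end{itemize}

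This is the step I expect to be the main obstacle. The subtlety is that monotonicity of $\widehat g$ is needed to transfer the componentwise inequality for $\rho(q_t)$ into an inequality of function values. I would double-check that only the equality case of strict convexity is used here, and not differentiability of $\rho$, which fails at kinks.

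For the second stage, the first stage gives, at every node, equality of the upwind parts in each direction:
\[
(D_1^+W)^-_{ij}=(D_1^+\widetilde W)^-_{ij},\qquad (D_1^-W)^+_{ij}=(D_1^-\widetilde W)^+_{ij},
\]
and the analogous identities for $D_2^\pm$.

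\begin{itemize}
\item On the periodic grid, $(D_1^-W)_{i,j}=(D_1^+W)_{i-1,j}$.
\item Letting $i$ range over all indices, the second identity yields $(D_1^+W)^+=(D_1^+\widetilde W)^+$ at every node.
\item Since $a=a^+-a^-$, it follows that $D_1^+W=D_1^+\widetilde W$ everywhere.
\item Shifting again gives $D_1^-W=D_1^-\widetilde W$.
\item The same argument in the second coordinate gives $D_2^\pm W=D_2^\pm\widetilde W$.
\end{itemize}

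Together these give $[D_hW]=[D_h\widetilde W]$, which is the rigidity condition of Assumption~\ref{ass:disc_g}.
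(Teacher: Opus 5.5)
Your proof is correct. The periodicity stage matches the paper's Steps 2--3, but your first stage, deriving $\rho(q)=\rho(q')$ from $D_g=0$, takes a genuinely different route.

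\textbf{The paper's route.} The paper works with gradients. It computes $\nabla_q g(x,\widetilde q)$ by the chain rule, with weights $\omega_\ell=\partial_{r_\ell}\widehat g(x,\rho(\widetilde q))\ge 0$. It then applies the pointwise inequalities $a^- - b^- + \mathbf 1_{\{b<0\}}(a-b)\ge 0$ and $a^+ - b^+ - \mathbf 1_{\{b>0\}}(a-b)\ge 0$. This gives the quantitative bound $D_g(x;q,\widetilde q)\ge D_{\widehat g}\bigl(x;\rho(q),\rho(\widetilde q)\bigr)$, where $D_{\widehat g}$ is the Bregman divergence of $\widehat g$ between the upwind vectors. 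Strict convexity of $\widehat g$ then finishes the argument.

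\textbf{Your route.} You restrict to the segment and note that $D_g=0$ makes the tangent line at $t=0$ coincide with the chord, so $\varphi$ is affine. You then squeeze $\varphi(t)$ using three facts: $a\mapsto a^\pm$ is convex, $\widehat g$ is coordinatewise monotone, and $\widehat g$ is convex. This forces equality in the convexity inequality for $\widehat g$ at $t=\tfrac12$.

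\textbf{What each buys.} The paper's argument yields an actual inequality between Bregman divergences, which could support quantitative rigidity estimates. The price is that it must differentiate $\widehat g$ at points that may lie on the boundary of $\mathbb R_+^4$. It must also interpret $\nabla_q g$ at the kinks $\widetilde q_\ell=0$ ``by continuity.'' Your argument never differentiates $\rho$ or $\widehat g$, so the kink issue disappears, and it does not need $\widehat g\in C^1$. It uses differentiability of $g$ only at $q'$, which is needed anyway to define $D_g$. Your chain of inequalities also shows that convexity of $g$ follows from the hypotheses on $\widehat g$. The trade-off is that your conclusion is purely qualitative.
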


\begin{proof}
We divide the proof into three steps.

\smallskip
\noindent\textbf{Step 1: A pointwise consequence of \(D_g(x;q,\widetilde q)=0\).}
Fix \(x\in\mathbb T^2\) and \(q,\widetilde q\in\mathbb R^4\). We first show that
\(D_g(x;q,\widetilde q)=0\) implies
\[
(q_1^-,q_2^+,q_3^-,q_4^+)
=
(\widetilde q_1^-,\widetilde q_2^+,\widetilde q_3^-,
 \widetilde q_4^+).
\]
Set
\[
\omega_\ell
:=
\partial_{r_\ell}\widehat g
\bigl(x,\widetilde q_1^-,\widetilde q_2^+,
        \widetilde q_3^-,\widetilde q_4^+\bigr),
\qquad \ell=1,\ldots,4.
\]
Since \(\widehat g(x,\cdot)\) is nondecreasing in each coordinate, we have
\(\omega_\ell\ge0\). Since \(a^-=\max\{-a,0\}\), we have
\((a^-)'=-\mathbf 1_{\{a<0\}}\) away from \(a=0\), while
\((a^+)'=\mathbf 1_{\{a>0\}}\). Therefore, using
\(g(x,q)=\widehat g(x,q_1^-,q_2^+,q_3^-,q_4^+)\), we obtain
\[
\nabla_q g(x,\widetilde q)
=
\bigl(
-\omega_1\mathbf 1_{\{\widetilde q_1<0\}},
\ \omega_2\mathbf 1_{\{\widetilde q_2>0\}},
\ -\omega_3\mathbf 1_{\{\widetilde q_3<0\}},
\ \omega_4\mathbf 1_{\{\widetilde q_4>0\}}
\bigr).
\]
At points where some \(\widetilde q_\ell=0\), the corresponding one-sided
factor \(q_\ell^+\) or \(q_\ell^-\) is not differentiable as a function of
\(q_\ell\). However, \(g(x,\cdot)\) is assumed to be \(C^1\). Therefore, the
above formula is understood by continuity of \(\nabla_q g\). 

By definition,
\begin{align}
D_g(x;q,\widetilde q)
&=
\widehat g(x,q_1^-,q_2^+,q_3^-,q_4^+)
-
\widehat g(x,\widetilde q_1^-,\widetilde q_2^+,
             \widetilde q_3^-,\widetilde q_4^+) 
-\nabla_q g(x,\widetilde q)\cdot(q-\widetilde q).
\label{eq:godunov_Dg_start}
\end{align}
Substituting the formula for \(\nabla_q g(x,\widetilde q)\), we get
\begin{align}
-\nabla_q g(x,\widetilde q)\cdot(q-\widetilde q)
&=
\omega_1\mathbf 1_{\{\widetilde q_1<0\}}(q_1-\widetilde q_1)
-\omega_2\mathbf 1_{\{\widetilde q_2>0\}}(q_2-\widetilde q_2) \notag\\
&\quad
+\omega_3\mathbf 1_{\{\widetilde q_3<0\}}(q_3-\widetilde q_3)
-\omega_4\mathbf 1_{\{\widetilde q_4>0\}}(q_4-\widetilde q_4).
\label{eq:godunov_full_linear_term}
\end{align}

We now use two elementary inequalities. For any \(a,b\in\mathbb R\),
\[
a^- - b^-+\mathbf 1_{\{b<0\}}(a-b)\ge0 
\]
and 
\[
a^+ - b^+-\mathbf 1_{\{b>0\}}(a-b)\ge0.
\]

Applying these two inequalities to the four terms in
\eqref{eq:godunov_full_linear_term}, and using \(\omega_\ell\ge0\), gives
\begin{align}
\omega_1\mathbf 1_{\{\widetilde q_1<0\}}(q_1-\widetilde q_1)
&\ge
-\omega_1(q_1^- - \widetilde q_1^-), \label{eq:godunov_comp1}\\
-\omega_2\mathbf 1_{\{\widetilde q_2>0\}}(q_2-\widetilde q_2)
&\ge
-\omega_2(q_2^+ - \widetilde q_2^+), \label{eq:godunov_comp2}\\
\omega_3\mathbf 1_{\{\widetilde q_3<0\}}(q_3-\widetilde q_3)
&\ge
-\omega_3(q_3^- - \widetilde q_3^-), \label{eq:godunov_comp3}\\
-\omega_4\mathbf 1_{\{\widetilde q_4>0\}}(q_4-\widetilde q_4)
&\ge
-\omega_4(q_4^+ - \widetilde q_4^+). \label{eq:godunov_comp4}
\end{align}
Combining \eqref{eq:godunov_Dg_start}--\eqref{eq:godunov_comp4}, we obtain
\begin{align}
D_g(x;q,\widetilde q)
&\ge
\widehat g(x,q_1^-,q_2^+,q_3^-,q_4^+)
-
\widehat g(x,\widetilde q_1^-,\widetilde q_2^+,
             \widetilde q_3^-,\widetilde q_4^+) \notag\\
&\quad
-\omega_1(q_1^- - \widetilde q_1^-)
-\omega_2(q_2^+ - \widetilde q_2^+)
-\omega_3(q_3^- - \widetilde q_3^-)
-\omega_4(q_4^+ - \widetilde q_4^+).
\label{eq:godunov_lower_bound}
\end{align}
The right-hand side of \eqref{eq:godunov_lower_bound} is the Bregman divergence
of the strictly convex function \(\widehat g(x,\cdot)\) between
\((q_1^-,q_2^+,q_3^-,q_4^+)\) and
\((\widetilde q_1^-,\widetilde q_2^+,\widetilde q_3^-,
\widetilde q_4^+)\). Hence, it is nonnegative and vanishes only when
\[
(q_1^-,q_2^+,q_3^-,q_4^+)
=
(\widetilde q_1^-,\widetilde q_2^+,\widetilde q_3^-,
 \widetilde q_4^+).
\]
Therefore, if \(D_g(x;q,\widetilde q)=0\), then the right-hand side of
\eqref{eq:godunov_lower_bound} must also be zero, and the desired equality
follows.

\smallskip
\noindent\textbf{Step 2: Apply the pointwise result at every grid node.}
Assume that
\[
D_g\bigl(x_{ij};[D_hW]_{ij},[D_h\widetilde W]_{ij}\bigr)=0
\qquad\text{for every node }(i,j).
\]
At each node, take \(q=[D_hW]_{ij}\) and
\(\widetilde q=[D_h\widetilde W]_{ij}\). By Step~1, for every \((i,j)\),
\[
(D_1^+W)^-_{ij}=(D_1^+\widetilde W)^-_{ij},
\qquad
(D_1^-W)^+_{ij}=(D_1^-\widetilde W)^+_{ij},
\]
and
\[
(D_2^+W)^-_{ij}=(D_2^+\widetilde W)^-_{ij},
\qquad
(D_2^-W)^+_{ij}=(D_2^-\widetilde W)^+_{ij}.
\]
Our goal is to prove that all four one-sided differences agree:
\[
D_1^+W=D_1^+\widetilde W,\qquad
D_1^-W=D_1^-\widetilde W,\qquad
D_2^+W=D_2^+\widetilde W,\qquad
D_2^-W=D_2^-\widetilde W.
\]
The identities above already give the negative part of \(D_1^+\) and \(D_2^+\),
and the positive part of \(D_1^-\) and \(D_2^-\). It remains to use periodicity
to obtain the positive parts of the forward differences. Once the forward
differences agree, the backward differences follow by shifting indices.

\smallskip
\noindent\textbf{Step 3: Use periodicity to recover the full stencil.}
We first prove \(D_1^+W=D_1^+\widetilde W\). From Step~2, we already know
\[
(D_1^+W)^-_{ij}=(D_1^+\widetilde W)^-_{ij}.
\]
It remains to prove the equality of the positive parts. By periodicity,
\[
(D_1^-W)_{i+1,j}=(D_1^+W)_{ij}.
\]
Therefore,
\[
(D_1^+W)^+_{ij}
=
(D_1^-W)^+_{i+1,j}
=
(D_1^-\widetilde W)^+_{i+1,j}
=
(D_1^+\widetilde W)^+_{ij}.
\]
Thus, both the positive and negative parts of \(D_1^+W\) and
\(D_1^+\widetilde W\) agree, so
\[
D_1^+W=D_1^+\widetilde W.
\]

The same argument in the second coordinate gives
\[
D_2^+W=D_2^+\widetilde W.
\]
Finally, on the periodic grid,
\[
(D_1^-W)_{ij}=(D_1^+W)_{i-1,j},
\qquad
(D_2^-W)_{ij}=(D_2^+W)_{i,j-1}.
\]
Since the forward differences agree, these identities imply
\[
D_1^-W=D_1^-\widetilde W,\qquad
D_2^-W=D_2^-\widetilde W.
\]
Hence, all four one-sided differences agree, namely
\[
[D_hW]=[D_h\widetilde W].
\]
\end{proof}
\endgroup

We next prove the monotonicity of the operator \(F_h^{\mathrm{td}}\) defined in \eqref{eq:Fh_full_def}, as formalized in \Cref{prop:Fh_monotone_MU}. 
\begin{proof}[Proof of \Cref{prop:Fh_monotone_MU}]
Write
\[
F_h^{\mathrm{td}}(Y)=(r_1(Y),\dots,r_{N_T}(Y),s_1(Y),\dots,s_{N_T}(Y)),
\]
where $r_k,s_k$ are given by \eqref{eq:rk_def_rewrite}--\eqref{eq:sk_def_rewrite}. By the definition of the space-time pairing,
\[
\big\langle F_h^{\mathrm{td}}(Y)-F_h^{\mathrm{td}}(\widetilde Y),\,Y-\widetilde Y\big\rangle_{h,\Delta t}
=
\Delta t\sum_{k=1}^{N_T}\langle r_k(Y)-r_k(\widetilde Y),\,M_k-\widetilde M_k\rangle_h
+
\Delta t\sum_{k=1}^{N_T}\langle s_k(Y)-s_k(\widetilde Y),\,U_{k-1}-\widetilde U_{k-1}\rangle_h .
\]

\medskip
\noindent\textbf{Step 1: Time-derivative terms.}
Collecting the two discrete time-difference contributions and applying a telescoping sum yields the boundary term
\[
\langle U_{N_T}-\widetilde U_{N_T},\, M_{N_T}-\widetilde M_{N_T}\rangle_h
-\langle U_0-\widetilde U_0,\, M_0-\widetilde M_0\rangle_h .
\]
Since $M_0=\widetilde M_0$ and $U_{N_T}=\widetilde U_{N_T}$ are fixed endpoint samples, this term equals $0$.

\medskip
\noindent\textbf{Step 2: Diffusion terms.}
On the periodic grid $\mathbb{T}^2_h$, the five-point Laplacian is self-adjoint with respect to $\langle\cdot,\cdot\rangle_h$, hence
\[
\langle \Delta_h A, B\rangle_h=\langle A,\Delta_h B\rangle_h .
\]
Therefore the diffusion contributions coming from $r_k$ and $s_k$ cancel:
\[
\sum_{k=1}^{N_T}\nu\langle \Delta_h(U_{k-1}-\widetilde U_{k-1}),\,M_k-\widetilde M_k\rangle_h
-
\sum_{k=1}^{N_T}\nu\langle \Delta_h(M_k-\widetilde M_k),\,U_{k-1}-\widetilde U_{k-1}\rangle_h
=0 .
\]

\medskip
\noindent\textbf{Step 3: Coupling terms.}
The coupling contribution equals
\[
\Delta t\sum_{k=1}^{N_T}\langle f(M_k)-f(\widetilde M_k),\,M_k-\widetilde M_k\rangle_h.
\]
By \Cref{ass:disc_f}, each summand is nonnegative, and it is strictly positive whenever $M_k\neq \widetilde M_k$. Hence
\begin{equation}
\label{eq:strict_coupling_term}
\Delta t\sum_{k=1}^{N_T}\langle f(M_k)-f(\widetilde M_k),\,M_k-\widetilde M_k\rangle_h \ge 0,
\end{equation}
with equality only if $M_k=\widetilde M_k$ for all $k$.

\medskip
\noindent\textbf{Step 4: Hamiltonian and transport terms.}
Fix $k\in\{1,\dots,N_T\}$ and set
\[
\delta U:=U_{k-1}-\widetilde U_{k-1},
\qquad
\delta M:=M_k-\widetilde M_k.
\]
Introduce the discrete Hamiltonian functional
\[
\mathcal G_h(U):=g(\cdot,[D_hU]).
\]
Using \Cref{lem:Bh_adjoint}, we obtain
\begin{equation}
\label{eq:Bh_adjoint_linearization}
\big\langle \mathcal B_h(U,M),\,V\big\rangle_h
=
\big\langle M\,\nabla_q g(\cdot,[D_hU]),\,[D_hV]\big\rangle_h
\qquad\text{for all }V.
\end{equation}
Equivalently, for fixed $M$, the map
\[
U\longmapsto \Phi_M(U):=\big\langle M,\mathcal G_h(U)\big\rangle_h
\]
has Fr\'echet derivative
\[
D\Phi_M(U)[V]
=
\big\langle \mathcal B_h(U,M),\,V\big\rangle_h.
\]

Since each $q\mapsto g_{i,j}(q)$ is convex by \Cref{ass:disc_g}, the functional $\Phi_M$ is convex for every nonnegative density $M$. Hence
\[
\Phi_{\widetilde M_k}(U_{k-1})-\Phi_{\widetilde M_k}(\widetilde U_{k-1})
-
D\Phi_{\widetilde M_k}(\widetilde U_{k-1})[\delta U]
\ge 0,
\]
and
\[
\Phi_{M_k}(\widetilde U_{k-1})-\Phi_{ M_k}(U_{k-1})
-
D\Phi_{ M_k}( U_{k-1})[-\delta U]
\ge 0.
\]
Expanding these two inequalities gives
\begin{align}
\label{eq:mtildebregg}
\begin{split}
&\big\langle \widetilde M_k,\,
g(\cdot,[D_hU_{k-1}])-g(\cdot,[D_h\widetilde U_{k-1}])\big\rangle_h
-
\big\langle \mathcal B_h(\widetilde U_{k-1}, \widetilde M_k),\,\delta U\big\rangle_h
\ge 0,
\\
&-\big\langle  M_k,\,
g(\cdot,[D_hU_{k-1}])-g(\cdot,[D_h\widetilde U_{k-1}])\big\rangle_h
+
\big\langle \mathcal B_h( U_{k-1}, M_k),\,\delta U\big\rangle_h
\ge 0.
\end{split}
\end{align}
Adding them yields
\begin{equation}
\label{eq:step4_main_ineq}
\big\langle g(\cdot,[D_h\widetilde U_{k-1}]) - g(\cdot,[D_hU_{k-1}]),\,\delta M\big\rangle_h
+
\big\langle \mathcal B_h(U_{k-1},M_k)-\mathcal B_h(\widetilde U_{k-1},\widetilde M_k),\,\delta U\big\rangle_h
\ge 0.
\end{equation}
Next, we show that when $Y\not=\widetilde{Y}$, the inequality is strict. Since $\widetilde M_k$ is strictly positive at every node, the first inequality of \eqref{eq:mtildebregg} is the weighted Bregman sum $\langle\widetilde M_k,\,D_g(\cdot\,;[D_hU_{k-1}],[D_h\widetilde U_{k-1}])\rangle_h$, which vanishes only if $D_g(x_{ij};[D_hU_{k-1}]_{ij},[D_h\widetilde U_{k-1}]_{ij})=0$ at every node. By the rigidity in \Cref{ass:disc_g}, this forces $[D_hU_{k-1}]=[D_h\widetilde U_{k-1}]$. Hence if $[D_hU_{k-1}]\neq[D_h\widetilde U_{k-1}]$,  \eqref{eq:step4_main_ineq} is strict.

\medskip
\noindent\textbf{Step 5: Conclusion.}
Adding Steps~1--4 yields
\[
\big\langle F_h^{\mathrm{td}}(Y)-F_h^{\mathrm{td}}(\widetilde Y),\,Y-\widetilde Y\big\rangle_{h,\Delta t}\ge 0.
\]
Moreover, if $Y\neq \widetilde Y$, then either $M_k\neq \widetilde M_k$ for some $k$, in which case the coupling term in Step~3 is strictly positive, or $M_k=\widetilde M_k$ for all $k$ and $U_{k-1}\neq \widetilde U_{k-1}$ for some $k$. In the latter case, the assumption
\(\langle U_{k-1}-\widetilde U_{k-1},\mathbf 1\rangle_h=0\) rules out the
possibility that \(U_{k-1}-\widetilde U_{k-1}\) is a nonzero constant grid
function. Hence, there exists some $k$ for which
\[
[D_hU_{k-1}] \neq [D_h\widetilde U_{k-1}],
\]Indeed, if $[D_hU_{k-1}]=[D_h\widetilde U_{k-1}]$, then every one-sided difference of $U_{k-1}-\widetilde U_{k-1}$ vanishes. This forces $U_{k-1}-\widetilde U_{k-1}$ to be constant. 
Thus, according to the arguments of Step~4, we get 
\[
\big\langle F_h^{\mathrm{td}}(Y)-F_h^{\mathrm{td}}(\widetilde Y),\,Y-\widetilde Y\big\rangle_{h,\Delta t}>0
\qquad\text{for all }Y\neq \widetilde Y,
\]
which proves \eqref{eq:Fh_strictly_monotone_MU}.
\end{proof}

Next, we establish the explicit formulas for the HRF method as described in \Cref{prop:explicit_disc_HRF}.
\begin{proof}[Proof of \Cref{prop:explicit_disc_HRF}]
Fix a feasible state $Y$ with $M_k\in\mathcal{M}_{++,h}$ for $k=1,\dots,N_T$ and consider the minimization problem \eqref{eq:disc_HRF_var}.
For clarification, we set
\[
\mathcal H_h(Y):=\nabla^2\mathcal E(Y)=\operatorname{diag}\bigl(1/M_1,\dots,1/M_{N_T},\,I,\dots,I\bigr).
\]
Since $M_k>0$ componentwise, the operator $\mathcal{H}_h(Y)$ is positive definite on each block, hence the quadratic map
$\Xi\mapsto \frac12\,g_{Y,\mathcal{E}}(\Xi,\Xi)$ is strictly convex on $\mathcal{X}_h$.
Therefore, the objective in \eqref{eq:disc_HRF_var} is strictly convex, and minimizing it over the linear subspace $\mathcal{T}_Y\mathcal{M}_h$
admits a unique minimizer.

Introduce Lagrange multipliers $\lambda_k\in\mathbb{R}$ for the constraints $\langle \Xi_{M_k},\mathbf{1}\rangle_h=0$ and define the Lagrangian
\[
\mathcal{L}(\Xi,\lambda)
:=
\frac12\,g_{Y,\mathcal{E}}(\Xi,\Xi)
+\langle F_h^{\mathrm{td}}(Y),\Xi\rangle_{h,\Delta t}
+\Delta t\sum_{k=1}^{N_T}\lambda_k\,\langle \Xi_{M_k},\mathbf{1}\rangle_h .
\]
We recall that $g_{Y,\mathcal{E}}(\Xi,\Psi)=\langle \mathcal{H}_h(Y)\Xi,\Psi\rangle_{h,\Delta t}$. For the unconstrained $U$-blocks, fix $k\in\{1,\dots,N_T\}$ and take an arbitrary variation $\delta\Xi_{U_{k-1}}\in\mathbb{R}^{N_x}$, keeping all other blocks fixed. Then
\[
0
=
\delta_{\Xi_{U_{k-1}}}\mathcal{L}(\Xi,\lambda)\,[\delta\Xi_{U_{k-1}}]
=
\Delta t\,\big\langle \Xi_{U_{k-1}}+s_k(Y),\,\delta\Xi_{U_{k-1}}\big\rangle_h .
\]
Since $\delta\Xi_{U_{k-1}}$ is arbitrary and $\Delta t>0$, we conclude that
\begin{equation}\label{eq:stat_U_block}
\Xi_{U_{k-1}}=-\,s_k(Y).
\end{equation}

For the constrained density blocks, fix $k\in\{1,\dots,N_T\}$ and take an arbitrary variation $\delta\Xi_{M_k}\in\mathbb{R}^{N_x}$.
Using that the $M_k$-block of $\mathcal{H}_h(Y)$ equals $\mathrm{diag}(1/M_k)$, we obtain
\[
0
=
\delta_{\Xi_{M_k}}\mathcal{L}(\Xi,\lambda)\,[\delta\Xi_{M_k}]
=
\Delta t\,\Big\langle \frac{\Xi_{M_k}}{M_k}+r_k(Y)+\lambda_k\mathbf{1},\,\delta\Xi_{M_k}\Big\rangle_h .
\]
Again $\delta\Xi_{M_k}$ is arbitrary, hence
\begin{equation}\label{eq:stat_M_block}
\frac{\Xi_{M_k}}{M_k}+r_k(Y)+\lambda_k\mathbf{1}=0,
\qquad\text{or equivalently}\qquad
\Xi_{M_k}=-\,M_k\odot\bigl(r_k(Y)+\lambda_k\mathbf{1}\bigr).
\end{equation}
Imposing the constraint $\langle \Xi_{M_k},\mathbf{1}\rangle_h=0$ and using $\langle M_k\odot a,\mathbf{1}\rangle_h=\langle M_k,a\rangle_h$ gives
\[
0=\langle \Xi_{M_k},\mathbf{1}\rangle_h
=-\langle M_k,r_k(Y)\rangle_h-\lambda_k\langle M_k,\mathbf{1}\rangle_h,
\]
so
\[
\lambda_k=-\frac{\langle M_k,r_k(Y)\rangle_h}{\langle M_k,\mathbf{1}\rangle_h}
=-\,\bar r_k(Y).
\]
Substituting this into \eqref{eq:stat_M_block} yields
\[
\Xi_{M_k}=-\,M_k\odot\bigl(r_k(Y)-\bar r_k(Y)\mathbf{1}\bigr),
\qquad k=1,\dots,N_T.
\]

Finally, identifying the minimizer $\Xi$ in \eqref{eq:disc_HRF_var} with $\dot Y(s)$ yields \eqref{eq:disc_HRF_blocks}, which completes the proof.
\end{proof}

Next, we establish the existence of the HRF dynamics and show that it preserves positivity, mass, and the slice-wise means of the value function.
\begin{proof}[Proof of \Cref{prop:global_exist_pos_mass}]
To invoke standard Euclidean ODE theory rigorously, we first view \eqref{eq:disc_HRF_blocks} as an ODE on the open set
\[
\Omega_h:=\Bigl\{Y\in\mathcal X_h:\ M_k\in\mathbb R_{++}^{N_x}\  \text{for }k=1,\dots,N_T\Bigr\}.
\]
On $\Omega_h$, the quantities $\bar r_k(Y)$ in \eqref{eq:rbar_def} are well defined, and Assumptions~\ref{ass:disc_f} and \ref{ass:disc_terms_regularity} imply that
\[
Y\mapsto (r_1(Y),\dots,r_{N_T}(Y),s_1(Y),\dots,s_{N_T}(Y))
\]
is locally Lipschitz. Hence the vector field in \eqref{eq:disc_HRF_blocks} is locally Lipschitz on $\Omega_h$, so the Picard--Lindel\"of theorem yields a unique maximal solution $Y(\cdot)$ on some interval $[0,S_{\max})$, with $S_{\max}>0$, starting from the feasible initial datum $Y(0)\in\mathcal M_h\subset\Omega_h$. The identities proved below then show that this maximal solution actually remains on $\mathcal M_h$.

Along the solution, the mass conservation follows directly from \eqref{eq:disc_HRF_blocks}. Indeed, using $\langle a\odot b,\mathbf 1\rangle_h=\langle a,b\rangle_h$ and the definition \eqref{eq:rbar_def}, for each $k$ and all $s\in[0,S_{\max})$,
\[
\frac{\mathrm d}{\mathrm ds}\langle M_k(s),\mathbf 1\rangle_h
=
-\bigl\langle M_k(s),\,r_k(Y(s))-\bar r_k(Y(s))\,\mathbf 1\bigr\rangle_h
=
-\langle M_k(s),r_k(Y(s))\rangle_h+\bar r_k(Y(s))\langle M_k(s),\mathbf 1\rangle_h
=0. 
\]
Hence
\[
\langle M_k(s),\mathbf 1\rangle_h=\langle M_k(0),\mathbf 1\rangle_h=1
\qquad\text{for all }s\in[0,S_{\max}).
\]

Next, we prove invariance of the slice-wise means of the $U$-blocks. For each $k$ and all $s\in[0,S_{\max})$,
\[
\frac{\mathrm d}{\mathrm ds}\langle U_{k-1}(s),\mathbf 1\rangle_h
=
-\langle s_k(Y(s)),\mathbf 1\rangle_h.
\]
Using \eqref{eq:sk_def_rewrite}, mass preservation, the self-adjointness of $\Delta_h$, and the divergence form of $\mathcal B_h(U_{k-1},M_k)$, we obtain
\[
\langle s_k(Y(s)),\mathbf 1\rangle_h
=
\frac{\langle M_k(s)-M_{k-1}(s),\mathbf 1\rangle_h}{\Delta t}
-\nu\langle \Delta_h M_k(s),\mathbf 1\rangle_h
+\langle \mathcal B_h(U_{k-1}(s),M_k(s)),\mathbf 1\rangle_h
=0.
\]
Hence
\[
\langle U_{k-1}(s),\mathbf 1\rangle_h=\langle U_{k-1}(0),\mathbf 1\rangle_h
\qquad\text{for all }s\in[0,S_{\max}).
\]

Positivity follows from the multiplicative structure of the density equation. Fix $k$ and a component index $\alpha$. Writing
\[
B_k(Y):=r_k(Y)-\bar r_k(Y)\mathbf 1,
\]
the $k$-th density block satisfies
\[
\dot M_k=-M_k\odot B_k(Y).
\]
Thus, 
\[
\frac{\mathrm d}{\mathrm ds}\log (M_k(s))_\alpha
=
-\,(B_k(Y(s)))_\alpha,
\]
and 
\[
(M_k(s))_\alpha
=
(M_k(0))_\alpha
\exp\!\Bigl(-\int_0^s (B_k(Y(\sigma)))_\alpha\,\mathrm d\sigma\Bigr)>0
\qquad\text{for all }s\in[0,S_{\max}).
\]
Hence, $M_k(s)$ remains strictly positive componentwise on $[0,S_{\max})$, and together with the mass constraint we have
\[
M_k(s)\in\mathcal M_{++,h}
\qquad\text{for all }s\in[0,S_{\max}).
\]

It remains to show that $S_{\max}=+\infty$. Let $\mathcal E$ be the discrete entropy \eqref{eq:disc_entropy_h}. We interpret $\nabla\mathcal E$ as the gradient with respect to the space-time pairing $\langle\cdot,\cdot\rangle_{h,\Delta t}$, i.e., $
D\mathcal E(Y)[\Xi]=\langle \nabla\mathcal E(Y),\Xi\rangle_{h,\Delta t}$,
and recall the definition of the Bregman divergence $D_{\mathcal E}$.

Fix $s\in[0,S_{\max})$. Since both $Y(s)$ and $Y^\dagger$ belong to $\mathcal M_h$, each density block of $Y^\dagger-Y(s)$ has zero mass. Hence $\delta:=Y^\dagger-Y(s)$ belongs to $\mathcal T_{Y(s)}\mathcal M_h$. Because $\dot Y(s)$ minimizes the strictly convex functional in \eqref{eq:disc_HRF_var} over the linear space $\mathcal T_{Y(s)}\mathcal M_h$, its first-order optimality condition reads
\[
g_{Y(s),\mathcal E}\bigl(\dot Y(s),\delta\bigr)
+
\bigl\langle F_h^{\mathrm{td}}(Y(s)),\delta\bigr\rangle_{h,\Delta t}=0
\qquad\text{for every }\delta\in\mathcal T_{Y(s)}\mathcal M_h.
\]
Applying this with $\delta=Y^\dagger-Y(s)$ gives
\[
g_{Y(s),\mathcal E}\bigl(\dot Y(s),\,Y^\dagger-Y(s)\bigr)
=
-\bigl\langle F_h^{\mathrm{td}}(Y(s)),\,Y^\dagger-Y(s)\bigr\rangle_{h,\Delta t}.
\]
On the other hand, by the chain rule and the definition of $D_{\mathcal E}$,
\[
\frac{\mathrm d}{\mathrm ds}D_{\mathcal E}(Y^\dagger,Y(s))
=
-\bigl\langle \nabla^2\mathcal E(Y(s))\,\dot Y(s),\,Y^\dagger-Y(s)\bigr\rangle_{h,\Delta t}
=
-\,g_{Y(s),\mathcal E}\bigl(\dot Y(s),\,Y^\dagger-Y(s)\bigr).
\]
Combining the two identities yields
\[
\frac{\mathrm d}{\mathrm ds}D_{\mathcal E}(Y^\dagger,Y(s))
=
\bigl\langle F_h^{\mathrm{td}}(Y(s)),\,Y^\dagger-Y(s)\bigr\rangle_{h,\Delta t}.
\]
Since $\widetilde F_h^{\mathrm{td}}(Y^\dagger)=0$, each density block of $F_h^{\mathrm{td}}(Y^\dagger)$ is a multiple of $\mathbf 1$ and each $U$-block vanishes. Because every density block in $Y(s)-Y^\dagger$ has zero mass, we have
\[
\bigl\langle F_h^{\mathrm{td}}(Y^\dagger),\,Y(s)-Y^\dagger\bigr\rangle_{h,\Delta t}=0.
\]
Therefore
\[
\frac{\mathrm d}{\mathrm ds}D_{\mathcal E}(Y^\dagger,Y(s))
=
-\bigl\langle F_h^{\mathrm{td}}(Y(s))-F_h^{\mathrm{td}}(Y^\dagger),\,Y(s)-Y^\dagger\bigr\rangle_{h,\Delta t}.
\]
According to the arguments of \Cref{prop:Fh_monotone_MU}, we obtain 
\[
\bigl\langle F_h^{\mathrm{td}}(Y(s))-F_h^{\mathrm{td}}(Y^\dagger),\,Y(s)-Y^\dagger\bigr\rangle_{h,\Delta t}\ge 0,
\]
and hence
\[
\frac{\mathrm d}{\mathrm ds}D_{\mathcal E}(Y^\dagger,Y(s))\le 0.
\]
Therefore $s\mapsto D_{\mathcal E}(Y^\dagger,Y(s))$ is nonincreasing on $[0,S_{\max})$.

This Lyapunov bound prevents the solution from approaching the boundary of the feasible interior in finite flow time. First, the quadratic $U$-part of the Bregman divergence is explicit:
\[
D_{\mathcal E}(Y^\dagger,Y(s))
\ge
\frac{\Delta t}{2}\sum_{k=1}^{N_T}\|U_{k-1}(s)-U_{k-1}^\dagger\|_h^2.
\]
Hence, the uniform bound
$D_{\mathcal E}(Y^\dagger,Y(s))\le D_{\mathcal E}(Y^\dagger,Y(0))$
for all $s\in[0,S_{\max})$ yields a global bound on every $U$-block along the maximal trajectory. For the density blocks, note that $D_{\mathcal E}(Y^\dagger,Y)$ contains the discrete relative-entropy terms
\[
\Delta t\sum_{k=1}^{N_T}\sum_{\alpha} h^2\Bigl[(M_k^\dagger)_\alpha\log\!\frac{(M_k^\dagger)_\alpha}{(M_k)_\alpha}-\bigl((M_k^\dagger)_\alpha-(M_k)_\alpha\bigr)\Bigr].
\]
Since $M_k^\dagger>0$ componentwise, if some component $(M_k)_\alpha$ were to tend to $0$, the corresponding term $
(M_k^\dagger)_\alpha\log\!\frac{(M_k^\dagger)_\alpha}{(M_k)_\alpha}$
would diverge to $+\infty$, contradicting the bound $
D_{\mathcal E}(Y^\dagger,Y(s))\le D_{\mathcal E}(Y^\dagger,Y(0))$.
Consequently, because the bound
$D_{\mathcal E}(Y^\dagger,Y(s))\le D_{\mathcal E}(Y^\dagger,Y(0))$
holds for every $s\in[0,S_{\max})$, there exists a constant $c_*>0$, depending only on $Y^\dagger$ and $D_{\mathcal E}(Y^\dagger,Y(0))$, such that
$(M_k(s))_\alpha\ge c_*$
for all $s\in[0,S_{\max})$, all $k$, and all components. Together with mass preservation, which yields the pointwise upper bound
$(M_k(s))_\alpha\le h^{-2}$,
and with the global bound on the $U$-blocks established above, this shows that the whole trajectory $\{Y(s):0\le s<S_{\max}\}$ is contained in a compact subset of the feasible interior.

Because the vector field in \eqref{eq:disc_HRF_blocks} is locally Lipschitz on the open set $\Omega_h$, the standard continuation criterion for ODEs in finite dimension implies that a maximal solution can be continued past any finite time as long as its image stays in a compact subset of $\Omega_h$. The previous paragraph provides such a compact subset for the entire trajectory on $[0,S_{\max})$. Hence $S_{\max}=+\infty$. The positivity and mass identities established above therefore, hold for all $s\ge 0$.
\end{proof}


The following proof establishes global convergence of the HRF on each invariant affine slice.

\begin{proof}[Proof of \Cref{thm:global_conv_discrete}]
We first explain the idea of the proof. The Bregman divergence from
$Y^\dagger$ decreases along the HRF and is bounded from below. Hence, its
decrease rate is nonnegative and integrable in artificial time. We then prove
that the trajectory stays in a compact subset of the feasible region. This gives
enough continuity to show that the decrease
rate tends to zero. Any limit point of the trajectory must therefore make the
limiting decrease rate equal to zero. By strict monotonicity, the only possible
limit point with the same averages of the value-function slices is
$Y^\dagger$. Thus, the trajectory converges to $Y^\dagger$, and the residuals
converge to zero by continuity.

\smallskip
\noindent\textbf{Step 1: The flow stays feasible and keeps the averages of
$U_k$.}
By \Cref{prop:global_exist_pos_mass}, the solution $Y(s)$ of
\eqref{eq:disc_HRF_blocks} exists for all $s\ge 0$ and remains feasible. Thus,
for every $s\ge 0$ and $k=1,\dots,N_T$, we have
$M_k(s)\in\mathcal M_{++,h}$ and $\langle M_k(s),\mathbf 1\rangle_h=1$.
Moreover, the averages of the value-function slices do not change:
\[
\langle U_{k-1}(s),\mathbf 1\rangle_h
=
\langle U_{k-1}(0),\mathbf 1\rangle_h
=
\langle U^\dagger_{k-1},\mathbf 1\rangle_h .
\]
Here, the last equality is part of the assumption on $Y^\dagger$.

\smallskip
\noindent\textbf{Step 2: The Bregman divergence is decreasing.}
As shown in the proof of \Cref{prop:global_exist_pos_mass}, along the HRF one has
\[
\frac{\mathrm d}{\mathrm ds}D_{\mathcal E}(Y^\dagger,Y(s))
=
-
\bigl\langle
\widetilde F_h^{\mathrm{td}}(Y(s))
-\widetilde F_h^{\mathrm{td}}(Y^\dagger),
Y(s)-Y^\dagger
\bigr\rangle_{h,\Delta t}.
\]
Set
\[
\Phi(s):=
\bigl\langle
\widetilde F_h^{\mathrm{td}}(Y(s))
-\widetilde F_h^{\mathrm{td}}(Y^\dagger),
Y(s)-Y^\dagger
\bigr\rangle_{h,\Delta t}.
\]
Then $\frac{\mathrm d}{\mathrm ds}D_{\mathcal E}(Y^\dagger,Y(s))=-\Phi(s)$.

We also have $\Phi(s)\ge 0$. Indeed, $Y(s)$ and $Y^\dagger$ have the same
averages of $U_k$, and every density block has mass one. Therefore the constants
removed in $\widetilde F_h^{\mathrm{td}}$ do not contribute to the pairing with
$Y(s)-Y^\dagger$. Hence
\[
\Phi(s)
=
\bigl\langle
F_h^{\mathrm{td}}(Y(s))-F_h^{\mathrm{td}}(Y^\dagger),
Y(s)-Y^\dagger
\bigr\rangle_{h,\Delta t}.
\]
The nonnegativity follows from \Cref{prop:Fh_monotone_MU}. Thus
$D_{\mathcal E}(Y^\dagger,Y(s))$ is nonincreasing.

\smallskip
\noindent\textbf{Step 3: The trajectory stays in a compact subset of the
feasible region.}
Since $D_{\mathcal E}(Y^\dagger,Y(s))\le D_{\mathcal E}(Y^\dagger,Y(0))$ for all
$s\ge 0$, and since the $U$-part of $D_{\mathcal E}$ is
$(\Delta t/2)\sum_{k=0}^{N_T-1}\|U_k(s)-U_k^\dagger\|_h^2$, all
value-function blocks $U_k(s)$ are uniformly bounded. For the density blocks, the entropy part of $D_{\mathcal E}$ is
a finite sum of terms of the form
\[
a\log\frac{a}{m}-(a-m),\qquad a>0,\quad m>0.
\]
This function is nonnegative and tends to $+\infty$ as $m\to0^+$. Therefore the
bound on $D_{\mathcal E}(Y^\dagger,Y(s))$ prevents any component of $M_k(s)$
from approaching zero. The mass constraint $\langle M_k(s),\mathbf 1\rangle_h=1$
also gives an upper bound on every component of $M_k(s)$. Hence, all components
of $Y(s)$ are uniformly bounded, and the density stays a positive distance away
from zero. Since the problem is finite-dimensional, the trajectory is contained
in a compact subset of the feasible region.

\smallskip
\noindent\textbf{Step 4: The decrease rate $\Phi(s)$ tends to zero.}
Integrating the identity
$\frac{\mathrm d}{\mathrm ds}D_{\mathcal E}(Y^\dagger,Y(s))=-\Phi(s)$ gives
\[
\int_0^\infty \Phi(s)\,\mathrm ds
=
D_{\mathcal E}(Y^\dagger,Y(0))
-\lim_{s\to\infty}D_{\mathcal E}(Y^\dagger,Y(s))
<\infty .
\]
Thus $\Phi\in L^1(0,\infty)$. On the compact set found in Step 3, the right-hand
side of \eqref{eq:disc_HRF_blocks} is continuous and bounded. Since the trajectory stays in the compact subset of the feasible region
constructed in the previous step, and since the right-hand side of
\eqref{eq:disc_HRF_blocks} is continuous there, the right-hand side is bounded
along the trajectory. Hence there exists $C>0$ such that
$\|\dot Y(s)\|\le C$ for all $s\ge 0$. Therefore, for any $s,t\ge 0$, we have
$\|Y(s)-Y(t)\|\le C|s-t|$, so $Y(\cdot)$ is uniformly continuous on
$[0,\infty)$.

Moreover, all density components stay uniformly away from zero on this compact
set. Hence $\widetilde F_h^{\mathrm{td}}$ is continuous on a neighborhood of the
trajectory. Therefore the map
$Y\mapsto \langle \widetilde F_h^{\mathrm{td}}(Y)-\widetilde F_h^{\mathrm{td}}(Y^\dagger),
Y-Y^\dagger\rangle_{h,\Delta t}$ is uniformly continuous on the compact subset
containing the trajectory. Since $Y(\cdot)$ is uniformly continuous, the
composition
$s\mapsto \langle \widetilde F_h^{\mathrm{td}}(Y(s))-\widetilde F_h^{\mathrm{td}}(Y^\dagger),
Y(s)-Y^\dagger\rangle_{h,\Delta t}$
is uniformly continuous on $[0,\infty)$. That is, $\Phi$ is uniformly
continuous. Since $\Phi\ge0$,
$\Phi\in L^1(0,\infty)$, and $\Phi$ is uniformly continuous, Barbalat's lemma
\cite{khalil2002nonlinear} implies that $\Phi(s)\to0$ as $s\to\infty$.

\smallskip
\noindent\textbf{Step 5: The trajectory converges to $Y^\dagger$.}
We have proved that $\Phi(s)\to 0$. We now use this to prove that
$Y(s)\to Y^\dagger$. Since the trajectory stays in a compact
set, every sequence $Y(s_n)$ with $s_n\to\infty$ has a convergent subsequence.
Let $\bar Y$ be the limit of such a subsequence. At this point, $\bar Y$ is only
a possible limit point of the trajectory. We will show that it must be
$Y^\dagger$.

The
quantities preserved by the flow pass to the limit. In particular,
\[
\langle \bar U_{k-1},\mathbf 1\rangle_h
=
\langle U^\dagger_{k-1},\mathbf 1\rangle_h,
\qquad k=1,\dots,N_T .
\]
Since $\Phi(s)\to0$ and $\Phi$ is continuous along convergent sequences, we get
\[
0=
\bigl\langle
\widetilde F_h^{\mathrm{td}}(\bar Y)
-\widetilde F_h^{\mathrm{td}}(Y^\dagger),
\bar Y-Y^\dagger
\bigr\rangle_{h,\Delta t}.
\]
The constants removed in $\widetilde F_h^{\mathrm{td}}$ do not affect this
pairing. Indeed, the density blocks of $\bar Y-Y^\dagger$ have zero mass, and
the value-function blocks have zero average. Hence, the same identity holds with
$F_h^{\mathrm{td}}$ in place of $\widetilde F_h^{\mathrm{td}}$:
\[
0=
\bigl\langle
F_h^{\mathrm{td}}(\bar Y)-F_h^{\mathrm{td}}(Y^\dagger),
\bar Y-Y^\dagger
\bigr\rangle_{h,\Delta t}.
\]
By the strict monotonicity in \Cref{prop:Fh_monotone_MU}, this is possible only
if $\bar Y=Y^\dagger$.

Thus, every convergent subsequence of $Y(s)$ has the same limit $Y^\dagger$.
Because the trajectory is precompact, this implies that the whole trajectory
converges:
\[
Y(s)\to Y^\dagger
\qquad\text{as }s\to\infty.
\]

\smallskip
\noindent\textbf{Step 6: The residuals converge to zero.}
Now, the convergence of the residuals follows directly from the convergence of
$Y(s)$. Since $\widetilde F_h^{\mathrm{td}}$ is continuous on the compact subset
containing the trajectory and $\widetilde F_h^{\mathrm{td}}(Y^\dagger)=0$, we
obtain
\[
\widetilde F_h^{\mathrm{td}}(Y(s))\to0.
\]
This completes the proof.
\end{proof}

Now, we are ready to compute the gradient of the inverse objective with respect to the unknown coefficients.

\begin{proof}[Proof of \Cref{prop:adjoint_grad}]
Fix a parameter $\theta\in\Theta$ and an arbitrary direction
$\delta\theta\in\mathbb R^\ell$. Since the inner state is differentiable with
respect to $\theta$ at this parameter, we denote its first-order change by
\[
\delta z := Dz^*(\theta)\,\delta\theta .
\]

\smallskip
\noindent\textbf{Step 1: Differentiate the reduced objective.}
By the definition of $\mathcal J$ in \eqref{eq:reduced-objective}, we have
\begin{equation}
\label{eq:dJ_app}
\begin{aligned}
\mathrm d\mathcal J(\theta)[\delta\theta]
=
\beta\,\bigl(Wz^*(\theta)-z^{\mathrm o}\bigr)^\top W\,\delta z
+
\gamma\,\bigl(G\theta-\theta^{\mathrm o}\bigr)^\top G\,\delta\theta
+
\alpha\,\bigl(J\theta\bigr)^\top J\,\delta\theta .
\end{aligned}
\end{equation}
The first term still contains $\delta z$, which depends on the change of the
inner solution.

\smallskip
\noindent\textbf{Step 2: Differentiate the inner equations.}
Since
\[
F(z^*(\theta),\theta)=0,
\qquad
\mathsf A z^*(\theta)=\mathsf b ,
\]
differentiating these equations in the direction $\delta\theta$ gives
\begin{equation}
\label{eq:lin-eq_app}
\frac{\partial F}{\partial z}\bigl(z^*(\theta),\theta\bigr)\,\delta z
+
\frac{\partial F}{\partial \theta}\bigl(z^*(\theta),\theta\bigr)\,\delta\theta
=0,
\end{equation}
and
\begin{equation}
\label{eq:lin-constraint_app}
\mathsf A\,\delta z=0.
\end{equation}
If there are no separate linear constraints, then \eqref{eq:lin-constraint_app}
is absent and the terms involving $\eta$ below are omitted.

\smallskip
\noindent\textbf{Step 3: Reformulation of the gradient.}
Let $(\psi,\eta)$ solve the adjoint equation \eqref{eq:adjoint}. Since
\eqref{eq:lin-eq_app} and \eqref{eq:lin-constraint_app} are equal to zero, we may
add their pairings with $\psi$ and $\eta$ to \eqref{eq:dJ_app}:
\[
0
=
\psi^\top
\left[
\frac{\partial F}{\partial z}\bigl(z^*(\theta),\theta\bigr)\,\delta z
+
\frac{\partial F}{\partial \theta}\bigl(z^*(\theta),\theta\bigr)\,\delta\theta
\right],
\qquad
0=\eta^\top \mathsf A\,\delta z .
\]
Therefore
\begin{align}
\mathrm d\mathcal J(\theta)[\delta\theta]
&=
\Bigl[
\beta\,W^\top\bigl(Wz^*(\theta)-z^{\mathrm o}\bigr)
+
\frac{\partial F}{\partial z}\bigl(z^*(\theta),\theta\bigr)^\top \psi
+
\mathsf A^\top\eta
\Bigr]^\top \delta z
\label{eq:dJ_grouped_app}
\\
&\quad+
\Bigl[
\frac{\partial F}{\partial \theta}\bigl(z^*(\theta),\theta\bigr)^\top \psi
+
\gamma\,G^\top\bigl(G\theta-\theta^{\mathrm o}\bigr)
+
\alpha\,J^\top J\,\theta
\Bigr]^\top \delta\theta .
\nonumber
\end{align}

\smallskip
\noindent\textbf{Step 4: Use the adjoint equation to remove $\delta z$.}
By the definition of $(\psi,\eta)$ in \eqref{eq:adjoint},
\[
\frac{\partial F}{\partial z}\bigl(z^*(\theta),\theta\bigr)^\top \psi
+
\mathsf A^\top\eta
=
-\beta\,W^\top\bigl(Wz^*(\theta)-z^{\mathrm o}\bigr).
\]
Hence, the coefficient of $\delta z$ in \eqref{eq:dJ_grouped_app} is zero. Thus
\[
\mathrm d\mathcal J(\theta)[\delta\theta]
=
\Bigl[
\frac{\partial F}{\partial \theta}\bigl(z^*(\theta),\theta\bigr)^\top \psi
+
\gamma\,G^\top\bigl(G\theta-\theta^{\mathrm o}\bigr)
+
\alpha\,J^\top J\,\theta
\Bigr]^\top \delta\theta .
\]
Since $\delta\theta$ is arbitrary, the gradient is
\[
\nabla_\theta \mathcal J(\theta)
=
\frac{\partial F}{\partial \theta}\bigl(z^*(\theta),\theta\bigr)^\top \psi
+
\gamma\,G^\top\bigl(G\theta-\theta^{\mathrm o}\bigr)
+
\alpha\,J^\top J\,\theta .
\]
This is exactly \eqref{eq:grad-final}. It remains to justify that the formula is independent of the chosen adjoint
solution. Let \((\psi_1,\eta_1)\) and \((\psi_2,\eta_2)\) be two adjoint
solutions, and set \(\zeta=\psi_1-\psi_2\) and \(\xi=\eta_1-\eta_2\). Then
\(F_z(z^*(\theta),\theta)^\top\zeta+\mathsf A^\top\xi=0\). For any parameter
direction \(d\), let \(\delta z_d\) be the corresponding first-order change of
the state. It satisfies
\(F_z(z^*(\theta),\theta)\delta z_d+F_\theta(z^*(\theta),\theta)d=0\) and
\(\mathsf A\delta z_d=0\). Therefore
\(\zeta^\top F_\theta(z^*(\theta),\theta)d
=-\zeta^\top F_z(z^*(\theta),\theta)\delta z_d
=\xi^\top\mathsf A\delta z_d=0\). Since this holds for every \(d\), we have
\(F_\theta(z^*(\theta),\theta)^\top\psi_1
=F_\theta(z^*(\theta),\theta)^\top\psi_2\). Thus, the adjoint contribution to
the gradient does not depend on which adjoint solution is used.
\end{proof}

Finally, we derive the update formulas for the GN method.
\begin{proof}[Proof of \Cref{prop:gn_direction}]
The GN method is applied to the least-squares objective
\[
\mathcal J(\theta)=\frac12\|r(\theta)\|_2^2 .
\]
At the current parameter $\theta$, it uses the first-order approximation
\[
r(\theta+d)\approx r(\theta)+\mathsf J_r(\theta)d .
\]
We therefore compute the first-order change of each block of $r(\theta)$ in the
direction $d$.

\smallskip
\noindent\textbf{Step 1: Differentiate the inner equations.}
The first block of $r(\theta)$ depends on $\theta$ through the inner state
$z^*(\theta)$. Let $\delta z_d$ denote the first-order change of this state in
the direction $d$. When $z^*(\theta)$ is differentiable with respect to
$\theta$, this means
\[
\delta z_d = Dz^*(\theta)d,
\qquad
z^*(\theta+\varepsilon d)
=
z^*(\theta)+\varepsilon \delta z_d+o(\varepsilon)
\quad\text{as }\varepsilon\to0.
\]
Since the inner state satisfies
\[
F(z^*(\theta),\theta)=0,
\qquad
\mathsf A z^*(\theta)=\mathsf b,
\]
differentiating these equations in the direction $d$ gives
\[
\frac{\partial F}{\partial z}\bigl(z^*(\theta),\theta\bigr)\,\delta z_d
+
\frac{\partial F}{\partial \theta}\bigl(z^*(\theta),\theta\bigr)d
=0,
\qquad
\mathsf A\,\delta z_d=0.
\]
This is exactly \eqref{eq:gn_sensitivity}.

\smallskip
\noindent\textbf{Step 2: Differentiate the least-squares residual.}
By \eqref{eq:gn-residual-def}, the residual is
\[
r(\theta)
=
\begin{bmatrix}
\sqrt{\beta}\,\bigl(Wz^*(\theta)-z^{\mathrm o}\bigr)\\[2pt]
\sqrt{\gamma}\,\bigl(G\theta-\theta^{\mathrm o}\bigr)\\[2pt]
\sqrt{\alpha}\,J\theta
\end{bmatrix}.
\]
Therefore, the first-order change of
$r(\theta)$ in the direction $d$ is
\[
\mathsf J_r(\theta)d=
\begin{bmatrix}
\sqrt{\beta}\,W\,\delta z_d\\
\sqrt{\gamma}\,Gd\\
\sqrt{\alpha}\,Jd
\end{bmatrix}.
\]

\smallskip
\noindent\textbf{Step 3: Write the GN step.}
The GN step minimizes the linearized least-squares problem
\[
\frac12\bigl\|r(\theta)+\mathsf J_r(\theta)d\bigr\|_2^2 .
\]
Differentiating this quadratic function with respect to $d$ gives
\[
\mathsf J_r(\theta)^\top\bigl(r(\theta)+\mathsf J_r(\theta)d\bigr)=0.
\]
Equivalently,
\[
\bigl(\mathsf J_r(\theta)^\top\mathsf J_r(\theta)\bigr)d
=
-\mathsf J_r(\theta)^\top r(\theta),
\]
which is \eqref{eq:gn_lm_system}.
\end{proof}

\section*{Declarations}
\noindent\textbf{Conflict of interest.}
The authors declare that they have no conflict of interest.

\medskip
\noindent\textbf{Data availability.}
The authors confirm that the data supporting the conclusions of this study are contained in the article.

\bibliographystyle{plain}
\bibliography{referencesnewnew}

\end{document}